\algnewcommand{\Inputs}[1]{%
  \State \textbf{Inputs:}
  \Statex \hspace*{\algorithmicindent}\parbox[t]{.8\linewidth}{\raggedright #1}
}
\algnewcommand{\Initialize}[1]{%
  \State \textbf{Initialize:}
  \Statex \hspace*{\algorithmicindent}\parbox[t]{.8\linewidth}{\raggedright #1}
}
\algnewcommand{\Outputs}[1]{%
  \State \textbf{Outputs:}
  \Statex \hspace*{\algorithmicindent}\parbox[t]{.8\linewidth}{\raggedright #1}
}
\newtheorem{theorem}{Theorem}
\crefname{theorem}{theorem}{Theorems}
\Crefname{Theorem}{Theorem}{Theorems}
\newtheorem*{lemma_nonumber*}{Lemma}
\newaliascnt{lemma}{theorem}
\newtheorem{lemma}[lemma]{Lemma}
\crefname{lemma}{lemma}{lemmas}
\Crefname{Lemma}{Lemma}{Lemmas}
\newaliascnt{corollary}{theorem}
\newtheorem{corollary}[corollary]{Corollary}
\crefname{corollary}{corollary}{corollaries}
\Crefname{Corollary}{Corollary}{Corollaries}
\newaliascnt{proposition}{theorem}
\newtheorem{proposition}[proposition]{Proposition}
\crefname{proposition}{proposition}{propositions}
\Crefname{Proposition}{Proposition}{Propositions}
\newaliascnt{definition}{theorem}
\crefname{definition}{definition}{definitions}
\Crefname{Definition}{Definition}{Definitions}
\newaliascnt{remark}{theorem}
\crefname{remark}{remark}{remarks}
\Crefname{Remark}{Remark}{Remarks}
\crefname{example}{example}{examples}
\Crefname{Example}{Example}{Examples}
\crefname{figure}{figure}{figures}
\Crefname{Figure}{Figure}{Figures}
\newtheorem{assumption}{\textbf{A}\hspace{-3pt}}
\newtheorem{assumptionB}{\textbf{B}\hspace{-3pt}}
\Crefname{assumptionB}{\textbf{B}\hspace{-3pt}}{\textbf{B}\hspace{-3pt}}
\crefname{assumptionB}{\textbf{B}}{\textbf{B}}
\newtheorem{assumptionC}{\textbf{C}\hspace{-3pt}}
\Crefname{assumptionC}{\textbf{C}\hspace{-3pt}}{\textbf{C}\hspace{-3pt}}
\crefname{assumptionC}{\textbf{C}}{\textbf{C}}
\newtheorem{assumptionH}{\textbf{H}\hspace{-3pt}}
\Crefname{assumptionH}{\textbf{H}\hspace{-3pt}}{\textbf{H}\hspace{-3pt}}
\crefname{assumptionH}{\textbf{H}}{\textbf{H}}
\newtheorem{assumptionT}{\textbf{T}\hspace{-3pt}}
\Crefname{assumptionT}{\textbf{T}\hspace{-3pt}}{\textbf{T}\hspace{-3pt}}
\crefname{assumptionT}{\textbf{T}}{\textbf{T}}
\newtheorem{assumptionD}{\textbf{D}\hspace{-3pt}}
\Crefname{assumptionT}{\textbf{T}\hspace{-3pt}}{\textbf{T}\hspace{-3pt}}
\crefname{assumptionT}{\textbf{T}}{\textbf{T}}
\newtheorem{assumptionL}{\textbf{L}\hspace{-3pt}}
\Crefname{assumptionL}{\textbf{L}\hspace{-3pt}}{\textbf{L}\hspace{-3pt}}
\crefname{assumptionL}{\textbf{L}}{\textbf{L}}
\Crefname{assumptionQ}{\textbf{Q}\hspace{-3pt}}{\textbf{Q}\hspace{-3pt}}
\crefname{assumptionQ}{\textbf{Q}}{\textbf{Q}}
\newtheorem{assumptionAR}{\textbf{AR}\hspace{-3pt}}
\Crefname{assumptionAR}{\textbf{AR}\hspace{-3pt}}{\textbf{AR}\hspace{-3pt}}
\crefname{assumptionAR}{\textbf{AR}}{\textbf{AR}}
\def\mean{\mathpzc{m}}
\def\ratio{\uptheta}
\def\rhoexp{\rho_{\mathrm{exp}}}
\def\xb{\bar{x}}
\def\bR{\bar{R}}
\def\Bdisc{B_{\discrete}}
\def\tBdisc{\tilde{B}_{\discrete}}
\def\dim{d}
\def\En{\tilde{E}_n}
\def\varepsn{\tilde{\vareps}_n}
\def\pow{p}
\def\ntt{\mathtt{n}_0}
\def\tlambda{\tilde{\lambda}}
\newcommand{\tb}{\tilde{b}}
\newcommand{\Time}{T}
\newcommand{\mttun}{\mathtt{k}_1}
\newcommand{\mttdeux}{\mathtt{k}_2}
\newcommand{\mtttrois}{\mtt_3^+}
\newcommand{\bvareps}{\bar{\vareps}}
\newcommand{\transference}{\mathbf{T}}
\newcommand{\esssup}{\mathrm{ess sup}}
\newcommand{\bgM}{b_{\gamma, n}}
\newcommand{\rme}{\mathrm{e}}
\newcommand{\rmE}{\mathrm{E}}
\newcommand{\Lip}{\mathtt{L}}
\newcommand{\tLip}{\tilde{\mathtt{L}}}
\newcommand{\tell}{\tilde{\ell}}
\newcommand{\Lipb}{\mtt_b}
\newcommand{\step}{\ceil{1/\gamma}}
\newcommand{\bstep}{\ceil{1/\bgamma}}
\def\bdisc{b}
\def\bfDd{\mathbf{D}_{\mathrm{d}}}
\def\bfDc{\mathbf{D}_{\mathrm{c}}}
\newcommand{\SDE}{stochastic differential equation}
\newcommand{\measfun}{\mathbb{F}}
\newcommandx{\norm}[2][1=]{\ifthenelse{\equal{#1}{}}{\left\Vert #2 \right\Vert}{\left\Vert #2 \right\Vert^{#1}}}
\newcommandx{\normLigne}[2][1=]{\ifthenelse{\equal{#1}{}}{\Vert #2 \Vert}{\Vert #2\Vert^{#1}}}
\def\bfc{\mathbf{c}}
\def\cbf{\mathbf{c}}
\def\bfX{\mathbf{X}}
\def\bbfX{\tilde{\mathbf{X}}}
\def\bfM{\mathbf{M}}
\def\bfB{\mathbf{B}}
\def\msa{\mathsf{A}}
\def\msk{\mathsf{K}}
\def\msb{\mathsf{B}} 
\def\msc{\mathsf{C}}
\def\msu{\mathsf{U}}
\def\msx{\mathsf{X}}
\def\msy{\mathsf{Y}}
\def\mct{\mathcal{T}}
\def\mcbb{\mathcal{B}}  
\newcommand{\mcb}[1]{\mathcal{B}(#1)}
\def\mcz{\mathcal{Z}}
\def\mcy{\mathcal{Y}}
\def\mcx{\mathcal{X}}
\def\mcf{\mathcal{F}}
\def\mcg{\mathcal{G}}
\def\rset{\mathbb{R}}
\def\nset{\mathbb{N}}
\def\nsets{\mathbb{N}^{\star}}
\def\rmb{\mathrm{b}}
\def\rmw{\mathrm{w}}
\def\rmd{\mathrm{d}}
\def\rmZ{\mathrm{Z}}
\def\rmS{\mathrm{S}}
\def\rme{\mathrm{e}}
\def\rmc{\mathrm{C}}
\def\rmC{\mathrm{C}}
\newcommandx{\functionspace}[2][1=+]{\mathbb{F}_{#1}(#2)}
\newcommand{\argmin}{\operatorname*{arg\,min}}
\newcommandx{\VarDeux}[3][3=]{\operatorname{Var}^{#3}_{#1}\left\{#2 \right\}}
\newcommand{\1}{\mathbbm{1}}
\newcommand{\LeftEqNo}{\let\veqno\@@leqno}
\newcommand{\floor}[1]{\left\lfloor #1 \right\rfloor}
\newcommand{\ceil}[1]{\left\lceil #1 \right\rceil}
\newcommand{\N}{\ensuremath{\mathbb{N}}}
\newcommand{\PE}{\mathbb{E}}
\newcommand{\PP}{\mathbb{P}}
\newcommand{\abs}[1]{\left\vert #1 \right\vert}
\newcommand{\tvnorm}[1]{\| #1 \|_{\mathrm{TV}}}
\newcommand{\tvnormLigne}[1]{\| #1 \|_{\mathrm{TV}}}
\newcommandx{\Vnorm}[2][1=V]{\| #2 \|_{#1}}
\newcommandx{\VnormEq}[2][1=V]{\left\| #2 \right\|_{#1}}
\newcommand{\parenthese}[1]{\left(#1 \right)}
\newcommand{\parentheseLigne}[1]{(#1 )}
\newcommand{\parentheseDeux}[1]{\left[ #1 \right]}
\newcommand{\parentheseDeuxLigne}[1]{[ #1 ]}
\newcommand{\defEns}[1]{\left\lbrace #1 \right\rbrace }
\newcommand{\defEnsLigne}[1]{\lbrace #1 \rbrace }
\newcommand{\ps}[2]{\left\langle#1,#2 \right\rangle}
\newcommand{\proba}[1]{\mathbb{P}\left( #1 \right)}
\newcommand{\probaLigne}[1]{\mathbb{P}( #1 )}
\newcommandx\probaMarkovTilde[2][2=]
\newcommand{\probaMarkov}[2]{\mathbb{P}_{#1}\left[ #2\right]}
\newcommand{\probaMarkovDD}[1]{\mathbb{P}_{#1}}
\newcommand{\expe}[1]{\PE \left[ #1 \right]}
\newcommand{\expeExpo}[2]{\PE^{#1} \left[ #2 \right]}
\newcommand{\expeLigne}[1]{\PE [ #1 ]}
\newcommand{\expeMarkov}[2]{\PE_{#1} \left[ #2 \right]}
\newcommand{\expeMarkovDD}[1]{\PE_{#1}}
\newcommand{\expeMarkovLigne}[2]{\PE_{#1} [ #2 ]}
\newcommand{\plusinfty}{+\infty}
\newcommand\numberthis{\addtocounter{equation}{1}\tag{\theequation}}
\def\ie{\textit{i.e.}}
\def\as{\textit{a.s}}
\def\eqsp{\;}
\newcommand{\coint}[1]{\left[#1\right)}
\newcommand{\ocint}[1]{\left(#1\right]}
\newcommand{\ooint}[1]{\left(#1\right)}
\newcommand{\ccint}[1]{\left[#1\right]}
\newcommand{\ocintLigne}[1]{(#1]}
\newcommand{\oointLigne}[1]{(#1)}
\newcommand{\ccintLigne}[1]{[#1]}
\newcommandx{\weight}[2][2=n]{\omega_{#1,#2}^N}
\newcommand{\ball}[2]{\operatorname{B}(#1,#2)}
\newcommand{\cball}[2]{\bar{\operatorname{B}}(#1,#2)}
\newcommand{\cballdeux}[2]{\cball{#1}{#2}\times\cball{#1}{#2}}
\newcommand{\diameter}{\operatorname{diam}}
\def\as{\ensuremath{\text{a.s.}}}
\newcommandx\sequence[3][2=,3=]
\newcommandx\sequenceD[3][2=,3=]
\newcommandx{\sequencen}[2][2=n\in\N]{\ensuremath{\{ #1_n, \eqsp #2 \}}}
\newcommandx\sequenceDouble[4][3=,4=]
\newcommandx{\sequencenDouble}[3][3=n\in\N]{\ensuremath{\{ (#1_{n},#2_{n}), \eqsp #3 \}}}
\newcommand{\wrt}{w.r.t.}
\def\iid{i.i.d.}
\def\eg{e.g.}
\newcommand{\opnorm}[1]{{\left\vert\kern-0.25ex\left\vert\kern-0.25ex\left\vert #1 
    \right\vert\kern-0.25ex\right\vert\kern-0.25ex\right\vert}}
\def\generator{\mathcal{A}}
\def\Id{\operatorname{Id}}
\newcommandx{\CPE}[3][1=]{{\mathbb E}_{#1}\left[#2 \middle \vert #3  \right]} 
\newcommandx{\CPVar}[3][1=]{\mathrm{Var}^{#3}_{#1}\left\{ #2 \right\}}
\newcommand{\CPP}[3][]
{\ifthenelse{\equal{#1}{}}{{\mathbb P}\left(\left. #2 \, \right| #3 \right)}{{\mathbb P}_{#1}\left(\left. #2 \, \right | #3 \right)}}
\def\scrA{\mathscr{A}}
\def\measSet{\mathbb{M}}
\newcommandx{\osc}[2][1=]{\mathrm{osc}_{#1}(#2)}
\def\Id{\operatorname{Id}}
\def\transpose{\top}
\def\bD{\bar{D}}
\def\bC{\bar{C}}
\def\brho{\bar{\rho}}
\def\bt{\bar{t}}
\def\bA{\bar{A}}
\def\bc{\bar{c}}
\def\bgamma{\bar{\gamma}}
\def\lambdab{\bar{\lambda}}
\def\blambda{\bar{\lambda}}
\def\tgamma{\tilde{\gamma}}
\def\tC{\tilde{C}}
\def\tc{\tilde{c}}
\def\tvareps{\tilde{\vareps}}
\def\trho{\tilde{\rho}}
\def\Mt{\tilde{M}}
\def\tM{\Mt}
\def\tT{\tilde{T}}
\newcommand{\ensemble}[2]{\left\{#1\,:\eqsp #2\right\}}
\newcommand{\ensembleLigne}[2]{\{#1\,:\eqsp #2\}}
\DeclareMathAlphabet{\mathpzc}{OT1}{pzc}{m}{it}
\def\lyap{\mathpzc{V}}
\newcommand\coupling[2]{\Gamma(\mu,\nu)}
\newcommand{\complementary}{\mathrm{c}}
\def\diam{\mathrm{diam}}
\def\Leb{\lambda}
\def\vareps{\varepsilon}
\def\bvareps{\bar{\varepsilon}}
\def\Phibf{\mathbf{\Phi}}
\def\Psibf{\mathbf{\Psi}}
\def\rker{\mathrm{R}}
\newcommandx{\KL}[2]{\text{KL}\left( #1 | #2 \right)}
\newcommandx{\KLLigne}[2]{\text{KL}( #1 | #2 )}
\def\gaStep
\def\QKer{Q}
\def\Tg{\mathcal{T}_{\gamma}}
\def\distance{\mathbf{d}}
\newcommandx{\wasserstein}[3][1=\distance,3=]{\mathbf{W}_{#1}^{#3}\left(#2\right)}
\newcommandx{\wassersteinLigne}[3][1=\distance,3=]{\mathbf{W}_{#1}^{#3}(#2)}
\newcommandx{\wassersteinD}[1][1=\distance]{\mathbf{W}_{#1}}
\newcommandx{\wassersteinDLigne}[1][1=\distance]{\mathbf{W}_{#1}}
\def\Rcoupling{\mathrm{R}}
\def\Qcoupling{\mathrm{Q}}
\def\Kcoupling{\mathrm{K}}
\def\diagSet{\Delta_{\msx}}
\def\Deltar{\diagSet}
\def\complem{\operatorname{c}}
\def\tildex{\tilde{x}}
\def\sigmaD{\sigma^2}
\def\sigmakD{\sigma^2_k}
\newcommandx{\phibfs}[1][1=]{\pmb{\varphi}_{\sigmaD_{#1}}}
\def\vphibf{\pmb{\varphi}}
\def\funreg{\mct}
\def\kappar{\varpi}
\def\Par{P^{(\mathrm{a})}}
\def\Qar{Q^{(\mathrm{a})}}
\def\eventA{\msa}
\def\transp{\operatorname{T}}
\newcommandx\sequenceg[3][2=,3=]
\def\discrete{\mathrm{d}}
\def\Xar{X^{(\mathrm{a})}}
\def\Yar{Y^{(\mathrm{a})}}
\def\War{W^{(\mathrm{a})}}
\def\Xiar{\Xi^{(\mathrm{a})}}
\def\mcfar{\mcf^{(\mathrm{a})}}
\def\Xart{\tilde{X}^{(\mathrm{a})}}
\def\Yart{\tilde{Y}^{(\mathrm{a})}}
\def\Kker{\Kcoupling}
\def\KkerD{\tilde{\Kcoupling}}
\def\Rker{\Rcoupling}
\def\tRker{\tilde{\Rker}}
\def\Pker{\mathrm{P}}
\def\Qker{\mathrm{Q}}
\def\VlyapD{\lyap}
\def\VlyapDun{\lyap_1}
\def\VlyapDdeux{\lyap_2}
\def\VlyapDtrois{\lyap_3}
\newcommandx{\distV}[1][1=\bfc]{\mathbf{W}_{#1}}
\newcommandx{\distVdeux}[1][1=\lyap_2]{\mathbf{W}_{#1}}
\def\inv{\leftarrow}
\def\mtt{\mathtt{m}}
\def\mttplus{\mathtt{m}^{+}}
\def\mttplusun{\mathtt{m}_1^{+}}
\def\mttplusdeux{\mathtt{m}_2^{+}}
\def\mttplustrois{\mathtt{m}_3^{+}}
\def\cconst{\mathtt{a}}
\def\Run{R_1}
\def\Rdeux{R_2}
\def\Rtrois{R_3}
\def\Rquatre{R_4}
\def\tR{\tilde{R}}
\def\tmttplus{\tilde{\mtt}^+}
\newcommand{\tup}[1]{\textup{#1}}
\newcommand{\stopping}[1]{\T_{\msc,\mathtt{n}_0}^{(#1)}}
\def\wass{\mathcal{W}}
\def\distY{\mathbf{d}}
\def\Xibf{\boldsymbol{\Xi}}
\def\rhomax{\rho_{\rm{max}}}
\def\wasscun{\mathbf{W}_{\bfc_1}}
\def\wasscdeux{\mathbf{W}_{\bfc_2}}
\def\wassctrois{\mathbf{W}_{\bfc_3}}
\title{Convergence of diffusions and their discretizations: from continuous to discrete processes and back}
\author[1]{Valentin De Bortoli \footnote{Email: valentin.debortoli@cmla.ens-cachan.fr}}
\author[1]{Alain Durmus \footnote{Email: alain.durmus@cmla.ens-cachan.fr} }
\affil[1]{CMLA - \'Ecole normale supérieure Paris-Saclay, CNRS, Université Paris-Saclay, 94235 Cachan, France.}
\begin{document}

\maketitle

\begin{abstract}
  In this paper, we establish new quantitative convergence bounds for a class of
  functional autoregressive models in weighted total variation metrics. To
  derive our results, we show that under mild assumptions, explicit minorization
  and Foster-Lyapunov drift conditions hold. 
  The main applications and
  consequences of the bounds we obtain concern the geometric convergence of Euler-Maruyama
  discretizations of diffusions with identity covariance matrix. Second, as a corollary, we
  provide a new approach to establish quantitative convergence of these
  diffusion processes by applying our conclusions in the discrete-time setting
  to a well-suited sequence of discretizations whose associated stepsizes
  decrease towards zero.
\end{abstract}

\section{Introduction}
\label{sec:introduction}

The study of the convergence of Markov processes in general state space is a
very attractive and active field of research motivated by applications in
mathematics, physics and statistics \cite{johndrow2017error}. Among the many
works on the subject, we can mention the pioneering results from
\cite{nummelin1978geometric,nummelin1982geometric,nummelin1983rate} using the
renewal approach. Then, the work of \cite{popov:1977,meyn1993criteria_i} paved
the way for the use of \textit{Foster-Lyapunov drift conditions}
\cite{foster1953stochastic,bremaud1999markov} which, in combination of an
appropriate \textit{minorization condition}, implies $(f,r)$-ergodicity on
general state space, drawing links with control theory, see
\cite{tuominen:tweedie:1994,douc:fort:moulines:soulier:2004,jarner:roberts:2002}. This
approach was successively applied to the study of Markov chains in numerous
papers \cite{chan:1993,chen:tsay:1991,roberts:polson:1994} and was later
extended and used in the case of continuous-time Markov processes in
\cite{khasminskii2011stochastic,meyn1993criteria_ii,meyn1993criteria_iii,down1995exponential,goldys:maslowski:2006,fort:roberts:2005,douc:fort:guillin:2009,veretennikov:1997, devraj2017geometric}.
However, most of these results establish convergence in total variation or in
$V$-norm and are non-quantitative. Explicit convergence bounds in the same metrics
for Markov chains have been established in
\cite{rosenthal:1995,fort:2001,douc:moulines:rosenthal:2004,rosenthal:2002,roberts:tweedie:1999,jones:hobert:2004,lund:tweedie:1996,meyn:tweedie:1994,fort:2002},
driven by the need for stopping rules for Markov Chain Monte Carlo (MCMC)
simulations. To the authors' knowledge, the techniques developed in these papers
have not been adapted to continuous-time Markov processes, except in
\cite{roberts:rosenthal:1996}. One of the main reason is that deriving
quantitative minorization conditions for continuous-time processes seems to be
even more difficult than for their discrete counterparts
\cite{eberle:guillin:zimmer:2018}. Indeed, in most cases, the constants which
appear in minorization conditions are either really pessimistic or hard to
quantify accurately \cite{jones:hobert:2001,qin2018wasserstein}.

Since the last decade, in order to avoid the use of minorization conditions,
other metrics than the total variation distance, or $V$-norm, have been
considered. In particular, Wasserstein metrics have shown to be very interesting
in the study of Markov processes and to derive quantitative bounds of
convergence as well as in the study of perturbation bounds for Markov chains
\cite{rudolf2018perturbation,pillai2014ergodicity}.  For example,
\cite{ollivier:2009,joulin:ollivier:2010,paulin2016mixing} introduced the notion
of Ricci curvature of Markov chains and its use to derive precise bounds on
variance and concentration inequalities for additive functionals.  Following
\cite{hairer2011yet}, \cite{hairer2011asymptotic} generalizes the Harris'
theorem for $V$-norms to handle more general Wasserstein type metrics. In the
same spirit, \cite{butkovsky2014subgeometric} establishes conditions which imply
subgeometric convergence in Wasserstein distance of Markov processes. In
addition, the use of Wasserstein distance has been successively applied to the
study of diffusion processes and MCMC algorithms. In particular,
\cite{eberle2016reflection,eberle:guillin:zimmer:2018} establish explicit
convergence rates for diffusions and McKean Vlasov processes.  Regarding
analysis of MCMC methods, \cite{hairer2014spectral} establishes geometric
convergence of the pre-conditioned Crank-Nicolson algorithm. Besides,
\cite{durmus:moulines:2016,dalalyan2019user,chatterji2018theory,baker2017control}
study the computational complexity in Wasserstein distance to sample from a
log-concave density on $\rset^d$ using appropriate discretizations of the
overdamped Langevin diffusion. One key idea introduced in
\cite{hairer2011asymptotic} and \cite{eberle2016reflection} is the construction
of an appropriate metric designed specifically for the Markov process under
consideration.  The approach of \cite{eberle2016reflection} has then been generalized
in \cite{cheng2018sharp,majka2018non}. While this approach leads to quantitative
results in the case of diffusions or their discretization, we can still wonder
if appropriate minorization conditions can be found to derive similar bounds
using classical results cited above.

In this paper, we show that for a class of functional auto-regressive models, sharp minorization
conditions hold using an iterated Markov coupling kernel. As a result new quantitative convergence
bounds can be obtained combining this conclusion and drift inequalities for well-suited Lyapunov functionals.
We apply them to the study of the Euler-Maruyama discretization of
diffusions with identity covariance matrix under various curvature assumptions on the drift. The rates of convergence we derive in weighted total variation metric in this case
improve the one recently established in
\cite{eberle2018quantitative}. Note that this study is significant to
be able to bound the computational complexity of this scheme when it is applied to the
overdamped Langevin diffusion to sample from a target density $\pi$ on
$\rset^d$. Indeed, while recent papers have established precise bounds
between the $n$-th iterate of the Euler-Maruyama scheme and $\pi$ in
different metrics (\eg~total variation or Wasserstein distances), the convergence of the associated Markov kernel is in general needed to obtain quantitative bounds on the mean square error or concentration inequalities for additive functionals, see \cite{durmus:moulines:2016,joulin:ollivier:2010}.

In the second part of the present paper, we show how the results we derive for
functional auto-regressive models can be used to establish explicit convergence
rates for diffusion processes.  First, we show that, under proper conditions on
a sequence of discretizations, the distance in some metric between the
distributions of the diffusion at time $t$ with different starting points can be
upper bounded by the limit of the distance between the corresponding
discretizations, when the discretization stepsize decreases towards
zero. Similarly, in \cite{kontoyiannis2017approx} general Markov processes are approximated by
  hidden Markov models under a continuous Foster-Lyapunov assumption. Second,
we design appropriate discretizations satisfying the necessary conditions we
obtain and which belong to the class of functional autoregressive models we
study.  Therefore, under the same curvature conditions as in the discrete case,
we get quantitative convergence rates for diffusions by taking the limit in the
bounds we derived for the Euler-Maruyama discretizations.  Finally, the rates we
obtain scale similarly with respect to the parameters of the problem under
consideration to the ones given in
\cite{eberle2016reflection,eberle:guillin:zimmer:2018} for the
Kantorovitch-Rubinstein distance, and improve them in the case of the total
variation norm. Note that in the diffusion case, earlier results were derived in
\cite{chen1997estimation,chen1995estimation,wang1994application}.

The paper is organized as follows. For reader's convenience and to
motivate our results, we begin in \Cref{sec:pres-results-}, with one
of their applications to the specific case of a diffusion over
$\rset^{\dim}$ with identity covariance matrix and its Euler-Maruyama
discretization, in the case where the drift function is strongly
convex at infinity. In \Cref{sec:main-results00}, we present our main
convergence results regarding a class of functional autoregressive
models. We then specialize them to the Euler-Maruyama discretization
of diffusions under various assumptions on the drift function in
\Cref{sec:applications}.  \Cref{sec:quant-conv-bounds} deals with the
convergence of diffusion processes with identity covariance
matrix. More precisely, in \Cref{sec:main-results-3}, we derive
sufficient conditions for the convergence of such processes based on a
sequence of well-suited discretizations. In \Cref{sec:applications-1},
we apply our results to the continuous counterparts of the situations
considered in \Cref{sec:applications}.  For ease of presentation, the
proofs and generalizations of our results are gathered in appendix.
\subsection*{Notation}
\label{sec:notations}
Let $\msa$, $\msb$ and $\msc$ three sets with $\msc \subset \msb$ and $f : \ \msa \to \msb$, we set $f^{\inv}(\msc) = \ensembleLigne{x \in \msa}{ f(x) \in \msc}$. For any $\msa \subset \msb$ and $f: \ \msb \to \msc$ we denote $f|_{\msa}$ the restriction of $f$ to $\msa$.
Let $d \in \nsets$ and $\langle \cdot, \cdot \rangle$ be a scalar product over $\rset^d$, and $\normLigne{\cdot}$ be the corresponding norm.
Let $\msa \subset \rset^d$ and $R \geq 0$, we denote $\diameter(\msa) = \sup_{(x,y) \in \msa} \normLigne{x -y}$ and $\Delta_{\msa, R} = \ensembleLigne{(x,y) \in \msa}{ \norm{x - y} \leq R} \subset \rset^{2d}$ and $\Delta_{\msa} = \Delta_{\msa,0}=  \ensembleLigne{(x,x)}{x \in \msa}$. In this paper,  we consider that $\rset^d$ is endowed with the topology of the norm $\norm{\cdot}$. $\mcb{\rset^d}$ denotes the Borel $\sigma$-field of $\rset^d$ . Let $\msu$ be an open set of $\rset^d$, $n \in \nsets$ and set $\rmC^n(\msu)$ be the set of the $n$-differentiable functions defined over $\msu$. Let $f \in \rmC^1(\msu)$, we denote by $\nabla f$ its gradient. Furthermore, if $f \in \rmC^2(\msu)$ we denote $\nabla^2f$ its Hessian and $\Delta$ its Laplacian.  We also denote $\rmC(\msu)$ the set of continuous functions defined over $\msu$ and for any set $\msa \subset \rset^{\dim}$ and $k \in \nset$ we set $\rmC^k(\msa) = \ensembleLigne{f|_{\msa}}{f \in \rmC^k(U), \ \text{with } \msa \subset \msu \ \text{and $\msu$ open}}$ .Let $f: \msa \to \rset^p$ with $p \in \nsets$. The function $f$ is said to be $L$-Lipschitz with $L \geq 0$ if for any $x,y \in \msa$, $\norm{f(x) - f(y)} \leq L \norm{x-y}$.

Let $\msx \in \mcb{\rset^d}$, $\msx$ is equipped with the trace of $\mcb{\rset^d}$ over $\msx$ defined by $\mcx= \{ \msa \cap \msx \, : \, \msa \in \mcb{\rset^d}\}$. Let $(\msy, \mcy)$ be some measurable space, we denote by $\measfun(\msx, \msy)$ the set of the $\mcx$-measurable functions over $\msx$. For any $f \in \measfun(\msx, \rset)$ we define its essential supremum by $\esssup(f) = \inf \ensembleLigne{a \geq 0}{\lambda(\abs{f}^{\inv}\ooint{a, +\infty}) = 0}$, where $\lambda$ is the Lebesgue measure. Let $\measSet(\mcx)$ be the set of finite signed measures over $\mcx$ and $\mu \in \measSet(\mcx)$. For $f \in \measfun(\msx, \rset)$ a $\mu$-integrable function we denote by $\mu(f)$ the integral of $f$ \wrt \ to $\mu$.  Let $V \in \measfun(\rset^d, \coint{1,+\infty})$. We define the $V$-norm for any $f \in \measfun(\msx, \rset)$ and the $V$-total variation norm for any $\mu \in \measSet(\mcx)$ as follows
\begin{equation}
  \Vnorm{f} = \esssup(\abs{f} / V) \eqsp , \qquad \Vnorm{\mu} = (1/2)\sup_{f \in \measfun(\msx, \rset), \Vnorm{f} \leq 1} \abs{\int_{\rset^d} f(x) \rmd \mu(x)} \eqsp .
\end{equation}
In the case where $V = 1$ this norm is called the total variation norm of $\mu$. Let $\mu, \nu$ be two probability measures over $\mcx$, \ie \ two elements of $\measSet(\mcx)$ such that $\mu(\msx) = \nu(\msx) = 1$.
A probability measure $\zeta$ over $\mcx^{\otimes 2}$ is said to be a transference plan between $\mu$ and $\nu$ if for any $\msa \in \mcx$, $\zeta(\msa \times \mcx) = \mu(\msa)$ and $\zeta(\mcx \times \msa) = \nu(\msa)$. We denote by $\transference(\mu, \nu)$ the set of all transference plans between $\mu$ and $\nu$.
Let $\bfc \in \measfun(\msx \times \msx, \coint{0,+\infty})$. We define the Wasserstein metric/distance $\wassersteinD[\bfc](\mu, \nu)$ between $\mu$ and $\nu$ by
\begin{equation}
  \label{eq:def_distance_wasser}
  \wassersteinD[\bfc](\mu, \nu) = \inf_{\zeta \in \transference(\mu,\nu)} \int_{\msx^2}  \bfc(x,y) \rmd \zeta (x,y) \eqsp. 
\end{equation}
Note that the term Wasserstein metric/distance is an abuse of terminology since $  \wassersteinD[\bfc]$ is only a real metric on a subspace of probability measures on $\msx$ under additional conditions on $\bfc$, \eg~if $\bfc$ is a metric on $\rset^d$, see \cite[Definition 6.1]{villani2009optimal}. 
If $\bfc(x,y) = \norm{x -y}^p$ for $p \geq 1$,  the Wasserstein distance of order $p$ is defined  by $\wassersteinD[p] = \wassersteinD[\bfc]^{1/p}$. Assume that $\bfc(x,y) = \1_{\Deltar^{\complementary}}(x,y)\lyap(x,y)$ with $\lyap \in \measfun(\msx \times \msx, \coint{0,+\infty})$ such that $\lyap$ is symmetric, satisfies the triangle inequality, \ie \ for any $x,y, z \in \msx$, $\lyap(x,z) \leq \lyap(x,y) + \lyap(y,z)$, and for any $x,y \in \msx$, $\lyap(x,y) = 0$ implies $x = y$. Then $\bfc$ is a metric over $\msx^2$ and the associated Wasserstein cost, denoted by $\distV$, is an extended metric. Note that if $\lyap(x,y) = \defEns{V(x) + V(y)}/2$ then $\distV(\mu, \nu) = \Vnorm{\mu - \nu}$
, see \cite[Theorem 19.1.7]{douc:moulines:priouret:soulier:2018}.

Assume that $\mu \ll \nu$ and denote by $\frac{\rmd \mu}{\rmd \nu}$ its Radon-Nikodym derivative. We define the Kullback-Leibler divergence, $\KL{\mu}{\nu}$, between $\mu$ and $\nu$, by
\begin{equation}
  \KL{\mu}{\nu} = \int_{\msx} \log\parenthese{\frac{\rmd \mu}{\rmd \nu}(x)} \rmd \mu(x) \eqsp .
\end{equation}

Let $\mcz$ be a $\sigma$-field. We say that $\Pker : \ \msx \times \mcz \to \coint{0,+\infty}$ is a Markov kernel if for any $x \in \msx$, $\Pker(x, \cdot)$ is a probability measure over $\mcz$ and for any $\msa \in \mcz$, $\Pker(\cdot, \msa) \in \measfun(\msx, \coint{0,+\infty})$. Let $\msy \in \mcb{\rset^d}$ be equipped with $\mcy$ the trace of $\mcb{\rset^d}$ over $\mcy$, $\Pker : \ \msx \times \mcz$ and $\Qker : \ \msy \times \mcz$ be two Markov kernels. We say that $\Kcoupling: \ \msx \times \msy \to \mcz^{\otimes 2}$ is a Markov coupling kernel if for any $(x,y) \in \msx \times \msy$, $\Kcoupling((x,y), \cdot)$ is a transference plan between $\Pker(x, \cdot)$ and $\Qker(y, \cdot)$.


\section{Motivation and illustrative example}
\label{sec:pres-results-}

\subsection{Non-contractive setting}
\label{sec:non-contr-sett}

In this section, we motivate our work with applications of our main results to one
specific example. Let $b : \ \rset^{\dim} \to \rset^{\dim}$ be a drift function, $(\bfB_t)_{t \geq 0}$ be a $d$-dimensional Brownian motion and assume that the \SDE
\begin{equation}
  \label{eq:sde_informal}
  \rmd \bfX_t = b(\bfX_t) \rmd t + \rmd \bfB_t \eqsp ,
\end{equation}
admits a unique strong solution $(\bfX_t)_{t \geq 0}$ on $\rset_+$ for any
starting point $\bfX_0 = x \in \rset^{\dim}$.  We denote by
$(\Pker_t)_{t \geq 0}$ its associated Markov semigroup.  We consider the Euler-Maruyama
discretization of this \SDE, \ie~the homogeneous Markov chain
$(X_k)_{k \in \nset}$, starting from $X_0=x \in \rset^{\dim}$ and defined by the
following recursion: for any $k \in \nset$
\begin{equation}
  \label{eq:euler_informal}
  X_{k+1} = X_k + \gamma b(X_k) + \sqrt{\gamma} Z_{k+1} \eqsp ,
\end{equation}
where $\gamma >0$ is a stepsize and $(Z_k)_{k \in \nsets}$ is a sequence of
\iid~$d$-dimensional Gaussian random variables with zero mean and identity
covariance matrix. We denote by $\Rker_{\gamma}$ its associated Markov kernel.

The first consequence of the results established in the present paper is the
explicit convergence of the Markov chain defined by \eqref{eq:euler_informal} in
a distance which is a mix of the total variation distance and the Wasserstein
distance of order $1$, under the assumption that $b$ is Lipschitz and strongly
convex at infinity.
\begin{theorem}
  \label{thm:informal_1}
  Assume that there exist $\mtt \in \rset$, $\mttplus >0$ and $\Lip, R \geq 0$ such that for any $x,y \in \rset^{\dim}$
  \begin{equation}
    \label{eq:reg_info}
    \norm{b(x) - b(y)} \leq \Lip \norm{x -y} \eqsp , \qquad \langle b(x) - b(y), x - y \rangle \leq -\mtt \norm{x-y}^2 \eqsp ,
  \end{equation}
  and if $\norm{x -y} \geq R$,
  \begin{equation}
    \label{eq:str_cvx_out_info}
    \langle b(x) - b(y), x - y \rangle \leq -\mttplus \norm{x-y}^2 \eqsp .
  \end{equation}
Then there exist $\bgamma >0$, $D_{\bgamma,1}, D_{\bgamma,2}, E_{\bgamma} \geq 0$ and $\lambda_{\bgamma}, \rho_{\bgamma} \in \coint{0, 1}$ with $\lambda_{\bgamma} \leq \rho_{\bgamma}$, which can be explicitly computed, such that for any $\gamma \in \ocint{0, \bgamma}$, $x,y \in \rset^{\dim}$ and $k \in \nset$
\begin{equation}
  \label{eq:true_result_informal}
    \wassersteinD[\bfc](\updelta_x \Rcoupling_{\gamma}^k, \updelta_y \Rcoupling_{\gamma}^k) \leq \lambda_{\bgamma}^{k\gamma/4} [D_{\bgamma,1} \bfc(x,y) + D_{\bgamma,2}\1_{\Delta^{\complementary}}(x,y)] +  E_{\bgamma} \rho_{\bgamma}^{k\gamma/4} \1_{\Delta^{\complementary}}(x,y)  \eqsp ,  
  \end{equation}
  where $\bfc(x,y) = \1_{\Delta^{\complementary}}(x,y) (1 + \norm{x-y}/R)$, $\Delta = \ensembleLigne{(x,x)}{x \in \rset^{\dim}}$ and
  $\Rker_{\gamma}$ is the Markov kernel associated with \eqref{eq:euler_informal}.
\end{theorem}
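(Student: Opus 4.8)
The plan is to obtain \eqref{eq:true_result_informal} as an instance of the general quantitative convergence theorem for functional autoregressive models established in \Cref{sec:main-results00}: I would cast $\Rker_\gamma$ in that framework and verify its minorization and drift hypotheses \emph{uniformly} over $\gamma \in \ocint{0,\bgamma}$. Writing $T_\gamma(x) = x + \gamma b(x)$, the recursion \eqref{eq:euler_informal} reads $X_{k+1} = T_\gamma(X_k) + \sqrt{\gamma}\, Z_{k+1}$ with \iid~standard Gaussian innovations, so $\Rker_\gamma(x,\cdot) = \mathcal{N}(T_\gamma(x), \gamma \IdM)$ is precisely of autoregressive type, and the two regimes driving the proof are separated by the threshold $R$ of \eqref{eq:str_cvx_out_info}.

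\emph{Far regime.} Expanding $\norm{T_\gamma(x) - T_\gamma(y)}^2 = \norm{x-y}^2 + 2\gamma\langle b(x)-b(y), x-y\rangle + \gamma^2\norm{b(x)-b(y)}^2$ and inserting \eqref{eq:reg_info} gives $\norm{T_\gamma(x)-T_\gamma(y)}^2 \leq (1 - 2\gamma\mtt + \gamma^2\Lip^2)\norm{x-y}^2$, and for $\norm{x-y}\geq R$ the sharper \eqref{eq:str_cvx_out_info} replaces $\mtt$ by $\mttplus$. Hence for a suitable $\bgamma>0$ and $\kappa\in\ocint{0,1}$ depending only on $\mttplus,\Lip$, the synchronous coupling (shared innovation) satisfies $\norm{T_\gamma(x)-T_\gamma(y)}\leq(1-\kappa\gamma)^{1/2}\norm{x-y}$ whenever $\norm{x-y}\geq R$ and $\gamma\leq\bgamma$, and always $\norm{T_\gamma(x)-T_\gamma(y)}\leq(1+\gamma\Lip)\norm{x-y}$. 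The first bound makes the inter-copy distance contract geometrically while it exceeds $R$; the second bounds its inflation over $\step = \ceil{1/\gamma}$ steps by the fixed factor $\rme^{\Lip}$. Together they yield the required Lyapunov-type control: from any $(x,y)$, the synchronously coupled pair lies, after $\step$ steps, within a fixed radius $\mathsf R \geq R$ that is independent of $\gamma\leq\bgamma$ and of the absolute position.

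\emph{Near regime and the iterated minorization.} The heart of the argument is a minorization of $\Rker_\gamma^{\step}$ on $\Delta_{\rset^{\dim},\mathsf R}$: there are $\varepsilon>0$ and a probability measure $\nu$, both independent of $\gamma\in\ocint{0,\bgamma}$, with $\Rker_\gamma^{\step}((x,y),\cdot)\geq\varepsilon\,\nu$ for all $(x,y)$ such that $\norm{x-y}\leq\mathsf R$; equivalently, the coupling that runs reflection coupling while the two copies are within distance $R$ and synchronous coupling otherwise coalesces within $\step$ steps with probability at least $\varepsilon$, uniformly over such $(x,y)$. Iterating for $\step\approx 1/\gamma$ steps — a macroscopic amount of continuous time — is essential: a single Gaussian step has total variation distance to the shifted Gaussian tending to $1$ as $\gamma\to 0$, whereas over $\step$ steps the reflected innovation accumulates to size of order one, comparable to $R$, so the diagonal is hit with order-one probability. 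Concretely I would control the scalar distance process $r_k = \norm{X_k - Y_k}$, whose conditional increment has mean of order $\gamma r_k$ — nonpositive once $r_k\geq R$ by the far-regime estimate and at worst $O(\gamma)$-expansive below $R$ by \eqref{eq:reg_info} — and conditional variance of order $\gamma$, and compare it via a supermartingale argument with a one-dimensional Gaussian random walk run for $\step$ steps to extract the explicit $\gamma$-uniform lower bound on the probability that $r_k$ reaches $0$. \textbf{This $\gamma$-uniform coalescence estimate is the crux, and the step I expect to demand the most care.}

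\emph{Conclusion.} These two steps provide, for the sampled kernel $\Rker_\gamma^{\step}$ and the cost $\bfc$, a minorization on $\Delta_{\rset^{\dim},\mathsf R}$ together with a drift condition returning the distance coordinate into $\{\norm{x-y}\leq\mathsf R\}$, all constants being independent of $\gamma\leq\bgamma$. Feeding them into the main convergence theorem of \Cref{sec:main-results00} gives, for some $\tilde\lambda\leq\tilde\rho$ in $\coint{0,1}$ not depending on $\gamma$, a bound $\wassersteinD[\bfc](\updelta_x\Rker_\gamma^{\step m},\updelta_y\Rker_\gamma^{\step m})\leq\tilde\lambda^{m}[\tilde D_1\bfc(x,y) + \tilde D_2\1_{\Delta^{\complementary}}(x,y)] + \tilde E\,\tilde\rho^{m}\1_{\Delta^{\complementary}}(x,y)$. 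Writing a general $k$ as $k=\step m + j$ with $0\leq j<\step$, absorbing the $j$ residual steps (on which $\bfc$ is distorted by at most the fixed factor $\rme^{\Lip}$ from the second far-regime bound) and using $\step\gamma\in\ccint{1,1+\bgamma}$ to pass from $m$ to the rescaled time $k\gamma$, one arrives at \eqref{eq:true_result_informal}; the quarter in $k\gamma/4$ absorbs the gap between $\step m$ and $k$ and the slack in the drift, and $D_{\bgamma,1},D_{\bgamma,2},E_{\bgamma},\lambda_{\bgamma},\rho_{\bgamma}$ remain explicit in $\mtt,\mttplus,\Lip,R,\bgamma$ throughout.
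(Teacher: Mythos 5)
Your overall architecture is the paper's: identify \eqref{eq:euler_informal} as a functional autoregressive model, prove a Foster--Lyapunov drift for the distance Lyapunov function $1+\norm{x-y}/R$ with small set $\Delta_{\msx,R}$, prove a $\gamma$-uniform coalescence bound for the $\step$-iterated coupling, and feed both into the two-rate convergence theorem of \Cref{sec:main-results00}; this is exactly how \Cref{propo:cvx_outside_bounds} (via \Cref{propo:drift_strong_convex_wass} and \Cref{theo:discrete_contrac_wass_D_v2}) yields \Cref{thm:informal_1}, including the $k\gamma/4$ exponents and $\lambda_{\bgamma}\le\rho_{\bgamma}$.

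However, the step you yourself flag as the crux does not go through as you set it up. You couple by reflection only when the copies are within distance $R$ and synchronously otherwise; under synchronous coupling the inter-copy distance evolves deterministically, and over a single block of $\step$ steps it contracts only by a fixed factor of order $\rme^{-\mttplus/2}$. Hence for pairs in your small set $\Delta_{\msx,\mathsf{R}}$ with $\mathsf{R}>R\,\rme^{\mttplus/2}$, the reflection zone may never be reached within the block and the coalescence probability is not bounded below (it can be zero), so the claimed uniform minorization on $\Delta_{\msx,\mathsf{R}}$ fails for a single block unless $\mathsf{R}=R$ or several blocks are used; even then, the state-dependent on/off noise makes your comparison with a one-dimensional Gaussian random walk delicate, because paths can exit the reflection zone mid-block. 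The paper avoids this entirely by running the reflection coupling at \emph{every} distance and computing the non-coalescence probability exactly for autoregressive models (\Cref{theo:strict_convergence_AR}, used through \Cref{theo:minorization_general} and \Cref{prop:a1_type} with the global one-sided Lipschitz constant $\kappa(\gamma)=-2\mtt+\gamma\Lip^2$), which gives $\Kker_{\gamma}^{\ell\step}((x,y),\Deltar^{\complementary})\le 1-2\Phibf\{-\norm{x-y}/(2\Xi_{\ell\step}^{1/2}(\kappa))\}$ for all $x,y$, uniformly in $\gamma$; the price is the extra lemma that reflection does not increase the expected distance, \Cref{propo:drift_strong_convex_wass}-\ref{item:reflex_ineq_a}, which recovers the drift you were getting for free from synchronous coupling. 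Two auxiliary claims are also false as written, though repairable: a Doeblin minorization $\Rker_{\gamma}^{\step}(x,\cdot)\ge\varepsilon\nu$ with a single measure $\nu$ cannot hold on the unbounded set $\Delta_{\rset^{\dim},\mathsf{R}}$ (the pairwise coalescence bound, i.e.\ the form required in \eqref{eq:minorization_condition_v2}, is the correct statement, and it is not equivalent to the former), and ``from any $(x,y)$ the synchronously coupled pair lies within a fixed radius after $\step$ steps'' is untrue for large initial separation --- only the geometric drift toward $\Delta_{\msx,R}$ holds, which fortunately is all that \Cref{theo:discrete_contrac_wass_D_v2} needs.
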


\begin{proof}
  The result is a direct consequence of \Cref{propo:cvx_outside_bounds} and the corresponding discussion in \Cref{sec:strongly-convex-at}.
\end{proof}

This result is derived as a specific case of a more general theorem for a class
of functional autoregressive models, see \Cref{theo:discrete_contrac_wass_D_v2}
and \Cref{sec:main-results00}. Its proof relies on the use of an extended
Foster-Lyapunov drift assumption as well as a minorization condition on the
Markov chain \eqref{eq:euler_informal}.  As an important consequence, curvature
assumptions on the drift (such as strong convexity at infinity) can be omitted
if we instead assume some Foster-Lyapunov condition, similarly to \cite[Theorem
6.1]{eberle2016reflection} and \cite[Theorem
2.1]{eberle:guillin:zimmer:2018}. 

The result derived in \Cref{thm:informal_1} has several important applications which we
gather in the following corollary.

\begin{corollary}
  \label{prop:collec_result_info}
  Assume that there exist $\mtt \in \rset$, $\mttplus >0$ and $\Lip, R \geq 0$
  such that \eqref{eq:reg_info} and \eqref{eq:str_cvx_out_info} are
  satisfied. Then, there exist $\bgamma >0$, $E_{\bgamma, 1}, E_{\bgamma, 2} \geq 0$ such that for any $\gamma \in \ocint{0, \bgamma}$, $x, y \in \rset^{\dim}$ and $k \in \nset$ we have 
  \begin{align}
    \tvnorm{\updelta_x \Rker_{\gamma}^k - \updelta_y \Rker_{\gamma}^k} & \leq     \wassersteinD[\bfc](\updelta_x \Rker_{\gamma}^k, \updelta_y \Rker_{\gamma}^k) \leq E_{\bgamma, 1} \rho_{\bgamma}^{k \gamma / 4} \bfc(x,y) \eqsp , \label{eq:wc_cv_info} \\  
    \wassersteinD[1](\updelta_x \Rker_{\gamma}^k, \updelta_y \Rker_{\gamma}^k) &\leq E_{\bgamma, 2} \rho_{\bgamma}^{k \gamma / 4} \norm{x-y} \eqsp , \label{eq:w1_cv_info}
  \end{align}
  where $\bfc(x,y) = \1_{\Delta^{\complementary}}(x,y) (1 + \norm{x-y}/R)$, $\Delta = \ensembleLigne{(x,x)}{x \in \rset^{\dim}}$ and $\rho_{\bgamma}$ is given in \eqref{eq:true_result_informal}.
In addition, for any
  $p \in \nset$ and $\upalpha \in \ooint{p, +\infty}$ there exists $E_{\bgamma, \upalpha} \geq 0$ such that for any $\gamma \in \ocint{0, \bgamma}$, $x,y \in \rset^d$ and $k\in \nset$ we have
  \begin{equation}
            \wassersteinD[p](\updelta_x \Rker_{\gamma}^k, \updelta_y \Rker_{\gamma}^k) \leq E_{\bgamma,  \upalpha} \rho_{\bgamma}^{k \gamma / (4\upalpha)} (\norm{x-y} + \norm{x-y}^{1/\upalpha}) \eqsp.   \label{eq:wp_cv_info} 
  \end{equation}
The constants $\bgamma$, $\{E_{\bgamma,i} \, : \, i=1,2,3\}$ and $E_{\bgamma,\upalpha}$ can be explicitly computed. 
\end{corollary}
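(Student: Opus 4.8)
The plan is to deduce the three displays from \Cref{thm:informal_1} (with, for the last one, an auxiliary moment estimate) by successively relaxing the cost function. \textbf{Total variation and $\wassersteinD[\bfc]$.} Since $1+\norm{x-y}/R\geq 1$ one has $\1_{\Delta^{\complementary}}\leq\bfc$ pointwise, so the total variation cost is dominated by $\bfc$ and $\tvnorm{\mu-\nu}=\wassersteinD[\1_{\Delta^{\complementary}}](\mu,\nu)\leq\wassersteinD[\bfc](\mu,\nu)$ for all probability measures $\mu,\nu$; this is the first inequality of \eqref{eq:wc_cv_info}. For the second, substitute $\1_{\Delta^{\complementary}}(x,y)\leq\bfc(x,y)$ and $\lambda_{\bgamma}^{k\gamma/4}\leq\rho_{\bgamma}^{k\gamma/4}$ (legitimate because $0\leq\lambda_{\bgamma}\leq\rho_{\bgamma}<1$) into \eqref{eq:true_result_informal}: the right-hand side becomes $(D_{\bgamma,1}+D_{\bgamma,2}+E_{\bgamma})\rho_{\bgamma}^{k\gamma/4}\bfc(x,y)$, so one takes $E_{\bgamma,1}=D_{\bgamma,1}+D_{\bgamma,2}+E_{\bgamma}$.

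\textbf{Kantorovich bound.} Here \eqref{eq:wc_cv_info} is too crude, since the weight $1+\norm{x-y}/R$ charges every non-coalesced pair at least $1$, whereas $\wassersteinD[1]$ uses the cost $\1_{\Delta^{\complementary}}(x,y)\norm{x-y}$, which vanishes at the diagonal. I would instead apply the general convergence result of \Cref{sec:main-results00} (in the form of \Cref{theo:discrete_contrac_wass_D_v2}), specialised to $\Rker_\gamma$ as in \Cref{sec:applications}, but with the pairwise weight $\lyap(x,y)=f(\norm{x-y})$ for a concave increasing $f$ with $f(0)=0$ and linear growth at infinity (an Eberle-type modified distance); such an $f$ is admissible (symmetric, subadditive hence triangular, vanishing only on the diagonal) and satisfies $c\,\norm{x-y}\leq f(\norm{x-y})\leq f'(0)\norm{x-y}$ with $c=\lim_{r\to\infty}f(r)/r>0$. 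Under \eqref{eq:reg_info}--\eqref{eq:str_cvx_out_info} the corresponding drift (strong contractivity of the synchronously coupled difference for $\norm{x-y}\geq R$) together with the reflection/minorization mechanism inside the ball of radius $R$ hold with the same rate $\rho_{\bgamma}$ as in \Cref{thm:informal_1}, yielding a bound $C_{\bgamma}\rho_{\bgamma}^{k\gamma/4}f(\norm{x-y})$ for the Wasserstein cost associated with $\1_{\Delta^{\complementary}}(x,y)f(\norm{x-y})$; as this cost dominates $c\,\1_{\Delta^{\complementary}}(x,y)\norm{x-y}$ and $f(\norm{x-y})\leq f'(0)\norm{x-y}$, one gets \eqref{eq:w1_cv_info} with $E_{\bgamma,2}=C_{\bgamma}f'(0)/c$. (The far-field part $\norm{x-y}\geq R$ can also be read off \eqref{eq:wc_cv_info}, as $\bfc(x,y)\leq 2\norm{x-y}/R$ there and $\wassersteinD[1]\leq R\,\wassersteinD[\bfc]$.)

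\textbf{Wasserstein-$p$ bound.} Fix $p\geq 1$ ($p=0$ being trivial and $p=1$ being \eqref{eq:w1_cv_info}) and $\upalpha\in\ooint{p,+\infty}$, and set $m\defeq p(\upalpha-1)/(\upalpha-p)\geq 1$. Let $\Kcoupling_\gamma$ be a Markov coupling kernel for $\Rker_\gamma$ realising \eqref{eq:w1_cv_info} (as furnished by the proof of \eqref{eq:w1_cv_info} through \Cref{theo:discrete_contrac_wass_D_v2}): then $\zeta_k\defeq\Kcoupling_\gamma^k((x,y),\cdot)$ is a transference plan between $\updelta_x\Rker_\gamma^k$ and $\updelta_y\Rker_\gamma^k$ with $\int\norm{u-v}\,\rmd\zeta_k\leq E_{\bgamma,2}\rho_{\bgamma}^{k\gamma/4}\norm{x-y}$, and — because \eqref{eq:reg_info}--\eqref{eq:str_cvx_out_info} make $(u,v)\mapsto 1+\norm{u-v}^{m}$ satisfy a Foster--Lyapunov drift under $\Kcoupling_\gamma$ (negative outside a ball, bounded inside) — one also has $\int\norm{u-v}^{m}\,\rmd\zeta_k\leq C_m(1+\norm{x-y}^{m})$ uniformly in $k$ and in $\gamma\in\ocint{0,\bgamma}$. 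Hölder's inequality with conjugate exponents $\upalpha/p$ and $\upalpha/(\upalpha-p)$, applied to $\norm{u-v}^{p}=\norm{u-v}^{p/\upalpha}\cdot\norm{u-v}^{p(\upalpha-1)/\upalpha}$, gives
\[
\wassersteinD[p]\big(\updelta_x\Rker_\gamma^k,\updelta_y\Rker_\gamma^k\big)^{p}\leq\int\norm{u-v}^{p}\,\rmd\zeta_k\leq\Big(\int\norm{u-v}\,\rmd\zeta_k\Big)^{p/\upalpha}\Big(\int\norm{u-v}^{m}\,\rmd\zeta_k\Big)^{1-p/\upalpha}.
\]
Inserting the two estimates, using $(1+t)^{1-p/\upalpha}\leq 1+t^{1-p/\upalpha}$ and $m(1-p/\upalpha)=p-p/\upalpha$, the right-hand side is at most $E_{\bgamma,2}^{p/\upalpha}C_m^{1-p/\upalpha}\,\rho_{\bgamma}^{pk\gamma/(4\upalpha)}\big(\norm{x-y}^{p/\upalpha}+\norm{x-y}^{p}\big)$; taking $p$-th roots and $(a+b)^{1/p}\leq a^{1/p}+b^{1/p}$ yields \eqref{eq:wp_cv_info} with $E_{\bgamma,\upalpha}=E_{\bgamma,2}^{1/\upalpha}C_m^{1/p-1/\upalpha}$. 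All constants are explicit because those of \Cref{thm:informal_1} and the drift constant $C_m$ are.

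\textbf{Main obstacle.} The consolidation for \eqref{eq:wc_cv_info} and the Hölder interpolation are routine. The work concentrates in (i) the Kantorovich step — confirming that the minorization/drift machinery of \Cref{sec:main-results00} delivers a genuine $\wassersteinD[1]$-contraction at rate $\rho_{\bgamma}$ with the vanishing-at-the-diagonal weight $f(\norm{\cdot})$, the subtlety being that inside the ball of radius $R$ the distance itself does not contract, so concavity of $f$ and the Gaussian increments have to be played against its potential growth — and (ii) in ensuring that the coupling witnessing \eqref{eq:w1_cv_info} is synchronous outside a bounded region, so that strong contractivity at infinity controls the moment drift of $1+\norm{u-v}^{m}$ for every $m\geq 1$. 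I expect (i) to be the principal difficulty.
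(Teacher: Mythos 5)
Your first step (the chain $\tvnorm{\cdot}\leq\wassersteinD[\bfc]$ and the absorption of $\lambda_{\bgamma}^{k\gamma/4}$ and $\1_{\Delta^{\complementary}}$ into $\rho_{\bgamma}^{k\gamma/4}\bfc$) is exactly the paper's derivation of \eqref{eq:wc_cv_info}, and your Hölder interpolation for \eqref{eq:wp_cv_info} mirrors \Cref{prop:from_1_to_p} combined with the moment drift of \Cref{propo:drift_strong_convex_wass}-\ref{item:reflex_ineq_c} (the drift is proved there for the reflection coupling, not for a synchronous one, but the estimate you need does hold). The problem is the Kantorovich step, on which both \eqref{eq:w1_cv_info} and your $\wassersteinD[p]$ argument rest: you do not prove it, you only sketch a route and yourself label it ``the principal difficulty''. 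That route, moreover, does not go through as described. You propose to feed an Eberle-type weight $\lyap(x,y)=f(\norm{x-y})$ with $f(0)=0$ into the machinery of \Cref{theo:discrete_contrac_wass_D_v2}, but that theorem requires $\lyap:\msx\times\msx\to\coint{1,+\infty}$, and its minorization-plus-drift mechanism hinges on the cost dominating $\1_{\Delta^{\complementary}}$ off the diagonal (the total-variation part is what the coupling probability controls). A weight vanishing at the diagonal falls outside this framework, so you would in effect have to redo a concave-distance contraction estimate for the reflection coupling from scratch, i.e.\ reprove an Eberle-type result — precisely the kind of argument the paper replaces — and it is not established that this yields the same rate $\rho_{\bgamma}$. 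Your fallback remark only covers $\norm{x-y}\geq R$; near the diagonal $\bfc(x,y)\approx 1$ gives no factor $\norm{x-y}$, which is the whole point.

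The paper closes this gap much more cheaply, via \Cref{coro:w1_wp} and \Cref{prop:w1_contrac_iterees}: no new distance is introduced. The key observation is that the minorization function $\Psibf(\gamma,1,t)=2\Phibf\{-t/(2\Xi_{\step}^{1/2}(\kappa))\}$ satisfies $\Psibf(\gamma,1,0)=1$ and is convex in $t$, so $\Kker_{\gamma}^{\step}\1_{\Delta^{\complementary}}(x,y)\leq 1-\Psibf(\gamma,1,\norm{x-y})\leq -\Psibf'(\gamma,1,0)\,\norm{x-y}$: one block of $\step$ steps converts the indicator into a quantity proportional to $\norm{x-y}$. Writing $\Kker_{\gamma}^{k}=\Kker_{\gamma}^{\step}\Kker_{\gamma}^{k-\step}$, applying the already-proved $\wassersteinD[\bfc]$ contraction (\Cref{thm:informal_1}, with cost $1+\vartheta\norm{x-y}$) to the last $k-\step$ steps, and using the one-step expansion $\Kker_{\gamma}\norm{x-y}\leq(1+\gamma\varkappa)\norm{x-y}$ from \Cref{propo:drift_strong_convex_wass}-\ref{item:reflex_ineq_a} for the initial block (and for $k\leq\step$) gives \eqref{eq:w1_cv_info} at the same rate $\rho_{\bgamma}$. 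If you replace your step (i) by this argument, the rest of your proposal (the $\wassersteinD[\bfc]$ consolidation and the Hölder step with the $q$-th moment drift) assembles into a correct proof.
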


\begin{proof}
The estimate \eqref{eq:wc_cv_info} is a direct consequences of \Cref{thm:informal_1}. The two inequalities \eqref{eq:w1_cv_info} and \eqref{eq:wp_cv_info} follow from \Cref{coro:w1_wp}.
\end{proof}

Note that the same rate $\rho_{\bgamma}$ appears in the inequalities \eqref{eq:true_result_informal}, \eqref{eq:wc_cv_info},  \eqref{eq:w1_cv_info} and \eqref{eq:wp_cv_info}. \Cref{sec:quant-conv-bounds} is devoted to the extension of our discrete-time results to their continuous-time counterparts.
Note also that  \Cref{thm:informal_2} and its consequences still hold if we only assume a local Lipschitz assumption, see the condition  \Cref{ass:loc_lip_bb}.

\begin{theorem}
  \label{thm:informal_2}
    Assume that there exist $\mtt\in \rset$, $\mttplus >0$ and $\Lip, R \geq 0$
  such that \eqref{eq:reg_info} and \eqref{eq:str_cvx_out_info} are
  satisfied.
  Then there exist $D_{1}, D_{2}, E \geq 0$ and $\lambda, \rho \in \coint{0, 1}$ with $\lambda \leq \rho$ such that for any $x,y \in \rset^{\dim}$ and $t \geq 0$
  \begin{equation}
    \label{eq:thm:informal_2}
\tvnorm{\updelta_x \Pker_t- \updelta_y \Pker_t} \leq    \wassersteinD[\bfc](\updelta_x \Pker_t, \updelta_y \Pker_t) \leq \lambda^{t/4} [D_1 \bfc(x,y) + D_2 \1_{\Delta^{\complementary}}(x,y)] +  E \rho^{t/4} \1_{\Delta^{\complementary}}(x,y)  \eqsp ,  
  \end{equation}
  where $\bfc(x,y) = \1_{\Delta^{\complementary}}(x,y) (1 + \norm{x-y}/R)$, $\Delta = \ensembleLigne{(x,x)}{x \in \rset^{\dim}}$, $(\Pker_t)_{t \geq 0}$ is the Markov semigroup associated with \eqref{eq:sde_informal} and 
\begin{align}
  &D_{1} = \lim_{\bgamma \to 0} D_{\bgamma, 1}\eqsp , \quad D_{2} = \lim_{\bgamma \to 0} D_{\bgamma, 2}\eqsp , \quad E = \lim_{\bgamma \to 0}E_{\bgamma}\eqsp , \quad \lambda = \lim_{\bgamma \to 0}\lambda_{\bgamma} \eqsp , \quad \rho = \lim_{\bgamma \to 0}\rho_{\bgamma} \eqsp,
\end{align}
and $D_{\bgamma,1}, D_{\bgamma,2}, E_{\bgamma}, \lambda_{\bgamma}, \rho_{\bgamma}$ are given in \Cref{thm:informal_1}.
\end{theorem}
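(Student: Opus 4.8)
The plan is to obtain the continuous-time bound \eqref{eq:thm:informal_2} as the limit, as the stepsize tends to zero, of the discrete-time bound \eqref{eq:true_result_informal} applied along Euler--Maruyama discretizations with vanishing stepsize, invoking the general transference principle of \Cref{sec:main-results-3} (which is specialized to the Euler--Maruyama scheme in \Cref{sec:applications-1}). Concretely, I would fix $t \geq 0$ and $x,y \in \rset^{\dim}$ and consider the stepsize sequence $\gamma_n = t/n$, $n \in \nsets$. For $n$ large enough that $\gamma_n \leq \bgamma$, \Cref{thm:informal_1} (equivalently \Cref{propo:cvx_outside_bounds}) applied with $\gamma = \gamma_n$ and $k = n$, so that $k\gamma = t$, yields
\[
\wassersteinD[\bfc](\updelta_x \Rker_{\gamma_n}^n, \updelta_y \Rker_{\gamma_n}^n) \leq \lambda_{\bgamma}^{t/4}\bigl[D_{\bgamma,1}\bfc(x,y) + D_{\bgamma,2}\1_{\Delta^{\complementary}}(x,y)\bigr] + E_{\bgamma}\rho_{\bgamma}^{t/4}\1_{\Delta^{\complementary}}(x,y) \eqsp,
\]
whose right-hand side does not depend on $n$.

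Next I would let $n \to \infty$. Under the Lipschitz condition \eqref{eq:reg_info} on $b$, the standard strong $\mathrm{L}^2$-convergence of the Euler--Maruyama scheme on $\ccint{0,t}$ together with uniform-in-$n$ moment bounds ensures that $\updelta_x \Rker_{\gamma_n}^n$ and $\updelta_y \Rker_{\gamma_n}^n$ converge weakly to $\updelta_x \Pker_t$ and $\updelta_y \Pker_t$ respectively. Since $\bfc(x,y) = \1_{\Delta^{\complementary}}(x,y)(1 + \norm{x-y}/R)$ is lower semicontinuous on $\rset^{\dim}\times\rset^{\dim}$, the map $(\mu,\nu)\mapsto \wassersteinD[\bfc](\mu,\nu)$ is lower semicontinuous for weak convergence, so passing to the $\liminf$ on the left gives
\[
\wassersteinD[\bfc](\updelta_x \Pker_t, \updelta_y \Pker_t) \leq \lambda_{\bgamma}^{t/4}\bigl[D_{\bgamma,1}\bfc(x,y) + D_{\bgamma,2}\1_{\Delta^{\complementary}}(x,y)\bigr] + E_{\bgamma}\rho_{\bgamma}^{t/4}\1_{\Delta^{\complementary}}(x,y)\eqsp.
\]
This holds for every $\bgamma$ small enough while the left-hand side is independent of $\bgamma$, so I would finally take $\bgamma \downarrow 0$ and invoke the convergence of the closed-form constants, $D_{\bgamma,i}\to D_i$, $E_{\bgamma}\to E$, $\lambda_{\bgamma}\to\lambda$, $\rho_{\bgamma}\to\rho$ (with $\lambda \leq \rho < 1$, which must be read off the explicit expressions), to obtain the Wasserstein estimate in \eqref{eq:thm:informal_2}. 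The total variation bound then follows immediately: since $\bfc \geq \1_{\Delta^{\complementary}}$ pointwise, every transference plan $\zeta$ between $\updelta_x\Pker_t$ and $\updelta_y\Pker_t$ satisfies $\int \bfc\,\rmd\zeta \geq \zeta(\Delta^{\complementary}) \geq \tvnorm{\updelta_x\Pker_t - \updelta_y\Pker_t}$, and minimizing over $\zeta$ gives $\tvnorm{\updelta_x\Pker_t - \updelta_y\Pker_t} \leq \wassersteinD[\bfc](\updelta_x\Pker_t,\updelta_y\Pker_t)$.

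The hard part will be the rigorous justification of the passage $n\to\infty$: one must verify that the family $(\Rker_{\gamma})_{\gamma \in \ocint{0,\bgamma}}$ meets the hypotheses of the transference principle of \Cref{sec:main-results-3} — weak convergence of the iterated kernels to $\Pker_t$, tightness and moment control, and the required measurability — so that the discrete bound genuinely transfers to the semigroup; the interchange of the two limits $n\to\infty$ and $\bgamma\to 0$ is harmless precisely because \Cref{thm:informal_1} is uniform in $\gamma \in \ocint{0,\bgamma}$, so for fixed $\bgamma$ the bound above is a constant. A secondary but necessary point is to check, from the explicit formulas of the discrete analysis, that all constants admit finite limits as $\bgamma \downarrow 0$ and that the surviving rate satisfies $\rho < 1$; the lower semicontinuity of $\wassersteinD[\bfc]$ (hence of $\bfc$ itself) is routine.
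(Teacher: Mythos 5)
Your proof is correct, but it follows a genuinely different route from the paper's. The paper obtains this statement as a corollary of \Cref{propo:cv_wass_continuous}, and the transference from discrete to continuous time there is not a weak-convergence argument: it rests on the machinery of \Cref{sec:proof-crefs-results-3}, namely a Girsanov/Kullback--Leibler estimate (\Cref{lemma:V-norm_control}) showing $\Vnorm{\updelta_x \Pker_{\Time} - \updelta_x \tRker_{\Time/m,n}^m} \to 0$, a synchronous-coupling comparison between projected and unprojected Euler schemes (\Cref{propo:compare}), and the triangle-type bound of \Cref{prop:general_bound} giving $\distV(\updelta_x\Pker_{\Time},\updelta_y\Pker_{\Time}) \leq \limsup_n\limsup_m \distV(\updelta_x\Rker_{\Time/m,n}^m,\updelta_y\Rker_{\Time/m,n}^m)$; the discrete estimate of \Cref{propo:cvx_outside_bounds} is then applied to the \emph{projected} scheme on $\cball{0}{n}$, the key point being that the limiting constants do not depend on the local Lipschitz constant $\Lip_n$, hence not on $n$. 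You instead exploit that, under the global Lipschitz hypothesis \eqref{eq:reg_info}, the Euler--Maruyama time-$t$ marginals converge weakly to $\updelta_x\Pker_t$, and that $\bfc$ is lower semicontinuous (indicator of an open set times a continuous function), so the optimal transport cost is jointly lower semicontinuous under weak convergence of the marginals (see \cite{villani2009optimal}); this yields precisely $\wassersteinD[\bfc](\updelta_x\Pker_t,\updelta_y\Pker_t)\leq\liminf_{n} \wassersteinD[\bfc](\updelta_x\Rker_{t/n}^{\,n},\updelta_y\Rker_{t/n}^{\,n})$, with no Girsanov estimate and no projected schemes, and your total-variation and $\bgamma\downarrow 0$ steps are routine. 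Two remarks: first, your closing paragraph misdescribes the paper's transference principle, which is not phrased in terms of weak convergence and tightness but in terms of the drift-approximation and integrability conditions entering \Cref{prop:general_bound}; your self-contained argument does not actually need it, so you should either drop that appeal or verify those (different) hypotheses as the paper does. Second, what the paper's heavier route buys is generality: it covers merely locally Lipschitz drifts (\Cref{ass:loc_lip_bb}, via the projected schemes and approximating drift families) and unbounded $V$-weighted costs, and it gives a quantitative approximation of $\Pker_{\Time}$ by the discretization rather than only a limiting inequality; your lower-semicontinuity shortcut is well adapted to the present statement because $b$ is globally Lipschitz and $\bfc$ is lower semicontinuous.
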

\begin{proof}
  This result follows from \Cref{propo:cv_wass_continuous}.  
\end{proof}
Note that the constants $D_1,D_2,E,\lambda$ and $\rho$ have explicit expressions,
see the corresponding discussion in \Cref{sec:applications-1} after
\Cref{propo:cv_wass_continuous}. In addition, the rate $\rho$ and $\lambda$ in \eqref{eq:thm:informal_2} are independent of the dimension $d$. This is a significant improvement compared to the convergence results in total variation derived in \cite[Theorem 2.1]{eberle:guillin:zimmer:2018} which imply a convergence rate which scales exponentially in the dimension, under the setting we consider. 
Similarly, we derive a continuous counterpart of \Cref{prop:collec_result_info} in the continuous time setting, see \Cref{coro:cont_w1_wp}.

As stated before, the convergence rates $\rho_{\bgamma}$, $\rho$, $\lambda_{\bgamma}$, $\lambda$,
given in \Cref{thm:informal_1} and \Cref{thm:informal_2}
can be explicitly computed. More precisely, we obtain the following expressions (up to logarithmic terms) with respect to the parameters $\mtt$, $\Lip$ and $R$ in the case $-\mtt R^2 \gg 1$, see \Cref{sec:drift-cond-conv}, \Cref{propo:cvx_outside_bounds}, Equations \eqref{eq:rho_bgamma_wass_majo_b}  and  \eqref{eq:bound_rho_a_bgamma_0_comp}:
    \begin{align}
      & \hspace{-0.9cm}\log(\log^{-1}(\rho_{\bgamma}^{-1})) \simeq -(\mtt R^2/4)  \sup_{\gamma \in \ocint{0, \bgamma}} \defEns{ \parenthese{1 - \frac{\gamma \Lip^2}{2\mtt}}\parenthese{1 - \exp\parentheseDeux{\frac{R^2(2\mtt - \gamma \Lip^2)}{1 - 2\mtt \gamma +\gamma^2 \Lip^2}}}^{-1} }    \label{eq:log_rho_gamma} \\ & \hspace{-0.9cm} \log(\log^{-1}(\rho^{-1})) \simeq -(\mtt R^2 / 4) \times (1-\rme^{2\mtt R^2})^{-1} \eqsp , \label{eq:log_rho}
                    \\ & \hspace{-0.9cm} \log(\lambda_{\bgamma}) = -\mttplus/2 + \bgamma \Lip^2/4                                                             \eqsp, \qquad  \log(\lambda) = -\mttplus/2 \eqsp.  \qquad
      \end{align}
    where $\simeq$ denotes equality up to logarithmic factors.

It is sensible to obtain two different convergence rates
    $\lambda_{\bgamma},\rho_{\bgamma}$ (resp. $\lambda,\rho$) in
    \Cref{thm:informal_1} (resp. in \Cref{thm:informal_2}), one
    characterizing the forgetting of the initial distance between the
    two starting points $x,y \in \rset^d$, corresponding to a burn-in
    period, and the other one characterizing the effective
    convergence. In addition, note that
    $\lambda_{\bgamma} \ll \rho_{\bgamma}$ and $\lambda \ll \rho$ if $-\mtt R^2 \gg 1$.

  We now compare these results and the rates obtained in
\eqref{eq:log_rho_gamma}-\eqref{eq:log_rho} with recent works studying the convergence of
  the Markov chain defined by \eqref{eq:euler_informal} and/or the
  corresponding diffusion process \eqref{eq:sde_informal} in the same framework, \ie~under the conditions \eqref{eq:reg_info} and \eqref{eq:str_cvx_out_info}.
Note that the same conclusions hold under more general Foster-Lyapunov drift conditions but it would make the comparison more involved. Note also we are still able to derive convergence results under weaker  curvature assumptions on the drift. The discussion  is postponed to \Cref{sec:other-curv-cond_disc} for the discrete setting and \Cref{sec:other-curv-cond_cont} for the continuous setting.
  
First, a major difference between our work and the ones mentioned below is that
we use a completely different technique to establish our results. Indeed,  all of them  follow the approach initiated in \cite{eberle2011reflection},
designing a suitable coupling and distance function of the form
$\cbf(x,y) = f(\norm{x-y})$, for any $x,y \in \rset^d$, with
$f: \rset_+ \to \rset_+$, to obtain a geometric contraction in
$\wassersteinDLigne[\bfc]$ for either the Markov chain \eqref{eq:euler_informal}
or the diffusion \eqref{eq:sde_informal} under the conditions
\eqref{eq:reg_info}-\eqref{eq:str_cvx_out_info}.  In this paper, we follow a
different path and derive convergence estimates using minorization and
Foster-Lyapunov drift conditions, adapting the technique used in
\cite{douc:moulines:priouret:soulier:2018} and the references therein.  It has
been thought for a long time that such an approach only gives very pessimistic
convergence bounds \cite{eberle:guillin:zimmer:2018}. We now compare more
specifically our results with the ones obtained following the work of
\cite{eberle2011reflection} and show that in fact our technique inspired by
classical methods to establish geometric convergence of Markov chains gives very
sharp estimates, improving and simplifying the results obtained in the existing
literature. This discussion and its conclusion are summarized in
\Cref{tab:cv_res}.   In the rest of this section, $C \geq 0$ stands for a  positive constant which may
  be different at each occurrence.



First we compare our work with the results of \cite{eberle2018quantitative}
which extend to the discrete setting the estimates of \cite{eberle2016reflection}.
The authors use the following cost function defined for any $x,y \in \rset^{\dim}$ by
\begin{equation}
  \label{eq:bca}
  \bfc_a(x,y) = a \1_{\Delta^{\complementary}}(x,y) + f_a(\norm{x-y}) \eqsp ,
\end{equation}
where $a \geq 0$ and $f_a$ is given in \cite[Equation
(2.53)]{eberle2018quantitative}.
Note that the cost $\bfc_a$ is close to the one introduced in
\Cref{thm:informal_1}. Then, 
\cite[Theorem 2.10]{eberle2018quantitative} states that  if
$a \in \ccintLigne{2 \gamma^{1/2}, \Phibf_{E}(R)}$ where $\Phibf_{E}$ is given in
\cite[Theorem 2.10]{eberle2018quantitative}, then there exist $\bgamma_{a} >0$ and $\rho_{a} \in \coint{0,1}$ such that for any $\gamma \in \ocint{0, \bgamma_{a}}$, $x,y \in \rset^{\dim}$ and $k \in \nset$,
\begin{equation}
  \label{eq:contrac_markov_eberle_1}
  \wassersteinD[\bfc_a](\updelta_x \Rker_{\gamma}^k, \updelta_y \Rker_{\gamma}^k) \leq \rho^{k \gamma}_{a} \bfc_a(x,y) \eqsp .
\end{equation}
Compared to our results \Cref{thm:informal_1},
\eqref{eq:contrac_markov_eberle_1} only gives one convergence rate $\rho_{a}$
and does not dissociate the forgetting of the initial distance between the
starting points $x,y \in \rset^d$ from the long-term behavior. In addition, $a$
may depend on $\gamma$, since it is required that
$a \in \ccintLigne{2 \gamma^{1/2}, \Phibf(R)}$ and $\bgamma_{a} < \bgamma$ where
$\bgamma$ is given by \Cref{thm:informal_1}. Omitting the dependency of $a$ and
$\rho_{a}$ with respect to $\gamma$ for the sake of simplicity, and applying
\cite[Theorem 2.10]{eberle2018quantitative} yield
\begin{equation}
  \log(\log^{-1}(\rho^{-1}_{a})) \simeq -\mtt R^2 / c_1 \eqsp , \text{ with }
  \label{eq:c1}
  c_1 = 16^{-1} \int_{1/4}^{3/8} (1-\rme^{u-1/2}) \vphibf(u) \rmd u \leq 0.00051 \eqsp ,
\end{equation}
where for any $t \in \rset$, $\vphibf(t) = (2 \uppi)^{-1/2}\exp(-t^2/2)$. It is worth noticing that in the case we are interested in, $-\mtt R^2 \gg 1$, we obtain that our rate given by \eqref{eq:log_rho_gamma} satisfies $\rho_{\bgamma} \ll \rho_{a}$ (also omitting dependency of $\rho_{\bgamma}$ with respect to $\gamma$).

Let $\bfc_b$ be defined
for any $x,y \in \rset^{\dim}$ by $\bfc_b(x,y) = f_b(\norm{x-y})$ with $f_b$
given in \cite[Equation (2.68)]{eberle2018quantitative}. Then, 
\cite[Theorem 2.12]{eberle2018quantitative} implies that there exist $\bgamma_b >0$ and $\rho_{b} \in \coint{0,1}$
such that for any $\gamma \in \ocint{0, \bgamma_b}$, $x,y \in \rset^{\dim}$ and $k \in \nset$,
\begin{equation}
  \label{eq:contrac_markov_eberle_2}
  \wassersteinD[\bfc_b](\updelta_x \Rker_{\gamma}^k, \updelta_y \Rker_{\gamma}^k) \leq \rho^{k \gamma}_{b} \bfc_b(x,y) \eqsp .
\end{equation}
Note that \eqref{eq:contrac_markov_eberle_1} implies convergence bounds both
with respect to $\wassersteinD[1]$ and the total variation distance whereas
\eqref{eq:contrac_markov_eberle_2} implies convergence bounds with respect to
$\wassersteinD[1]$ only. Once again, omitting the dependency with respect to $\gamma$,
we obtain that the rate satisfies
\begin{equation}
  \label{eq:eberle2}
  \log(\log^{-1}(\rho^{-1}_{b})) \simeq -49 \mtt R^2 / (6 c_2) \eqsp , 
\end{equation}
with
\begin{equation}
  \label{eq:c2}
  c_2 = 4 \min \left( \int_0^{1/2}u^2(1-\rme^{u-1/2}) \vphibf(u) \rmd u, (1- \rme^{-1}) \int_0^{1/2}u^3 \vphibf(u) \rmd u \right) \leq  0.0072 \eqsp ,
\end{equation}
and we obtain that our rate given by \eqref{eq:log_rho_gamma} satisfies $\rho_{\bgamma} \ll \rho_{a}$ when $-\mtt R^2 \gg 1$.

  We now compare our results with the ones derived in \cite{majka2018non}. For fair comparison,
  since \cite{majka2018non} does not assume a one-sided
  Lipschitz condition but only a global Lipschitz condition we set $\mtt = -\Lip$ in the next paragraph.  This paper
extends the techniques of \cite{eberle2018quantitative,eberle2016reflection} to
deal with $\wassersteinD[2]$. It is shown in
\cite[Theorem 2.1]{majka2018non} that there exist $\bgamma_{c} >0$ and $\rho_{c} \in \coint{0,1}$ such that for
any $\gamma \in \ocint{0, \bgamma_{c}}$, $x,y \in \rset^{\dim}$ and $k \in \nset$,
\begin{equation}
  \label{eq:contrac_markov_majka_1}
  \wassersteinD[\bfc_c](\updelta_x \Rker_{\gamma}^k, \updelta_y \Rker_{\gamma}^k) \leq \rho^{k \gamma}_{c} \bfc_c(x,y) \eqsp ,
\end{equation}
with $\bfc_c$ given for any $x,y \in \rset^{\dim}$ by $\bfc_c(x,y) = f_c(\norm{x-y})$ and $f_c$ given in \cite[Equation (2.11)]{eberle2018quantitative}. Note that this result implies convergence bounds with respect to $\wassersteinD[1]$
and $\wassersteinD[2]$. In particular, we have for any $\gamma \in \ocint{0, \bgamma}$, $x,y \in \rset^{\dim}$ and $k \in \nset$,
\begin{equation}
  \label{eq:contrac_markov_majka_1}
  \wassersteinD[2](\updelta_x \Rker_{\gamma}^k, \updelta_y \Rker_{\gamma}^k) \leq C \rho^{k \gamma/2}_{c} \bfc_c^{1/2}(x,y) \leq C \rho_c^{k \gamma/2} (\norm{x-y} + \norm{x-y}^{1/2}) \eqsp .
\end{equation}
In addition, it holds that
\begin{equation}
  \label{eq:majka}
  \log(\log^{-1}(\rho^{-1}_c)) \simeq \Lip R^2 / (6c_2) \eqsp \text{ where $c_2$ is defined by \eqref{eq:c2}} \eqsp,
\end{equation}
and therefore our rate also satisfies $\rho_{\bgamma} \ll \rho_{c}$ when $\Lip R^2 \gg 1$.

The results of \cite{eberle2018quantitative,majka2018non} both extend, and
generalize, in the discrete-time setting the techniques used in
\cite{eberle2016reflection}.  In the latter, contraction results for the
semigroup $(\Pker_t)_{t \geq 0}$ are obtained with respect to
$\wassersteinD[\bfc_e]$, where for any $x,y \in \rset^{\dim}$,
$\bfc_e(x,y) = f_e(\norm{x-y})$ and $f_e$ is defined by \cite[Equation
(2.6)]{eberle2016reflection}. In particular, in \cite[Corollary
2.3]{eberle2016reflection}, it is shown that there exists
$\rho_e \in \coint{0, 1}$ such that for any $x,y \in \rset^{\dim}$ and
$t \geq 0$
\begin{equation}
  \wassersteinD[\bfc_e](\updelta_x \Pker_t, \updelta_y \Pker_t) \leq \rho^t_e \bfc_e(x,y) \eqsp .
\end{equation}
Note that this result implies convergence bounds in $\wassersteinD[1]$, see \cite[Corollary 2.3]{eberle2016reflection}.
The rate is given \cite[Lemma 2.9]{eberle2016reflection} and, in the case $-\mtt R^2 \gg 1$, we have 
\begin{equation}
  \label{eq:rate_eberle_correct}
  \log(\log^{-1}(\rho^{-1}_e)) \simeq -\mtt R^2 / 4 \eqsp ,
\end{equation}
which is better than our rate in the continuous-time case\footnote{Note that in \cite[Lemma 2.9, Equation (2.18)]{eberle2016reflection} the stated result implies that $\log(\log^{-1}(\rho^{-1})) \simeq \Lip R^2 /8$ if $\kappa(r) \geq -\Lip r$ for any $r \geq 0$, where $\kappa$ is defined in \cite[p.5]{eberle2016reflection}. However,
note that if $b$ is $\Lip$-Lipschitz then $\kappa(r) \geq -2 \Lip$ and \eqref{eq:rate_eberle_correct} follows.}. However, note that we
derive our results in $\wassersteinD[1]$ from our estimates with respect to
$\wassersteinD[\bfc]$ with $\bfc$ given in \Cref{thm:informal_1}, which controls
both $\wassersteinD[1]$ and the total variation norm. Also, the discrepancy
between our rate and the one of \eqref{eq:rate_eberle_correct} is controlled by $(\rme^{-2 \mtt R^2} - 1)^{-1}$ which is small when $-\mtt R^2$ is large.

Finally we compare our continuous-time results with the ones of
\cite{luo2016exponential}.  It is shown in \cite[Theorem
1.3]{luo2016exponential} that for any $p>1$ there exist $\rho_f \in \coint{0, 1}$ and $C \geq 0$
such that for any $x,y \in \rset^{\dim}$ and $t \geq 0$
\begin{equation}
  \label{eq:rate_luo_correct}
  \wassersteinD[p](\updelta_x \Pker_t, \updelta_y \Pker_t) \leq C\rho^t_f \defEns{\norm{x-y} + \norm{x-y}^{1/p}} \eqsp ,
\end{equation}
and the rate is given in \cite[Theorem
1.3]{luo2016exponential} by
\begin{equation}
  \log(\log^{-1}(\rho^{-1}_f)) = (-\mtt + \mttplus) R^2 / 4 \eqsp .
\end{equation}
The additional term $\mttplus R^2 / 4$ does not appear in our rates\footnote{Similarly to \cite{eberle2016reflection},
in \cite[Theorem 1.3]{luo2016exponential} the stated result implies that $\log(\log^{-1}(\rho^{-1})) \simeq \Lip R^2 /2$ if $\kappa(r) \leq \Lip r$ for any $r \geq 0$ and $\kappa(r) \leq -\mttplus r$ for $r \geq R$, where $\kappa$ is defined in \cite[Equation (1.4)]{luo2016exponential}. However, note that if $b$ is $\Lip$-Lipschitz and $\mttplus$ strongly convex outside of $\cball{0}{R}$ we have $\kappa(r) \leq \Lip r/2$ for any $r \geq 0$ and $\kappa(r) \leq -\mttplus r/2$ for any $r \geq R$ and \eqref{eq:rate_luo_correct} follows.}.
As a consequence our rate is better as soon as
\begin{equation}
  \label{eq:16}
\mttplus \geq -\mtt / (\rme^{-2\mtt R^2}-1) \eqsp .
\end{equation}
\Cref{tab:cv_res} gives a summary of the comparisons we made above.  

\begin{table}[h]
  \centering
  \begin{tabular}{|c|c|c|c|c|c|}
    \hline
    Reference & Wasserstein distance & distance bound & (D) & (C) & (NR)\\
    \hline \hline
    \cite{eberle2018quantitative} & $\tvnorm{\cdot}$  & $\1_{\Delta^{\complementary}}(x,y) + \norm{x-y}$ & \checkmark &  &  
    $7840$ \\ 
              & $\wassersteinD[1]$ & $\norm{x-y}$ & \checkmark & & 
                                                                   $4536$ \\
    \hline \cite{majka2018non}  & $\wassersteinD[2]$ & $\norm{x-y} + \norm{x-y}^{1/2}$ &\checkmark & & 
                                                                                                       $332$ \\    
    \hline \cite{eberle2016reflection} & $\wassersteinD[1]$ & $\norm{x-y}$ &  & \checkmark & $1$ \\
    \hline \cite{luo2016exponential} & $\wassersteinD[p]$ & $\norm{x-y} + \norm{x-y}^{1/p}$ &  & \checkmark & $1 - \mttplus /  \mtt$ \\
    \hline  & $\tvnorm{\cdot}$  & $\1_{\Delta^{\complementary}}(x,y) + \norm{x-y}$  & \checkmark & \checkmark  & $(1 - \rme^{2\mtt R^2})^{-1}$ \\
           Ours   & $\wassersteinD[1]$  & $\norm{x-y}$  & \checkmark & \checkmark  & idem \\
              & $\wassersteinD[p]$  & $\norm{x-y} + \norm{x-y}^{1/\upalpha}$  & \checkmark & \checkmark  &  idem \\
    \hline
  \end{tabular}
  \caption{
    Every line of the table reads as follows. Suppose ``Wasserstein distance'' reads $\wassersteinD[\bfc_1]$ and ``distance bound'' reads $\bfc_2(x,y)$ then: if (D) is checked, there exist $C \geq 0$ and $\rho \in \coint{0,1}$ such that for any $x,y \in \rset^{\dim}$ and $k \in \nset$, $\wassersteinD[\bfc_1](\updelta_x \Rker_{\gamma}^{k}, \updelta_y \Rker_{\gamma}^{k}) \leq C \rho^{k \gamma}\bfc_2(x,y)$ for $\gamma$ small enough. If (C) is checked, there exist $C \geq 0$ and $\rho \in \coint{0,1}$ such that for any $x,y \in \rset^{\dim}$ and $t \geq 0$,  $\wassersteinD[\bfc_1](\updelta_x \Pker_{t}, \updelta_y \Pker_t) \leq C \rho^{t} \bfc_2(x,y)$. In addition, if the normalized rate ``(NR)'' reads $\beta$ we have $-4 \log(\log^{-1}(\rho^{-1})) / (\mtt R^2)  \simeq \beta$ (with $\mtt$ replaced by $-\Lip$ in the case of \cite{majka2018non}). Note that for the sake of simplicity we omit the dependency with respect to $\bgamma$ in the present analysis. The exact distances used in papers with which we compare our results, are given in \cite[Equation (2.53)]{eberle2018quantitative}, \cite[Equation (2.11)]{majka2018non}, \cite[Equation (2.6)]{eberle2016reflection} and \cite[Equation (2.4)]{luo2016exponential}. Note that $p \in \nset$ and $\upalpha \in \oointLigne{p, +\infty}$. }
  \label{tab:cv_res}
\end{table}

\subsection{An illustrative example}
\label{sec:an-illustr-example}

We now consider a toy example to justify the setting under study in the previous section.
Consider the following Gaussian mixture distribution $\pi$ whose Radon-Nikodym density with respect to
the Lebesgue measure $\lambda$ is given for any $x \in \rset$ by
\begin{equation}(\rmd \pi / \rmd \Leb)(x) = (2\sqrt{2\uppi \sigma^2})^{-1} \exp[-x^2/ (2\sigma^2)] + (2\sqrt{2\uppi \sigma^2})^{-1} \exp[-(x-\mean)^2/ (2\sigma^2)] \eqsp ,
\end{equation}
where $\sigma >0$ and $\mean \geq 0$. For any $x \in \rset$, we have $(\rmd \pi / \rmd \Leb)(x) \propto \rme^{-U(x-\mean/2)}$ and for any $\xb \in \rset$
\begin{equation}
  U(\xb) = \xb^2 / (2\sigma^2) - \log\parentheseDeux{\mathrm{cosh}(\mean \xb / (2\sigma^2))} \eqsp ,
\end{equation}
Note that $U'$ is $\Lip$-Lipschitz with $\Lip = \sigma^{-2} \max\defEnsLigne{1, (\mean/(2\sigma))^2 - 1}$ and that $U$ is convex if and only if $\mean \leq 2 \sigma$. Also, we obtain that $b = -U'$ satisfies \eqref{eq:reg_info} with $  \Lip = \sigma^{-2} \max\defEnsLigne{1, (\mean/(2\sigma))^2 - 1}$, $ R = 2 \mean$, $\mttplus = 1/(2\sigma^2)$. 

We now consider the Markov chain \eqref{eq:euler_informal} with $b = - U'$ and
its associated Markov kernel $\Rker_{\gamma}$ for $\gamma > 0$. Let
$x_0 \in \rset$ and we define $\log(\rhoexp)$ to be the slope of the function
$n \mapsto \log(\tvnorm{\updelta_{x_0} \Rker_{\gamma}^n - \pi})$. Note that this
slope is computed only until
$\log(\tvnorm{\updelta_{x_0} \Rker_{\gamma}^n - \pi})$ reaches a given
precision, since for $\gamma >0$ small enough there exists a probability measure
$\pi_{\gamma}$ such that
$\tvnorm{\updelta_{x_0} \Rker_{\gamma}^n - \pi_{\gamma}} \to 0$ and
$\pi_{\gamma} \neq \pi$. In what follows we compare $\log(\rhoexp)$ with our
estimates.

\begin{figure}
  \centering
  \subfloat[]{\includegraphics[width=0.33\linewidth]{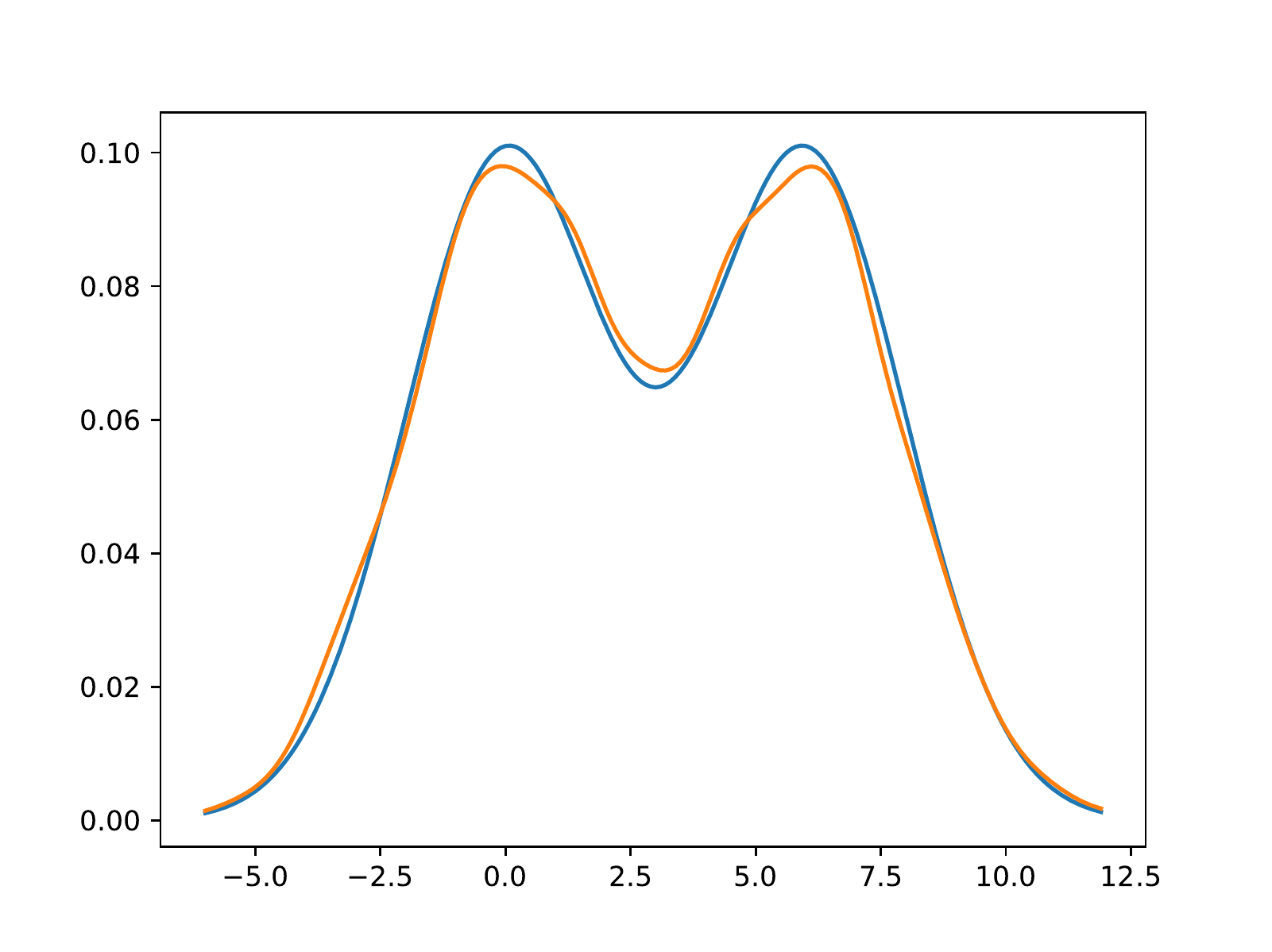}} \hfill
  \subfloat[]{\includegraphics[width=0.33\linewidth]{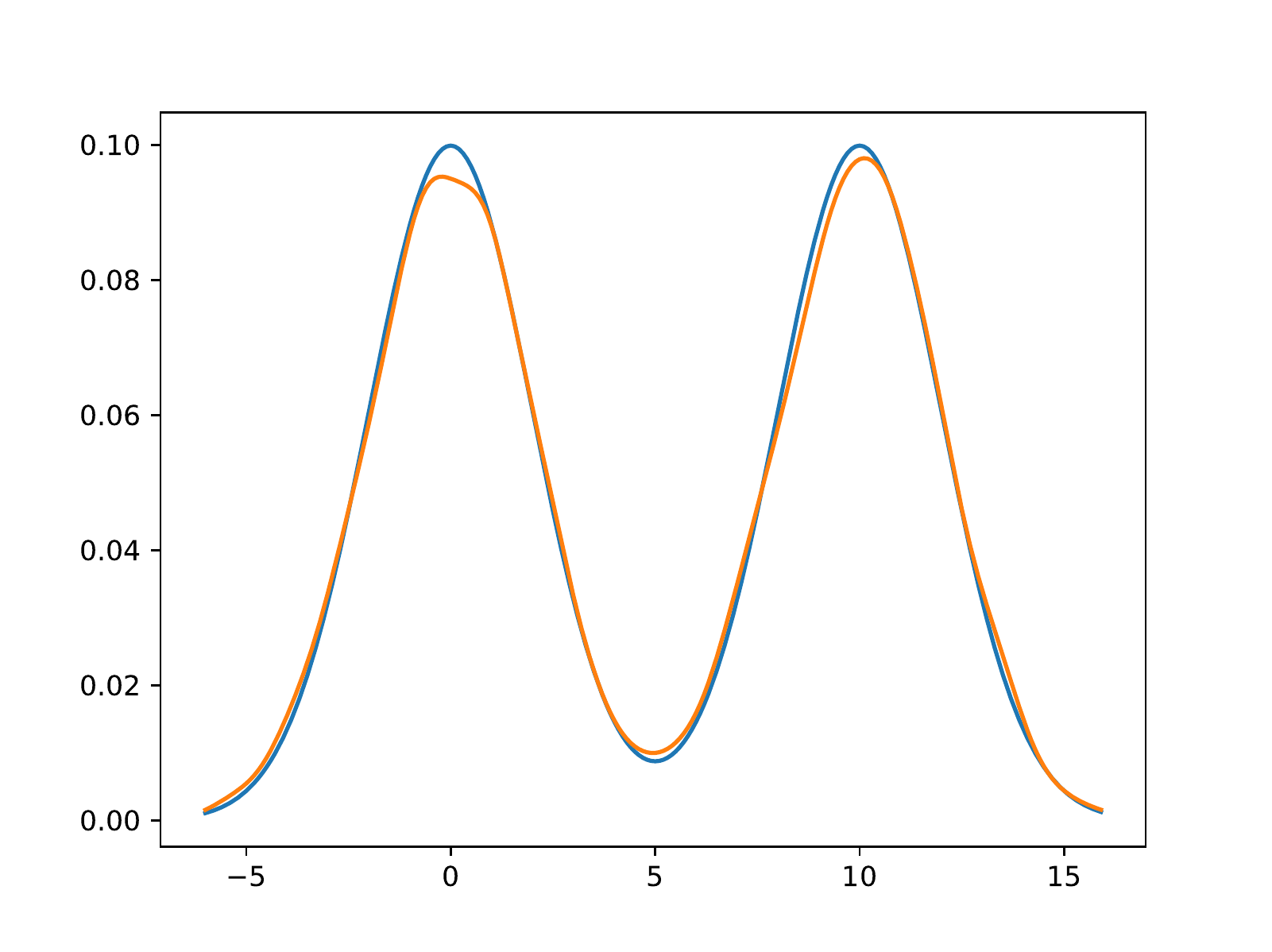}} \hfill
  \subfloat[]{\includegraphics[width=0.33\linewidth]{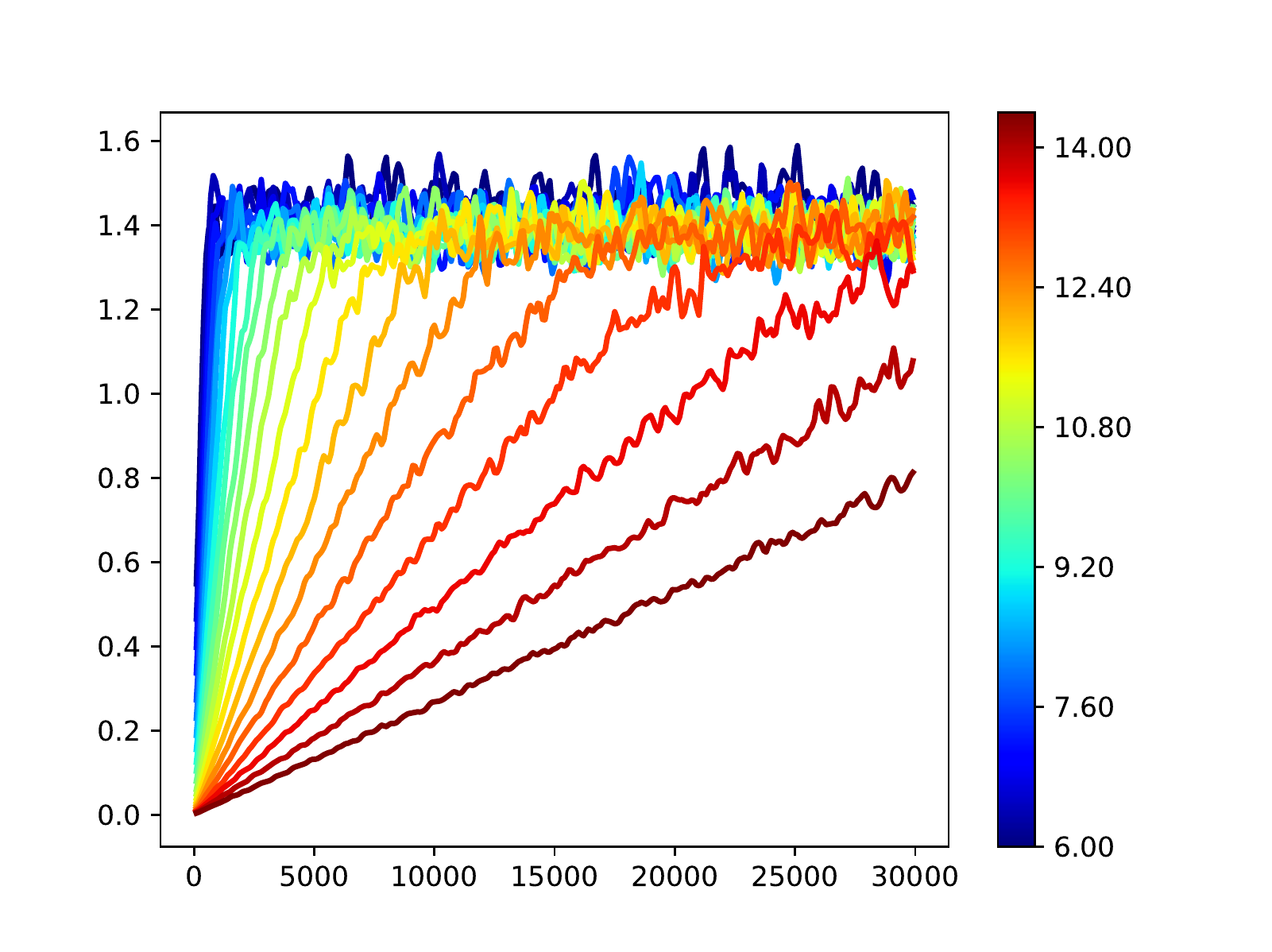}} \hfill
  \caption{In (a) and (b), the blue curve is the theoretical log-partition and in orange the estimated log-partition of $\updelta_{x_0} \Rker_{\gamma}^n$ at iteration $n = 10000$ with $\gamma = 0.1$. The estimation of the log-partition is performed using Gaussian kernels and $1000$ points sampled from $1000000$ points using a bootstrap procedure. In (a), $\mean = 6$ and $\sigma = 2$ and in (b) $\mean= 10$ and $\sigma = 2$. In (c) we illustrate the behavior of $-\log_{10}(\tvnorm{\updelta_{x_0} \Rker_{\gamma}^n - \pi})$ for $\sigma = 2$ and $\mean$ between $6$ and $14$ (color blue to red). Note that the precision saturates since $\pi \neq \pi_{\gamma}$.}
  \label{fig:cv_rate}  
\end{figure}

Let $\ratio = \mean / (2\sigma)$ and assume that $\ratio \geq \sqrt{2}$. Note that in this case $\Lip R^2 = 16 \ratio^2 (\ratio^2 - 1)$. Let $\rho$ be the rate we identify in \eqref{eq:log_rho}. Up to logarithmic terms we have $\log(\log^{-1}(\rho^{-1})) \simeq 4 \ratio^2  (\ratio^2 - 1) / (1 - \rme^{-32 \ratio^2  (\ratio^2 - 1)})$.
In \Cref{fig:cv_rate} and \Cref{fig:quad}, we fix $\sigma = 2$ and study the behavior of $\log(\rhoexp)$ and $\log(\rho)$ \wrt \ $\mean$. In particular, \Cref{fig:quad}-(b) illustrates that the rates we obtain are much closer to the ones estimated by our numerical simulations.

\begin{figure}
  \centering
  \subfloat[]{\includegraphics[width=0.4\linewidth]{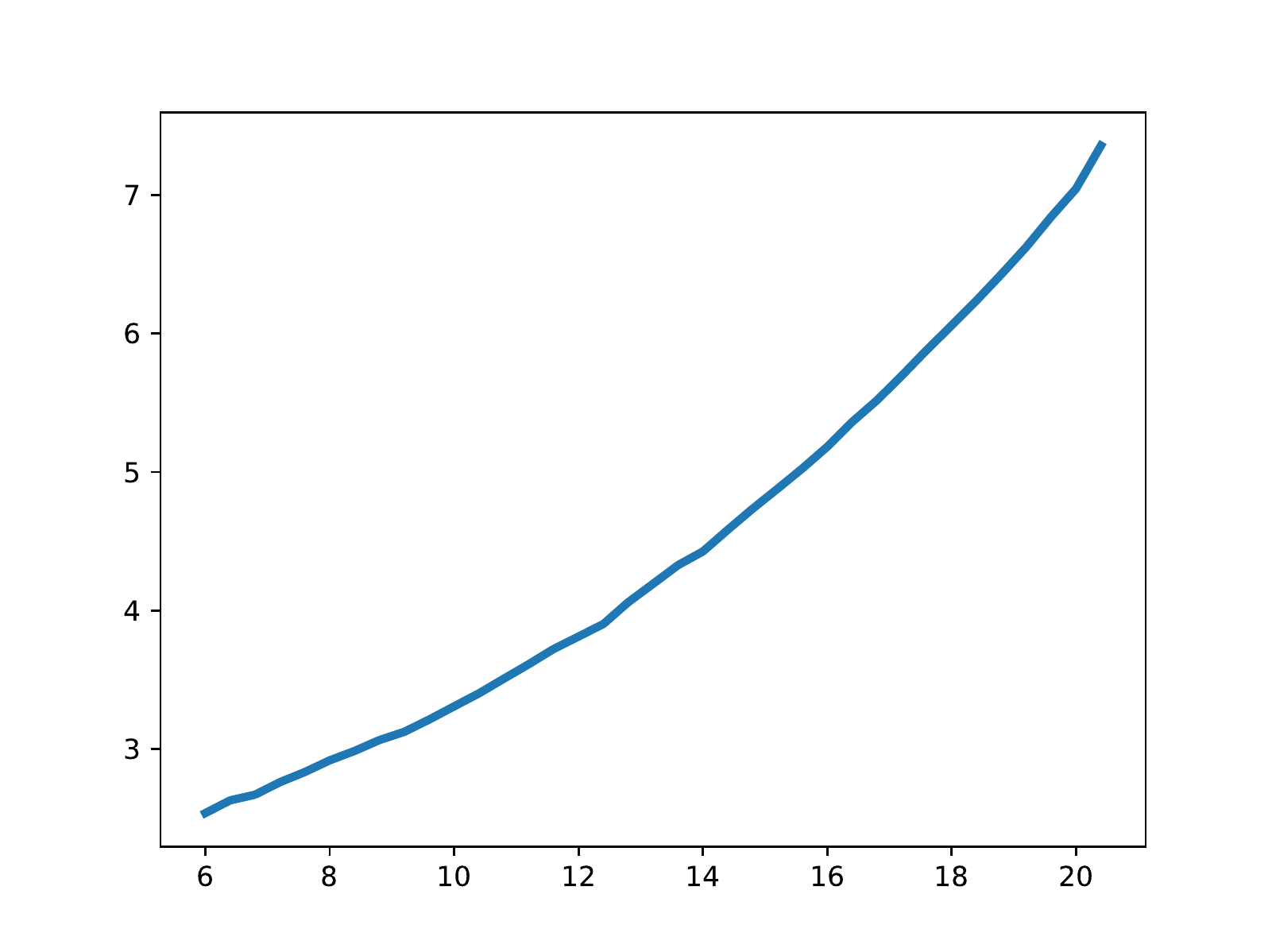}} \qquad 
  \subfloat[]{\includegraphics[width=0.4\linewidth]{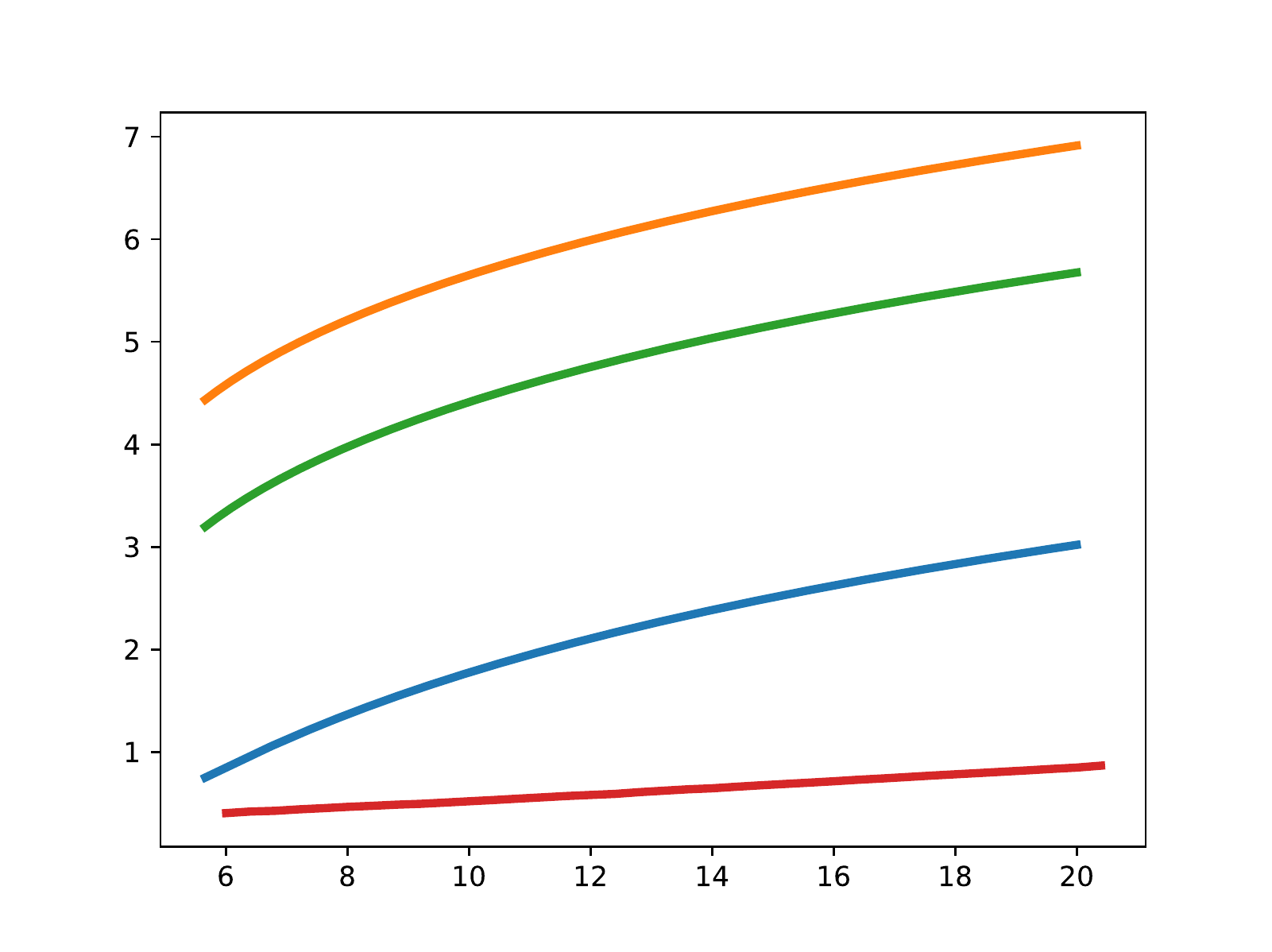}} 
  \caption{In (a) we present $\log(\log^{-1}(\rhoexp^{-1}))$ as $\mean$
    varies. In (b) we present $\log_{10}(\log(\log^{-1}(\rho^{-1})))$ with
    $\rho \leftarrow \rhoexp$ (red), $\rho$ given by \eqref{eq:log_rho} (blue),
    $\rho$ given by \eqref{eq:majka} (green) and $\rho$ given by \eqref{eq:c1}
    (orange). 
  }
  \label{fig:quad}  
\end{figure}


\section{Quantitative convergence bounds  for a class of functional autoregressive models}
\label{sec:presentation}

\label{sec:main-results00}
Let $\msx \in \mathcal{B}(\rset^d)$ endowed with the trace of $\mathcal{B}(\rset^d)$ on $\msx$ denoted by $\mcx= \{ \msa \cap \msx \, : \, \msa \in \mcbb(\rset^d)\}$. 
In this section we consider the Markov chain $(X_k)_{k \in \N}$ defined by $X_0 \in \msx$ and the following recursion: for any $k \in \nset$
\begin{equation}
  \label{eq:langevin_discrete}
  X_{k+1} = \Pi\parenthese{\Tg(X_k)  + \sqrt{ \gamma} \, Z_{k+1}} \eqsp ,
\end{equation}
where $\{\mathcal{T}_{\gamma} : \gamma \in \ocint{0, \bgamma} \}$ is a family of measurable functions from $\msx$ to $\rset^d$ with $\bgamma > 0$, $\gamma \in \ocint{0,\bgamma}$ is a stepsize, $(Z_k)_{k \in \nsets}$ is a sequence of i.i.d $d$-dimensional
zero mean Gaussian random variables with covariance identity and $\Pi: \rset^d \to \msx$ is a measurable function. The Markov chain $(X_k)_{k \in \nset}$ defined by  \eqref{eq:langevin_discrete} is associated with the Markov kernel $\Rcoupling_{\gamma}$ defined on $\msx \times \mcb{\rset^d}$ for any $\gamma \in \ocint{0, \bgamma}$, $x \in \rset^d$ and $\msa \in \mathcal{B}(\rset^d)$ by
\begin{equation}
  \Rcoupling_{\gamma}(x, \msa) = (2 \uppi \gamma)^{-d/2} \int_{\Pi^{\inv}(\msa)}  \exp\parentheseDeux{-(2\gamma)^{-1} \| y - \Tg(x)  \|^2} \rmd y \eqsp .   \label{eq:kernel_langevin}
\end{equation}
Note that for any $x \in \msx$, $\Rker_{\gamma}(x, \msx) = 1$ and therefore, $\Rker_{\gamma}$ given in \eqref{eq:kernel_langevin} is also a Markov kernel over $\msx \times \mcx$.

In this section we state explicit convergence results for $\Rker_{\gamma}$ for some Wasserstein distances
and discuss the rates we obtain. These results rely on appropriate minorization and  Foster-Lyapunov
drift conditions. We first derive the minorization condition for the $n$-th iterate of $\Rker_{\gamma}$. To do so, we consider 
a Markov coupling kernel $\Kcoupling_{\gamma}$ for $\Rcoupling_{\gamma}$ for any $\gamma \in \ocint{0, \bgamma}$, \ie \ for any $x,y \in \rset^d$, $\Kcoupling_{\gamma}((x,y), \cdot)$ is a transference plan between $\Rcoupling_{\gamma}(x, \cdot)$ and $\Rcoupling_{\gamma}(y, \cdot)$. Indeed, in that case, by \cite[Theorem 19.1.6]{douc:moulines:priouret:soulier:2018},  we have  for any $x,y \in \msx$,  $\gamma \in \ocint{0, \bgamma}$ and $n \in \nsets$,
\begin{equation}
  \label{eq:distrib_coupling}
  \tvnorm{\updelta_x \Rker_{\gamma}^n - \updelta_y \Rker_{\gamma}^n } \leq \Kcoupling_{\gamma}^n((x,y), \Deltar^{\complementary}) \eqsp,
\end{equation}
where $\Delta_{\msx}= \ensemble{(x,x)}{x \in \msx}$.  In this paper,
  we consider a projected version of the discrete reflection coupling
  \cite{bubley:dyer:jerrum:1998,durmus:moulines:2016} which is the discrete
  counterpart of the coupling introduced in \cite{lindvall1986coupling}. For any
$x, y,z \in \rset^d$, $\gamma \in \ocint{0, \bgamma}$, let
\begin{equation} \label{eq:e_def}
  \rme(x,y) = \begin{cases} \rmE(x,y) / \| \rmE(x,y) \| & \text{if } \Tg(x) \neq \Tg(y) \\
    0 & \text{otherwise}\end{cases}\eqsp , \qquad \rmE(x,y) = \Tg(y) - \Tg(x) \eqsp ,
\end{equation}
and
\begin{align}
  \mathcal{S}_{\gamma}(x,y,z)& = \Tg(y) +  (\Id - 2\rme(x,y)\rme(x,y)^{\top})z \eqsp , \
   p_{\gamma}(x,y,z)= 1 \wedge \frac{\vphibf_{\gamma}(\| \rmE(x,y) \| - \langle \rme(x,y),z \rangle)}{\vphibf_{\gamma}(\langle \rme(x,y), z \rangle)} \eqsp , \label{eq:success_prob}
\end{align}
where $\vphibf_{\gamma}$ is the one dimensional zero mean Gaussian distribution function with variance $\gamma$. Let $(U_k)_{k \in \nsets}$ be a sequence of \iid \ uniform random variables on $\ccint{0,1}$ independent of $(Z_k)_{k \in \nsets}$. Define the Markov chain $(X_k, Y_k)_{k \in \nset}$ starting from $(X_0, Y_0) \in \msx^2$ by the  recursion: for any $k \in \nset$,
\begin{equation}
  \label{eq:def_project_x_Y_1}
\begin{aligned}
  &\tilde{X}_{k+1} = \Tg(X_k) + \sqrt{\gamma} Z_{k+1} \eqsp , \\
  &\tilde{Y}_{k+1} =  \begin{cases} \tilde{X}_{k+1} & \text{if} \ \Tg(X_{k}) = \Tg(Y_{k}) \eqsp, \\ W_{k+1}\tilde{X}_{k+1} + (1-W_{k+1})\mathcal{S}_{\gamma}(X_k,Y_k, \sqrt{\gamma} Z_{k+1}) & \text{otherwise} \eqsp , \end{cases} 
\end{aligned}
\end{equation}
where $  W_{k+1} = \1_{\ocint{-\infty, 0}}(U_{k+1} - p(X_k, Y_k, \sqrt{\gamma}  Z_{k+1}))$ and finally set 
\begin{equation}(X_{k+1}, Y_{k+1}) = (\Pi(\tilde{X}_{k+1}), \Pi(\tilde{Y}_{k+1})) \eqsp . \label{eq:mc_def}\end{equation}
The Markov chain $(X_k, Y_k)_{k \in \nset}$ is associated with the Markov kernel $\Kcoupling_{\gamma}$ on $\msx^2 \times \mathcal{X}^{\otimes 2}$ given for all $\gamma \in \ocint{0, \bgamma}$, $x, y \in \msx$ and $\eventA \in \mathcal{X}^{\otimes 2}$ by
\begin{align}
&
\Kcoupling_{\gamma}((x,y) , \eventA) = \frac{\1_{\Delta_{\rset^d}}(\Tg(x),\Tg(y))} {(2 \uppi \gamma )^{d/2}}\int_{\rset^d} \1_{\Pi_{\msa}}(\tildex,\tildex) \rme^{-\frac{\norm{\tildex - \Tg(x)}^2}{2\gamma}} \rmd \tildex \\
\nonumber
&+ \frac{\1_{\Delta_{\rset^d}^{\complem}}(\Tg(x),\Tg(y))}{ (2 \uppi \gamma )^{d/2}} \left[ \int_{\rset^d} \1_{\Pi_{\msa}}(\tildex,\tildex)
p_{\gamma}\parenthese{x,y,\tildex - \Tg(x)} \rme^{-\frac{\norm{\tildex - \Tg(x)}^2}{2\gamma}} \rmd \tildex \right.
\\
&\left. +  \int_{\rset^d} \1_{\Pi_{\msa}}\parenthese{\tildex,\mathcal{S}_{\gamma}\parenthese{x,y,\tildex - \Tg(x)}}
\defEns{1-p_{\gamma}\parenthese{x,y,\tildex - \Tg(x)}} \rme^{-\frac{\norm{\tildex - \Tg(x)}^2}{2\gamma}} \rmd \tildex \right] \eqsp, \label{eq:coupling_form}
\end{align}
where $\Pi_{\msa} = (\Pi, \Pi)^{\inv}(\eventA)$ and $ \Delta_{\rset^d} = \ensembleLigne{(x,x)}{x \in \rset^d}$.  Note that marginally, by definition, the distribution of $X_{k+1}$ given $X_k$ is $\Rcoupling_{\gamma}(X_k, \cdot)$. It is well-know (see \eg \  \cite[Section 3.3]{bubley:dyer:jerrum:1998}) that $\tilde{Y}_{k+1}$ and $\Tg(Y_k) + \sqrt{\gamma} Z_{k+1}$ have the same distribution given $Y_k$, and therefore the distribution of $Y_{k+1}$ given $Y_k$ is  $\Rcoupling_{\gamma}(Y_k, \cdot)$. As a result, for any $\gamma \in \ocint{0, \bgamma}$, $x,y \in \msx$, $\Kcoupling_{\gamma}((x,y), \cdot)$ is a transference plan between $\Rcoupling_{\gamma}(x, \cdot)$ and $\Rcoupling_{\gamma}(y, \cdot)$.


As emphasized previously, based on  \eqref{eq:distrib_coupling}, to study convergence of $\rker_{\gamma}$ for $\gamma \in \ocint{0,\bgamma}$<, we first give upper bounds for $\Kcoupling_{\gamma}^{n}((x,y), \Deltar^{\complementary})$
for any  $x, y \in \msx$ and $n \in \nsets$ under appropriate conditions on $\Tg$ and $\Pi$.

\begin{assumption}
  \label{ass:non_expansive_Pi}
  The function $\Pi : \rset^d \to \msx$ is non expansive: \ie~for any $x,y \in \rset^d$, $\norm{\Pi(x) - \Pi(y)} \leq \norm{x-y}$. 
\end{assumption}
Note that \Cref{ass:non_expansive_Pi} is satisfied if $\Pi$ is the
proximal operator \cite[Proposition 12.27]{bauschke:combettes:2011}
associated with a convex lower semi-continuous function
$\mathrm{f} : \rset^d \to \ocint{-\infty,\plusinfty}$. For example, if
$\mathrm{f}(x) = \sum_{i=1}^d \abs{x_i}$, the associated proximal operator is the soft
thresholding operator \cite[Section 6.5.2]{parikh:boyd:2013}. If $\mathrm{f}$ is the convex indicator of a closed convex set $\msc \subset \rset^d$, defined by 
$\mathrm{f}(x) = 0$ for $x \in \msc$,
$\mathrm{f}(x) = \plusinfty$ otherwise, the proximal operator is simply the orthogonal projection onto $\msc$ by \cite[Example 12.21]{bauschke:combettes:2011} and we define for any $x \in \rset^{\dim}$
\begin{equation}
  \label{eq:def_proj}
  \Pi_{\msc}(x) = \argmin_{y \in \msc} \norm{y-x} \eqsp.
\end{equation}

First, the class of  Markov chains defined by \eqref{eq:langevin_discrete} contains Euler-Maruyama discretizations of diffusion processes with identity diffusion matrix and for which $\Pi = \Id$. Our results will be  specified for this particular case in \Cref{sec:applications}.
Second, for the
applications that we have in mind,
the use of Markov chains defined by \eqref{eq:langevin_discrete} with $\Pi \not = \Id$ satisfying \Cref{ass:non_expansive_Pi}, has been proposed based on optimization literature to sample non-smooth log-concave densities \cite{durmus2016efficient,Bubeck:2015,durmus2018analysis,bernton2018langevin}. Finally, we will also make use of \eqref{eq:langevin_discrete} with $\Pi = \Pi_{\msk_n}$, where $\Pi_{\msk_n}$ is defined by \eqref{eq:def_proj} with $\msc \leftarrow \msk_n$, and $(\msk_n)_{n \in \nsets}$ is a sequence of increasing compact sets of $\rset^d$, to derive our results on diffusion processes  in \Cref{sec:applications-1}. 

We now consider the following assumption on
$\{\mct_{\gamma} \, : \, \gamma \in \ocint{0,\bgamma}\}$. Let $\msa \in \mcbb(\rset^{2d})$. 
\begin{assumption}[$\msa$]
  \label{assum:lip_op}
  There exists
  $\kappa : \ocint{0,\bgamma} \to \rset$ such that for any $\gamma \in \ocint{0,\bgamma}$ 
  and $(x, y) \in \msa \cap \msx^2$ 
  \begin{equation}
    \label{eq:lip_op}
    \| \Tg(x) - \Tg(y) \|^2 \le (1 + \gamma \kappa(\gamma)) \| x - y\|^2 \eqsp .
  \end{equation}
  Further, one of the following conditions holds for any $\gamma \in \ocint{0,\bgamma}$: 
  \begin{enumerate*}[label=(\roman*)]
      \item  \label{assum:lip_op_str_convex}
        $\kappa(\gamma) <0$;
              \item  \label{assum:lip_op_convex}
 $\kappa(\gamma)  \leq 0$;
\item \label{assum:lip_op_non_convex}
 $\kappa(\gamma)  > 0$.
  \end{enumerate*}
\end{assumption}
If $\Tg(x) = x + \gamma b(x)$ and $b$ is $\Lip$-Lipschitz we have that \Cref{assum:lip_op}($\rset^{\dim}$) holds for any with $\kappa(\gamma) = \Lip(2 + \gamma \Lip)$. Note that \Cref{assum:lip_op}($\msx^2$)-\ref{assum:lip_op_str_convex} or  \Cref{assum:lip_op}($\msx^2$)-\ref{assum:lip_op_convex} imply that for any $\gamma \in \ocint{0,\bgamma}$, $\Tg$ is non-expansive itself (see \Cref{ass:non_expansive_Pi}). For $\upkappa : \ocint{0,\bgamma} \to \rset$ and $\ell \in \nset^{\star}$, $\gamma \in \ocint{0, \bgamma}$ such that $\gamma \upkappa(\gamma) \in \ooint{-1, +\infty}$, define 
  \begin{equation}
\label{eq:def_Xi}
\Xi_{n}(\upkappa) = \gamma \sum_{k=1}^{n}(1 + \gamma \upkappa(\gamma))^{-k} \eqsp .
\end{equation}
The following theorem gives a generalization of a minorization condition on autoregressive models \cite[Section 6]{durmus:moulines:2016}.

    \begin{theorem}
      \label{theo:minorization_general}
Let $\msa \in \mcbb(\rset^{2d})$ and assume \tup{\Cref{ass:non_expansive_Pi}} and \tup{\Cref{assum:lip_op}($\msa$)}.  Let  $(X_k, Y_k)_{k \in \nset}$ be defined by \eqref{eq:mc_def} with $(X_0, Y_0)= (x,y) \in \msa \cap \msx^2$ and $\gamma \in \ocint{0, \bgamma}$. Then for any $n \in \nsets$
  \begin{multline}
\proba{X_n \neq Y_n \text{ and for any $k \in \{1,\ldots,n-1\}$,} \, (X_k,  Y_k) \in \msa}  \leq  \1_{\Deltar^{\complem}}(x,y) \defEns{1-2\Phibf\parenthese{-\frac{\norm{x-y} }{2\Xi_n^{1/2}(\kappa)}}} \eqsp,
  \end{multline}
where $\Phibf$ is the cumulative distribution function of the Gaussian distribution with zero mean and unit variance on $\rset$. 
\end{theorem}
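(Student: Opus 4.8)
The plan is to prove the bound by induction on $n$, conditioning on the first step of the coupled chain $(X_k,Y_k)_{k\in\nset}$ of \eqref{eq:mc_def}, in the spirit of \cite[Section~6]{durmus:moulines:2016}; write $\PP_{x,y}$, $\PE_{x,y}$ for the law and expectation of this chain started at $(X_0,Y_0)=(x,y)$. The degenerate cases collapse to the trivial bound: if $x=y$ then $\Tg(x)=\Tg(y)$, so $\tilde Y_1=\tilde X_1$ and $X_k=Y_k$ for all $k$; and if $\Tg(x)=\Tg(y)$ the chain coalesces at step~$1$. So from now on assume $x\neq y$ and $d:=\norm{\rmE(x,y)}=\norm{\Tg(x)-\Tg(y)}>0$, and set $e=\rme(x,y)$, $W=\sqrt{\gamma}\,\ps{e}{Z_1}\sim\mathcal{N}(0,\gamma)$.

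The first ingredient is the one-step description read off from \eqref{eq:e_def}--\eqref{eq:coupling_form}. The acceptance probability is $p_\gamma=1\wedge\rme^{(2dW-d^2)/(2\gamma)}$, which equals $1$ precisely when $W\geq d/2$; hence $\{X_1\neq Y_1\}\subseteq\{W_1=0\}\cap\{W<d/2\}$ up to a $\PP_{x,y}$-null set, and on $\{W_1=0\}$ one has $\tilde X_1-\tilde Y_1=(2W-d)e$, so by \tup{\Cref{ass:non_expansive_Pi}}, $\norm{X_1-Y_1}\leq\norm{\tilde X_1-\tilde Y_1}=\abs{d-2W}$. Moreover $\PE_{x,y}[\1_{\{W_1=0\}}\mid W]=\bigl(1-\rme^{(2dW-d^2)/(2\gamma)}\bigr)\1_{\{W<d/2\}}$, and \tup{\Cref{assum:lip_op}($\msa$)} applied at $(x,y)\in\msa\cap\msx^2$ gives $d\leq(1+\gamma\kappa(\gamma))^{1/2}\norm{x-y}$. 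Secondly, for $n\geq2$ the Markov property at time~$1$, together with the induction hypothesis applied on $\{(X_1,Y_1)\in\msa\}$ (on which $(X_1,Y_1)\in\msa\cap\msx^2$), gives
\[
\PP_{x,y}\bigl(X_n\neq Y_n,\ (X_k,Y_k)\in\msa,\ 1\leq k\leq n-1\bigr)\leq\PE_{x,y}\bigl[g_{n-1}(\norm{X_1-Y_1})\bigr],
\]
where $g_m(r)=\1_{\{r>0\}}\bigl\{1-2\Phibf\bigl(-r/(2\Xi_m^{1/2}(\kappa))\bigr)\bigr\}$ for $m\geq1$ and $g_0(r)=\1_{\{r>0\}}$, the case $m=0$ serving as the base case $n=1$; each $g_m$ is nondecreasing on $\coint{0,+\infty}$ with $g_m(0)=0$.

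It remains to bound $\PE_{x,y}[g_{n-1}(\norm{X_1-Y_1})]$, which is the crux. Using that $g_{n-1}$ is nonnegative and nondecreasing, $\norm{X_1-Y_1}\leq d-2W$ on $\{X_1\neq Y_1\}$ (where $W<d/2$), and the conditional probability above, this expectation is at most $\int_{-\infty}^{d/2}\bigl(1-\rme^{(2dw-d^2)/(2\gamma)}\bigr)g_{n-1}(d-2w)\,\vphibf_\gamma(w)\,\rmd w$, with $\vphibf_s$ the centred Gaussian density of variance $s$. Since $\bigl(1-\rme^{(2dw-d^2)/(2\gamma)}\bigr)\vphibf_\gamma(w)=\vphibf_\gamma(w)-\vphibf_\gamma(w-d)$, the substitution $u=d-2w$ turns this into $\int_0^{+\infty}h_{4\gamma}(d,u)\,g_{n-1}(u)\,\rmd u$, where $h_s(a,b)=\vphibf_s(b-a)-\vphibf_s(b+a)$ is the transition density, indexed by accumulated variance $s$, of Brownian motion on $\ooint{0,+\infty}$ absorbed at the origin. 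Because $g_m(u)=\int_0^{+\infty}h_{4\Xi_m(\kappa)}(u,v)\,\rmd v$ for $u\geq0$ (with $\Xi_0(\kappa)=0$), Tonelli and the Chapman--Kolmogorov relation for the absorbed semigroup yield $\int_0^{+\infty}h_{4\gamma}(d,u)\,g_{n-1}(u)\,\rmd u=\int_0^{+\infty}h_{4(\gamma+\Xi_{n-1}(\kappa))}(d,v)\,\rmd v=1-2\Phibf\bigl(-d/(2(\gamma+\Xi_{n-1}(\kappa))^{1/2})\bigr)$. Combining this with $d\leq(1+\gamma\kappa(\gamma))^{1/2}\norm{x-y}$, the identity $\Xi_n(\kappa)=(\gamma+\Xi_{n-1}(\kappa))/(1+\gamma\kappa(\gamma))$ obtained directly from \eqref{eq:def_Xi}, and the monotonicity of $t\mapsto1-2\Phibf(-t)$ on $\coint{0,+\infty}$ closes the induction and produces the stated bound.

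The step I expect to be the main obstacle is this last computation: realizing the Gaussian integral $\int_0^{+\infty}h_{4\gamma}(d,u)g_{n-1}(u)\,\rmd u$ as the composition of two absorbed-Brownian transition kernels, hence as a non-absorption probability, and tracking the accumulated variance $4(\gamma+\Xi_{n-1}(\kappa))=4(1+\gamma\kappa(\gamma))\Xi_n(\kappa)$ so that the recursion for $\Xi_n(\kappa)$ matches exactly. The remainder is routine bookkeeping: checking that $p_\gamma<1$ forces $W<d/2$ and that the non-coalescent increment $\tilde X_1-\tilde Y_1$ is colinear with $e$, verifying $g_m=\int_0^{+\infty}h_{4\Xi_m(\kappa)}(\cdot,v)\,\rmd v$, the exactness of the first-step decomposition of the event, and the degenerate cases $x=y$ and $\Tg(x)=\Tg(y)$.
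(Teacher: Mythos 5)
Your argument is correct, and it reaches the bound by a genuinely different route than the paper. The paper deduces \Cref{theo:minorization_general} as a special case of the more general non-homogeneous result \Cref{theo:strict_convergence_AR}, whose proof runs a \emph{backward} induction over the time indices inside a single expectation: starting from the conditional coalescence probability at the final step, it repeatedly applies the Gaussian conditioning estimate of \cite[Lemma 20]{durmus:moulines:2016} to push the bound back one step at a time, accumulating the variances $\Xiar_{k+1,n}$. You instead run a \emph{forward} induction on $n$ via a first-step analysis and the Markov property (using homogeneity, so the induction hypothesis at $(X_1,Y_1)\in\msa\cap\msx^2$ applies with the same $\gamma$ and $\kappa$), and you replace the cited lemma by a self-contained computation: the one-step identity $\bigl(1-\rme^{(2dw-d^2)/(2\gamma)}\bigr)\vphibf_\gamma(w)=\vphibf_\gamma(w)-\vphibf_\gamma(w-d)$, the change of variables $u=d-2w$ identifying the resulting kernel as $h_{4\gamma}(d,\cdot)$ with $h_s(a,b)=\vphibf_s(b-a)-\vphibf_s(b+a)$, and the Chapman--Kolmogorov property of this absorbed-Brownian-motion kernel, together with the recursion $\Xi_n(\kappa)=(\gamma+\Xi_{n-1}(\kappa))/(1+\gamma\kappa(\gamma))$; all the surrounding bookkeeping (rejection forces $W<d/2$, colinearity of $\tilde X_1-\tilde Y_1$ with $\rme(x,y)$, non-expansiveness of $\Pi$, monotonicity of $g_m$, the degenerate cases) checks out. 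What the paper's route buys is generality: the backward induction handles non-homogeneous maps $\funreg_k$, stepwise constants $\kappar_k$ and variances $\sigma_k^2$ at no extra cost, which is what \Cref{theo:strict_convergence_AR} records and what is reused elsewhere; what your route buys is a shorter, self-contained proof of the homogeneous statement that makes the probabilistic mechanism transparent (non-coalescence is exactly non-absorption of a one-dimensional Brownian motion with accumulated variance $4\Xi_n(\kappa)$), at the price of relying on homogeneity for the Markov restart and not directly covering the non-homogeneous extension.
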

\begin{proof}
 The proof is a simple application of \Cref{theo:strict_convergence_AR}.
\end{proof}
Based on \Cref{theo:minorization_general}, since $\proba{X_n \neq Y_n} = \Kcoupling^n((x,y),\Deltar^{\complementary})$ where $(X_k, Y_k)_{k \in \nset}$ is  defined by \eqref{eq:mc_def} with $(X_0, Y_0)= (x,y) \in  \msx^2$,  we can derive minorization conditions for the Markov kernel $\Rker_{\gamma}^n$ with $n \in \nsets$ for any $\gamma \in \ocintLigne{0,\bgamma}$ depending on the assumption we make on $\kappa$ in \Cref{assum:lip_op}($\msx^2$).
More precisely, these minorization conditions are derived using $\Kker_{\gamma}^{\ell \step}$ with $\ell \in \nsets$. This is 
a requirement to obtain sharp bounds in the limit $\gamma \to 0$. Indeed, for any $x,y \in \msx$, based only on the results of \Cref{theo:minorization_general}, we get that for any $\ell \in \nsets$,
$\lim_{\gamma \to 0} \tvnorm{\updelta_x \Rker_{\gamma}^{\ell} - \updelta_y \Rker_{\gamma}^{\ell}} \leq 1$, whereas the following proposition implies that for any $\ell \in \nsets$, $\lim_{\gamma \to 0} \tvnorm{\updelta_x \Rker_{\gamma}^{\ell \step} - \updelta_y \Rker_{\gamma}^{\ell \step}} < 1$.

\begin{proposition}
  \label{propo:doeb}
Let $\msa \in \mcbb(\rset^{2d})$ and assume \tup{\Cref{ass:non_expansive_Pi}} and \tup{\Cref{assum:lip_op}($\msa$)} hold.  Let  $(X_k, Y_k)_{k \in \nset}$ be defined by \eqref{eq:mc_def} with $(X_0,Y_0) = (x,y) \in \msa \cap \msx^2$ and $\gamma \in \ocint{0, \bgamma}$. Then for any  $\ell \in \N^{*}$ and $\gamma \in \ocint{0,\bgamma}$,
  \begin{align}
    &\proba{X_{\ell \ceil{\gamma}} \neq Y_{\ell \ceil{\gamma}} { \text{ and for any $k \in \{1,\ldots,n-1\}$,} \, (X_k,  Y_k) \in \msa}}
       \leq 1- 2 \Phibf\parenthese{ - \alpha^{-1/2}(\kappa,\gamma, \ell) \| x- y \|/2} \eqsp ,     \label{eq:doeblin_condition_1}
  \end{align}
  where
  \begin{enumerate}[label= (\alph*),  wide, labelwidth=!, labelindent=0pt]
  \item  $\alpha(\upkappa, \gamma, \ell) = -\upkappa^{-1}(\gamma) \parentheseDeux{ \exp(-\ell \upkappa(\gamma)) - 1}$  if  \tup{\Cref{assum:lip_op}($\msa$)-\ref{assum:lip_op_str_convex}} holds ;
    \label{item:kappa_neg}
  \item  $\alpha(\upkappa, \gamma, \ell) = \ell$  if \tup{\Cref{assum:lip_op}($\msa$)-\ref{assum:lip_op_convex}} holds ; \label{item:kappa_0}    
  \item  $\alpha(\upkappa, \gamma, \ell) =  \upkappa^{-1}(\gamma) \parentheseDeux{ 1 - \exp \defEns{-\ell \upkappa(\gamma)/(1 + \gamma  \upkappa(\gamma))}}$ if \tup{\Cref{assum:lip_op}($\msa$)-\ref{assum:lip_op_non_convex}} holds. \label{item:kappa_pos}
  \end{enumerate}
\end{proposition}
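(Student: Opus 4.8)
The result is essentially a corollary of \Cref{theo:minorization_general} applied at time $n=\ell\step$ (with $\step=\ceil{1/\gamma}$): all that remains is to compare the quantity $\Xi_{\ell\step}(\kappa)$ appearing there with $\alpha(\kappa,\gamma,\ell)$, using $\step\geq1/\gamma$ together with a couple of elementary inequalities.

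More precisely, the plan is the following. Since \tup{\Cref{ass:non_expansive_Pi}} and \tup{\Cref{assum:lip_op}($\msa$)} hold and $(X_0,Y_0)=(x,y)\in\msa\cap\msx^2$, \Cref{theo:minorization_general} gives, for every $n\in\nsets$,
\[
\proba{X_n\neq Y_n\text{ and }(X_k,Y_k)\in\msa\text{ for }k\in\iintD{1}{n-1}}\leq\1_{\Deltar^{\complem}}(x,y)\defEns{1-2\Phibf\parenthese{-\norm{x-y}/(2\Xi_n^{1/2}(\kappa))}}\eqsp.
\]
Summing the geometric series in \eqref{eq:def_Xi} yields the closed form $\Xi_n(\kappa)=\kappa^{-1}(\gamma)[1-(1+\gamma\kappa(\gamma))^{-n}]$ when $\kappa(\gamma)\neq0$ and $\Xi_n(\kappa)=\gamma n$ when $\kappa(\gamma)=0$. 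Since $u\mapsto1-2\Phibf(-\norm{x-y}/(2\sqrt u))$ is nonincreasing on $\ooint{0,+\infty}$, substituting $n=\ell\step$ in the display above shows that \eqref{eq:doeblin_condition_1} follows once we establish $\Xi_{\ell\step}(\kappa)\geq\alpha(\kappa,\gamma,\ell)$ in each of the three cases (the diagonal case $x=y$ being trivial).

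The three comparisons rely only on $\step\geq1/\gamma$ and the elementary inequalities $\log(1+t)\leq t$ and $\log(1+t)\geq t/(1+t)$, both valid for $t>-1$. If \tup{\Cref{assum:lip_op}($\msa$)-\ref{assum:lip_op_convex}} holds, then $\kappa(\gamma)\leq0$ so $(1+\gamma\kappa(\gamma))^{-k}\geq1$ for every $k\geq1$, hence $\Xi_{\ell\step}(\kappa)\geq\gamma\,\ell\step\geq\ell=\alpha(\kappa,\gamma,\ell)$. If \tup{\Cref{assum:lip_op}($\msa$)-\ref{assum:lip_op_str_convex}} holds, then $\kappa(\gamma)<0$ and, dividing the target inequality by $\kappa^{-1}(\gamma)<0$, it reduces to $(1+\gamma\kappa(\gamma))^{-\ell\step}\geq\rme^{-\ell\kappa(\gamma)}$, which follows from $-\ell\step\log(1+\gamma\kappa(\gamma))\geq-\ell\step\,\gamma\kappa(\gamma)\geq-\ell\kappa(\gamma)$ via $\log(1+t)\leq t$ and $\ell\step\,\gamma\geq\ell$. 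If \tup{\Cref{assum:lip_op}($\msa$)-\ref{assum:lip_op_non_convex}} holds, then $\kappa(\gamma)>0$ and, dividing by $\kappa^{-1}(\gamma)>0$, the target reduces to $\ell\step\log(1+\gamma\kappa(\gamma))\geq\ell\kappa(\gamma)/(1+\gamma\kappa(\gamma))$; since $\ell\step\geq\ell/\gamma$ and $\log(1+\gamma\kappa(\gamma))>0$, this follows from $\log(1+t)\geq t/(1+t)$ applied with $t=\gamma\kappa(\gamma)$.

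I do not expect a genuine obstacle here: the proof is a bounded amount of bookkeeping. The only points needing care are the directions of the inequalities — dividing by $\kappa^{-1}(\gamma)$ reverses the inequality when $\kappa(\gamma)<0$, and $u\mapsto1-2\Phibf(-\norm{x-y}/(2\sqrt u))$ is \emph{nonincreasing}, so one needs the \emph{lower} bound $\Xi_{\ell\step}(\kappa)\geq\alpha(\kappa,\gamma,\ell)$ (not an upper bound), and one must check that replacing $\step$ by $1/\gamma$ always loosens in the admissible direction. The single mildly non-routine ingredient is the inequality $\log(1+t)\geq t/(1+t)$ used in case \ref{item:kappa_pos}; positivity of $\alpha(\kappa,\gamma,\ell)$ in all three cases is immediate from the same sign considerations.
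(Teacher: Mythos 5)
Your proposal is correct and follows essentially the same route as the paper: the paper also reduces \Cref{propo:doeb} to \Cref{theo:minorization_general} at $n=\ell\step$ and then lower-bounds $\Xi_{\ell\step}(\kappa)$ by $\alpha(\kappa,\gamma,\ell)$ (its Lemma in \Cref{sec:proof-crefl}), using the same closed-form geometric sum and the same inequalities $\log(1+t)\leq t$ and $\log(1+t)\geq t/(1+t)$. The sign and monotonicity caveats you flag are exactly the points handled there, so no gap remains.
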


\begin{proof}
    The proof is postponed to \Cref{sec:proof-crefl}.
  \end{proof}
  Depending on the conditions imposed on $\kappa$ defined in
  \Cref{assum:lip_op}($\msx^2$), we obtain the following consequences of \Cref{propo:doeb} which establish, either  an explicit
  convergence bound in total variation for $\Rcoupling_{\gamma}$, or a quantitative
  minorization condition satisfied by this kernel. 
\begin{corollary}
\label{coro:doeblin_lemme_1}
Assume \tup{\Cref{ass:non_expansive_Pi}} and \tup{\Cref{assum:lip_op}($\msx^2$)}.
  \begin{enumerate}[label=(\alph*)]
  \item \label{coro:doeblin_lemme_1_a}
If \tup{\Cref{assum:lip_op}($\msx^2$)-\ref{assum:lip_op_str_convex}} holds and $\kappa_- = \sup_{\gamma \in \ocint{0, \bgamma}} \kappa(\gamma) <0$. Then, for any $\gamma \in \ocint{0,\bgamma}$, $\Rcoupling_{\gamma}$ admits a unique invariant probability measure $\pi_{\gamma}$ and we have for any $\gamma \in \ocint{0, \bgamma}$, $x \in \rset^d$ and $\ell \in \nsets$,
  \begin{align}
    \label{eq:coro_strongly_convex}
    &\tvnorm{ \updelta_x \Rcoupling_{\gamma}^{\ell\ceil{1/\gamma}} -  \pi_{\gamma} }  \leq 1- 2  \int_{\rset^d}\Phibf\defEns{ -(-\kappa_-)^{1/2} \| x- y \|/\{2(\exp(-\ell \kappa_-)-1)^{1/2}\}} \rmd \pi_{\gamma}(y) \eqsp.
  \end{align}
\item \label{coro:doeblin_lemme_1_b}
If \tup{\Cref{assum:lip_op}($\msx^2$)-\ref{assum:lip_op_convex}} holds and, in addition, assume that for any $\gamma \in \ocint{0, \bgamma},$ $\Rker_{\gamma}$ admits an invariant probability measure $\pi_{\gamma}$, then we have for any $\gamma \in \ocint{0, \bgamma}$, $x \in \rset^d$ and $\ell \in \nsets$,
  \begin{equation}
    \label{eq:coro_strongly_convex}
    \tvnorm{ \updelta_x \Rcoupling_{\gamma}^{\ell\ceil{1/\gamma}} -  \pi_{\gamma} } \leq 1- 2  \int_{\rset^d}\Phibf\defEns{ - \| x- y \|/(2\ell^{1/2})} \rmd \pi_{\gamma}(y) \eqsp.
  \end{equation}
  \end{enumerate}
\end{corollary}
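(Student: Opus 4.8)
The plan is to combine the Doeblin-type estimate of \Cref{propo:doeb} with the coupling inequality \eqref{eq:distrib_coupling}, and then integrate against $\pi_\gamma$. Since the coupled chain $(X_k,Y_k)_{k\in\nset}$ defined in \eqref{eq:mc_def} always takes values in $\msx^2$, applying \Cref{propo:doeb} with $\msa=\msx^2$ makes the event $\{(X_k,Y_k)\in\msa\text{ for all }k\}$ automatically satisfied; moreover $X_n=Y_n$ if and only if $(X_n,Y_n)\in\Deltar$, so $\Kcoupling_\gamma^{n}((x,y),\Deltar^{\complementary})=\proba{X_n\neq Y_n}$. Hence, for $(X_0,Y_0)=(x,y)\in\msx^2$, $\gamma\in\ocint{0,\bgamma}$, $\ell\in\nsets$ and $n=\ell\ceil{1/\gamma}$, the bounds \eqref{eq:distrib_coupling} and \Cref{propo:doeb} yield
\[
\tvnorm{\updelta_x\Rker_\gamma^{n}-\updelta_y\Rker_\gamma^{n}}\le\Kcoupling_\gamma^{n}((x,y),\Deltar^{\complementary})=\proba{X_n\neq Y_n}\le 1-2\Phibf\parenthese{-\alpha^{-1/2}(\kappa,\gamma,\ell)\norm{x-y}/2}\eqsp,
\]
where $\alpha(\kappa,\gamma,\ell)=-\kappa^{-1}(\gamma)[\exp(-\ell\kappa(\gamma))-1]$ under \Cref{assum:lip_op}($\msx^2$)-\ref{assum:lip_op_str_convex}, and $\alpha(\kappa,\gamma,\ell)=\ell$ under \Cref{assum:lip_op}($\msx^2$)-\ref{assum:lip_op_convex}.

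Assuming $\pi_\gamma$ is an invariant probability measure for $\Rker_\gamma$, I would use invariance to write $\updelta_x\Rker_\gamma^{n}-\pi_\gamma=\int_{\rset^d}(\updelta_x\Rker_\gamma^{n}-\updelta_y\Rker_\gamma^{n})\,\rmd\pi_\gamma(y)$, so that, by convexity of $\tvnormLigne{\cdot}$ and the previous display (still with $n=\ell\ceil{1/\gamma}$),
\[
\tvnorm{\updelta_x\Rker_\gamma^{n}-\pi_\gamma}\le\int_{\rset^d}\tvnorm{\updelta_x\Rker_\gamma^{n}-\updelta_y\Rker_\gamma^{n}}\,\rmd\pi_\gamma(y)\le 1-2\int_{\rset^d}\Phibf\parenthese{-\alpha^{-1/2}(\kappa,\gamma,\ell)\norm{x-y}/2}\,\rmd\pi_\gamma(y)\eqsp.
\]
Under \ref{coro:doeblin_lemme_1_b}, where $\alpha(\kappa,\gamma,\ell)=\ell$, this is exactly the announced inequality. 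Under \ref{coro:doeblin_lemme_1_a} it remains to replace $\alpha(\kappa,\gamma,\ell)$ by its value at the constant function $\kappa_-$: I would use the elementary monotonicity fact that $u\mapsto u^{-1}(\rme^{\ell u}-1)$ is nondecreasing on $\ooint{0,+\infty}$ — the numerator of its derivative equals $\rme^{\ell u}(\ell u-1)+1$, which vanishes at $u=0$ and has derivative $\ell^2 u\,\rme^{\ell u}\ge 0$ — so that, taking $u=-\kappa(\gamma)\ge-\kappa_->0$, one gets $\alpha(\kappa,\gamma,\ell)\ge-\kappa_-^{-1}[\exp(-\ell\kappa_-)-1]$; since $\Phibf$ is nondecreasing, substituting this smaller quantity only enlarges the right-hand side, which produces the stated estimate.

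Finally, existence and uniqueness of $\pi_\gamma$ under \ref{coro:doeblin_lemme_1_a} must be established separately (in \ref{coro:doeblin_lemme_1_b} it is part of the hypotheses). The synchronous coupling — the same Gaussian increment $Z_{k+1}$ for both components — combined with \Cref{ass:non_expansive_Pi} and \eqref{eq:lip_op} shows that $\wassersteinD[2](\mu\Rker_\gamma,\nu\Rker_\gamma)\le(1+\gamma\kappa(\gamma))^{1/2}\wassersteinD[2](\mu,\nu)$ with $(1+\gamma\kappa(\gamma))^{1/2}<1$, i.e. $\Rker_\gamma$ is a strict contraction of the Wasserstein space of order $2$; existence of $\pi_\gamma$ then follows from Banach's fixed point theorem (alternatively, from Krylov--Bogolyubov applied to the family of Cesàro averages, tight thanks to the uniformly bounded second moments of $(\Rker_\gamma^{n}(x_0,\cdot))_{n\in\nset}$), while uniqueness among \emph{all} invariant probability measures follows at once from the total variation bound above by letting $\ell\to+\infty$, since then $\alpha(\kappa,\gamma,\ell)\to+\infty$ and the right-hand side tends to $1-2\Phibf(0)=0$. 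I expect this last point to be the only step requiring real care, specifically when $\msx$ is a non-closed Borel subset of $\rset^d$: completeness must then be invoked on $\adh{\msx}$ and one must verify that the fixed point charges $\msx$. In the situations relevant to the applications ($\Pi=\Id$, orthogonal projections onto closed convex sets, or proximal operators) $\msx$ is closed and this difficulty does not arise.
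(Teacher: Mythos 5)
Your proof of the two displayed bounds follows exactly the paper's route: apply \Cref{propo:doeb} with $\msa=\msx^2$ (so that the restriction to the event that the coupled chain stays in $\msa$ is vacuous), combine with \eqref{eq:distrib_coupling}, integrate against $\pi_\gamma$ using $\pi_\gamma=\pi_\gamma\Rker_\gamma^{\ell\ceil{1/\gamma}}$, and, in case (a), pass from $\kappa(\gamma)$ to $\kappa_-$ via the monotonicity of $t\mapsto(\rme^{\ell t}-1)/t$ on $\ooint{0,+\infty}$ — and you check the direction correctly (replacing $\alpha$ by a smaller lower bound only enlarges the right-hand side). The only genuine divergence is the existence and uniqueness of $\pi_\gamma$ in part (a): the paper derives a Foster--Lyapunov drift for $V(x)=1+\norm{x}$ from \Cref{ass:non_expansive_Pi} and item (i) of \Cref{assum:lip_op}($\msx^2$), then invokes the Feller property and compactness of the level sets of $V$ together with \cite[Theorem 12.3.3]{douc:moulines:priouret:soulier:2018}, whereas you obtain existence from the synchronous-coupling contraction in $\wassersteinD[2]$ and the Banach fixed point theorem (or Krylov--Bogolyubov), and uniqueness among all invariant probability measures by letting $\ell\to+\infty$ in the total variation bound, which is valid by bounded convergence. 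Both routes are sound; yours has the merit of making uniqueness explicit rather than folding it into the cited theorem, and the closedness caveat you raise for non-closed $\msx$ is not a defect of your argument specifically — the paper's proof has the same implicit requirement, since compactness of the level sets of $V$ restricted to $\msx$ also needs $\msx$ closed, which holds in all the applications considered ($\Pi=\Id$, orthogonal projections onto closed convex sets, proximal maps).
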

\begin{proof}
  The proof is postponed to \Cref{sec:proof-crefc}.
\end{proof}

In other words, if $\Tg$ is a contractive mapping, see \Cref{assum:lip_op}($\msx^2$)-\ref{assum:lip_op_str_convex}, then for $x\in \rset^d$ the convergence of $(\updelta_x \Rker_{\gamma}^{\ell \step})_{\ell \in \nsets}$ to $\pi_{\gamma}$ in total variation is exponential in $\ell$. If $\Tg$ is non expansive, see \Cref{assum:lip_op}($\msx^2$)-\ref{assum:lip_op_convex}, and $\Rker_{\gamma}$ admits an invariant probability measure $\pi_{\gamma}$, for any $x \in \rset^d$,  the convergence of $(\updelta_x \Rker_{\gamma}^{\ell \step})_{\ell \in \nsets}$ to $\pi_{\gamma}$ in total variation is linear in $\ell^{1/2}$. In the case where $\Tg$ is non expansive, see \Cref{assum:lip_op}($\msx^2$)-\ref{assum:lip_op_convex}, or simply Lipschitz, see \Cref{assum:lip_op}($\msx^2$)-\ref{assum:lip_op_non_convex} and no additional assumption is made, we do not directly obtain contraction in total variation but only minorization conditions.

\begin{corollary}
\label{coro:doeblin_lemme_2}
Assume \tup{\Cref{ass:non_expansive_Pi}} and \tup{\Cref{assum:lip_op}($\msx^2$)}.
  Then,  for any $\gamma \in \ocint{0,\bgamma}$, 
  \begin{enumerate}[label=(\alph*)]
  \item \label{coro:doeblin_lemme_2_a} if \tup{\Cref{assum:lip_op}($\msx^2$)-\ref{assum:lip_op_convex}} holds, for any  $x,y \in \msx$ with $\norm{x-y} \leq M$ with $M \geq 0$ and $\ell \in \nsets$ with $\ell \geq \ceil{M^2}$,
    \begin{equation}
      \label{eq:doeblin_condition_2_a}  
       \Kcoupling_{\gamma}^{\ell \step}((x,y), \Deltar^{\complementary})\leq 1 - 2 \Phibf\parenthese{ -1/2}  \eqsp;
    \end{equation}
  \item \label{coro:doeblin_lemme_2_b} if  \tup{\Cref{assum:lip_op}($\msx^2$)-\ref{assum:lip_op_non_convex}} holds, for any $x,y \in \msx $ and $\ell \in \nsets$,
        \begin{equation}
    \label{eq:doeblin_condition_2}    
 \Kcoupling_{\gamma}^{\ell \step}((x,y), \Deltar^{\complementary}) \leq 1- 2 \Phibf\defEns{ -(1 + \bgamma)^{1/2} (1 + \kappa_+)^{1/2} \norm{x-y}/2  } \eqsp ,
    \end{equation}
where $\kappa_+ = \sup_{\gamma \in \ocint{0, \bgamma}} \kappa(\gamma)$.
  \end{enumerate}
\end{corollary}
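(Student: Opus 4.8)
The plan is to deduce both bounds from \Cref{propo:doeb} applied with $\msa = \msx^2$ (a Borel subset of $\rset^{2d}$ since $\msx \in \mcbb(\rset^d)$). With this choice, \tup{\Cref{assum:lip_op}($\msx^2$)} is precisely the standing hypothesis, and the constraint $\{(X_k, Y_k) \in \msa\ \text{for all}\ k\}$ appearing in \eqref{eq:doeblin_condition_1} is automatically satisfied because the chain $(X_k, Y_k)_{k \in \nset}$ from \eqref{eq:mc_def} takes values in $\msx^2$. Since $(X_k, Y_k)_{k \in \nset}$ is associated with the kernel $\Kcoupling_{\gamma}$, one has $\proba{X_{\ell\step} \neq Y_{\ell\step}} = \Kcoupling_{\gamma}^{\ell\step}((x,y), \Deltar^{\complementary})$ when $(X_0, Y_0) = (x,y)$, so \Cref{propo:doeb} gives, for every $\gamma \in \ocint{0,\bgamma}$, $x,y \in \msx$ and $\ell \in \nsets$,
\[
\Kcoupling_{\gamma}^{\ell\step}((x,y), \Deltar^{\complementary}) \leq 1 - 2\Phibf\parenthese{-\alpha^{-1/2}(\kappa, \gamma, \ell)\,\norm{x-y}/2} \eqsp ,
\]
with $\alpha(\kappa,\gamma,\ell)$ given by the clause of \Cref{propo:doeb} matching the sign hypothesis on $\kappa$. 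It then suffices, in each regime, to bound $\alpha^{-1/2}(\kappa,\gamma,\ell)$ from above and to use that $\Phibf$ is nondecreasing.

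For \ref{coro:doeblin_lemme_2_a}, under \tup{\Cref{assum:lip_op}($\msx^2$)-\ref{assum:lip_op_convex}}, \Cref{propo:doeb}-\ref{item:kappa_0} gives $\alpha(\kappa,\gamma,\ell) = \ell$, so the argument of $\Phibf$ above equals $-\norm{x-y}/(2\ell^{1/2})$. When $\norm{x-y} \leq M$ and $\ell \geq \ceil{M^2} \geq M^2$ we get $\norm{x-y}/(2\ell^{1/2}) \leq 1/2$, and monotonicity of $\Phibf$ yields \eqref{eq:doeblin_condition_2_a}.

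For \ref{coro:doeblin_lemme_2_b}, under \tup{\Cref{assum:lip_op}($\msx^2$)-\ref{assum:lip_op_non_convex}}, \Cref{propo:doeb}-\ref{item:kappa_pos} gives the quantity $\alpha(\kappa,\gamma,\ell) = \kappa^{-1}(\gamma)\parentheseDeux{1 - \exp\defEns{-\ell\kappa(\gamma)/(1+\gamma\kappa(\gamma))}}$, which is positive because $\kappa(\gamma) > 0$; the target is the uniform-in-$\ell$ lower bound $\alpha(\kappa,\gamma,\ell) \geq \parentheseDeux{(1+\bgamma)(1+\kappa_+)}^{-1}$. Setting $c = \kappa(\gamma)/(1+\gamma\kappa(\gamma)) > 0$ and using $\ell \geq 1$ together with the elementary bound $1 - \rme^{-v} \geq v/(1+v)$ for $v \geq 0$, one gets $1 - \rme^{-\ell c} \geq 1 - \rme^{-c} \geq c/(1+c)$, which simplifies to $\alpha(\kappa,\gamma,\ell) \geq \parentheseDeux{1 + (1+\gamma)\kappa(\gamma)}^{-1}$; then $\gamma \leq \bgamma$, $\kappa(\gamma) \leq \kappa_+$ and $\bgamma > 0$ give $1 + (1+\gamma)\kappa(\gamma) \leq 1 + (1+\bgamma)\kappa_+ \leq (1+\bgamma)(1+\kappa_+)$, whence $\alpha^{-1/2}(\kappa,\gamma,\ell) \leq (1+\bgamma)^{1/2}(1+\kappa_+)^{1/2}$. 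Substituting into the display and using monotonicity of $\Phibf$ gives \eqref{eq:doeblin_condition_2}.

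I expect the only nontrivial point to be the chain of elementary estimates in \ref{coro:doeblin_lemme_2_b} leading to the lower bound on $\alpha(\kappa,\gamma,\ell)$ with exactly the constant $[(1+\bgamma)(1+\kappa_+)]^{-1}$; everything else is a direct substitution into \Cref{propo:doeb} combined with the monotonicity of the standard Gaussian cumulative distribution function.
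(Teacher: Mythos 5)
Your proposal is correct and follows essentially the same route as the paper: both parts are direct applications of \Cref{propo:doeb} with $\msa = \msx^2$ (so the constraint that the chain stays in $\msa$ is automatic), and your part~\ref{coro:doeblin_lemme_2_b} uses the same elementary bound $1-\rme^{-t}\geq t/(1+t)$ together with $\ell \geq 1$, $\gamma \leq \bgamma$ and $\kappa(\gamma)\leq \kappa_+$ that the paper uses, merely applied in a slightly different order.
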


\begin{proof}
    The proof is postponed to \Cref{coro:doeblin_lemme_2:proof}.
\end{proof}

In our application below, we are mainly interested in the case where $\Rker_{\gamma}$ satisfies a geometric drift condition. Let $(\msy,\mcy)$ be a measurable space,  $\lambda \in (0,1)$, $A\geq0$, $V: \msy \to \coint{1,+\infty}$ be a measurable function and $\msc \in \mcy$.
\begin{assumptionD}[\hypertarget{ass:drift_discrete}{$\bfDd(V,\lambda,A,\msc)$}]
  A Markov kernel $\Rcoupling$ on $\msy\times \mcy$ satisfies the discrete Foster-Lyapunov drift condition if  for all $y \in \msy$
\begin{equation}
  \label{eq:discrete_drift}
  \Rcoupling V(y) \leq \lambda V(y) + A \1_{\msc}(y) \eqsp.
\end{equation}
\end{assumptionD}
The index $\mathrm{d}$ in $\bfDd$ stands for ``discrete'' as we will introduce the continuous-time counterpart of this drift condition, denoted by $\bfDc$, in \Cref{sec:main-results-3}.
Note that this drift condition implies the existence of an invariant probability measure if $\Rker$ is a Feller kernel and the level sets of $V$ are compact, see \cite[Theorem 12.3.3]{douc:moulines:priouret:soulier:2018}. In the sequel, 
we are interested in establishing convergence results in  the Wasserstein metric $\distV$ associated with the
cost
\begin{equation}
  \label{eq:def_wbf}
  \bfc : (x,y) \mapsto \1_{\Deltar^{\complementary}}(x,y)\VlyapD(x,y)
\end{equation}
where $\lyap: \ \msx \times \msx \to \coint{0,+\infty}$ satisfies for any $x,y, z \in \msx$,
$\lyap(x,y) = \lyap(y,x)$, $\lyap(x,z) \leq \lyap(x,y) + \lyap(y,z)$ and $\lyap(x,y) = 0$ implies that $x = y$. Note that under these conditions on $\lyap$, $\bfc$ defines a metric on $\rset^d$. Let $\mu, \nu$ be two probability measures over $\mcx$, we highlight three cases.
\begin{itemize}
\item total variation: if $\lyap = 1$ then $\distV(\mu,\nu) = \tvnorm{\mu - \nu}$ ;
\item $V$-norm: if $\lyap(x,y) = \defEnsLigne{V(x) + V(y)}/2$ where $V: \ \rset^d \to \coint{1,+\infty}$ is measurable then $\distV(\mu,\nu) = \Vnorm[V]{\mu - \nu}$ ;
\item total variation + Kantorovitch-Rubinstein metric: if $\lyap(x,y) = 1 + \vartheta \norm{x-y}$ with $\vartheta >0$, then by definition of Wasserstein metrics, $\distV(\mu, \nu) \geq \tvnorm{\mu - \nu} + \vartheta \wassersteinD[1](\mu, \nu) $.
\end{itemize}
We now state convergence bounds for Markov kernels which satisfy one of the conclusions of \Cref{coro:doeblin_lemme_2}. Indeed, in order to deal with the two assumptions  \Cref{assum:lip_op}($\msx^2$)-\ref{assum:lip_op_convex} and \Cref{assum:lip_op}($\msx^2$)-\ref{assum:lip_op_non_convex} together, we provide a general result regarding the contraction of $\Rker_{\gamma}$ in the metric $\distV$ for some cost function $\bfc$ on $\msx^2$. This result is based on an abstract condition on $\KkerD_{\gamma}^{\step}\1_{\Deltar^{\complementary}}$, which is satisfied under \Cref{assum:lip_op}($\msx^2$)-\ref{assum:lip_op_convex} or \Cref{assum:lip_op}($\msx^2$)-\ref{assum:lip_op_non_convex} by  \Cref{coro:doeblin_lemme_2} with $\KkerD_{\gamma} \leftarrow \Kker_{\gamma}$,  and a drift condition for $\KkerD_{\gamma}$, where $\KkerD_{\gamma}$ is a Markov coupling kernel for $\Rker_{\gamma}$.  We recall that for any $M \geq 0$,
\begin{equation}
  \label{eq:def_delta_M}
  \Delta_{\msx,M} = \{(x,y) \in \msx \, : \, \norm{x-y} \leq M\} \eqsp. 
\end{equation}

\begin{theorem}
  \label{theo:discrete_contrac_wass_D_v2}
  Assume that there exist  $\lambda \in (0,1)$, $A \geq 0$, $\tM_{\discrete}>0$, a measurable function $\VlyapD : \msx \times \msx \to \coint{1,\plusinfty}$, $\msc \in \mcx^{\otimes 2}$  with $\msc \subset \Delta_{\msx,\tM_{\discrete}}$ and for any $\gamma \in \ocint{0,\bgamma}$, $\KkerD_{\gamma}$ a Markov coupling kernel for $\Rcoupling_{\gamma}$ satisfying  \hyperlink{ass:drift_discrete}{$\bfDd(\VlyapD,\lambda^{\gamma}, A\gamma, \msc)$}.
  Further, assume that for any $\gamma \in \ocint{0, \bgamma}$, $\Delta_{\msx}$ is absorbing for $\KkerD_{\gamma}$, \ie \ for any $x \in \msx$, $\KkerD_{\gamma} \1_{\Delta_{\msx}}(x,x) = 1$, 
 and that there exists $\Psibf : \ocint{0,\bgamma} \times \nsets \times \rset_+ \to \ccint{0,1}$ such that for any $\gamma \in \ocint{0, \bgamma}$, $\ell \in \nsets$ and $x,y \in \msx$
  \begin{equation}
  \label{eq:minorization_condition_v2}
   \KkerD_{\gamma}^{\ell \step} ((x,y), \Deltar^{\complementary}) \leq 1 - \Psibf(\gamma, \ell, \norm{x-y})  \eqsp, 
 \text{ and for any $M \geq 0$, $\inf_{(x,y) \in \Delta_{\msx,M}} \Psibf(\gamma,\ell, \norm{x-y}) >0$} \eqsp.
\end{equation}
Then, for any $\gamma \in \ocint{0, \bgamma}$, $\ell \in \nsets$ and $x,y \in \msx$
\begin{equation}
  \label{eq:theo:discrete_contrac_wass_D_v2_a}
  \distV(\updelta_x \Rcoupling_{\gamma}^k, \updelta_y \Rcoupling_{\gamma}^k) \leq \KkerD_{\gamma}^k \bfc(x,y) \leq \lambda^{k\gamma/4} [\bD_1
 \bfc(x,y) + \bD_2\1_{\Deltar^{\complementary}}(x,y)] +  \bC_1 \brho_1^{k\gamma/4}\1_{\Deltar^{\complementary}}(x,y)  \eqsp,
  \end{equation}
 where 
  \begin{equation}
    \begin{aligned}
      \bD_1 &= 1+ 4 A  \log^{-1}(1/\lambda)/\lambda^{\bgamma} \eqsp, \quad \bD_2  = \bD_1 A \lambda^{-(1+\bgamma)\ell} (1 + \bgamma)\ell  \eqsp, \quad \bC_{1}  = 8 A \log^{-1}(1/\brho_1)/ \brho_1^{\bgamma}\eqsp, \\
      \log(\brho_1) &= \defEns{\log(\lambda) \log(1 - \bvareps_{\discrete, 1}) } / \defEns{-\log(\bc_{1})  + \log(1 - \bvareps_{\discrete, 1}) } \eqsp , \quad 
     \bc_{1} = \tBdisc +  A \lambda^{-(1 + \bgamma)\ell} (1 + \bgamma)\ell \eqsp , \\
      \bvareps_{\discrete, 1}&  =  \inf_{\gamma \in \ocint{0, \bgamma}, \ (x,y) \in \Delta_{\msx,\tM_{\discrete}}} \Psibf(\gamma, \ell, \norm{x-y}) \qquad \tBdisc = \sup_{(x,y) \in \msc }\lyap(x,y) \eqsp.
  \end{aligned}  
\end{equation}  
In addition, if $\bgamma \leq 1$ 
       and $\bvareps_{\discrete, 1} \leq 1 - \rme^{-1}$, then
       \begin{equation}
         \label{eq:rho_upper}
        \log^{-1}(\brho_1^{-1}) \leq \left .\parentheseDeux{1 + \log\parentheseLigne{\tBdisc} + \log(1 + 2A\ell) +2\ell \log(\lambda^{-1})}  \middle/  \parentheseDeux{\log(\lambda^{-1}) \bvareps_{\discrete, 1}} \right.\eqsp . \label{eq:majo_rho} 
      \end{equation}

\end{theorem}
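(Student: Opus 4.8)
The plan is to reduce the statement to an estimate on the iterated coupling kernel $\KkerD_\gamma^{k}$ and then to run the classical drift-plus-minorization argument for geometric ergodicity on the pair chain, following the strategy of \cite{douc:moulines:priouret:soulier:2018} and keeping every constant explicit. First, since for every $k\in\nset$ the measure $\KkerD_\gamma^{k}((x,y),\cdot)$ is a transference plan between $\updelta_x\Rcoupling_\gamma^{k}$ and $\updelta_y\Rcoupling_\gamma^{k}$, the definition \eqref{eq:def_distance_wasser} of $\distV$ gives at once $\distV(\updelta_x\Rcoupling_\gamma^{k},\updelta_y\Rcoupling_\gamma^{k})\le\KkerD_\gamma^{k}\bfc(x,y)$, which is the first inequality of \eqref{eq:theo:discrete_contrac_wass_D_v2_a}; it therefore suffices to bound $\KkerD_\gamma^{k}\bfc(x,y)$. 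Because $\Delta_\msx$ is absorbing for $\KkerD_\gamma$ and $\bfc$ vanishes on $\Delta_\msx$, one has $\KkerD_\gamma^{k}\bfc(x,x)=0$, which is why the additive terms in the final bound carry the factor $\1_{\Deltar^{\complementary}}(x,y)$.

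The core of the argument is to set up the drift/minorization pair for the $\ell\step$-skeleton $Q:=\KkerD_\gamma^{\ell\step}$. Iterating \hyperlink{ass:drift_discrete}{$\bfDd(\VlyapD,\lambda^{\gamma},A\gamma,\msc)$} over $\ell\step$ steps and performing a first-entrance decomposition at the first visit of the pair chain to $\msc\subset\Delta_{\msx,\tM_\discrete}$, I would obtain a drift $Q\VlyapD\le\bar{\lambda}\,\VlyapD+\bar{b}\,\1_{\Delta_{\msx,\tM_\discrete}}$ together with the uniform bound $\sup_{(x,y)\in\Delta_{\msx,\tM_\discrete}}Q\VlyapD(x,y)\le\bc_1$. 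The elementary inputs turning the exponents and the geometric sum into the stated constants are $1\le\gamma\step\le1+\bgamma$ (so that $\lambda^{\gamma\ell\step}\le\lambda^{\ell}$, $\lambda^{-\gamma\ell\step}\le\lambda^{-(1+\bgamma)\ell}$ and $\gamma\ell\step\le(1+\bgamma)\ell$), $\gamma\le\bgamma$, and $1-\lambda^{\gamma}\ge-\gamma\log(\lambda)\,\lambda^{\bgamma}$; they yield $\bD_1=1+4A\log^{-1}(1/\lambda)/\lambda^{\bgamma}$ and $\bc_1=\tBdisc+A\lambda^{-(1+\bgamma)\ell}(1+\bgamma)\ell$ with $\tBdisc=\sup_{(x,y)\in\msc}\lyap(x,y)$, the factor $\lambda^{-(1+\bgamma)\ell}(1+\bgamma)\ell$ coming from controlling the growth of $\VlyapD$ over an excursion of length $\ell\step$ and reappearing in $\bD_2$. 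On the minorization side, restricting \eqref{eq:minorization_condition_v2} to $\Delta_{\msx,\tM_\discrete}$ gives $Q((x,y),\Deltar^{\complementary})\le1-\bvareps_{\discrete,1}$ for $(x,y)\in\Delta_{\msx,\tM_\discrete}$, i.e. a coupling probability at least $\bvareps_{\discrete,1}$ each time the skeleton pair chain lies in the small set, while the diagonal stays absorbing.

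With these ingredients I would apply the quantitative geometric convergence estimate of \cite{douc:moulines:priouret:soulier:2018} to the pair chain: decompose a trajectory into successive excursions away from $\Delta_{\msx,\tM_\discrete}$, control the (geometrically distributed) lengths and the $\VlyapD$-cost of these excursions through the drift, make the number of excursions before coupling geometric through the minorization, and sum. This produces, for the skeleton, a bound of the form $Q^{m}\bfc(x,y)\le\bar{\lambda}^{m}[\bD_1\bfc(x,y)+\bD_2\1_{\Deltar^{\complementary}}(x,y)]+\bC_1\,\brho_1^{\gamma\ell\step m}\1_{\Deltar^{\complementary}}(x,y)$, with $\brho_1$ the optimal trade-off between the drift decay $\lambda$ and the coupling probability $\bvareps_{\discrete,1}$ against the excursion cost $\bc_1$, namely $\log\brho_1=\log(\lambda)\log(1-\bvareps_{\discrete,1})/\{-\log(\bc_1)+\log(1-\bvareps_{\discrete,1})\}$, and $\bC_1=8A\log^{-1}(1/\brho_1)/\brho_1^{\bgamma}$ collecting the drift contribution along excursions. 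One then passes from the skeleton to an arbitrary $k\in\nset$ by Euclidean division $k=m\ell\step+r$ with $0\le r<\ell\step$, bounding $\KkerD_\gamma^{r}$ applied to the previous estimate by the one-step drift; the bounded loss caused by the remainder $r$ and the skeleton length is absorbed by weakening the exponents from $\gamma k$ to $\gamma k/4$, which yields \eqref{eq:theo:discrete_contrac_wass_D_v2_a}. I expect this step to be the main obstacle: running the excursion/regeneration estimate while keeping all constants explicit, and in particular extracting the closed form for $\brho_1$; the rest is bookkeeping.

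Finally, \eqref{eq:majo_rho} is purely algebraic. From the definition of $\brho_1$ one has $\log^{-1}(\brho_1^{-1})=\{\log(\bc_1)-\log(1-\bvareps_{\discrete,1})\}/\{\log(\lambda^{-1})\,(-\log(1-\bvareps_{\discrete,1}))\}$. When $\bgamma\le1$, using $(1+\bgamma)\ell\le2\ell$ and $\tBdisc\ge1$ gives $\bc_1\le\tBdisc(1+2A\ell\,\lambda^{-(1+\bgamma)\ell})$, whence $\log(\bc_1)\le\log(\tBdisc)+\log(1+2A\ell)+2\ell\log(\lambda^{-1})$; when $\bvareps_{\discrete,1}\le1-\rme^{-1}$, one has $\bvareps_{\discrete,1}\le-\log(1-\bvareps_{\discrete,1})\le1$. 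Substituting these two bounds into the above expression for $\log^{-1}(\brho_1^{-1})$ gives \eqref{eq:majo_rho}.
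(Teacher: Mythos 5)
Your reduction to bounding $\KkerD_{\gamma}^{k}\bfc(x,y)$ and your final algebraic derivation of \eqref{eq:majo_rho} are fine, but the core of your argument takes the skeleton route --- iterate the one-step drift to $Q=\KkerD_{\gamma}^{\ell\step}$, build a drift/minorization pair for $Q$, and apply the classical regeneration bound of \cite{douc:moulines:priouret:soulier:2018} --- and this route does not deliver the theorem as stated. The concrete gap is your claimed skeleton drift $Q\VlyapD\le\bar{\lambda}\,\VlyapD+\bar{b}\,\1_{\Delta_{\msx,\tM_{\discrete}}}$: iterating $\bfDd(\VlyapD,\lambda^{\gamma},A\gamma,\msc)$ gives $Q\VlyapD\le\lambda^{\gamma\ell\step}\VlyapD+A\gamma\sum_{j=0}^{\ell\step-1}\lambda^{\gamma j}\,\KkerD_{\gamma}^{\ell\step-1-j}\1_{\msc}$, and the terms $\KkerD_{\gamma}^{m}\1_{\msc}(x,y)$ are not supported on $\Delta_{\msx,\tM_{\discrete}}$, so the additive part cannot be localized to the prescribed set; the standard fix replaces it by a sublevel set of $\VlyapD$ whose size is dictated by $A$, $\lambda$ and $\ell$, which is exactly what \Cref{theo:discrete_contrac_wass} (and \Cref{lem:eric_book}, \Cref{thm:v_ergo_gene}) does, at the price of requiring the minorization on that larger set (the constant $\bvareps_{\discrete,2}$ there) rather than only on $\Delta_{\msx,\tM_{\discrete}}$ as in $\bvareps_{\discrete,1}$. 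Moreover, the skeleton argument produces a single rate per skeleton step governed by $\lambda^{\gamma\ell\step}$ (or $(\lambda^{\ell}+1)/2$), so it cannot yield the two-rate structure of \eqref{eq:theo:discrete_contrac_wass_D_v2_a}, in which the initial condition $\bfc(x,y)$ is forgotten at the one-step rate $\lambda^{k\gamma/4}$ while only the $(x,y)$-free term decays at $\brho_1^{k\gamma/4}$, nor the closed form of $\brho_1$ in which $\log\lambda$ (not $\log\lambda^{\ell}$) enters; your Euclidean-division-plus-``absorb into $k\gamma/4$'' step does not repair this, since the loss is in the base of the exponential, not in a bounded prefactor.

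The paper's proof avoids the skeleton drift entirely: it applies \Cref{theo:quanti_v_alain_v_norm} (via \Cref{thm:uno}) to the pair chain with the bounded semi-metric $\distY=\1_{\Deltar^{\complementary}}$, using the \emph{one-step} drift $\bfDd(\VlyapD,\lambda^{\gamma},A\gamma,\msc)$ to control exponential moments $\expeLigne{\lambda^{-\gamma T^{(m)}_{\msc,\ntt}}}$ of the return times to $\msc$ delayed by $\ntt=\ell\step$ (\Cref{lem:bound_T_m}), the supermartingale property of $\distY(\X_n,\Y_n)$ together with the $\ntt$-iterate minorization on $\msc$ only (\Cref{propo:adapt_ahip}), and then a Markov-inequality optimization that produces exactly $\log(\brho_1)=\log(\lambda)\log(1-\bvareps_{\discrete,1})/\{-\log(\bc_1)+\log(1-\bvareps_{\discrete,1})\}$ with $\bc_1=\tBdisc+A\lambda^{-(1+\bgamma)\ell}(1+\bgamma)\ell$, the two-rate bound, and the constants $\bD_1,\bD_2,\bC_1$ (the factor $4\log^{-1}(1/\cdot)$ coming from $n t^{n/2}\le 4\log^{-1}(1/t)t^{n/4}$ applied to the geometric sum in \Cref{theo:quanti_v_alain_v_norm}). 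If you want to salvage your write-up, you should replace the skeleton drift step by this hitting-time/exponential-moment argument; otherwise what you prove is (a version of) \Cref{theo:discrete_contrac_wass}, under the extra, unstated hypothesis that the relevant sublevel set of $\VlyapD$ is contained in $\Delta_{\msx,\tM_{\discrete}}$, and with constants that differ from those claimed.
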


\begin{proof}
  The proof is postponed to \Cref{sec:theo:discrete_contrac_wass_D_v2:proof}.
\end{proof}
We emphasize that \eqref{eq:minorization_condition_v2} is satisfied under \Cref{assum:lip_op}($\msx^2$)-\ref{assum:lip_op_convex} or \Cref{assum:lip_op}($\msx^2$)-\ref{assum:lip_op_non_convex} by  \Cref{coro:doeblin_lemme_2} with $\KkerD_{\gamma} \leftarrow \Kker_{\gamma}$.

Further, note that in \eqref{eq:theo:discrete_contrac_wass_D_v2_a}, the leading term, $\bC_1 \brho_1^{k\gamma/4}$, does not depend on $x,y \in \msx$. Indeed, the rate in front of the initial conditions $\lyap(x,y)$ is given by $\lambda^{\gamma/4}$ which is always smaller than $\brho_1^{\gamma/4}$. Therefore, \Cref{theo:discrete_contrac_wass_D_v2} implies in particular that for any $\gamma \in \ocint{0, \bgamma}$, $x,y \in  \msk$ and $k \in \nset$
\begin{equation}
  \label{eq:convergence_c}
    \distV(\updelta_x \Rcoupling_{\gamma}^k, \updelta_y \Rcoupling_{\gamma}^k) \leq \brho_1^{k\gamma/4} [\bD_{1}  + \bD_{2} +  \bC_{1}] \bfc(x,y)  \eqsp.
\end{equation}

We conclude this section with two propositions which highlight the usefulness of
 of the conclusions of  \Cref{theo:discrete_contrac_wass_D_v2} to establish convergence
estimates with respect to different metrics.  First, in
\Cref{prop:w1_contrac_iterees}, under additional conditions on $\Psibf$ and on
$\lyap$ (which will be satisfied in our applications, see
\Cref{coro:w1_wp}) we get a similar result to \eqref{eq:convergence_c}
replacing $\bfc$ by $(x,y) \mapsto \norm{x-y}$, \ie \ replacing
$\wassersteinD[\bfc]$ by $\wassersteinD[1]$.

\begin{proposition}
  \label{prop:w1_contrac_iterees}
  Assume that the conditions of \Cref{theo:discrete_contrac_wass_D_v2} are satisfied with for any $x,y \in \msx$, $\lyap(x,y) = 1 + \vartheta \norm{x-y}$, where $\vartheta >0$.
  In addition, assume that the following conditions hold.
  \begin{enumerate}[label=(\roman*)]
  \item For any $\gamma \in \ocint{0, \bgamma}$, $t \mapsto \Psibf(\gamma, 1, t)$ is convex on $\rset_+$, admits a right-derivative at $0$, denoted by $\Psibf'(\gamma, 1, 0)$, and $\mathbf{a} = \inf_{\gamma \in \ocint{0, \bgamma}} \Psibf'(\gamma, 1, 0) > -\infty$. \label{prop:w1_contrac_iterees:1}
  \item There exists $\varkappa \geq 0$ such that for any $x,y \in \msx$,  
    $\KkerD_{\gamma}\norm{x-y} \leq (1 + \gamma \varkappa) \norm{x-y}$. \label{prop:w1_contrac_iterees:3}
  \end{enumerate}
  Then there exist $\bD_3 \geq 0$ and $\brho_1 \in \coint{0,1}$ such that for any
  $\gamma \in \ocint{0, \bgamma}$, $x,y \in \msx$ and $k \in \nset$
  \begin{equation}
    \label{eq:wass_un}
    \wassersteinD[1](\updelta_x \Rcoupling_{\gamma}^k, \updelta_y \Rcoupling_{\gamma}^k) \leq \KkerD_{\gamma}^k \norm{x-y} \leq \bD_3  \brho_1^{k \gamma/4} \norm{x-y} \eqsp ,
  \end{equation}
  with $\brho_1$ given in \Cref{theo:discrete_contrac_wass_D_v2} and $\bD_3$ explicit in the proof.
\end{proposition}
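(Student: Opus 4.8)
The first inequality in \eqref{eq:wass_un} holds for any coupling: since $\KkerD_\gamma$ is a Markov coupling kernel for $\Rcoupling_\gamma$, its iterate $\KkerD_\gamma^k$ is a Markov coupling kernel for $\Rcoupling_\gamma^k$, so $\KkerD_\gamma^k((x,y),\cdot)\in\transference(\updelta_x\Rcoupling_\gamma^k,\updelta_y\Rcoupling_\gamma^k)$ and, by definition of $\wassersteinD[1]$, $\wassersteinD[1](\updelta_x\Rcoupling_\gamma^k,\updelta_y\Rcoupling_\gamma^k)\leq\int_{\msx^2}\norm{u-v}\,\KkerD_\gamma^k((x,y),\rmd(u,v))=\KkerD_\gamma^k\norm{\cdot-\cdot}(x,y)$. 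Everything then reduces to the second inequality, which I would prove by a two-time-scale (peeling) argument: the first $\step$ steps are used to turn the pair $(x,y)$ into one for which $\KkerD_\gamma^{\step}\bfc$ is controlled by $\norm{x-y}$, while \Cref{theo:discrete_contrac_wass_D_v2}, applied to the remaining $k-\step$ steps, supplies the geometric rate $\brho_1$.

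First I would record two bounds, uniform in $\gamma\in\ocint{0,\bgamma}$. From \ref{prop:w1_contrac_iterees:3} and monotonicity of Markov kernels, an induction on $n$ gives $\KkerD_\gamma^{n}\norm{\cdot-\cdot}(x,y)\leq(1+\gamma\varkappa)^{n}\norm{x-y}$; with $\gamma\step\leq 1+\bgamma$ and $1+t\leq\rme^{t}$ this yields $\KkerD_\gamma^{\step}\norm{\cdot-\cdot}(x,y)\leq\rme^{\varkappa(1+\bgamma)}\norm{x-y}$. Next, by \ref{prop:w1_contrac_iterees:1} the map $g_\gamma:=1-\Psibf(\gamma,1,\cdot)$ is concave on $\rset_+$; moreover $\Psibf(\gamma,1,0)=1$ (this is the normalisation satisfied by the minorizing function produced in \Cref{theo:minorization_general}/\Cref{propo:doeb}, consistent with $\Deltar$ being absorbing for $\KkerD_\gamma$), so $g_\gamma(0)=0$ and concavity gives $g_\gamma(t)\leq -\Psibf'(\gamma,1,0)\,t\leq\absLigne{\mathbf{a}}\,t$ for $t\geq 0$, using $0\leq-\Psibf'(\gamma,1,0)\leq\absLigne{\mathbf{a}}$ (since $\Psibf(\gamma,1,\cdot)\leq 1=\Psibf(\gamma,1,0)$ and $\mathbf{a}=\inf_\gamma\Psibf'(\gamma,1,0)$). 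Plugging $t=\norm{x-y}$ into \eqref{eq:minorization_condition_v2} with $\ell=1$ gives the key estimate $\KkerD_\gamma^{\step}\1_{\Deltar^{\complementary}}(x,y)\leq\absLigne{\mathbf{a}}\norm{x-y}$. Since $\lyap(x,y)=1+\vartheta\norm{x-y}$ and $\norm{x-y}=\1_{\Deltar^{\complementary}}(x,y)\norm{x-y}$, we have $\bfc=\1_{\Deltar^{\complementary}}+\vartheta\norm{\cdot-\cdot}$, so by linearity and the two bounds above $\KkerD_\gamma^{\step}\bfc(x,y)\leq(\absLigne{\mathbf{a}}+\vartheta\rme^{\varkappa(1+\bgamma)})\norm{x-y}$.

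For $k\geq\step$, write $k=\step+j$. By the Markov property, $\KkerD_\gamma^{k}\norm{\cdot-\cdot}(x,y)=\int_{\msx^2}\KkerD_\gamma^{j}\norm{\cdot-\cdot}(u,v)\,\KkerD_\gamma^{\step}((x,y),\rmd(u,v))$. For the integrand, $\vartheta\norm{u-v}\leq\bfc(u,v)$ combined with \eqref{eq:theo:discrete_contrac_wass_D_v2_a} (bounding $\1_{\Deltar^{\complementary}}\leq\bfc$ and using $\lambda\leq\brho_1$, see the discussion after \Cref{theo:discrete_contrac_wass_D_v2}) gives $\KkerD_\gamma^{j}\norm{\cdot-\cdot}(u,v)\leq\vartheta^{-1}(\bD_1+\bD_2+\bC_1)\brho_1^{j\gamma/4}\bfc(u,v)$. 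Integrating against $\KkerD_\gamma^{\step}((x,y),\cdot)$, invoking the bound on $\KkerD_\gamma^{\step}\bfc$ from the previous paragraph and $\brho_1^{j\gamma/4}\leq\brho_1^{-(1+\bgamma)/4}\brho_1^{k\gamma/4}$ (since $\step\gamma\leq 1+\bgamma$), yields $\KkerD_\gamma^{k}\norm{\cdot-\cdot}(x,y)\leq\vartheta^{-1}(\bD_1+\bD_2+\bC_1)(\absLigne{\mathbf{a}}+\vartheta\rme^{\varkappa(1+\bgamma)})\brho_1^{-(1+\bgamma)/4}\brho_1^{k\gamma/4}\norm{x-y}$. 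For $k<\step$, the first elementary bound alone gives $\KkerD_\gamma^{k}\norm{\cdot-\cdot}(x,y)\leq\rme^{\varkappa(1+\bgamma)}\norm{x-y}\leq\rme^{\varkappa(1+\bgamma)}\brho_1^{-(1+\bgamma)/4}\brho_1^{k\gamma/4}\norm{x-y}$ (as $k\gamma\leq 1+\bgamma$). Taking $\bD_3=\brho_1^{-(1+\bgamma)/4}\max\{\rme^{\varkappa(1+\bgamma)},\,\vartheta^{-1}(\bD_1+\bD_2+\bC_1)(\absLigne{\mathbf{a}}+\vartheta\rme^{\varkappa(1+\bgamma)})\}$ then gives \eqref{eq:wass_un} for all $k\in\nset$, with $\brho_1$ as in \Cref{theo:discrete_contrac_wass_D_v2}.

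The main obstacle is exactly the passage carried out in the second paragraph: \Cref{theo:discrete_contrac_wass_D_v2} only delivers a contraction in the cost $\bfc$, which contains the indicator $\1_{\Deltar^{\complementary}}$ and hence does not, on its own, produce a bound proportional to $\norm{x-y}$ for nearby points. Extracting the factor $\norm{x-y}$ forces one to linearise the non-coupling probability $\KkerD_\gamma^{\step}\1_{\Deltar^{\complementary}}$ near the diagonal, which is precisely what the convexity hypothesis \ref{prop:w1_contrac_iterees:1} (with $\Psibf(\gamma,1,0)=1$) and the one-step Lipschitz-in-distance estimate \ref{prop:w1_contrac_iterees:3} provide; the normalisation $\gamma\step\leq 1+\bgamma$ is what keeps all constants uniform in $\gamma$.
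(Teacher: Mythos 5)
Your proposal is correct and follows essentially the same route as the paper's proof: peel off the first $\step$ steps, linearise $\KkerD_{\gamma}^{\step}\1_{\Deltar^{\complementary}}(x,y)\leq 1-\Psibf(\gamma,1,\norm{x-y})\leq -\mathbf{a}\norm{x-y}$ via convexity of $\Psibf$ together with the normalisation at $0$, control the remaining $k-\step$ steps by the $\bfc$-contraction of \Cref{theo:discrete_contrac_wass_D_v2} with $\lambda\leq\brho_1$, and treat $k\leq\step$ separately using \ref{prop:w1_contrac_iterees:3}. Your handling of the small-$k$ case (absorbing the missing rate factor via $k\gamma\leq 1+\bgamma$) is in fact slightly more explicit than the paper's.
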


\begin{proof}
  The proof is postponed to \Cref{sec:prop:w1_contrac_iterees:proof}.
\end{proof}

As a consequence, if $\msx$ is closed, the Markov kernel
$\Rker_{\gamma}$ admits a unique invariant probability measure
$\pi_{\gamma}$, \ie \ $\pi_{\gamma} = \pi_{\gamma} \Rker_{\gamma}$,
using \cite[Chapter 1, 6, A.1]{granas2003fixed}, and since we have
that
$\mathscr{P}_1(\msx) = \{\mu \text{ probability measure on }
(\rset^d,\mcbb(\rset^d)) \, : \, \int_{\rset^d} \norm{x} \rmd \mu(x) <
\plusinfty\}$, endowed with $\wassersteinD[1]$ is complete, see
\cite[Theorem 6.18]{villani2009optimal}. Further, for any
$\gamma \in \ocint{0, \bgamma}$, using \cite[Theorem
1]{meyers1967converse}, there exists a distance $\mathbf{d}_{\gamma}$
on $\mathscr{P}_1(\msx)$, topologically equivalent to
$\wassersteinD[1]$, such that $\mathscr{P}_1$ is complete and for any
$x, y \in \rset^{\dim}$ and $k \in \nset$
\begin{equation}
  \mathbf{d}_{\gamma}(\updelta_x \Rker_{\gamma}^k, \updelta_y \Rker_{\gamma}^k) \leq \rho^{k \gamma/4} \mathbf{d}_{\gamma}(\updelta_x,\updelta_y) \eqsp .
\end{equation}

A similar result to \eqref{eq:wass_un} in \Cref{prop:w1_contrac_iterees} can be derived when replacing $\wassersteinD[1]$ by $\wassersteinD[p]$
with $p \in \nset$, if we assume some Foster-Lyapunov condition with respect to $(x,y) \mapsto \norm{x-y}^p$.

\begin{proposition}
  \label{prop:from_1_to_p}
  Assume that there exist $\brho \in \ocint{0, 1}$, $\bD \geq 0$ and for any $\gamma \in \ocint{0, \bgamma}$,  $\KkerD_{\gamma}$ a Markov coupling kernel for $\Rcoupling_{\gamma}$  satisfying for any $x,y \in \msx$ and $k \in \nset$
  \begin{equation}
    \label{eq:wass_un_simpl}
    \KkerD_{\gamma}^k \norm{x-y} \leq \bD \brho^{k \gamma} \norm{x-y} \eqsp .
  \end{equation}
  In addition, assume that for any $\gamma \in \ocint{0, \bgamma}$ and $q \in \nset$,
  $\KkerD_{\gamma}$  satisfies
  \hyperlink{ass:drift_discrete}{$\bfDd((x,y) \mapsto \normLigne{x-y}^q,\tilde{\lambda}_q^{\gamma}, \tilde{A}_q\gamma)$} with $\tilde{\lambda}_q \in \ocint{0,1}$ and $\tilde{A}_q \geq 0$.
  Then, for any $p \geq 1$ and $\upalpha \in \ooint{p, +\infty}$ there exists $\bD_{4, \upalpha} \geq 0$ such that
  \begin{equation}
    \wassersteinD[p](\updelta_x \Rker_{\gamma}^k, \updelta_x \Rker_{\gamma}^k) \leq \parenthese{\KkerD_{\gamma}^k \norm{x-y}^p}^{1/p} \leq \bD_{4, \upalpha} \brho^{k \gamma / \upalpha} \defEns{\norm{x-y} + \norm{x-y}^{1/\upalpha}} \eqsp ,
  \end{equation}
  with $\bD_{4, \upalpha}$ explicit in the proof.
\end{proposition}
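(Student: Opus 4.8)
The plan is to reduce the claim to a moment estimate for the coupling kernel and then interpolate. Since $\KkerD_{\gamma}$ is a Markov coupling kernel for $\Rcoupling_{\gamma}$, an immediate induction (using \cite[Theorem 19.1.6]{douc:moulines:priouret:soulier:2018}) shows that for every $k \in \nset$ the measure $\KkerD_{\gamma}^k((x,y),\cdot)$ is a transference plan between $\updelta_x \Rcoupling_{\gamma}^k$ and $\updelta_y \Rcoupling_{\gamma}^k$, so that $\wassersteinD[p](\updelta_x \Rcoupling_{\gamma}^k, \updelta_y \Rcoupling_{\gamma}^k)^p \leq \KkerD_{\gamma}^k \normLigne{x-y}^p$ by the very definition of $\wassersteinD[p]$; it thus suffices to bound $\KkerD_{\gamma}^k \normLigne{x-y}^p$. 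Fix $p \geq 1$ and $\upalpha \in \ooint{p,+\infty}$. If $p = 1$ the result is immediate from \eqref{eq:wass_un_simpl}, since $\brho \leq 1$ gives $\brho^{k\gamma} \leq \brho^{k\gamma/\upalpha}$; so assume $p > 1$ and set
\begin{equation*}
  q = \ceil{1 + \upalpha(p-1)/(\upalpha-p)} \in \nset \eqsp , \qquad t = (p-1)/(q-1) \eqsp .
\end{equation*}
Then $q \geq 2$, $q \geq p$, $t \in \ooint{0,1}$, $(1-t) + tq = p$, and the choice of $q$ guarantees $q-1 \geq \upalpha(p-1)/(\upalpha-p)$, hence $t \leq 1 - p/\upalpha$, i.e.\ $1-t \geq p/\upalpha$.

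Next I would control the $q$-th moment. Iterating the drift condition \hyperlink{ass:drift_discrete}{$\bfDd((x,y) \mapsto \normLigne{x-y}^q,\tilde{\lambda}_q^{\gamma}, \tilde{A}_q\gamma)$} yields, for all $k \in \nset$,
\begin{equation*}
  \KkerD_{\gamma}^k \normLigne{x-y}^q \leq \tilde{\lambda}_q^{k\gamma}\normLigne{x-y}^q + \tilde{A}_q \gamma \sum_{j=0}^{k-1}\tilde{\lambda}_q^{j\gamma} \leq \normLigne{x-y}^q + \tilde{A}_q \gamma/(1 - \tilde{\lambda}_q^{\gamma}) \leq \normLigne{x-y}^q + C_q \eqsp ,
\end{equation*}
where $C_q = \tilde{A}_q/\defEns{\log(1/\tilde{\lambda}_q)\tilde{\lambda}_q^{\bgamma}}$ is independent of $\gamma$ and $k$; the last inequality uses the elementary bound $1 - \rme^{-u} \geq u\rme^{-u}$ with $u = \gamma \log(1/\tilde{\lambda}_q) \leq \bgamma \log(1/\tilde{\lambda}_q)$. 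In particular $\KkerD_{\gamma}^k \normLigne{x-y}^p < \infty$, since $p \leq q$.

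The core step is a Hölder interpolation between this bounded $q$-th moment and the geometrically contracting first moment \eqref{eq:wass_un_simpl}: writing $\normLigne{u-v}^p = \normLigne{u-v}^{1-t}\,\normLigne{u-v}^{tq}$ and applying Hölder's inequality with exponents $1/(1-t)$ and $1/t$ to the probability measure $\KkerD_{\gamma}^k((x,y),\cdot)$,
\begin{equation*}
  \KkerD_{\gamma}^k \normLigne{x-y}^p \leq \parenthese{\KkerD_{\gamma}^k \normLigne{x-y}}^{1-t}\parenthese{\KkerD_{\gamma}^k\normLigne{x-y}^q}^{t} \leq \bD^{1-t}\brho^{(1-t)k\gamma}\normLigne{x-y}^{1-t}\parenthese{\normLigne{x-y}^q + C_q}^{t} \eqsp .
\end{equation*}
Using the subadditivity of $s \mapsto s^{t}$ and of $s \mapsto s^{1/p}$ on $\rset_+$ together with $1-t+qt=p$, this gives $\KkerD_{\gamma}^k\normLigne{x-y}^p \leq \bD^{1-t}\brho^{(1-t)k\gamma}\defEns{\normLigne{x-y}^p + C_q^{t}\normLigne{x-y}^{1-t}}$, hence after taking $p$-th roots
\begin{equation*}
  \parenthese{\KkerD_{\gamma}^k\normLigne{x-y}^p}^{1/p} \leq \bD^{(1-t)/p}\brho^{(1-t)k\gamma/p}\defEns{\normLigne{x-y} + C_q^{t/p}\normLigne{x-y}^{(1-t)/p}} \eqsp .
\end{equation*}
Since $1/\upalpha \leq (1-t)/p \leq 1$ and $\brho \leq 1$, one has $\brho^{(1-t)k\gamma/p} \leq \brho^{k\gamma/\upalpha}$ and $\normLigne{x-y}^{(1-t)/p} \leq \normLigne{x-y} + \normLigne{x-y}^{1/\upalpha}$, which gives the claimed bound with $\bD_{4,\upalpha} = \bD^{(1-t)/p}(1 + C_q^{t/p})$.

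The only genuinely delicate point is the choice of the integer $q$: one must take $q \in \nset$ large enough that $(p-1)/(q-1) \leq 1 - p/\upalpha$, so that the interpolated rate is no worse than $\brho^{k\gamma/\upalpha}$, while keeping $q$ finite — this is possible exactly because $\upalpha$ is required to be strictly larger than $p$, and this constraint is precisely what forces the loss of the rate from $\brho^{k\gamma}$ (available for the first moment) down to $\brho^{k\gamma/\upalpha}$. Everything else is routine: Hölder's inequality, iteration of a geometric drift, and subadditivity of concave power functions.
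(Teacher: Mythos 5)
Your proof is correct and follows essentially the same route as the paper's: a Hölder interpolation between the geometrically contracting first moment \eqref{eq:wass_un_simpl} and a uniformly-in-$k$ bounded integer-order moment obtained by iterating the drift condition, followed by subadditivity of concave powers and a $p$-th root. The only cosmetic difference is that you round the auxiliary moment up to an integer before interpolating (accepting a slightly stronger rate that you then relax to $\brho^{k\gamma/\upalpha}$), whereas the paper interpolates with the non-integer exponent $q = p(\upalpha-1)/(\upalpha-p)$ and then passes to $\ceil{q}$ via Jensen's inequality.
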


\begin{proof}
  The proof is postponed to \Cref{sec:prop:from_1_to_p:proof}.
\end{proof}


\section{Application to the projected Euler-Maruyama discretization}
\label{sec:applications}

Here we consider the case in which the operator $\Tg$ in
\eqref{eq:langevin_discrete} is given by the discretization of a diffusion
\eqref{eq:sde_informal}. More precisely, for $b : \ \rset^d \to \rset^d$, we
study the projected Euler-Maruyama discretization associated to the diffusion
with drift function $b$ and diffusion coefficient $\Id$, \ie \ we consider the
following assumption for $\msx \subset \rset^d$.
  \begin{assumptionB}[$\msx$]
    \label{ass:def_Euler}
    $\msx$ is assumed to be a closed convex (non-empty) subset of $\rset^d$,
    $\Pi = \Pi_\msx$ is the orthogonal projection onto $\msx$ defined in
    \eqref{eq:def_proj} and
  \begin{equation}
    \Tg(x) = x + \gamma b(x)    \text{ for any $\gamma >0$ and $x \in \msx$} \eqsp,   
    \label{eq:def_proj_euler}
  \end{equation}
  where $b :\rset^d \to \rset^d$ is continuous. 
\end{assumptionB} 
Note that if $\msx = \rset^d$ and $\Pi = \Id$, then this scheme is the classical
Euler-Maruyama discretization of a diffusion with drift $b$ and diffusion
coefficient $\Id$.  The application to the tamed Euler-Maruyama discretization
of the results of \Cref{sec:main-results00} is given in \Cref{sec:tamed-unadj-lang}.  In what
follows, we show the convergence in weighted total variation for the projected
Euler-Maruyama discretization and discuss the dependency of the constants
appearing in the bounds we obtain with respect to the properties we assume on
the drift $b$. We first derive minorization conditions or convergence in total
variation depending on the regularity/curvature assumption on the drift $b$ in
\Cref{sec:minor-cond}.  Drift conditions and the ensuing convergence when
combined with the minorization assumption are studied in
\Cref{sec:drift-cond-conv}.

\subsection{Minorization condition}
\label{sec:minor-cond}

First, we show that some regularity/curvature conditions on the drift $b$ imply condition  \Cref{assum:lip_op}$(\msx^2)$ for $\Tg$ given by \eqref{eq:def_proj_euler}. Let $\mtt \in \rset$. 

\begin{assumptionB}
  \label{as:item:lip}
    There exists $\Lip \geq 0$ such that  $b$ is $\Lip$-Lipschitz, \ie~for any $x,y \in \msx$, $\| b(x) - b(y) \| \leq \Lip \| x - y \|$ and $b(0) = 0$.
  \end{assumptionB}

  \begin{assumptionB}[$\mtt$]
  \label{as:b_min}
For any $x,y \in \msx$,
  \begin{equation}
    \langle b(x) - b(y) , x -y \rangle \leq -\mtt \, \| x - y \|^2 \eqsp .
  \end{equation}
  \end{assumptionB}
Note that \Cref{as:item:lip} implies \Cref{as:b_min}($-\Lip$). However, we are interested in the case where $\abs{\mtt}$ is possibly strictly smaller than $\Lip$.
If there exists $U \in \rmC^1(\msx)$ such that for any $x \in \msx$, $b(x) = -\nabla U(x)$ and \Cref{as:b_min}($\mtt$) holds with $\mtt =0$, respectively $\mtt >0$ then $U$ is convex, respectively strongly convex. Note that \Cref{as:b_min}$(0)$ does not imply that $\Tg$ given by \eqref{eq:def_proj_euler} is non-expansive, therefore we consider the following assumption.

  \begin{assumptionB}
  \label{as:b_lemme_descente}
There exists $\Lipb >0$ such that for any $x,y \in \msx$,
  \begin{equation}
\langle b(x) - b(y) , x -y \rangle \leq -\Lipb \| b(x) - b(y) \|^2 \eqsp . \label{eq:descent_lemma}
  \end{equation}
  \end{assumptionB}
  Note that \Cref{as:b_lemme_descente} implies that \Cref{as:item:lip}
  with $\Lip = \Lipb^{-1}$ and \Cref{as:b_min}($0$) hold. Conversely, in the case
  where $\msx = \rset^d$ and there exists
  $U \in \rmC^1(\rset^d)$ such that for any $x \in \rset^d$,
  $b(x) = -\nabla U(x)$, \cite[Theorem
  2.1.5]{nesterov:2004} implies that under \Cref{as:item:lip} and
  \Cref{as:b_min}($0$), \Cref{as:b_lemme_descente} holds with
  $\Lipb = \Lip^{-1}$. Based on \Cref{prop:a1_type} and assuming
  \Cref{ass:def_Euler}, we obtain the following results on the Markov
  kernel $\Rker_{\gamma}$ defined by \eqref{eq:kernel_langevin} with
  $\gamma >0$.

\begin{proposition}
  \label{prop:a1_type}
Assume \tup{\Cref{ass:def_Euler}($\msx$)} holds for $\msx \subset \rset^d$.
  \begin{enumerate}[label= (\alph*)]
  \item \label{item:a1_1}
    \tup{If \Cref{as:item:lip}} and \tup{\Cref{as:b_min}($\mtt$)} hold with $\mtt \in \rset$. Then \eqref{eq:lip_op} in \tup{\Cref{assum:lip_op}($\msx^2$)} holds for any $\gamma >0$ with $\kappa(\gamma) = -2 \mtt + \Lip^2 \gamma$. In particular, if $\mtt > 0 $ then \tup{\Cref{assum:lip_op}($\msx^2$)-\ref{assum:lip_op_str_convex}} holds for any $\bgamma < 2 \mtt/ \Lip^2$ and if  $\mtt \leq  0 $ then \tup{\Cref{assum:lip_op}($\msx^2$)-\ref{assum:lip_op_non_convex}} holds for any $\bgamma>0$ ;
\item \label{item:a1_2}
If   \tup{\Cref{as:b_lemme_descente}} holds, 
  then \tup{\Cref{assum:lip_op}($\msx^2$)-\ref{assum:lip_op_convex}} holds with $\kappa(\gamma) = 0$ for any $\bgamma \leq 2\Lipb$.
  \end{enumerate}
\end{proposition}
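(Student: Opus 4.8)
The plan is to prove \Cref{prop:a1_type} by directly expanding $\norm{\Tg(x) - \Tg(y)}^2$ using the expression $\Tg(x) = x + \gamma b(x)$ from \eqref{eq:def_proj_euler} in \tup{\Cref{ass:def_Euler}($\msx$)}. Note that \eqref{eq:lip_op} in \tup{\Cref{assum:lip_op}($\msx^2$)} is a condition on $\Tg$ alone, required only for $(x,y) \in \msx^2$ (one takes $\msa = \rset^{2d}$), so the projection $\Pi = \Pi_{\msx}$ plays no role. For any $x,y \in \msx$ and $\gamma > 0$ one has the identity
\[
  \norm{\Tg(x) - \Tg(y)}^2 = \norm{x-y}^2 + 2\gamma \ps{b(x)-b(y)}{x-y} + \gamma^2 \norm{b(x)-b(y)}^2 \eqsp ,
\]
and it then suffices, in each case, to bound the last two terms by multiples of $\norm{x-y}^2$.

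For part~\ref{item:a1_1}, I would apply \tup{\Cref{as:b_min}($\mtt$)} to get $\ps{b(x)-b(y)}{x-y} \leq -\mtt \norm{x-y}^2$ and \tup{\Cref{as:item:lip}} to get $\norm{b(x)-b(y)}^2 \leq \Lip^2 \norm{x-y}^2$. Substituting yields $\norm{\Tg(x)-\Tg(y)}^2 \leq \bigl(1 + \gamma(-2\mtt + \Lip^2\gamma)\bigr)\norm{x-y}^2$, which is exactly \eqref{eq:lip_op} with $\kappa(\gamma) = -2\mtt + \Lip^2\gamma$. The dichotomy follows by inspecting the sign of $\kappa$: if $\mtt > 0$ and $\bgamma < 2\mtt/\Lip^2$, then for every $\gamma \in \ocint{0,\bgamma}$ one has $\Lip^2\gamma < 2\mtt$, hence $\kappa(\gamma) < 0$, i.e.\ \tup{\Cref{assum:lip_op}($\msx^2$)-\ref{assum:lip_op_str_convex}}; if $\mtt \leq 0$, then $-2\mtt \geq 0$ and, discarding the trivial case $\Lip = 0$ (in which $b \equiv 0$ by \tup{\Cref{as:item:lip}} and $\Tg = \Id$), one has $\kappa(\gamma) = -2\mtt + \Lip^2\gamma > 0$ for every $\gamma > 0$, i.e.\ \tup{\Cref{assum:lip_op}($\msx^2$)-\ref{assum:lip_op_non_convex}}.

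For part~\ref{item:a1_2}, I would instead use \tup{\Cref{as:b_lemme_descente}}, which gives $\ps{b(x)-b(y)}{x-y} \leq -\Lipb \norm{b(x)-b(y)}^2$; inserting this into the identity above yields
\[
  \norm{\Tg(x)-\Tg(y)}^2 \leq \norm{x-y}^2 + \gamma(\gamma - 2\Lipb)\norm{b(x)-b(y)}^2 \eqsp .
\]
Since $\bgamma \leq 2\Lipb$, for every $\gamma \in \ocint{0,\bgamma}$ the factor $\gamma - 2\Lipb$ is nonpositive, so the last term is $\leq 0$ and $\norm{\Tg(x)-\Tg(y)}^2 \leq \norm{x-y}^2$. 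This is \eqref{eq:lip_op} with $\kappa \equiv 0$, i.e.\ \tup{\Cref{assum:lip_op}($\msx^2$)-\ref{assum:lip_op_convex}}.

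There is no real obstacle: each case reduces to a one-line completion-of-squares estimate. The only points requiring a little care are the bookkeeping of the sign of $\kappa(\gamma)$ across the three regimes in part~\ref{item:a1_1} and the (degenerate) check that $\Lip = 0$ is harmless.
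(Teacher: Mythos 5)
Your proposal is correct and follows essentially the same route as the paper: expand $\norm{\Tg(x)-\Tg(y)}^2$, bound the cross and quadratic terms via \Cref{as:item:lip}/\Cref{as:b_min}($\mtt$) for part (a) and via \Cref{as:b_lemme_descente} for part (b), yielding the same $\kappa(\gamma)$. The extra remarks on the sign bookkeeping and the degenerate case $\Lip=0$ are fine but not needed beyond what the paper records.
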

\begin{proof}
    The proof is postponed to \Cref{prop:a1_type:proof}.
\end{proof}

Combining \Cref{prop:a1_type} and \Cref{propo:doeb} and/or \Cref{coro:doeblin_lemme_1}, we can draw the following conclusions. 

If \Cref{as:item:lip} and \Cref{as:b_min}($\mtt$) hold with $\mtt >0$, then we obtain, by \Cref{prop:a1_type}-\ref{item:a1_1} and \Cref{propo:doeb}-\ref{item:kappa_neg}, that for any $ \gamma \in \ooint{0, 2\mtt/\Lip^2 }$ and $\ell \in \nsets$, \eqref{eq:doeblin_condition_1} holds with $\alpha = \alpha_-$ given by 
\begin{equation}
  \alpha_-(\kappa,\gamma, \ell) = - \frac{\exp(-\ell( -2\mtt + \Lip^2 \gamma )) - 1}{-2 \mtt + \Lip^2 \gamma } \eqsp.
\end{equation}
In addition, \Cref{coro:doeblin_lemme_1}-\ref{coro:doeblin_lemme_1_a} implies  that for any $\gamma \in \ooint{0,2\mtt/\Lip^2}$ and $x \in \msx$, $\parentheseLigne{\updelta_x \Rcoupling_\gamma^{\ceil{1/\gamma} \ell}}_{\ell \in \nset}$ converges exponentially fast to its invariant probability measure $\pi_{\gamma}$ in total variation, with a rate which does not depend on $\gamma$, but only on $\mtt$.

  Under \Cref{as:b_lemme_descente},  combining \Cref{prop:a1_type}-\ref{item:a1_2} and \Cref{coro:doeblin_lemme_2}-\ref{coro:doeblin_lemme_2_a} we obtain that on any compact set $\msk \subset \msx$, $\Rker_{\gamma}^{\step \ell}$ satisfies the minorization condition \eqref{eq:doeblin_condition_2_a} with $\ell \geq \diameter(\msk)^2$. In addition, if $\Rker_{\gamma}$ admits an invariant probability measure $\pi_{\gamma}$, then \Cref{coro:doeblin_lemme_1}-\ref{coro:doeblin_lemme_1_b} implies that for any $\gamma \in \ocint{0, 2\mtt_b}$ and $x \in \msx$, $(\updelta_x \Rker_{\gamma}^{\step \ell})_{\ell \in \nset}$ converges linearly in $\ell^{1/2}$ to $\pi_{\gamma}$ in total variation.

  In the case where \Cref{as:item:lip} and \Cref{as:b_min}($\mtt$) are satisfied with $\mtt \in \rset_-$, we obtain that for any $\gamma >0$ and $\ell \in \nsets$, \eqref{eq:doeblin_condition_1} holds with $\alpha = \alpha_+$ given by 
  \begin{align}
  \alpha_+(\kappa,\gamma, \ell) &= (-2\mtt + \Lip^2 \gamma )^{-1}\defEns{1 - \exp\parentheseDeux{- \ell( -2 \mtt + \Lip^2 \gamma  )/(1 + \gamma (-2 \mtt + \Lip^2 \gamma))}} \leq  (-2\mtt + \Lip^2 \gamma)^{-1} \eqsp,     \label{eq:alpha_plus}
\end{align}
which implies that the bound given by \Cref{propo:doeb}-\ref{item:kappa_pos} does not go to $0$ when $\ell$ goes to infinity. Therefore we cannot directly conclude that the Markov chain converges in total variation. However, by \Cref{prop:a1_type}-\ref{item:a1_1}, \Cref{coro:doeblin_lemme_2}-\ref{coro:doeblin_lemme_2_b} shows that for any $\gamma \in \ocint{0, \bgamma}$ with $\bgamma >0$ and $\ell \in \nsets$, $\Rcoupling_{\gamma}^{\ceil{1/\gamma}\ell}$ satisfies the minorization condition  \eqref{eq:doeblin_condition_2}, with constants which only depend on $\mtt$ and $\Lip$. Note however that in \eqref{eq:alpha_plus} the influence of $\mtt$ is different than the one of $\Lip$ and this result justifies the two assumptions \Cref{as:item:lip} and \Cref{as:b_min}($\mtt$).

\subsection{Drift conditions and convergence}
\label{sec:drift-cond-conv}

In the sequel of this section, we consider several assumptions on the
drift function $b$ which imply Foster-Lyapunov drift conditions on the
Markov coupling kernel $\Kcoupling_{\gamma}$ defined in
\eqref{eq:coupling_form}. These results in combination with
\Cref{prop:a1_type} will allow us to use
\Cref{theo:discrete_contrac_wass_D_v2}, see also
\Cref{theo:discrete_contrac_wass}.

 \subsubsection{Strongly convex at infinity}
 \label{sec:strongly-convex-at}

 First, we consider conditions
 on $b$ which imply that $\Rker_{\gamma}$ for
 $\gamma \in \ocint{0,\bgamma}$, is geometrically
 convergent in a metric which dominates the total variation distance
 and the Wasserstein distance of order $1$. This result will be an application of   \Cref{theo:discrete_contrac_wass_D_v2} and the constants we end up with are independent of the dimension $d$. To do so, we establish that there exists a Lyapunov
 function $\lyap$ for which $\Kcoupling_{\gamma}$ satisfies for $\gamma \in \ocint{0,\bgamma}$, 
 \hyperlink{ass:drift_discrete}{$\bfDd(\lyap,\lambda^{\gamma},A
   \gamma, \Delta_{\msx,M_{\discrete}})$} where  $\Delta_{\msx,M_{\discrete}}$ is given by \eqref{eq:def_delta_M} and 
 $M_{\discrete} \geq 0$  which do not depend on the dimension. 
\begin{assumptionC}
  \label{assum:strong_convex_outside_ball}
There exist $\Run > 0$ and $\mttplusun >0$ such that for any $x,y \in \msx$ with $\norm{x-y} \geq \Run$,
  \begin{equation}
    \ps{b(x) - b(y)}{x-y} \leq      -\mttplusun \norm[2]{x-y} \eqsp.
  \end{equation}
\end{assumptionC}
This assumption has been considered in  \cite{eberle2018quantitative,eberle2016reflection,luo2016exponential,majka2018non} and is sometimes referred to as  strong convexity of the drift $b$ outside of the ball $\ball{0}{\Run}$, see \Cref{sec:an-illustr-example} for an example of such a setting.
In the next proposition,  we derive the announced drift for $\VlyapDun: \ \msx^2 \to \coint{1,+\infty}$ defined for any $x,y  \in \msx$ by
  \begin{equation}
    \label{eq:def_Wun}
    \VlyapDun(x,y) = 1 + \norm{x-y}/\Run \eqsp.
  \end{equation}
\begin{proposition}  
  \label{propo:drift_strong_convex_wass}
Assume \tup{\Cref{ass:def_Euler}$(\msx)$} for $\msx \subset \rset^d$, \tup{\Cref{as:item:lip}}, \tup{\Cref{as:b_min}($\mtt$)} for $\mtt \in \rset_-$ and
\tup{\Cref{assum:strong_convex_outside_ball}}.
Let $\Kker_{\gamma}$ be defined by \eqref{eq:coupling_form} and $\bgamma \in (0, 2\mttplusun/\Lip^2)$. Then the following hold:
\begin{enumerate}[label= (\alph*)]
 \item for any $\gamma \in \ocint{0, \bgamma}$, we have
   \begin{equation}
     \label{eq:majo_norme_reflex}
     \Kker_{\gamma} \norm{x-y} \leq \norm{\Tg(x) - \Tg(y)} \leq (1 + \gamma (-\mtt + \bgamma\Lip^2/2)) \norm{x-y} \eqsp .
   \end{equation} \label{item:reflex_ineq_a}
\item for any $\gamma \in \ocint{0,\bgamma}$, $\Kker_{\gamma}$ satisfies \hyperlink{ass:drift_discrete}{$\bfDd(\VlyapDun,\lambda^{\gamma},A \gamma, \Delta_{\msx,\Run})$}  where $\Delta_{\msx,\Run}$ is given by \eqref{eq:def_delta_M}  and  
   \begin{equation} \lambda = \exp\parentheseDeux{ -(\mttplusun - \bgamma \Lip^2/2)/2} \eqsp , \quad A= \mttplusun - \mtt 
     \eqsp . \label{eq:const_drift_wass}
   \end{equation} \label{item:reflex_ineq_b}
 \item for any $p \in \nset$ with $p \geq 2$, there  exist $\lambda_p \in \ocint{0, 1}$, $A_p \geq 0$ and $\gamma \in \ocint{0,\bgamma}$, such that $\Kker_{\gamma}$ satisfies \hyperlink{ass:drift_discrete}{$\bfDd((x,y) \mapsto \norm{x-y}^p,\lambda_p^{\gamma},A_p \gamma)$}, with explicit constants given in the proof. 
 \label{item:reflex_ineq_c}
\end{enumerate}
 \end{proposition}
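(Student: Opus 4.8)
The plan is to reduce all three statements to properties of the single pre‑projection step of the reflection coupling \eqref{eq:def_project_x_Y_1} and then pass to $\Kker_{\gamma}$ using that $\Pi=\Pi_{\msx}$ is $1$‑Lipschitz (this is \Cref{ass:non_expansive_Pi}, valid for the orthogonal projection onto a closed convex set). Throughout fix $\gamma\in\ocint{0,\bgamma}$, $x,y\in\msx$, write $(\tilde X_1,\tilde Y_1)$ for the one‑step pre‑projection pair of \eqref{eq:def_project_x_Y_1} started at $(x,y)$, and set $d=\norm{\Tg(x)-\Tg(y)}=\norm{\rmE(x,y)}$. For \textbf{(a)}, the right inequality is elementary: expanding $\norm{\Tg(x)-\Tg(y)}^2=\norm{x-y}^2+2\gamma\ps{b(x)-b(y)}{x-y}+\gamma^2\norm{b(x)-b(y)}^2$ and using \Cref{as:b_min}($\mtt$) and \Cref{as:item:lip} bounds it by $(1-2\gamma\mtt+\gamma^2\Lip^2)\norm{x-y}^2\le(1+\gamma(-2\mtt+\bgamma\Lip^2))\norm{x-y}^2\le(1+\gamma(-\mtt+\bgamma\Lip^2/2))^2\norm{x-y}^2$ (using $\gamma\le\bgamma$ and $(1+t)^2\ge1+2t$), and taking square roots finishes it since the base is $\ge1$ because $\mtt\le0$. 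For the left inequality I would first note $\Kker_{\gamma}\norm{x-y}\le\expeMarkov{(x,y)}{\norm{\tilde X_1-\tilde Y_1}}$ by non‑expansiveness of $\Pi$, then read off from \eqref{eq:def_project_x_Y_1} that $\norm{\tilde X_1-\tilde Y_1}$ is $0$ if $\Tg(x)=\Tg(y)$ and otherwise equals $(1-W_1)\absLigne{d-2\sqrt{\gamma}\,\xi}$ with $\xi=\ps{\rme(x,y)}{Z_1}\sim\mathcal N(0,1)$ and $\probaLigne{W_1=1\mid Z_1}=p_{\gamma}(x,y,\sqrt{\gamma}Z_1)=1\wedge\exp(d\xi/\sqrt{\gamma}-d^2/(2\gamma))$; the key input is then the classical reflection‑coupling identity $\expeMarkov{(x,y)}{\norm{\tilde X_1-\tilde Y_1}}=\PE_{\xi}[(1-p_{\gamma}(\xi))\absLigne{d-2\sqrt{\gamma}\xi}]=d$, which follows from a short one‑dimensional Gaussian computation (change of variables $t=d-2\sqrt{\gamma}\xi$ together with $\mathrm{e}^{s\eta-s^2/2}\phi(\eta)=\phi(\eta-s)$ for the standard normal density $\phi$) and is essentially \cite[Section 3.3]{bubley:dyer:jerrum:1998}, \cite{durmus:moulines:2016}.

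For \textbf{(b)} I would combine (a) with $\Kker_{\gamma}\VlyapDun(x,y)=1+\Kker_{\gamma}\norm{x-y}/\Run$ and split on $\norm{x-y}$. Put $c=2\mttplusun-\bgamma\Lip^2$, which is $>0$ since $\bgamma<2\mttplusun/\Lip^2$, and note $\gamma c\in[0,1]$ (using $\mttplusun\le\Lip$, which follows from \Cref{as:item:lip} and \Cref{assum:strong_convex_outside_ball}). If $\norm{x-y}\ge\Run$, then arguing as in (a) but with \Cref{assum:strong_convex_outside_ball} gives $\norm{\Tg(x)-\Tg(y)}^2\le(1-\gamma c)\norm{x-y}^2$, hence with $r=\norm{x-y}/\Run\ge1$, $\Kker_{\gamma}\VlyapDun(x,y)\le1+r(1-\gamma c)^{1/2}$; the elementary inequality $1+r(1-u)^{1/2}\le\mathrm{e}^{-u/4}(1+r)$, valid for $r\ge1$, $u\in[0,1]$ (the difference is nondecreasing in $r$ because $\mathrm{e}^{-u/4}\ge(1-u)^{1/2}$, and at $r=1$ it reads $2\mathrm{e}^{-u/4}\ge1+(1-u)^{1/2}$, which is convex in $u$ and vanishes together with its derivative at $u=0$), then yields $\Kker_{\gamma}\VlyapDun(x,y)\le\lambda^{\gamma}\VlyapDun(x,y)$ with $\lambda=\mathrm{e}^{-c/4}=\exp[-(\mttplusun-\bgamma\Lip^2/2)/2]$, so no additive term is needed off $\Delta_{\msx,\Run}$. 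If $\norm{x-y}\le\Run$, the right inequality of (a) gives $\Kker_{\gamma}\VlyapDun(x,y)\le\VlyapDun(x,y)+\gamma(-\mtt+\bgamma\Lip^2/2)\norm{x-y}/\Run\le\VlyapDun(x,y)+\gamma(-\mtt+\bgamma\Lip^2/2)$, and combining with $\VlyapDun(x,y)\le2$ and $1-\lambda^{\gamma}\le\gamma\log(1/\lambda)=\gamma(\mttplusun-\bgamma\Lip^2/2)/2$ gives $\Kker_{\gamma}\VlyapDun(x,y)\le\lambda^{\gamma}\VlyapDun(x,y)+\gamma(\mttplusun-\mtt)$; this is exactly \hyperlink{ass:drift_discrete}{$\bfDd(\VlyapDun,\lambda^{\gamma},A\gamma,\Delta_{\msx,\Run})$} with $A=\mttplusun-\mtt$.

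For \textbf{(c)} I would again start from $\Kker_{\gamma}\norm{x-y}^p\le\expeMarkov{(x,y)}{\norm{\tilde X_1-\tilde Y_1}^p}$ and, using $1-W_1\le1$, bound this by $\PE_{\xi}[\absLigne{d-2\sqrt{\gamma}\xi}^p]$ when $\Tg(x)\neq\Tg(y)$ (it is $0$ otherwise). A Taylor expansion of $t\mapsto\absLigne{d+t}^p$ at $0$ with integral remainder, $\PE[\xi]=0$, and Gaussian moment bounds produce a dimension‑free constant $C_p$ with $\PE_{\xi}[\absLigne{d-2\sqrt{\gamma}\xi}^p]\le d^p+C_p(\gamma d^{p-2}+\gamma^{p/2})$. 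Then, using $d\le(1+\gamma\Lip)\norm{x-y}$ from \Cref{as:item:lip}: if $\norm{x-y}\ge\Run$ I would bound $d^p\le(1-\gamma c)^{p/2}\norm{x-y}^p$ (by \Cref{assum:strong_convex_outside_ball}) and $d^{p-2}\le(1+\bgamma\Lip)^{p-2}\norm{x-y}^{p-2}$, absorb the lower‑order term via Young's inequality into $\tfrac{cp}{8}\gamma\norm{x-y}^p$ plus a constant multiple of $\gamma$, and use $(1-\gamma c)^{p/2}+\tfrac{cp}{8}\gamma\le\mathrm{e}^{-cp\gamma/8}$ for $\bgamma$ small enough; if $\norm{x-y}<\Run$ I would bound $d^p\le(1+\gamma\Lip)^p\norm{x-y}^p$ with $(1+\gamma\Lip)^p\le\mathrm{e}^{-cp\gamma/8}+C'\gamma$ and everything else by constants times powers of $\Run$. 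This yields \hyperlink{ass:drift_discrete}{$\bfDd((x,y)\mapsto\norm{x-y}^p,\lambda_p^{\gamma},A_p\gamma)$} with $\lambda_p=\mathrm{e}^{-cp/8}$ and $A_p$ an explicit constant depending on $\mtt,\mttplusun,\Lip,\Run,\bgamma,p$, at the price of possibly shrinking $\bgamma$ by a factor depending only on $c$ and $p$.

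The step I expect to be the main obstacle is the left inequality of (a): the reflection‑coupled transition is pointwise expansive (a point and its reflection about a separating hyperplane can be arbitrarily far apart), so $\Kker_{\gamma}\norm{x-y}\le\norm{\Tg(x)-\Tg(y)}$ rests on the exact $L^1$ cancellation in the Gaussian integral defining the acceptance probability $p_{\gamma}$ — this is the one point where crude bounds fail, and parts (b) and (c) are built on top of it. A secondary delicate point is calibrating the inequality $2\mathrm{e}^{-u/4}\ge1+(1-u)^{1/2}$ on $[0,1]$ so that (b) comes out with precisely the announced rate $\lambda=\exp[-(\mttplusun-\bgamma\Lip^2/2)/2]$ rather than a lossy one.
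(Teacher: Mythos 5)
Your proposal is correct and follows essentially the same route as the paper's proof: part (a) via the exact one-dimensional $L^1$ computation for the reflection coupling combined with non-expansiveness of $\Pi_{\msx}$, part (b) via a case split at $\norm{x-y}=\Run$ yielding exactly the announced $\lambda$ and $A=\mttplusun-\mtt$, and part (c) via Gaussian moment expansion of $\abs{\norm{\rmE}-2\sqrt{\gamma}\xi}^p$ with the lower-order terms absorbed through a supremum/Young argument. The only deviation worth noting is in (c): your calibration $(1-\gamma c)^{p/2}+\tfrac{cp}{8}\gamma\le \rme^{-cp\gamma/8}$ forces $\bgamma$ to shrink with $p$, whereas the paper keeps the same $\bgamma\in\ooint{0,2\mttplusun/\Lip^2}$ by taking the $p$-independent rate $\lambda_p=\exp[-\mttplusun/2+\bgamma\Lip^2/4]$ and pushing all $p$-dependence into $A_p$; this uniformity in $\gamma$ is what is actually invoked later in \Cref{prop:from_1_to_p} and \Cref{coro:w1_wp}, so you should either adopt the paper's choice of $\lambda_p$ or track the $p$-dependent stepsize threshold explicitly downstream.
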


 \begin{proof}
   The proof is postponed to \Cref{propo:drift_strong_convex_wass:proof}.
\end{proof}

 \begin{theorem}
  \label{propo:cvx_outside_bounds}
Assume \tup{\Cref{ass:def_Euler}$(\msx)$} for $\msx \subset \rset^d$, \tup{\Cref{as:item:lip}} and 
  \tup{\Cref{assum:strong_convex_outside_ball}}.  Assume in addition either 
  \tup{\Cref{as:b_min}($\mtt$)} for $\mtt \in \rset_-$ or \tup{\Cref{as:b_lemme_descente}}.
  Then the conditions and the  conclusions of \Cref{theo:discrete_contrac_wass_D_v2} hold with $\bgamma$, $\lambda$ and $A$ given by \Cref{propo:drift_strong_convex_wass}-\ref{item:reflex_ineq_b}, $\tM_{\discrete} = R_1$, $\Kker_{\gamma}$ given by \eqref{eq:coupling_form} for any $\gamma \in \ocint{0, \bgamma}$, $\lyap = \VlyapDun$ defined in \eqref{eq:def_Wun}, and for any $\gamma \in \ocint{0, \bgamma}$, $\ell \in \nsets$ and $t >0$,
  \begin{align}
\qquad \text{ under \tup{\Cref{as:b_min}($\mtt$)}} \eqsp,    \Psibf(\gamma, \ell, t) & =
                                                                                2\Phibf\defEnsLigne{- t / (2\Xi_{\ell \step}^{1/2}(\kappa)) } \eqsp,
                                                                                \label{eq:Psi_def}\\
                                                                                    \label{eq:Psi_def_2}
     \text{ under \tup{\Cref{as:b_lemme_descente}} } \eqsp,                                       \Psibf(\gamma, \ell, t) &= \begin{cases} 2\Phibf \{-1/2
    \} \qquad  \text{if } \ell \geq \ceil{\Run}^2 \text{ and } t \leq \Run
    \eqsp ,\\ 2\Phibf\defEnsLigne{-t /(2\Xi_{\ell
        \step}^{1/2}(\kappa)) } \qquad  \text{otherwise} \eqsp, \end{cases}                                                                                                                
    \end{align}
    where $\kappa$ is given in \Cref{prop:a1_type}-\ref{item:a1_1} and $\Xi_{\ell \step}$ in \eqref{eq:def_Xi}.
  \end{theorem}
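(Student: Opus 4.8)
The plan is to verify, one item at a time, that the hypotheses of \Cref{theo:discrete_contrac_wass_D_v2} hold with the data announced in the statement, and then quote that theorem. I would treat the two alternative assumption sets together: if only \tup{\Cref{as:b_lemme_descente}} is assumed, it forces \tup{\Cref{as:item:lip}} with $\Lip=\Lipb^{-1}$ and \tup{\Cref{as:b_min}($0$)}, so that, after replacing $\mtt$ by $0$ and $\Lip$ by $\Lipb^{-1}$ when needed, both cases proceed from \tup{\Cref{as:item:lip}}, \tup{\Cref{as:b_min}($\mtt$)} with $\mtt\in\rset_-$, and \tup{\Cref{assum:strong_convex_outside_ball}}. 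Moreover, $\Pi=\Pi_{\msx}$ being the orthogonal projection onto a nonempty closed convex set is $1$-Lipschitz, so \tup{\Cref{ass:non_expansive_Pi}} holds.

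For the Foster-Lyapunov ingredient I would invoke \Cref{propo:drift_strong_convex_wass}-\ref{item:reflex_ineq_b}, whose hypotheses are exactly those now available: for $\bgamma\in(0,2\mttplusun/\Lip^2)$ and every $\gamma\in\ocint{0,\bgamma}$, the coupling kernel $\Kker_\gamma$ of \eqref{eq:coupling_form} satisfies \hyperlink{ass:drift_discrete}{$\bfDd(\VlyapDun,\lambda^\gamma,A\gamma,\Delta_{\msx,\Run})$} with $\lambda,A$ as in \eqref{eq:const_drift_wass}. I then take $\lyap=\VlyapDun$, $\msc=\Delta_{\msx,\Run}$ and $\tM_{\discrete}=R_1=\Run$, so that $\msc\subset\Delta_{\msx,\tM_{\discrete}}$ holds trivially. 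That $\Delta_{\msx}$ is absorbing for $\Kker_\gamma$ is read off \eqref{eq:coupling_form}: when $\Tg(x)=\Tg(y)$, and in particular when $x=y$, only the first term of \eqref{eq:coupling_form} survives and it is the diagonal coupling, whence $\Kker_\gamma\1_{\Delta_{\msx}}(x,x)=1$.

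The minorization condition \eqref{eq:minorization_condition_v2} requires the most care. Under \tup{\Cref{as:item:lip}} and \tup{\Cref{as:b_min}($\mtt$)}, \Cref{prop:a1_type}-\ref{item:a1_1} gives \tup{\Cref{assum:lip_op}($\msx^2$)} with $\kappa(\gamma)=-2\mtt+\Lip^2\gamma\ge 0$; since the chain $(X_k,Y_k)$ of \eqref{eq:mc_def} is $\msx^2$-valued, \Cref{theo:minorization_general} applied with $\msa=\msx^2$ and $n=\ell\step$ (the intermediate constraint then being vacuous), together with $\proba{X_{\ell\step}\neq Y_{\ell\step}}=\Kker_\gamma^{\ell\step}((x,y),\Deltar^{\complementary})$, yields $\Kker_\gamma^{\ell\step}((x,y),\Deltar^{\complementary})\le 1-2\Phibf\{-\norm{x-y}/(2\Xi_{\ell\step}^{1/2}(\kappa))\}$, which is \eqref{eq:Psi_def}. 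When only \tup{\Cref{as:b_lemme_descente}} holds, the same computation (with $\mtt=0$, $\Lip=\Lipb^{-1}$) produces the ``otherwise'' branch of \eqref{eq:Psi_def_2}; in addition \Cref{prop:a1_type}-\ref{item:a1_2} gives \tup{\Cref{assum:lip_op}($\msx^2$)-\ref{assum:lip_op_convex}}, so \Cref{coro:doeblin_lemme_2}-\ref{coro:doeblin_lemme_2_a} applies with $M\leftarrow\Run$ and any $\ell\ge\ceil{\Run}^2$ (note $\ceil{\Run}^2\ge\ceil{\Run^2}$), giving $\Kker_\gamma^{\ell\step}((x,y),\Deltar^{\complementary})\le 1-2\Phibf(-1/2)$ whenever $\norm{x-y}\le\Run$. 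On the overlap $\{\ell\ge\ceil{\Run}^2,\ \norm{x-y}\le\Run\}$ both $1-2\Phibf(-1/2)$ and the general bound are valid upper bounds for $\Kker_\gamma^{\ell\step}((x,y),\Deltar^{\complementary})$, so the piecewise $\Psibf$ of \eqref{eq:Psi_def_2} is legitimate with no comparison between the two needed. Finally I would check the nondegeneracy clause of \eqref{eq:minorization_condition_v2}: on $\Delta_{\msx,M}$ one has $\Psibf=2\Phibf(-1/2)>0$ or $\Psibf\ge 2\Phibf\{-M/(2\Xi_{\ell\step}^{1/2}(\kappa))\}$, and the latter is positive uniformly in $\gamma\in\ocint{0,\bgamma}$ since $\Phibf$ is strictly positive on $\rset$ and a crude bound on the summands of \eqref{eq:def_Xi} gives $0<\inf_{\gamma\in\ocint{0,\bgamma}}\Xi_{\ell\step}(\kappa)\le\sup_{\gamma\in\ocint{0,\bgamma}}\Xi_{\ell\step}(\kappa)\le(1+\bgamma)\ell<\infty$.

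With every hypothesis of \Cref{theo:discrete_contrac_wass_D_v2} in place, that theorem delivers \eqref{eq:theo:discrete_contrac_wass_D_v2_a} with the constants displayed there, and \eqref{eq:majo_rho} under the additional conditions $\bgamma\le 1$ and $\bvareps_{\discrete,1}\le 1-\rme^{-1}$; this finishes the proof. The only genuinely substantive point---as opposed to bookkeeping---is aligning the $\kappa$ produced by \Cref{prop:a1_type} with the hypothesis of \Cref{theo:minorization_general} in each of the two regimes, and justifying the case split in $\Psibf$ for the descent-lemma setting; I expect this to be the (mild) crux.
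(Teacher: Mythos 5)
Your proposal is correct and follows essentially the same route as the paper's proof: the drift via \Cref{propo:drift_strong_convex_wass}-\ref{item:reflex_ineq_b}, the absorbing diagonal read off \eqref{eq:coupling_form}, and the minorization via \Cref{theo:minorization_general} combined with \Cref{prop:a1_type} (with \Cref{coro:doeblin_lemme_2}-\ref{coro:doeblin_lemme_2_a} supplying the constant branch of \eqref{eq:Psi_def_2} in the \Cref{as:b_lemme_descente} case), before quoting \Cref{theo:discrete_contrac_wass_D_v2}. The only difference is that you spell out details the paper leaves implicit (the nondegeneracy of $\Psibf$ on $\Delta_{\msx,M}$ and the legitimacy of the piecewise definition), which is harmless.
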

  \begin{proof}
    First, note that for any $\gamma >0$, $\Delta_{\msx}$ is absorbing for $\Kker_{\gamma}$  by definition of the reflection coupling, see \eqref{eq:coupling_form}.
   We assume that \Cref{as:b_min}($\mtt$) holds.  Let $\bgamma \in \oointLigne{0, 2\mttplusun/\Lip^2}$. Using \Cref{propo:drift_strong_convex_wass}-\ref{item:reflex_ineq_b} we obtain that $\VlyapDun$ given by \eqref{eq:def_Wun} satisfies \hyperlink{ass:drift_discrete}{$\bfDd(\VlyapDun, \lambda^{\gamma}, A\gamma, \Delta_{\msx,\Run})$} for any $\gamma \in \ocint{0, \bgamma}$ with $\lambda$ and $A$ given in \eqref{eq:const_drift_wass}. Using \Cref{theo:minorization_general}, \Cref{prop:a1_type}-\ref{item:a1_1}, we have for any $\gamma \in \ocint{0, \bgamma}$, $\ell \in \nsets$ and $x,y \in \msx$
  \begin{equation}
    \Kcoupling_{\gamma}^{\ell \step}((x,y), \Deltar^{\complementary}) \leq 1 - 2 \Phibf(-\Xi_{\ell \step}^{-1/2}(\kappa)\norm{x-y}/2) \eqsp , \label{eq:mino_complete}
  \end{equation}
  where $\kappa(\gamma) = -2\mtt + \gamma \Lip^2$, which concludes the proof.

   The proof under \Cref{as:b_lemme_descente} follows the same lines upon noting that \Cref{as:b_lemme_descente} implies that  \Cref{as:b_min}($0$) holds and using \Cref{prop:a1_type}-\ref{item:a1_2} instead of \Cref{prop:a1_type}-\ref{item:a1_1}.
\end{proof}
  Let $\bgamma \in \oointLigne{0, \max(2\mttplusun/\Lip^2,1)}$, $\ell \in \nsets$ specified below, $\lambda_{\bgamma, a}, \rho_{\bgamma, a} \in \ooint{0,1}$ and $D_{\bgamma, 1, a}$, $D_{\bgamma, 2, a}$, $C_{\bgamma, a} \geq 0$ the constants given by \Cref{propo:cvx_outside_bounds}, such that for any $k \in \nset$, $\gamma \in \ocint{0, \bgamma}$ and $x,y \in \msx$
  \begin{equation}
    \wasscun(\updelta_x \Rcoupling_{\gamma}^k, \updelta_y \Rcoupling_{\gamma}^k) \leq \Kker_{\gamma}^k \bfc_1(x,y) \leq \lambda_{\bgamma, a}^{k\gamma/4} [D_{\bgamma,1,a} \bfc_1(x,y) + D_{\bgamma,2,a}\1_{\Deltar^{\complementary}}] +  C_{\bgamma, a} \rho_{\bgamma, a}^{k\gamma/4}  \eqsp ,  \label{eq:wc1_convergence}\end{equation}
  with $\bfc_1(x,y) = \1_{\Deltar^{\complementary}}(x,y) (1 + \norm{x-y}/R_1)$ for any $x,y \in \msx$.
Note that by \eqref{eq:convergence_c},  this result implies that for any $k \in \nset$, $\gamma \in \ocint{0, \bgamma}$ and $x,y \in \msx$
  \begin{equation}
    \label{eq:wc1_convergence_true}
    \wasscun(\updelta_x \Rcoupling_{\gamma}^k, \updelta_y \Rcoupling_{\gamma}^k) \leq \defEns{D_{\bgamma, 1, a} + D_{\bgamma, 2, a} + C_{\bgamma, a} } \rho_{\bgamma, a}^{k \gamma} \bfc_1(x,y) \eqsp .
  \end{equation}
  We now give upper-bounds on $\rho_{\bgamma, a}$.
Note that using \Cref{theo:discrete_contrac_wass_D_v2}, we obtain that the following limits exist and do not depend on~$\Lip$
\begin{equation}
 D_{1,a} = \lim_{\bgamma \to 0} D_{\bgamma, 1, a}\eqsp , \quad D_{2,a} = \lim_{\bgamma \to 0} D_{\bgamma, 2, a}\eqsp , \quad C_a = \lim_{\bgamma \to 0}C_{\bgamma, a}\eqsp , \quad \lambda_a = \lim_{\bgamma \to 0}\lambda_{\bgamma, a} \eqsp , \quad \rho_a = \lim_{\bgamma \to 0}\rho_{\bgamma, a} \eqsp .\label{eq:inde_1}\end{equation}
Once again, we point out that $\lambda_{\bgamma, a} \leq \rho_{\bgamma, a}$ in \Cref{theo:discrete_contrac_wass_D_v2}.
In the following discussion we assume that \tup{\Cref{ass:def_Euler}$(\msx)$} for $\msx \subset \rset^d$, \tup{\Cref{as:item:lip}} and \tup{\Cref{assum:strong_convex_outside_ball}} hold.
We now give upper bounds on the rate
$\rho_{\bgamma, a}$ and $\rho_a$ using
\Cref{theo:discrete_contrac_wass_D_v2} depending on the assumptions in \Cref{propo:cvx_outside_bounds}.
  
\begin{enumerate}[label=(\alph*),wide, labelwidth=!, labelindent=0pt]
\item If \Cref{as:b_lemme_descente} holds,  set $\ell = \ceil{\Run^2}$. Using that $2 \Phibf(-1/2) \leq 1 - \rme^{-1}$ and choosing $\mttplusun$ sufficiently small such that the conditions of \Cref{theo:discrete_contrac_wass_D_v2} hold, we have
    \begin{align}
      &\log^{-1}(\rho_{\bgamma, a}^{-1}) \leq \left[1 + \log(2) + \log\parenthese{1 + 2(1 + \Run^2)\mttplusun} \right . \\
     & \qquad \qquad \qquad \qquad \left .  \left . + 2(1 + \Run^2)(\mttplusun - \bgamma \Lip^2/2)\right] \middle/ \parentheseDeux{(\mttplusun - \bgamma \Lip^2/2) \Phibf\defEnsLigne{-1/2} } \right. \eqsp . \label{eq:rho_bgamma_wass_a}
\end{align}
  Taking the limit $\bgamma \to 0$ in \eqref{eq:rho_bgamma_wass_a} and using that for any $t \geq 0$, $\log(1+t)\leq t$, we get that
    \begin{equation}
      \log^{-1}(\rho_{a}^{-1}) \leq (1+ \log(2)) / \parentheseLigne{\mttplusun \Phibf\defEns{-1/2}} + 4(1+\Run^2)/\Phibf\defEns{-1/2}\eqsp . \label{eq:rho_bgamma_wass_majo_a_cont_disc}
\end{equation}
The leading term in \eqref{eq:rho_bgamma_wass_majo_a_cont_disc} is of order $\max(\Run^2, 1/\mttplusun)$,
which corresponds to the one identified in \cite[Theorem 2.8]{eberle2018quantitative} and is optimal, see \cite[Remark 2.10]{eberle2016reflection}.
\item If \Cref{as:b_min}($\mtt$) holds with $\mtt \in \rset_-$, set $\ell = \ceil{R_1^2}$. Choosing $\mttplusun >0$ sufficiently small and $\Run, |\mtt|$ sufficiently large such that the conditions of \Cref{theo:discrete_contrac_wass_D_v2} hold, we have
  \begin{multline}
    \log^{-1}(\rho_{\bgamma, a}^{-1}) \leq  \left[1 + \log(2) + \log\parenthese{1 + 2(1+ R_1^2)\defEnsLigne{\mttplusun - \mtt}} \right . \\
    \left .  \left . + 2(1 + R_1^2)(\mttplusun - \bgamma \Lip^2/2)\right] \middle/ \parentheseDeux{(\mttplusun - \bgamma \Lip^2/2) \Phibf\defEnsLigne{-\Xi_{\bstep \ceil{\Run^2}}^{-1/2}(\kappa)\Run/2}} \right. \eqsp . \label{eq:rho_bgamma_wass_majo_b}
  \end{multline}  
  Taking the limit $\bgamma \to 0$ in this result and using \eqref{eq:def_Xi}, we get that
  \begin{align}
    &\log^{-1}(\rho_{a}^{-1}) \leq  \left[1 + \log(2) + \log\parentheseLigne{1 + 2\defEnsLigne{\mttplusun - \mtt}}  + 2\mttplusun\right]    \left .   \vphantom{1 + \log(2) + \log\parentheseLigne{1 + 4\defEnsLigne{\mttplusun - \mtt}}  + 3\mttplusun} \middle/ \parentheseDeux{\mttplusun \Phibf\defEnsLigne{-(-\mtt)^{1/2}\Run/(2 - 2\rme^{2\mtt \Run^2})^{1/2}}} \right. \eqsp . \label{eq:bound_rho_a_bgamma_0_comp}
  \end{align}
  The comparison between this rate and the ones derived in recent works is conducted in \Cref{sec:non-contr-sett}. We extend our result to other Wasserstein metrics in the following proposition.
  \begin{corollary}
    \label{coro:w1_wp}
    Assume \tup{\Cref{ass:def_Euler}$(\msx)$} for $\msx \subset \rset^d$, \tup{\Cref{as:item:lip}} and 
  \tup{\Cref{assum:strong_convex_outside_ball}}.  Assume in addition either 
  \tup{\Cref{as:b_min}($\mtt$)} for $\mtt \in \rset_-$ or \tup{\Cref{as:b_lemme_descente}}. Then for any $p \in \nset$, $\upalpha \in \ooint{p, +\infty}$, $\gamma \in \ocint{0, \bgamma}$, $x,y \in \msx$ and $k \in \nset$ we have
    \begin{align}
      & \wassersteinD[1](\updelta_x \Rker_{\gamma}^k, \updelta_y \Rker_{\gamma}^k ) \leq D_{3, \bgamma,a} \rho_{\bgamma, a}^{k \gamma/4} \norm{x-y} \eqsp , \label{eq:w1_contrac_a}\\
      & \wassersteinD[p](\updelta_x \Rker_{\gamma}^k, \updelta_y \Rker_{\gamma}^k ) \leq D_{\upalpha, \bgamma,a} \rho_{\bgamma, a}^{k \gamma/(4\upalpha)} \defEns{ \norm{x-y} + \norm{x-y}^{1/\upalpha}} \label{eq:wp_contrac_a} \eqsp ,
    \end{align}
  where $\rho_{\bgamma, a}$, $D_{3, \bgamma, a}$ and $D_{\upalpha, \bgamma, a}$  are given in \eqref{eq:wc1_convergence},  \Cref{prop:w1_contrac_iterees} and  \Cref{prop:from_1_to_p} respectively.
  \end{corollary}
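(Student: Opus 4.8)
The plan is to derive both estimates by feeding the conclusions of \Cref{propo:cvx_outside_bounds} into \Cref{prop:w1_contrac_iterees} and \Cref{prop:from_1_to_p}. Under the hypotheses of the corollary, \Cref{propo:cvx_outside_bounds} guarantees that the assumptions of \Cref{theo:discrete_contrac_wass_D_v2} hold with $\Kker_{\gamma}$ given by \eqref{eq:coupling_form}, $\tM_{\discrete} = \Run$ and $\lyap = \VlyapDun$ from \eqref{eq:def_Wun}; in particular $\lyap(x,y) = 1 + \norm{x-y}/\Run$ has the form required in \Cref{prop:w1_contrac_iterees} with $\vartheta = 1/\Run$. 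For the index $\ell = 1$, the minorization function $\Psibf$ may be taken to be $t \mapsto 2\Phibf\bigl(-t/(2\Xi_{\step}^{1/2}(\kappa))\bigr)$: under \Cref{as:b_min}($\mtt$) this is exactly \eqref{eq:Psi_def}, while under \Cref{as:b_lemme_descente} it is \eqref{eq:Psi_def_2} when $\ceil{\Run}^2 > 1$ and otherwise a valid (weaker) choice allowed by \Cref{theo:minorization_general}; since the rate $\rho_{\bgamma,a}$ produced by \Cref{theo:discrete_contrac_wass_D_v2} is governed by a larger value of $\ell$ (e.g.\ $\ell = \ceil{\Run^2}$), this choice for the index $1$ does not alter it.

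Next I would check hypotheses~\ref{prop:w1_contrac_iterees:1} and~\ref{prop:w1_contrac_iterees:3} of \Cref{prop:w1_contrac_iterees}. For~\ref{prop:w1_contrac_iterees:3}, the bound $\Kker_{\gamma}\norm{x-y} \leq (1 + \gamma\varkappa)\norm{x-y}$ follows from the first inequality in \eqref{eq:majo_norme_reflex} (which holds for the reflection coupling regardless of curvature) together with the growth of $\Tg$: under \Cref{as:b_min}($\mtt$) this gives $\varkappa = -\mtt + \bgamma\Lip^2/2$ via \Cref{propo:drift_strong_convex_wass}-\ref{item:reflex_ineq_a}, while under \Cref{as:b_lemme_descente} the map $\Tg$ is non-expansive for $\gamma \leq 2\Lipb$, so $\varkappa = 0$. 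For~\ref{prop:w1_contrac_iterees:1}, the map $s \mapsto \Phibf(-s)$ is convex and nonincreasing on $\rset_+$ (its second derivative equals $s\,\varphi(s) \geq 0$, with $\varphi$ the standard Gaussian density), hence $t \mapsto 2\Phibf\bigl(-t/(2\Xi_{\step}^{1/2}(\kappa))\bigr)$ is convex on $\rset_+$ with right-derivative at $0$ equal to $-\varphi(0)/\Xi_{\step}^{1/2}(\kappa)$; it then remains to prove $\inf_{\gamma \in \ocint{0,\bgamma}} \Xi_{\step}(\kappa) > 0$. From \eqref{eq:def_Xi} one has $\Xi_{\step}(\kappa) = \kappa(\gamma)^{-1}\bigl[1 - (1 + \gamma\kappa(\gamma))^{-\step}\bigr]$ under \Cref{as:b_min}($\mtt$) and $\Xi_{\step}(\kappa) = \gamma\ceil{1/\gamma} \in \ccint{1, 1 + \bgamma}$ under \Cref{as:b_lemme_descente}; in the first case $\gamma \mapsto \Xi_{\step}(\kappa)$ is continuous and strictly positive on $\ocint{0,\bgamma}$ with strictly positive limit $(1 - \rme^{2\mtt})/(-2\mtt)$ as $\gamma \to 0$, so the infimum is positive. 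This yields $\mathbf{a} > -\infty$, and \Cref{prop:w1_contrac_iterees} then gives \eqref{eq:w1_contrac_a}, with $D_{3,\bgamma,a}$ read off from its proof.

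For \eqref{eq:wp_contrac_a}, I would invoke \Cref{prop:from_1_to_p} with $\KkerD_{\gamma} = \Kker_{\gamma}$: its assumption \eqref{eq:wass_un_simpl} is precisely \eqref{eq:w1_contrac_a}, with $\brho = \rho_{\bgamma,a}^{1/4}$ and $\bD = D_{3,\bgamma,a}$. It then remains to provide, for every $q \in \nset$, a geometric drift $\bfDd((x,y) \mapsto \norm{x-y}^q, \tilde{\lambda}_q^{\gamma}, \tilde{A}_q\gamma)$ for $\Kker_{\gamma}$: for $q \geq 2$ this is \Cref{propo:drift_strong_convex_wass}-\ref{item:reflex_ineq_c} (applicable here since \Cref{as:b_lemme_descente} implies \Cref{as:b_min}($0$)); for $q = 1$ it follows from \Cref{propo:drift_strong_convex_wass}-\ref{item:reflex_ineq_b}, since $\Kker_{\gamma}\norm{x-y} = \Run\bigl(\Kker_{\gamma}\VlyapDun - 1\bigr) \leq \lambda^{\gamma}\norm{x-y} + \Run A\gamma$; and $q = 0$ is trivial. \Cref{prop:from_1_to_p} then delivers, for $p \in \nset$ and $\upalpha \in \ooint{p, +\infty}$, the estimate \eqref{eq:wp_contrac_a} with $\brho^{k\gamma/\upalpha} = \rho_{\bgamma,a}^{k\gamma/(4\upalpha)}$ and $D_{\upalpha,\bgamma,a}$ taken from the proof of \Cref{prop:from_1_to_p}.

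The step I expect to be the main obstacle is the verification of hypothesis~\ref{prop:w1_contrac_iterees:1}: one has to exhibit, for the index $\ell = 1$, a minorization function whose dependence on $\norm{x-y}$ is convex and whose right-derivative at the origin is bounded below uniformly in $\gamma \in \ocint{0,\bgamma}$. This is exactly where the smooth Gaussian-tail form of $\Psibf$ is needed and where one must rule out degeneration of $\Xi_{\step}(\kappa)$ as $\gamma \to 0$; the remainder amounts to tracking the constants through \Cref{propo:cvx_outside_bounds}, \Cref{prop:w1_contrac_iterees} and \Cref{prop:from_1_to_p}.
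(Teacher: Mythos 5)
Your proposal is correct and follows essentially the same route as the paper's proof: verify the convexity and the uniformly bounded right-derivative at $0$ of $t \mapsto 2\Phibf\{-t/(2\Xi_{\step}^{1/2}(\kappa))\}$ together with the Lipschitz bound of \Cref{propo:drift_strong_convex_wass}-\ref{item:reflex_ineq_a}, then feed the estimate of \Cref{propo:cvx_outside_bounds} into \Cref{prop:w1_contrac_iterees} for \eqref{eq:w1_contrac_a}, and combine \Cref{propo:drift_strong_convex_wass}-\ref{item:reflex_ineq_c} with \Cref{prop:from_1_to_p} for \eqref{eq:wp_contrac_a}. Your treatment is in fact slightly more careful than the paper's (positivity of $\inf_{\gamma}\Xi_{\step}(\kappa)$, the $\ell=1$ slice of $\Psibf$ under \Cref{as:b_lemme_descente}, and the $q\in\{0,1\}$ drifts), but these are refinements of the same argument.
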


  \begin{proof}
    The proof is postponed to \Cref{coro:w1_wp:proof}.
  \end{proof}

\end{enumerate}

\subsubsection{Other curvature conditions}
\label{sec:other-curv-cond_disc}

We now derive uniform ergodic convergence in $V$-norm under weaker conditions than \Cref{assum:strong_convex_outside_ball}.
The following assumption ensures that the radial part of $b$ decrease faster
than a linear function with slope $-\mttplusdeux < 0$.
\begin{assumptionC}
  \label{assum:drift_strong}
There exist $\Rdeux \geq 0$ and $\mttplusdeux >0$ such that for any $x \in \cball{0}{\Rdeux}^{\complementary} \cap \msx$,
    \begin{equation}\ps{b(x)}{x} \leq      -\mttplusdeux \norm[2]{x} \eqsp . \end{equation}
  \end{assumptionC}
  In the next proposition we derive a Foster-Lyapunov drift condition for $\VlyapDdeux: \ \msx^2 \to \coint{1,+\infty}$ defined for any $x, y \in \msx$ by
  \begin{equation}
    \VlyapDdeux(x,y) = 1 + \norm{x}^2/2 + \norm{y}^2/2 \eqsp , \qquad \bfc_2(x,y) = \1_{\Delta_{\msx}}(x,y) \VlyapDdeux(x,y) \eqsp .    \label{eq:def_V_1}
  \end{equation}
  Note that for any $x,y \in \msx$, $    \VlyapDdeux(x,y) = \defEns{V(x) +V(y)}/2$ with $V(x) = 1 + \norm[2]{x}$.
  
\begin{proposition}
  \label{propo:drift_strong_convex}
    Assume \tup{\Cref{ass:def_Euler}$(\msx)$} for $\msx \subset \rset^d$, \tup{\Cref{as:item:lip}}, \tup{\Cref{as:b_min}($\mtt$)} for $\mtt \in \rset_-$ and \tup{\Cref{assum:drift_strong}}.
  Then $\Kcoupling_{\gamma}$ defined by \eqref{eq:coupling_form} satisfies \hyperlink{ass:drift_discrete}{$\bfDd(    \VlyapDdeux,\lambda^{\gamma},A \gamma, \cball{0}{R} \times \cball{0}{R})$} for any $\gamma \in \ocint{0,\bgamma}$ where $\bgamma \in \oointLigne{0, 2\mttplusdeux/\Lip^2}$ and
  \begin{equation}
\lambda = \exp \parentheseDeuxLigne{ - (\mttplusdeux - \bgamma \Lip^2/2)} \eqsp , \quad  A= d + 2\Rdeux^2(\mttplusdeux - \mtt) + 2 \mttplusdeux \eqsp , \quad R = \sqrt{2}\lambda^{-\bgamma} A^{1/2} \log^{-1/2}(1/\lambda) \eqsp . \label{eq:lambda_A_drift_strong}
 \end{equation}
\end{proposition}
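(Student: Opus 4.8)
The plan is to derive a single-chain Foster--Lyapunov inequality for $\Rcoupling_{\gamma}$ with $V(z)=1+\norm[2]{z}$ and then lift it to $\Kcoupling_{\gamma}$ using that $\VlyapDdeux$ is separable. Write $\VlyapDdeux(x,y)=\{V(x)+V(y)\}/2$. Since $\Kcoupling_{\gamma}((x,y),\cdot)$ is a transference plan between $\Rcoupling_{\gamma}(x,\cdot)$ and $\Rcoupling_{\gamma}(y,\cdot)$ (see the discussion following \eqref{eq:coupling_form}), the marginal property yields, for all $x,y\in\msx$ and $\gamma\in\ocint{0,\bgamma}$,
\[
  \Kcoupling_{\gamma}\VlyapDdeux(x,y)=\tfrac12\Rcoupling_{\gamma}V(x)+\tfrac12\Rcoupling_{\gamma}V(y)\eqsp,
\]
so it suffices to bound $\Rcoupling_{\gamma}V$ on $\msx$.

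For the one-chain estimate, I would use that $\Pi_{\msx}$ is non-expansive and fixes $0$ (as $0\in\msx$), hence $\norm{\Pi_{\msx}(u)}\le\norm{u}$; together with $\expe{Z_1}=0$ and $\expe{\norm[2]{Z_1}}=d$ this gives
\[
  \Rcoupling_{\gamma}V(x)\le 1+\norm[2]{x+\gamma b(x)}+\gamma d=V(x)+2\gamma\ps{x}{b(x)}+\gamma^{2}\norm[2]{b(x)}+\gamma d\eqsp.
\]
Set $c=\mttplusdeux-\bgamma\Lip^{2}/2=\log(1/\lambda)>0$. Rewriting $V(x)=\lambda^{\gamma}V(x)+(1-\lambda^{\gamma})(1+\norm[2]{x})$ and using $1-\lambda^{\gamma}\le\gamma c$, $\gamma^{2}\norm[2]{b(x)}\le\gamma\bgamma\Lip^{2}\norm[2]{x}$ (by \Cref{as:item:lip}), and bounding $2\ps{x}{b(x)}$ by $-2\mttplusdeux\norm[2]{x}$ when $\norm{x}\ge\Rdeux$ (by \Cref{assum:drift_strong}) and by $-2\mtt\norm[2]{x}$ otherwise (by \Cref{as:b_min}($\mtt$) together with $b(0)=0$), I expect to reach both a global bound
\[
  \Rcoupling_{\gamma}V(x)\le\lambda^{\gamma}V(x)+\gamma B\eqsp,\qquad B:=\mttplusdeux+d+\Rdeux^{2}(\mttplusdeux+\bgamma\Lip^{2}/2-2\mtt)\eqsp,
\]
valid for every $x\in\msx$, and a strictly contractive bound outside the ball,
\[
  \Rcoupling_{\gamma}V(x)\le\lambda^{\gamma}V(x)+\gamma(\mttplusdeux+d)-\gamma c\norm[2]{x}\eqsp,\qquad\norm{x}\ge\Rdeux\eqsp;
\]
note $A-B=\mttplusdeux+\Rdeux^{2}c\ge 0$, so $B\le A$.

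It then remains to combine these two bounds for $x$ and for $y$ and halve. If $(x,y)\in\cball{0}{R}\times\cball{0}{R}$, the global bound applied to both coordinates gives $\Kcoupling_{\gamma}\VlyapDdeux(x,y)\le\lambda^{\gamma}\VlyapDdeux(x,y)+\gamma B\le\lambda^{\gamma}\VlyapDdeux(x,y)+A\gamma$. If $(x,y)\notin\cball{0}{R}\times\cball{0}{R}$, say $\norm{x}>R$, one first checks $R\ge\Rdeux$ (which holds since $R^{2}=2\lambda^{-2\bgamma}A/c\ge 2A/c\ge 4\Rdeux^{2}$, using $\mttplusdeux-\mtt\ge\mttplusdeux\ge c$), applies the contractive bound to $x$ and the global bound to $y$, and uses $cR^{2}=2\lambda^{-2\bgamma}A\ge 2A$ and $B=A-\mttplusdeux-\Rdeux^{2}c$ to see that $(\mttplusdeux+d)-cR^{2}+B\le d-A-\Rdeux^{2}c\le 0$ (since $A\ge d$); hence $\Kcoupling_{\gamma}\VlyapDdeux(x,y)\le\lambda^{\gamma}\VlyapDdeux(x,y)$, which is exactly the required inequality off the small set. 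This establishes $\bfDd(\VlyapDdeux,\lambda^{\gamma},A\gamma,\cball{0}{R}\times\cball{0}{R})$ with the constants in \eqref{eq:lambda_A_drift_strong}.

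The main difficulty is structural rather than computational: because the small set is the \emph{product} $\cball{0}{R}\times\cball{0}{R}$, it is not enough to prove a one-chain drift of the form $\Rcoupling_{\gamma}V(x)\le\lambda^{\gamma}V(x)+\tilde A\gamma\1_{\cball{0}{R}}(x)$, since averaging the two coordinates would only yield $\lambda^{\gamma}\VlyapDdeux+\tfrac{\tilde A\gamma}{2}(\1_{\cball{0}{R}}(x)+\1_{\cball{0}{R}}(y))$ and $\tfrac12(\1_{\cball{0}{R}}(x)+\1_{\cball{0}{R}}(y))\not\le\1_{\cball{0}{R}\times\cball{0}{R}}(x,y)$. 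One must instead keep simultaneously the globally-additive bound and the bound that is strictly contractive once $\norm{x}\ge\Rdeux$, and the quantitative part of the argument — checking that the radius $R$ of \eqref{eq:lambda_A_drift_strong} is large enough for the negative term $-\gamma c\norm[2]{x}$ at the coordinate leaving $\cball{0}{R}$ to absorb the additive term $\gamma B$ at the other coordinate — is precisely what pins down $A$ and $R$.
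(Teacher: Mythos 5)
Your proof is correct and follows essentially the same route as the paper's: the paper likewise expands $\Rcoupling_{\gamma}V(x)$ for $V(x)=1+\norm{x}^2$ using \Cref{as:item:lip}, \Cref{as:b_min}($\mtt$) and \Cref{assum:drift_strong}, passes to $\Kcoupling_{\gamma}$ by averaging the two marginals, and absorbs the additive constant off the product set $\cball{0}{R}\times\cball{0}{R}$ using that $V$ is large at the coordinate leaving $\cball{0}{R}$, this last step being packaged in \Cref{lemma:drift_2_drift_1}-\ref{item:b_drift} applied to a global single-chain drift at the doubled rate $\lambda^{2\gamma}$, which produces exactly your $R$, $\lambda$ and $A$. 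The only difference is bookkeeping: you keep the explicit quadratic surplus $-\gamma c\norm{x}^2$ and let it swallow the constant at the other coordinate, whereas the paper trades $\lambda^{2\gamma}$ for $\lambda^{\gamma}$ outside the product ball — the same mechanism, with the same implicit use of $0\in\msx$ that the paper's own computation makes.
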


\begin{proof}
The proof is postponed to \Cref{propo:drift_strong_convex:proof}.
\end{proof}

\begin{theorem}
  \label{propo:drift_strong_convex_bounds}
      Assume \tup{\Cref{ass:def_Euler}$(\msx)$} for $\msx \subset \rset^d$, \tup{\Cref{as:item:lip}} and \tup{\Cref{assum:drift_strong}}. 
      Assume in addition either
      \tup{\Cref{as:b_min}($\mtt$)} for $\mtt \in \rset_-$ or \tup{\Cref{as:b_lemme_descente}}. 
      Then the conditions and  conclusions of \Cref{theo:discrete_contrac_wass_D_v2} hold with $\lyap = \VlyapDdeux$ defined in \eqref{eq:def_V_1},  $\bgamma$, $\lambda$, $A$ and $\tM_{\discrete} = 2R$ given by \Cref{propo:drift_strong_convex},
  and $\Psibf$ given by \eqref{eq:Psi_def} or \eqref{eq:Psi_def_2}.
\end{theorem}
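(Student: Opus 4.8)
The statement is an instance of \Cref{theo:discrete_contrac_wass_D_v2}, so the plan is simply to verify that the hypotheses of that theorem are satisfied with the advertised data, following closely the proof of \Cref{propo:cvx_outside_bounds}. Fix $\bgamma \in \ooint{0, 2\mttplusdeux/\Lip^2}$, additionally imposing $\bgamma \le 2\Lipb$ when working under \Cref{as:b_lemme_descente}, so that both \Cref{propo:drift_strong_convex} and \Cref{prop:a1_type} apply on $\ocint{0,\bgamma}$. There is no need to pin down $\ell \in \nsets$ here, since the minorization condition \eqref{eq:minorization_condition_v2} required by \Cref{theo:discrete_contrac_wass_D_v2} is asked for every $\ell$; the specific choice of $\ell$ is made afterwards when extracting explicit rates.

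First I would record that, for every $\gamma \in \ocint{0,\bgamma}$, the diagonal $\Delta_{\msx}$ is absorbing for $\Kcoupling_\gamma$: this is immediate from the definition \eqref{eq:coupling_form} of the projected reflection coupling, whose second branch collapses onto the diagonal as soon as $\Tg(x) = \Tg(y)$. Next, the drift condition: by \Cref{propo:drift_strong_convex}, $\Kcoupling_\gamma$ satisfies \hyperlink{ass:drift_discrete}{$\bfDd(\VlyapDdeux, \lambda^\gamma, A\gamma, \cball{0}{R}\times\cball{0}{R})$} with $\lambda, A, R$ as in \eqref{eq:lambda_A_drift_strong}; it then suffices to observe that $\msc := (\cball{0}{R}\times\cball{0}{R}) \cap \msx^2 \in \mcx^{\otimes 2}$ and that $\msc \subset \Delta_{\msx, 2R}$, because $\norm{x-y} \le \norm{x} + \norm{y} \le 2R$ whenever $\norm{x} \le R$ and $\norm{y} \le R$. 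Hence $\msc$ is an admissible small set with $\tM_{\discrete} = 2R$, and the drift hypothesis of \Cref{theo:discrete_contrac_wass_D_v2} holds with $\VlyapD = \VlyapDdeux \ge 1$.

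It remains to establish the abstract minorization condition \eqref{eq:minorization_condition_v2} with the stated $\Psibf$. Under \Cref{as:b_min}($\mtt$) with $\mtt \in \rset_-$, \Cref{prop:a1_type}-\ref{item:a1_1} gives \Cref{assum:lip_op}($\msx^2$) with $\kappa(\gamma) = -2\mtt + \gamma \Lip^2 \ge 0$, i.e. branch \ref{assum:lip_op_non_convex}, and \Cref{theo:minorization_general} then yields $\Kcoupling_\gamma^{\ell\step}((x,y), \Deltar^{\complementary}) \le 1 - 2\Phibf\defEnsLigne{-\norm{x-y}/(2\Xi_{\ell\step}^{1/2}(\kappa))}$, which is exactly \eqref{eq:minorization_condition_v2} with $\Psibf$ as in \eqref{eq:Psi_def}. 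Under \Cref{as:b_lemme_descente}, one uses that this condition forces \Cref{as:b_min}($0$), so \Cref{prop:a1_type}-\ref{item:a1_2} gives \Cref{assum:lip_op}($\msx^2$)-\ref{assum:lip_op_convex} with $\kappa \equiv 0$; combining \Cref{coro:doeblin_lemme_2}-\ref{coro:doeblin_lemme_2_a} (for $\norm{x-y} \le \Run$ and $\ell \ge \ceil{\Run}^2 \ge \ceil{\Run^2}$) with \Cref{theo:minorization_general} (otherwise) produces $\Psibf$ as in \eqref{eq:Psi_def_2}. In both cases the positivity requirement $\inf_{(x,y) \in \Delta_{\msx,M}} \Psibf(\gamma,\ell,\norm{x-y}) > 0$ for every $M \ge 0$ is clear, since $\Phibf$ is strictly positive everywhere and $\Xi_{\ell\step}(\kappa)$ is finite. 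All hypotheses of \Cref{theo:discrete_contrac_wass_D_v2} are thus verified, and its conclusions follow verbatim.

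The only genuinely non-routine points are: (i) matching the Lyapunov pair of \Cref{propo:drift_strong_convex} — whose small set is a product of balls — to the shape required by \Cref{theo:discrete_contrac_wass_D_v2} (namely, being contained in some $\Delta_{\msx,\tM_{\discrete}}$), which is the elementary inclusion above; and (ii) handling the borderline case $\kappa \equiv 0$ arising under \Cref{as:b_lemme_descente}, where \Cref{theo:minorization_general} alone does not give a lower bound on $\Psibf$ that stays bounded away from $0$ on small sets as $\gamma \to 0$, forcing one to splice in the compact-set minorization of \Cref{coro:doeblin_lemme_2}-\ref{coro:doeblin_lemme_2_a} — which is precisely why $\Psibf$ in \eqref{eq:Psi_def_2} is defined by cases. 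Everything else is bookkeeping of constants already performed in \Cref{propo:drift_strong_convex} and \Cref{theo:discrete_contrac_wass_D_v2}.
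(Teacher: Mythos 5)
Your proposal is correct and follows essentially the same route as the paper, which simply adapts the proof of \Cref{propo:cvx_outside_bounds}: absorbing diagonal, the drift of \Cref{propo:drift_strong_convex} with the product of balls contained in $\Delta_{\msx,2R}$, and the minorization from \Cref{theo:minorization_general} (resp. \Cref{coro:doeblin_lemme_2}-\ref{coro:doeblin_lemme_2_a}) giving \eqref{eq:Psi_def} (resp. \eqref{eq:Psi_def_2}). One small quibble with your remark (ii): since $\kappa\equiv 0$ gives $\Xi_{\ell\step}(\kappa)=\gamma\ell\step\geq\ell$, \Cref{theo:minorization_general} alone already yields a $\gamma$-uniform lower bound for fixed $\ell$; the case-splitting in \eqref{eq:Psi_def_2} is about getting the fixed constant $2\Phibf(-1/2)$ on the small set rather than about a degeneration as $\gamma\to0$.
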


\begin{proof}
  The proof is similar to the one of \Cref{propo:cvx_outside_bounds}.
\end{proof}
  Let $\bgamma \in \oointLigne{0, \max(2\mttplusdeux/\Lip^2,1)}$, $\ell \in \nsets$ specified below, $\lambda_{\bgamma, b}, \rho_{\bgamma, b} \in \ooint{0,1}$ and $D_{\bgamma, 1, b}$, $D_{\bgamma, 2, b}$, $C_{\bgamma, b} \geq 0$ the constants given by \Cref{propo:drift_strong_convex_bounds}, such that for any $k \in \nset$, $\gamma \in \ocint{0, \bgamma}$ and $x,y \in \msx$
  \begin{equation}
    \wasscdeux(\updelta_x \Rcoupling_{\gamma}^k, \updelta_y \Rcoupling_{\gamma}^k) \leq \Kker_{\gamma}^k \bfc_2(x,y) \leq \lambda_{\bgamma, b}^{k\gamma/4} [D_{\bgamma,1,b} \bfc_2(x,y) + D_{\bgamma,2,b}\1_{\Deltar^{\complementary}}] +  C_{\bgamma, b} \rho_{\bgamma, b}^{k\gamma/4}  \eqsp ,  \label{eq:wc1_convergence}\end{equation}
  with $\bfc_2(x,y) = \1_{\Deltar^{\complementary}}(x,y) \defEnsLigne{V(x) + V(y)}/2$ for any $x,y \in \msx$.
Note that by \eqref{eq:def_V_1},  this result implies that for any $k \in \nset$, $\gamma \in \ocint{0, \bgamma}$ and $x,y \in \msx$
  \begin{equation}
    \label{eq:wc2_convergence_true}
    \Vnorm{\updelta_x \Rcoupling_{\gamma}^k - \updelta_y \Rcoupling_{\gamma}^k} \leq \defEns{D_{\bgamma, 1, b} + D_{\bgamma, 2, b} + C_{\bgamma, b} } \rho_{\bgamma, b}^{k \gamma} \bfc_2(x,y) \eqsp .
  \end{equation}

  Note that using \Cref{theo:discrete_contrac_wass_D_v2}, we obtain that the following limits exist and do not depend on~$\Lip$
\begin{equation}
D_{1,b} = \lim_{\bgamma \to 0} D_{\bgamma, 1, b}\eqsp , \quad D_{2,b} = \lim_{\bgamma \to 0} D_{\bgamma, 2, b}\eqsp , \quad C_b = \lim_{\bgamma \to 0}C_{\bgamma, b}\eqsp , \quad \lambda_b = \lim_{\bgamma \to 0}\lambda_{\bgamma, b} \eqsp , \quad \rho_b = \lim_{\bgamma \to 0}\rho_{\bgamma, b} \eqsp . \label{eq:inde_2}\end{equation}
We now discuss the dependency of $\rho_b$ with respect to the introduced parameters, depending on the sign of $\mtt$ and  based on \Cref{theo:discrete_contrac_wass_D_v2}. 
\begin{enumerate}[label= (\alph*), wide, labelwidth=!, labelindent=0pt]
\item If \Cref{as:b_lemme_descente} holds, set $\ell = \ceil{\tM_{\discrete}^2}$. Then, if  we consider $\mttplusdeux$ sufficiently small and $\abs{\mtt}$ and $\Rdeux$ sufficiently large such that the conditions of \Cref{theo:discrete_contrac_wass_D_v2} hold, we have
    \begin{align}
      &\log^{-1}(\rho_b^{-1}) \\
      & \qquad \leq \left. \parentheseDeux{1 + 2\log(1+ R^2) + \log(1+2A) + 2(1 + 4R^2) \mttplusdeux} \middle/ \parentheseDeux{\mttplusdeux\Phibf(-1/2)}  \right. \label{eq:rho_bgamma_1_majo_lim} \eqsp.
  \end{align}
    Note that the leading term on the right hand side of this equation is of order $R^2$, \ie \ of order $\max(\Rdeux^2, d/\mttplusdeux)$.
\item If \Cref{as:b_min}($\mtt$) with $\mtt \in \rset_-$, set
  $\ell = \ceil{\tM_{\discrete}^2}$. Then,  if  we consider $\mttplusdeux$ sufficiently small and $\abs{\mtt}$ and $\Rdeux$ sufficiently large such that the conditions of \Cref{theo:discrete_contrac_wass_D_v2} hold, we have 
  \begin{align}
    &\log^{-1}(\rho_b^{-1}) \leq \left. \parentheseDeux{1 + 2\log(1+ R^2) + \log(1+2A) + 2(1 + 4R^2) \mttplusdeux} \right . \\ & \qquad \qquad \qquad \left . \middle/ \parentheseDeux{\mttplusdeux \Phibf\defEnsLigne{-2(-\mtt)^{1/2}R/(2 - 2\rme^{2\mtt R^2})^{1/2}}}  \label{eq:rho_bgamma_2_majo_lim} \right. \eqsp ,
  \end{align}
Note that the right hand side of \eqref{eq:rho_bgamma_2_majo_lim}  is exponential in $-\mtt R^2$, \ie \ exponential  in $-\mtt d / \mttplusdeux$ and $-\Rdeux^2(\mttplusdeux - \mtt)\mtt / \mttplusdeux$.
  \end{enumerate}
  We now consider a condition which enforces weak curvature outside of a compact set.
\begin{assumptionC}
  \label{assum:curvature}
  There exist $\Rtrois, \cconst \geq 0$, $\mttun, \mttdeux >0$, such that for any $x \in \rset^d$
  \[ \langle b(x), x \rangle \leq - \mttun \| x \| \1_{\cball{0}{\Rtrois}^{\complementary}}(x) -\mttdeux \| b (x) \|^2 + \cconst/2 \eqsp.\]
\end{assumptionC}
In the case where $\msx = \rset^d$, $\Pi_{\msx} = \Id$ and there exists $U \in \rmC^1(\rset^d, \rset)$ such that \tup{\Cref{as:item:lip}} and  \tup{\Cref{as:b_min}($0$)} hold with $b = - \nabla U$ and $\int_{\rset^d} \rme^{-U(x)} \rmd x < +\infty$, then there exist $\Rtrois \geq 0$ and $\mttun >0$ such that \Cref{assum:curvature} holds with $\mttdeux = \cconst = 0$, see \cite[Lemma 2.2]{barky2008simple}.
Define $V : \ \msx \to \coint{1,+\infty}$ for any $x \in \msx$ by
  \begin{equation}
    \label{eq:def_V_2}
    V(x) = \exp(\mtttrois\phi(x)) \eqsp , \qquad \phi(x) =  \sqrt{1 + \norm{x}^2} \eqsp , \qquad \mtttrois \in \ocint{0, \mttun / 2} \eqsp .
  \end{equation}
  We also define for any $x,y \in \msx$,
  \begin{equation} \VlyapDtrois(x,y) = \defEns{V(x) + V(y)}/2 \eqsp , \qquad \bfc_3(x,y) = \1_{\Delta_{\msx}}(x,y) \VlyapDtrois(x,y) \eqsp . \label{eq:def_c3} \end{equation}
\begin{proposition}
  \label{propo:drift_convex}
   Assume \tup{\Cref{ass:def_Euler}$(\msx)$} for $\msx \subset \rset^d$ and \tup{\Cref{assum:curvature}}.
  Then for any $\gamma \in \ocint{0,\bgamma}$, $\Kcoupling_{\gamma}$ defined by \eqref{eq:coupling_form} satisfies \hyperlink{ass:drift_discrete}{$\bfDd(\VlyapDtrois,\lambda^{\gamma},A \gamma, \cballdeux{0}{R})$} where $\bgamma \in (0, 2\mttdeux)$, $\Rquatre = \max(1, \Rtrois, (d+\cconst)/\mttun)$ and
  \begin{equation}
    \label{eq:trois}
    \begin{aligned}
      & \lambda = \rme^{-(\mtttrois)^2/2} \eqsp , \\
      &A = \exp\parentheseDeux{\bgamma (\mtttrois (d+\cconst)+ (\mtttrois)^2)/2 + \mtttrois (1 + \Rquatre^2)^{1/2}}(\mtttrois (d+\cconst)/2 + (\mtttrois)^2) \eqsp , \quad R = \log(2\lambda^{-2\bgamma} A \log^{-1}(1/\lambda)) \eqsp .
      \end{aligned} \label{eq:const_drift_vexp}
 \end{equation}
\end{proposition}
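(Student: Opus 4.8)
The plan is to prove the stated Foster--Lyapunov inequality for $\Kcoupling_\gamma$ by reducing it to a one-chain drift for $\Rker_\gamma$ applied to $V$. Since $\VlyapDtrois(x,y) = \{V(x) + V(y)\}/2$ and the two marginals of $\Kcoupling_\gamma((x,y),\cdot)$ are $\Rker_\gamma(x,\cdot)$ and $\Rker_\gamma(y,\cdot)$, one has $\Kcoupling_\gamma\VlyapDtrois(x,y) = \{\Rker_\gamma V(x) + \Rker_\gamma V(y)\}/2$, so it is enough to establish a drift of the form $\Rker_\gamma V(z) \leq \lambda^\gamma V(z) + A\gamma\1_{\cball{0}{R}}(z)$ for $z \in \msx$ in which, moreover, $\Rker_\gamma V$ decays \emph{strictly} faster than $\lambda^\gamma$ outside $\cball{0}{R}$; the surplus of one coordinate then absorbs the additive term $A\gamma$ produced by the other coordinate when it lies inside $\cball{0}{R}$, which is exactly what the logarithmic choice of $R$ is calibrated to ensure. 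Finally $\cballdeux{0}{R} = \cball{0}{R}\times\cball{0}{R}$ carries the indicator of the resulting small set.

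For the one-chain drift I would first note that, since $\phi$ is radially non-decreasing and $\Pi_\msx$ is non-expansive (with $0\in\msx$), $V\circ\Pi_\msx \leq V$, so that $\Rker_\gamma V(x) \leq \PE[V(x + \gamma b(x) + \sqrt{\gamma}Z)]$ with $Z\sim\mathcal{N}(0,\IdM)$ and one may work with $\Pi_\msx = \Id$. From $\phi(a+h)^2 = \phi(a)^2 + 2\langle a,h\rangle + \normLigne{h}^2$ and $\sqrt{1+t}\leq 1+t/2$ one gets the pointwise bound
\[
  \phi(a+h) \leq \phi(a) + \frac{\langle a,h\rangle}{\phi(a)} + \frac{\normLigne{h}^2}{2\phi(a)} \eqsp .
\]
Applying this with $a = x$, $h = \gamma b(x) + \sqrt{\gamma}Z$ and expanding $\normLigne{h}^2$, the $b(x)$-dependent terms collect into $\gamma\{\langle x,b(x)\rangle + (\gamma/2)\normLigne{b(x)}^2\}/\phi(x)$, and since $\gamma\leq\bgamma < 2\mttdeux$, \Cref{assum:curvature} bounds the bracket by $-\mttun\normLigne{x}\1_{\cball{0}{\Rtrois}^{\complementary}}(x) + \cconst/2$, the $\normLigne{b(x)}^2$ contribution being non-positive and thus discarded. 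Multiplying by $\mtttrois$, exponentiating, and integrating out $Z$ by decomposing it along the unit vector of $x + \gamma b(x)$ and its orthogonal complement (so that a one-dimensional Gaussian moment generating function and a $\chi^2_{d-1}$ one appear, both finite once $\gamma\mtttrois < 1$), then using $-\log(1-t)\leq t/(1-t)$ and $\normLigne{x+\gamma b(x)}^2 \leq \phi(x)^2 + \gamma\cconst$, yields $\Rker_\gamma V(x)\leq V(x)\exp(E(x))$ with
\[
  E(x) \leq \gamma\mtttrois\Bigl[ -\frac{\mttun\normLigne{x}}{\phi(x)}\1_{\cball{0}{\Rtrois}^{\complementary}}(x) + \frac{\cconst + d/(1-\bgamma\mtttrois)}{2\phi(x)} + \frac{\mtttrois(1+\bgamma\cconst)}{2(1-\bgamma\mtttrois)}\Bigr] \eqsp .
\]

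It then remains to split on the size of $\normLigne{x}$. For $\normLigne{x}$ large --- which is what $\normLigne{x} > R$ provides, and where $\normLigne{x}/\phi(x)$ is close to $1$ --- I would use $\Rquatre = \max(1,\Rtrois,(d+\cconst)/\mttun)$ to dominate the middle term and $\mtttrois\leq\mttun/2$ for the last one, and check that the bracket is $\leq -\mtttrois/2$, so that $E(x)\leq -\gamma(\mtttrois)^2/2 = \log(\lambda^\gamma)$, with strict improvement as $\normLigne{x}\to\infty$; for $\normLigne{x}\leq R$ one bounds the bracket by its positive part, getting $\Rker_\gamma V(x)\leq \rme^{\gamma B}V(x)$ for an explicit $B$, hence $\Rker_\gamma V(x)\leq \lambda^\gamma V(x) + (\rme^{\gamma B}-\lambda^\gamma)\sup_{\normLigne{z}\leq R}V(z)$, and bounding $\rme^{\gamma B}-\lambda^\gamma \leq \gamma(B + (\mtttrois)^2/2)\rme^{\gamma B}$ and $\sup_{\normLigne{z}\leq R}V(z) = \exp(\mtttrois\sqrt{1+R^2})$ produces the constant $A$. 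Plugging the one-chain drift back into $\Kcoupling_\gamma\VlyapDtrois = \{\Rker_\gamma V(x)+\Rker_\gamma V(y)\}/2$ and exploiting the surplus decay outside $\cball{0}{R}$ gives \hyperlink{ass:drift_discrete}{$\bfDd(\VlyapDtrois,\lambda^\gamma,A\gamma,\cballdeux{0}{R})$}. The hard part will be the quantitative bookkeeping in these last two steps: recovering \emph{exactly} the rate $\lambda = \rme^{-(\mtttrois)^2/2}$ (which forces the careful use of $\mtttrois\leq\mttun/2$ and of the closeness of $\normLigne{x}/\phi(x)$ to $1$), controlling the dimension- and $\cconst$-dependence that enters through $\Rquatre$, and calibrating the logarithmic radius $R$ and the constant $A$ so that the off-diagonal case in the reduction to $\Kcoupling_\gamma$ closes.
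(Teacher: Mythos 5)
Your overall architecture is the same as the paper's: reduce the coupling drift to a one-chain drift for $\Rker_{\gamma}V$ (this is \Cref{lemma:drift_2_drift_1}-\ref{item:a_drift}) and then absorb the additive constant off $\cballdeux{0}{R}$ using the exponential size of $V$ (this is \Cref{lemma:drift_2_drift_1}-\ref{item:b_drift}). The genuine gap is in how you propose to get the one-chain drift with the \emph{stated} constants. Your device --- the pointwise bound $\phi(x+h)\leq\phi(x)+\langle x,h\rangle/\phi(x)+\norm{h}^2/(2\phi(x))$ followed by explicit Gaussian and $\chi^2_{d-1}$ moment generating functions --- (i) is only finite when $\gamma\mtttrois<1$, a restriction not implied by $\bgamma<2\mttdeux$ and $\mtttrois\leq\mttun/2$, and (ii) inflates the Gaussian correction from $\gamma(\mtttrois)^2/2$ to roughly $\gamma(\mtttrois)^2(1+\bgamma\cconst)/\{2(1-\bgamma\mtttrois)\}$ plus a $d/(1-\bgamma\mtttrois)$ term. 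With the radius frozen at $\Rquatre=\max(1,\Rtrois,(d+\cconst)/\mttun)$ and no further smallness of $\bgamma$, your claimed bracket bound $\leq-\mtttrois/2$ outside the ball fails in general (take $\bgamma\cconst$ large, or $\bgamma\mtttrois$ close to $1$), so $\lambda=\rme^{-(\mtttrois)^2/2}$ and the stated $A$ are not recovered; you flagged this as ``bookkeeping'', but it is not --- it is where the argument breaks. The paper's proof avoids it entirely: since $\phi$ is $1$-Lipschitz, the Gaussian log-Sobolev/Herbst inequality (\cite[Theorem 5.5]{boucheron2013concentration}) gives $\Rker_{\gamma}\exp(\mtttrois\phi)(x)\leq\exp[\mtttrois\,\Rker_{\gamma}\phi(x)+\gamma(\mtttrois)^2/2]$ with the clean correction $\gamma(\mtttrois)^2/2$, free of $d$, $\cconst$ and $(1-\gamma\mtttrois)^{-1}$ factors, and then Jensen turns $\Rker_{\gamma}\phi(x)$ into $\sqrt{1+\norm{\Tg(x)}^2+\gamma d}$; after that, your case split on $\norm{x}\gtrless\Rquatre$ together with \Cref{assum:curvature} and $\gamma\leq 2\mttdeux$ works exactly as you intend.

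A second, related soft spot is the absorption step. Your own case-(a) estimate yields exactly $\Rker_{\gamma}V(x)\leq\lambda^{\gamma}V(x)$ outside $\cball{0}{\Rquatre}$; the ``strict improvement'' is only asymptotic as $\norm{x}\to\infty$ and is not quantified at $\norm{x}=R$, so when one coordinate lies outside $\cball{0}{R}$ and the other inside, there is no explicit surplus to swallow the $A\gamma$ term with the same rate $\lambda^{\gamma}$. The paper manufactures the surplus by conceding rate: \Cref{lemma:drift_2_drift_1}-\ref{item:b_drift} converts the global one-chain drift $\bfDd(V,\lambda^{\gamma},A\gamma,\msx)$ into a coupling drift with rate $\lambda^{\gamma/2}$ and small set $\cballdeux{0}{R}$, the logarithmic $R$ in \eqref{eq:const_drift_vexp} being precisely the radius beyond which $V$ dominates a multiple of $A\log^{-1}(1/\lambda)$. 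So you should either quantify a genuine surplus at radius $R$ (which your estimate does not provide) or, as the paper does, trade a factor in the exponent of the rate; as written, the off-diagonal closure with the claimed $\lambda$ does not go through.
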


\begin{proof}
  The proof is postponed to \Cref{propo:drift_convex:proof}.
\end{proof}
 \begin{theorem}
   \label{propo:weak_outside_bounds}
      Assume \tup{\Cref{ass:def_Euler}$(\msx)$} for $\msx \subset \rset^d$, \tup{\Cref{as:item:lip}} and \tup{\Cref{assum:curvature}}. 
      Assume in addition either
      \tup{\Cref{as:b_min}($\mtt$)} for $\mtt \in \rset_-$ or \tup{\Cref{as:b_lemme_descente}}. 
      Then the conditions and  conclusions of \Cref{theo:discrete_contrac_wass_D_v2} hold with $\lyap = \VlyapDdeux$ defined in \eqref{eq:def_V_1},  $\bgamma$, $\lambda$, $A$ and $\tM_{\discrete} = 2R$ given by \Cref{propo:drift_convex},
  and $\Psibf$ given by \eqref{eq:Psi_def} or \eqref{eq:Psi_def_2}.   
\end{theorem}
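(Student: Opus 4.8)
The plan is to prove \Cref{propo:weak_outside_bounds} exactly as \Cref{propo:cvx_outside_bounds} was proved, namely by checking that all the hypotheses of \Cref{theo:discrete_contrac_wass_D_v2} are met, the three ingredients being: the coupling drift condition of \Cref{propo:drift_convex}, the iterated minorization condition obtained from \Cref{prop:a1_type} and \Cref{theo:minorization_general} (or \Cref{coro:doeblin_lemme_2}), and the fact that the diagonal is absorbing for $\Kker_\gamma$.

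First I would fix $\bgamma \in \ooint{0, 2\mttdeux}$ (intersected, if \Cref{as:b_lemme_descente} is assumed, with $\ocint{0, 2\Lipb}$ so that \Cref{prop:a1_type}-\ref{item:a1_2} applies, recalling that \Cref{as:b_lemme_descente} entails \Cref{as:item:lip}). For every $\gamma >0$ the set $\Delta_{\msx}$ is absorbing for $\Kker_\gamma$ by construction of the reflection coupling in \eqref{eq:coupling_form}. Next, under \Cref{assum:curvature}, \Cref{propo:drift_convex} supplies $\lambda \in (0,1)$, $A \geq 0$ and $R \geq 0$ given by \eqref{eq:const_drift_vexp} such that $\Kker_\gamma$ satisfies \hyperlink{ass:drift_discrete}{$\bfDd(\VlyapDtrois, \lambda^\gamma, A\gamma, \cballdeux{0}{R})$} for every $\gamma \in \ocint{0,\bgamma}$, with $\VlyapDtrois$ and $\bfc_3$ defined in \eqref{eq:def_c3}. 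Since $\norm{x}\leq R$ and $\norm{y}\leq R$ imply $\norm{x-y}\leq 2R$, the small set $\msc = \cballdeux{0}{R}$ is contained in $\Delta_{\msx,2R}$, so with $\tM_{\discrete} = 2R$ the drift-and-small-set requirements of \Cref{theo:discrete_contrac_wass_D_v2} hold with $\lyap = \VlyapDtrois$.

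Second, I would derive the iterated minorization condition \eqref{eq:minorization_condition_v2}. If \Cref{as:item:lip} and \Cref{as:b_min}($\mtt$) hold with $\mtt \in \rset_-$, then \Cref{prop:a1_type}-\ref{item:a1_1} gives \Cref{assum:lip_op}($\msx^2$)-\ref{assum:lip_op_non_convex} with $\kappa(\gamma) = -2\mtt + \Lip^2\gamma > 0$, and \Cref{theo:minorization_general} then yields, for all $\gamma\in\ocint{0,\bgamma}$, $\ell\in\nsets$ and $x,y\in\msx$, $\Kcoupling_\gamma^{\ell\step}((x,y),\Deltar^{\complementary}) \leq 1 - 2\Phibf\{-\norm{x-y}/(2\Xi_{\ell\step}^{1/2}(\kappa))\}$, that is, \eqref{eq:minorization_condition_v2} with $\Psibf$ given by \eqref{eq:Psi_def}; the uniform positivity $\inf_{(x,y)\in\Delta_{\msx,M}}\Psibf(\gamma,\ell,\norm{x-y})>0$ for each $M\geq0$ is immediate because $\Phibf$ is positive and $\Xi_{\ell\step}(\kappa)$ is finite. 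If instead \Cref{as:b_lemme_descente} holds, \Cref{prop:a1_type}-\ref{item:a1_2} gives \Cref{assum:lip_op}($\msx^2$)-\ref{assum:lip_op_convex} with $\kappa=0$, and combining \Cref{coro:doeblin_lemme_2}-\ref{coro:doeblin_lemme_2_a} (applied with the small-set radius of \Cref{propo:drift_convex}, for $\ell$ at least the square of that radius and $\norm{x-y}$ below it) with \Cref{theo:minorization_general} (in the complementary regime) produces \eqref{eq:minorization_condition_v2} with $\Psibf$ of the form \eqref{eq:Psi_def_2}. With these three ingredients in hand, \Cref{theo:discrete_contrac_wass_D_v2} applies verbatim and gives the stated conclusion.

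I do not anticipate a genuine obstacle: the proof is a verification that the pieces line up with the template of \Cref{propo:cvx_outside_bounds}, the only mildly delicate points being the inclusion $\cballdeux{0}{R}\subset \Delta_{\msx,2R}$ (so that $\tM_{\discrete}=2R$ is admissible) and the bookkeeping of the admissible range of $\bgamma$ so that \Cref{propo:drift_convex} and the relevant branch of \Cref{prop:a1_type} hold simultaneously; the choice of $\ell$ (of the order of $\tM_{\discrete}^2$) is then dictated, as in \Cref{sec:strongly-convex-at}, by the requirement $\bvareps_{\discrete,1}\leq 1-\rme^{-1}$ in \Cref{theo:discrete_contrac_wass_D_v2}.
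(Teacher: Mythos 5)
Your proposal is correct and follows essentially the same route as the paper, whose proof is simply the template of \Cref{propo:cvx_outside_bounds}: absorbing diagonal, the coupling drift condition of \Cref{propo:drift_convex} with small set $\cballdeux{0}{R}\subset\Delta_{\msx,2R}$, and the iterated minorization from \Cref{prop:a1_type} together with \Cref{theo:minorization_general} (resp.\ \Cref{coro:doeblin_lemme_2}), fed into \Cref{theo:discrete_contrac_wass_D_v2}. You are also right to work with $\lyap=\VlyapDtrois$ from \eqref{eq:def_c3} rather than the $\VlyapDdeux$ literally written in the statement, since \Cref{propo:drift_convex} provides the drift for $\VlyapDtrois$ and the continuous-time counterpart \Cref{theo:c_ergo_v_norm} confirms $\bfc_3$ is the intended cost.
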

\begin{proof}
  The proof is similar to the one of \Cref{propo:cvx_outside_bounds}.
\end{proof}    
The dependency of the rate given by \Cref{propo:weak_outside_bounds}
with respect to the constants is discussed in \Cref{sec:rates-crefpr}.


\section{Quantitative convergence bounds  for diffusions}
\label{sec:quant-conv-bounds}
\subsection{Main results}
\label{sec:main-results-3}
In this section, we aim at deriving quantitative convergence bounds with respect to some Wasserstein metrics
for diffusion processes under regularity and curvature assumptions on the drift $b$. Consider 
the following \SDE 
\begin{equation}
  \rmd \bfX_t = b(\bfX_t) \rmd t + \rmd \bfB_t \eqsp ,\label{eq:diff}
\end{equation}
where $(\bfB_t)_{t \geq 0}$ is a $d$-dimensional Brownian motion and $b : \ \rset^d \to \rset^d$ is a continuous drift. 

  When there exists a unique strong solution $(\bfX_t)_{t \geq 0}$  of \eqref{eq:diff} for any starting point $\bfX_0 = x$, with $x \in \rset^d$, we define the semi-group $(\Pker_t)_{t\geq0}$ for any
  $\msa \in \mcb{\rset^d}$, $x \in \rset^d$ and $t\geq0$ by
  $\Pker_t(x, \msa) = \proba{\bfX_t \in \msa}$. We now
  turn to establishing that $(\Pker_t)_{t \geq 0}$ converges for some
  Wasserstein metrics.  In order to prove this result we will rely on
  discretizations of the \SDE \ \eqref{eq:diff}.  If the conditions of
  \Cref{theo:discrete_contrac_wass_D_v2} are satisfied, these discretized
  processes are uniformly geometrically ergodic and taking the limit when
  the discretization stepsize goes to zero, we obtain the convergence of
  the associated diffusion processes.  

  First, assume that $b$ is Lipschitz regular. We establish in \Cref{thm:limit_lip} that for
  any $T \geq 0$ and $x, y \in \rset^{\dim}$, the Wasserstein distance
  $\wassersteinD[\bfc](\updelta_x \Pker_T, \updelta_y \Pker_T)$ is upper-bounded
  by the upper limit when $m \to +\infty$ of
  $\wassersteinD[\bfc](\updelta_x \Rker_{T/m}^m, \updelta_y \Rker_{T/m}^m)$,
  where $\Rker_{\gamma}$ is given for any $\gamma >0$ in
  \eqref{eq:kernel_langevin} and $\bfc$ is given in \eqref{eq:def_wbf}.

  Second, this result is extended in \Cref{thm:limit_loc_lip} to cover the case
  where $b$ is no longer Lipschitz regular but only locally Lipschitz regular,
  see \Cref{ass:loc_lip_bb}.  \Cref{thm:limit_lip} and \Cref{thm:limit_loc_lip}
  are applications of a more general theory developed in
  \Cref{sec:proof-crefs-results-3}. Let $M \geq 0$, we consider for any $x \in \rset^{\dim}$
  \begin{equation}
    \label{eq:lyap_m}
    V_M(x,y) = \exp[M\phi(x)] \eqsp , \qquad \phi(x) = (1 + \norm{x})^{1/2} \eqsp .
  \end{equation}

  \begin{theorem}
    \label{thm:limit_lip}
    Assume \tup{\Cref{as:item:lip}} and
    $\sup_{x \in \rset^{\dim}} \langle x, b(x) \rangle < +\infty$. Then, for any
    starting point $\bfX_0 = x$, with $x \in \rset^d$, there exists a unique
    strong solution to \eqref{eq:diff}. In addition, for any
    $\lyap: \ \rset^{\dim}\times \rset^{\dim} \to \coint{1,+\infty}$ satisfying
    $\sup_{(x,y) \in \rset^{\dim} \times \rset^{\dim}} \defEnsLigne{\lyap(x,y) (V_M(x) + V_M(y))^{-1}} < +\infty$ with
    $M \geq 0$ and $V_M$ given in \eqref{eq:lyap_m}, we get that for any
    $x,y \in \rset^{\dim}$ and $T \geq 0$
          \begin{equation}
        \distV(\updelta_x \Pker_{\Time}, \updelta_y \Pker_{\Time}) \leq \limsup_{m \to +\infty} \distV(\updelta_x \Rker_{\Time/m}^{m}, \updelta_y \Rker_{\Time/m}^{m}) \eqsp, \label{eq:conclu_1}
      \end{equation}
      where $\bfc$ is given by \eqref{eq:def_wbf}, $(\Pker_t)_{t \geq 0}$ is the semigroup associated with \eqref{eq:diff} and  for any $\gamma\in \ocint{0, \bgamma}$, $\Rker_{\gamma}$ is the Markov kernel associated with \eqref{eq:langevin_discrete} where $\Tg(x) = x + \gamma b(x)$, $\msx = \rset^{\dim}$ and $\Pi = \Id$.
    \end{theorem}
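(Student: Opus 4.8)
The plan is to realize $\mu:=\updelta_x\Pker_{\Time}$ and $\nu:=\updelta_y\Pker_{\Time}$ as weak limits, as $m\to+\infty$, of the Euler--Maruyama marginals $\mu_m:=\updelta_x\Rker_{\Time/m}^{m}$ and $\nu_m:=\updelta_y\Rker_{\Time/m}^{m}$, and then to pass to the limit inside the Wasserstein functional $\distV$ using the lower semi-continuity of the cost $\bfc$ of \eqref{eq:def_wbf}. First I would observe that under \Cref{as:item:lip} the drift $b$ has linear growth (since $b(0)=0$), so \eqref{eq:diff} has a unique strong solution $(\bfX_t)_{t\ge0}$ from every starting point and $(\Pker_t)_{t\ge0}$ is well defined; the classical $\Ltwo$ strong convergence estimate for the Euler scheme with globally Lipschitz drift then gives, for fixed $x\in\rset^{\dim}$ and $\Time\ge0$, that the $m$-th iterate of the chain \eqref{eq:langevin_discrete} with $\Tg(z)=z+(\Time/m)b(z)$, $\msx=\rset^{\dim}$, $\Pi=\Id$, stepsize $\gamma=\Time/m$ and initial point $x$ converges to $\bfX_{\Time}$ in $\Ltwo$, whence $\mu_m\to\mu$ and $\nu_m\to\nu$ weakly.

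Next, for each $m$ I would choose $\zeta_m\in\transference(\mu_m,\nu_m)$ with $\int_{\rset^{2d}}\bfc\,\rmd\zeta_m\le\distV(\mu_m,\nu_m)+1/m$. Since $(\mu_m)_m$ and $(\nu_m)_m$ converge weakly they are tight, so $(\zeta_m)_m$ is tight on $\rset^{2d}$ by Prokhorov's theorem. Picking a subsequence $(m_j)$ with $\distV(\mu_{m_j},\nu_{m_j})\to\limsup_{m\to+\infty}\distV(\mu_m,\nu_m)$, and then a further subsequence — not relabelled — with $\zeta_{m_j}\to\zeta$ weakly, continuity of the two coordinate projections $\rset^{2d}\to\rset^{\dim}$ identifies the marginals of $\zeta$, so $\zeta\in\transference(\mu,\nu)$. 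The cost $\bfc$ is nonnegative and lower semi-continuous: $\Deltar$ is closed, hence $\1_{\Deltar^{\complementary}}$ is lower semi-continuous, and $\lyap\ge1$ is continuous off the diagonal, so $\bfc$ coincides with $\lyap$ near each off-diagonal point while at diagonal points $\bfc$ vanishes and the relevant $\liminf$ is nonnegative. The Portmanteau theorem then gives $\int\bfc\,\rmd\zeta\le\liminf_j\int\bfc\,\rmd\zeta_{m_j}$, and therefore
\begin{equation*}
  \distV(\mu,\nu)\le\int_{\rset^{2d}}\bfc\,\rmd\zeta\le\liminf_j\bigl(\distV(\mu_{m_j},\nu_{m_j})+1/m_j\bigr)=\limsup_{m\to+\infty}\distV(\mu_m,\nu_m)\eqsp,
\end{equation*}
which is exactly \eqref{eq:conclu_1}.

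Finally, the growth hypothesis $\sup_{x}\langle x,b(x)\rangle<+\infty$ together with the domination $\lyap(x,y)\le C(V_M(x)+V_M(y))$ by $V_M$ from \eqref{eq:lyap_m} is what converts the one-step Gaussian moment bounds for $\Rker_\gamma$ into a drift inequality uniform in $\gamma\in\ocint{0,\bgamma}$; iterating it controls $\sup_m\mu_m(V_M)$, which keeps the right-hand side of \eqref{eq:conclu_1} finite and is the piece needed to fit this statement into the general framework of \Cref{sec:proof-crefs-results-3}. The main obstacle is the limit passage itself: the Euler scheme need not be contracting under these assumptions, so no rate is available and the argument rests entirely on weak convergence; the delicate points are (i) the weak convergence $\mu_m\to\mu$, which relies on the classical strong convergence estimate for Euler--Maruyama with Lipschitz drift, and (ii) matching the lower semi-continuity of the degenerate cost $\bfc$ — which collapses to $0$ on $\Deltar$ — with this weak convergence so that the inequality points in the intended direction. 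It is precisely this lower semi-continuity that forces a $\limsup$ (rather than a $\liminf$ or an equality) on the right-hand side of \eqref{eq:conclu_1}.
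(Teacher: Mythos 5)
Your route is genuinely different from the paper's: you realize $\updelta_x \Pker_{\Time}$ and $\updelta_y\Pker_{\Time}$ as weak limits of the Euler marginals and pass to the limit in the Wasserstein functional via tightness of near-optimal couplings and lower semi-continuity of $\bfc$. The paper instead proves the much stronger statement that $\Vnorm[V_M]{\updelta_x \Pker_{\Time} - \updelta_x \Rker_{\Time/m}^{m}} \to 0$ by a Girsanov/Kullback--Leibler argument (\Cref{lemma:V-norm_control}, combined with the drift and integrability checks of \Cref{prop:L_ass_check_lip}, \Cref{prop:integrability_R_lip} and \Cref{propo:majo_cont}), and then concludes through the triangle-type estimate of \Cref{prop:general_bound}. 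Your scheme would indeed be a cleaner argument for the concrete continuous costs used later (e.g. $\bfc_1(x,y)=\1_{\Delta^{\complementary}}(x,y)(1+\norm{x-y}/R_1)$), and the $\limsup$ on the right of \eqref{eq:conclu_1} causes no difficulty (in fact your argument would even give a $\liminf$, so your closing remark about what ``forces'' the $\limsup$ is not accurate).

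However, there is a genuine gap: you assert that $\bfc$ is lower semi-continuous because ``$\lyap\geq 1$ is continuous off the diagonal'', but no continuity of $\lyap$ is assumed anywhere in the statement. The hypotheses only require $\lyap\geq 1$, the metric-type properties built into \eqref{eq:def_wbf}, and the domination $\lyap(x,y)\leq C\,(V_M(x)+V_M(y))$ with $V_M$ from \eqref{eq:lyap_m}; in particular $\lyap$ may be of the form $\{V(x)+V(y)\}/2$ with $V$ merely measurable, which is exactly the generality needed for the $V$-norm statements downstream. For such costs, weak convergence of $\updelta_x\Rker_{\Time/m}^m$ to $\updelta_x\Pker_{\Time}$ transfers nothing: $\zeta\mapsto\int\bfc\,\rmd\zeta$ is not weakly lower semi-continuous when $\bfc$ is only measurable, so your Portmanteau step fails. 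This is also why your proof never uses the two hypotheses $\sup_{x}\langle x,b(x)\rangle<+\infty$ and the $V_M$-domination of $\lyap$ --- a warning sign that you are proving a different (less general) statement. To close the gap at the stated level of generality you need a mode of convergence of the discretization that controls integrals against arbitrary measurable functions bounded by $V_M$, i.e. precisely the $V_M$-weighted total variation convergence that the paper obtains via the Girsanov change of measure, the bound $\Vnorm{\mu-\nu}\lesssim(\mu V^2+\nu V^2)^{1/2}\KL{\mu}{\nu}^{1/2}$, and the Foster--Lyapunov estimates guaranteeing $\Pker_{\Time}V_M^2(x)<+\infty$ and $\limsup_m \Rker_{\Time/m}^m V_M^2(x)<+\infty$; alternatively, you must add (lower semi-)continuity of $\lyap$ as an explicit hypothesis, which would weaken the theorem.
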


    \begin{proof}
      The proof is postponed to \Cref{thm:limit_lip:proof}.
    \end{proof}
    
We now weaken the Lipschitz regularity assumption and consider the following condition on the drift $b$.
\begin{assumptionB}
  \label{ass:loc_lip_bb}
      $b$ is locally Lipschitz, \ie~for any $M \geq 0$, there exists $\Lip_M \geq 0$ such that for any
    $x,y \in \cball{0}{M}$, $\norm{b(x)-b(y)} \leq \Lip_M \norm{x-y}$ and $b(0) = 0$.
  \end{assumptionB}
  As a consequence, under a mild integrability assumption, which will be satisfied in all of our applications,
  we obtain the following generalization of \Cref{thm:limit_lip}.

  \begin{theorem}
    \label{thm:limit_loc_lip}
    Assume \tup{\Cref{as:b_min}($\mtt$)}, \tup{\Cref{ass:loc_lip_bb}} and that
    $\sup_{x \in \rset^{\dim}} \langle x, b(x) \rangle < +\infty$.  Then, for
    any starting point $\bfX_0 = x$, with $x \in \rset^d$, there exists a unique
    strong solution of \eqref{eq:diff}.  In addition assume that for any
    $x \in \rset^{\dim}$ and $T \geq 0$ there exists $\vareps_b >0$
    such that
    \begin{equation}
      \label{eq:pre_L2}
  \sup_{s \in \ccint{0,\Time}} \defEns{ \updelta_x \Pker_s \norm[2(1+\vareps_b)]{b(x)}} < \plusinfty \eqsp .
\end{equation}
Then, for any
    $\lyap: \ \rset^{\dim}\times \rset^{\dim} \to \coint{1,+\infty}$ satisfying
    $\sup_{(x,y) \in \rset^{\dim} \times \rset^{\dim}} \defEnsLigne{\lyap(x,y) (V_M(x) + V_M(y))^{-1}} < +\infty$ with
    $M \geq 0$ and $V_M$ given in \eqref{eq:lyap_m}, we get that for any
$x,y \in \rset^{\dim}$ and $T \geq 0$
      \begin{equation}
        \distV(\updelta_x \Pker_{\Time}, \updelta_y \Pker_{\Time}) \leq \limsup_{n \to +\infty} \limsup_{m \to +\infty} \distV(\updelta_x \Rker_{\Time/m, n}^{m}, \updelta_y \Rker_{\Time/m, n}^{m}) \eqsp, \label{eq:conclu_1}
      \end{equation}
where for any $x,y \in \rset^d$, $\bfc(x,y) = \1_{\Deltar^{\complementary}}(x,y) \lyap(x,y)$, $(\Pker_t)_{t \geq 0}$ is the semigroup associated with \eqref{eq:diff} and  for any $\gamma\in \ocint{0, \bgamma}$, $n \in \nset$,  $\Rker_{\gamma, n}$ is the Markov kernel associated with \eqref{eq:langevin_discrete} where $\Tg(x) = x + \gamma b(x)$, $\msx = \cball{0}{n}$ and $\Pi = \Pi_{\cball{0}{n}}$.
\end{theorem}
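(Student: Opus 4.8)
The plan is to obtain the stated bound from the general theory of \Cref{sec:proof-crefs-results-3} — the same one underlying \Cref{thm:limit_lip} — by a double approximation: for each $n$ one localises the diffusion to the ball $\cball{0}{n}$, on which \Cref{ass:loc_lip_bb} provides a \emph{globally} $\Lip_n$-Lipschitz bounded drift so that the Lipschitz theory applies; the inner $\limsup_m$ then removes the time discretisation, and the outer $\limsup_n$ removes the localisation. First, under \Cref{ass:loc_lip_bb} the drift is locally Lipschitz, so \eqref{eq:diff} has a unique strong solution up to a possible explosion time; the hypothesis $\sup_{x}\langle x,b(x)\rangle<+\infty$ makes $x\mapsto 1+\norm[2]{x}$ a Lyapunov function for the generator of \eqref{eq:diff} and thereby excludes explosion, so $(\bfX_t)_{t\geq0}$ is a well-defined non-explosive strong Markov process with semigroup $(\Pker_t)_{t\geq0}$. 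The same hypothesis also yields a bound $\generator V_M\leq K$ for the exponential Lyapunov function $V_M$ of \eqref{eq:lyap_m} — a normal reflection only improves it — hence a moment bound on $\updelta_x\Pker_s(V_M)$ and on its localised analogues that is uniform in $s\in\ccint{0,\Time}$ and in $n$; since $\lyap$ is dominated by $V_M(x)+V_M(y)$, this keeps the cost $\distV$ finite and stable along the approximating families.

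\emph{Localisation and discretisation.} Fix $n\in\nsets$ and let $(\bfX_t^{(n)})_{t\geq0}$ be the natural scaling limit of $\Rker_{\gamma,n}$, namely the diffusion with drift $b$ reflected (normally) at $\partial\cball{0}{n}$, with semigroup $(\Pker_t^{(n)})_{t\geq0}$; since $b|_{\cball{0}{n}}$ is $\Lip_n$-Lipschitz and bounded, this process is well defined, and the recursion \eqref{eq:langevin_discrete} with $\Tg(x)=x+\gamma b(x)$, $\msx=\cball{0}{n}$ and $\Pi=\Pi_{\cball{0}{n}}$ is exactly its projected Euler--Maruyama discretisation, \ie\ the kernel $\Rker_{\gamma,n}$. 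Applying the general theory to $(\bfX_t^{(n)})_{t\geq0}$ — whose hypotheses hold by the global Lipschitz regularity of $b|_{\cball{0}{n}}$, the uniform Lyapunov bound above, and the integrability \eqref{eq:pre_L2}, which controls the drift increments $\updelta_x\Pker_s\norm[2]{\bfX_s-\bfX_{\underline{s}}}$ of the interpolated scheme when $b$ has superlinear growth — gives, for all $x,y\in\rset^{\dim}$ and $\Time\geq0$,
\begin{equation*}
  \distV(\updelta_x\Pker_\Time^{(n)},\updelta_y\Pker_\Time^{(n)})\leq\limsup_{m\to+\infty}\distV(\updelta_x\Rker_{\Time/m,n}^{m},\updelta_y\Rker_{\Time/m,n}^{m})\eqsp.
\end{equation*}

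\emph{Removing the localisation.} Driving $(\bfX_t^{(n)})$ and $(\bfX_t)$ by the same Brownian motion, they coincide on $\ccint{0,\Time}$ on the event $\{\sup_{t\in\ccint{0,\Time}}\norm{\bfX_t}<n\}$, whose probability tends to $1$ as $n\to+\infty$ by non-explosion; hence $\updelta_x\Pker_\Time^{(n)}\to\updelta_x\Pker_\Time$ and $\updelta_y\Pker_\Time^{(n)}\to\updelta_y\Pker_\Time$ as $n\to+\infty$, indeed in total variation, with $V_M$-moments uniformly bounded. Since $\bfc=\1_{\Deltar^{\complementary}}\lyap$ is nonnegative and lower semicontinuous, $\distV$ is lower semicontinuous for weak convergence, so
\begin{equation*}
  \distV(\updelta_x\Pker_\Time,\updelta_y\Pker_\Time)\leq\liminf_{n\to+\infty}\distV(\updelta_x\Pker_\Time^{(n)},\updelta_y\Pker_\Time^{(n)})\leq\limsup_{n\to+\infty}\limsup_{m\to+\infty}\distV(\updelta_x\Rker_{\Time/m,n}^{m},\updelta_y\Rker_{\Time/m,n}^{m})\eqsp,
\end{equation*}
which is the asserted inequality.

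The main obstacle is the localised step: showing rigorously that $\Rker_{\gamma,n}$ is the right discretisation of the reflected diffusion (stability of the Skorokhod reflection map on the convex domain $\cball{0}{n}$) and, more importantly, that the resulting convergence is compatible with the Wasserstein cost $\distV$ rather than with mere weak convergence — this is precisely where \eqref{eq:pre_L2} together with the uniform $V_M$-bounds is used. By contrast, removing the localisation is soft, relying only on non-explosion of $(\bfX_t)$ and lower semicontinuity of $\bfc$.
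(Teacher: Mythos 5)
Your overall architecture (localise, then discretise, then undo the localisation) is not the paper's, and it contains a genuine gap at exactly the step you flag as "the main obstacle". Your inner inequality, $\distV(\updelta_x\Pker_\Time^{(n)},\updelta_y\Pker_\Time^{(n)})\leq\limsup_{m}\distV(\updelta_x\Rker_{\Time/m,n}^{m},\updelta_y\Rker_{\Time/m,n}^{m})$, is obtained by "applying the general theory" to the normally reflected diffusion on $\cball{0}{n}$, but the general theory of \Cref{sec:proof-crefs-results-3} does not cover that situation: \Cref{lemma:V-norm_control} (the Girsanov step) is stated for the unreflected SDE \eqref{eq:diff} on $\rset^{\dim}$ compared with the \emph{unprojected} scheme ($\Pi=\Id$), and its proof uses absolute continuity of the path law with respect to Brownian motion via \cite[Theorem 7.7]{liptser2013statistics}; the law of a reflected diffusion (with its local-time term) is not of that form, and $\Pi=\Pi_{\cball{0}{n}}$ is excluded there. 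Since $\bfc\geq\1_{\Deltar^{\complementary}}$, your claim amounts to convergence of the projected Euler--Maruyama scheme to the reflected diffusion in a weighted total-variation sense; stability of the Skorokhod map gives at best pathwise/Wasserstein-type closeness, not total variation, so this is a substantial separate result that neither the paper nor your sketch supplies. A secondary point: in the closing step you invoke lower semicontinuity of $\bfc$, but the theorem only assumes $\lyap$ measurable; this can be repaired by arguing through the triangle inequality in the $V_M$-norm (as the paper does elsewhere) rather than through lsc of the transport cost, so it is fixable, unlike the inner step.

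For contrast, the paper never introduces a reflected diffusion. It fixes the tamed drift family $b_{\gamma,n}$ of \eqref{eq:def_fam} (equal to $b$ on $\cball{0}{n}$, so the projected kernel coincides with the $\Rker_{\gamma,n}$ of the statement), and then makes two comparisons at fixed $T$: (i) diffusion versus \emph{unprojected} scheme $\tRker_{\Time/m,n}$ via the Girsanov/KL bound of \Cref{lemma:V-norm_control}, where \eqref{eq:pre_L2} and the taming (\Cref{ass:diff_disc}, \Cref{ass:lyap_majo}, verified in \Cref{prop:fam_prop}) are what control the KL term despite $b$ being only locally Lipschitz; (ii) projected versus unprojected scheme via the synchronous coupling of \Cref{propo:compare}, which uses that the two chains agree until the unprojected chain leaves $\cball{0}{n}$, with the exit probability killed as $n\to+\infty$ by a supermartingale and Doob's maximal inequality. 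Combining these in \Cref{prop:general_bound}, with the moment bounds of \Cref{prop:existence_integr}, \Cref{prop:integrability_R_loc_lip} and \Cref{propo:majo_cont}, yields \eqref{eq:conclu_1}. If you want to salvage your route, you would have to prove weighted-TV convergence of the projected scheme to the reflected diffusion (e.g.\ via Neumann heat-kernel estimates), which is considerably harder than the comparison of the two discrete schemes that the paper uses to the same effect.
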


\begin{proof}
  The proof is postponed to \Cref{thm:limit_loc_lip:proof}.
\end{proof}

Note that \eqref{eq:pre_L2} holds under mild conditions on the drift function, see \Cref{prop:existence_integr}.
In the next section we apply these results to diffusion processes and derive
sharp convergence bounds in the case where $b$ satisfies some curvature
assumption, similarly to \Cref{sec:applications}.

\subsection{Applications}
\label{sec:applications-1}
In this section, we combine the results of \Cref{thm:limit_loc_lip} with the convergence bounds for discrete processes derived in \Cref{sec:applications}, in order to obtain convergence bounds for continuous processes that are solutions of \eqref{eq:diff}.

\subsubsection{Strongly convex at infinity}

\begin{theorem}
  \label{propo:cv_wass_continuous}
  Assume either \tup{\Cref{as:b_min}($\mtt$)} for $\mtt \in \rset_-$
  or \tup{\Cref{as:b_lemme_descente}}.  Assume
  \tup{\Cref{assum:strong_convex_outside_ball}},
  \tup{\Cref{ass:loc_lip_bb}}. In addition, assume
  $\sup_{x \in \rset^d}\{ \norm{b(x)}^{2(1+\vareps_b)}
  \rme^{-\mttplusun\norm{x}^2}\} < +\infty$ for some $\vareps_b >0$.
  Then, for any $\Time \geq 0$, and $x,y \in \rset^d$
  \begin{equation}
    \label{eq:wass_c1_cont}
    \distV[\bfc_1](\updelta_x \Pker_{\Time}, \updelta_y \Pker_{\Time}) \leq \lambda_a^{T/4}(D_{1,a} \bfc_1(x,y) + D_{2,a} \1_{\Delta^{\complementary}}(x,y)) + C_a \rho_a^{T/4} \1_{\Delta^{\complementary}}(x,y) \eqsp , 
  \end{equation}
  with $D_{1,a}, D_{2,a}, C_a \geq 0$, $\lambda_a, \rho_a \in \ooint{0,1}$ given by \eqref{eq:inde_1} and
  for any $x,y \in \rset^d$, $\bfc_1(x,y) = \1_{\Deltar^{\complementary}}(x,y) \VlyapDun(x,y)$ with $\VlyapDun(x,y) = 1 + \norm{x-y}/R_1$.
\end{theorem}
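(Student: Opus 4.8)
The plan is to obtain \eqref{eq:wass_c1_cont} by passing to the zero-stepsize limit in the discrete-time estimate of \Cref{propo:cvx_outside_bounds}, using \Cref{thm:limit_loc_lip} to relate the diffusion semigroup to a sequence of projected Euler--Maruyama schemes, and exploiting the fact (recorded in \eqref{eq:inde_1}) that the constants in \Cref{propo:cvx_outside_bounds} converge, as the stepsize upper bound $\bgamma\to 0$, to quantities independent of the Lipschitz constant of $b$.

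First I would check that the hypotheses of \Cref{thm:limit_loc_lip} are in force. The condition \tup{\Cref{ass:loc_lip_bb}} is assumed, and \tup{\Cref{as:b_min}($\mtt$)} for some $\mtt\leq 0$ holds in either case (directly, or with $\mtt=0$ when only \tup{\Cref{as:b_lemme_descente}} is assumed). Since $b(0)=0$, combining \tup{\Cref{assum:strong_convex_outside_ball}}, which gives $\ps{x}{b(x)}=\ps{x-0}{b(x)-b(0)}\leq-\mttplusun\norm{x}^2$ for $\norm{x}\geq\Run$, with the local Lipschitz bound $\ps{x}{b(x)}\leq\Lip_{\Run}\norm{x}^2\leq\Lip_{\Run}\Run^2$ for $\norm{x}\leq\Run$, yields $\sup_{x\in\rset^d}\ps{x}{b(x)}<+\infty$. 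The remaining requirement \eqref{eq:pre_L2} is where the extra hypothesis $\sup_{x\in\rset^d}\{\norm{b(x)}^{2(1+\vareps_b)}\rme^{-\mttplusun\norm{x}^2}\}<+\infty$ is used: multiplying it by a Gaussian-type moment bound for $\updelta_x\Pker_s$, $s\in\ccint{0,\Time}$, obtained from the exponential Lyapunov estimate for the diffusion (as in \Cref{prop:existence_integr}), gives $\sup_{s\in\ccint{0,\Time}}\updelta_x\Pker_s(\norm[2(1+\vareps_b)]{b(\cdot)})<+\infty$. I would then apply \Cref{thm:limit_loc_lip} with $\lyap=\VlyapDun$ from \eqref{eq:def_Wun}; since $\VlyapDun(x,y)=1+\norm{x-y}/\Run$ grows only linearly, one has $\sup_{x,y}\VlyapDun(x,y)(V_M(x)+V_M(y))^{-1}<+\infty$ for any $M>0$, with $V_M$ as in \eqref{eq:lyap_m}. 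This produces $\distV[\bfc_1](\updelta_x\Pker_{\Time},\updelta_y\Pker_{\Time})\leq\limsup_{n\to+\infty}\limsup_{m\to+\infty}\distV[\bfc_1](\updelta_x\Rker_{\Time/m,n}^m,\updelta_y\Rker_{\Time/m,n}^m)$, where $\Rker_{\gamma,n}$ is the Euler--Maruyama kernel on $\msx=\cball{0}{n}$ with $\Tg(x)=x+\gamma b(x)$ and $\Pi=\Pi_{\cball{0}{n}}$.

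Next, for each $n$ large enough I would bound the discrete quantity using \Cref{propo:cvx_outside_bounds}: on the closed convex ball $\cball{0}{n}$, $b$ is $\Lip_n$-Lipschitz by \tup{\Cref{ass:loc_lip_bb}}, so \tup{\Cref{ass:def_Euler}($\cball{0}{n}$)}, \tup{\Cref{as:item:lip}}, \tup{\Cref{assum:strong_convex_outside_ball}}, and either \tup{\Cref{as:b_min}($\mtt$)} or \tup{\Cref{as:b_lemme_descente}} all hold on $\cball{0}{n}$. For $\gamma=\Time/m$ with $m$ large enough that $\Time/m\leq\min(1,2\mttplusun/\Lip_n^2)$, taking the stepsize upper bound $\bgamma=\Time/m$ and running $k=m$ steps (so $k\gamma/4=\Time/4$), \eqref{eq:wc1_convergence} gives $\distV[\bfc_1](\updelta_x\Rker_{\Time/m,n}^m,\updelta_y\Rker_{\Time/m,n}^m)\leq\lambda_{\Time/m,a}^{\Time/4}[D_{\Time/m,1,a}\bfc_1(x,y)+D_{\Time/m,2,a}\1_{\Deltar^{\complementary}}(x,y)]+C_{\Time/m,a}\rho_{\Time/m,a}^{\Time/4}\1_{\Deltar^{\complementary}}(x,y)$.

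Finally, I would let $m\to+\infty$: since $\Time/m\to 0$, the constants converge by \eqref{eq:inde_1} to $\lambda_a,\rho_a\in\ooint{0,1}$ and $D_{1,a},D_{2,a},C_a\geq 0$, which do not depend on $\Lip_n$ and hence not on $n$; thus $\limsup_{m\to+\infty}\distV[\bfc_1](\updelta_x\Rker_{\Time/m,n}^m,\updelta_y\Rker_{\Time/m,n}^m)$ is bounded by $\lambda_a^{\Time/4}(D_{1,a}\bfc_1(x,y)+D_{2,a}\1_{\Deltar^{\complementary}}(x,y))+C_a\rho_a^{\Time/4}\1_{\Deltar^{\complementary}}(x,y)$, independently of $n$, and $\limsup_{n\to+\infty}$ leaves this unchanged. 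Combining with the bound from \Cref{thm:limit_loc_lip} gives \eqref{eq:wass_c1_cont}. The step I expect to be the main obstacle is precisely this last limit: a priori the admissible stepsize interval $\oointLigne{0,2\mttplusun/\Lip_n^2}$ shrinks and every constant in \Cref{propo:cvx_outside_bounds} depends on $n$ through $\Lip_n$, so the double $\limsup$ might diverge; what rescues the argument is that the leading rates $\lambda_{\bgamma,a},\rho_{\bgamma,a}$ and the prefactors stabilise as $\bgamma\to 0$ to $\Lip$-free limits, which is exactly the content of \eqref{eq:inde_1}. A secondary technical point is the verification of \eqref{eq:pre_L2} through the semigroup moment bound.
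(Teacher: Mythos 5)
Your proposal is correct and follows essentially the same route as the paper's proof: pass to the diffusion via \Cref{thm:limit_loc_lip} (with \Cref{prop:existence_integr} supplying the integrability condition), bound each discrete term with \Cref{propo:cvx_outside_bounds} on $\cball{0}{n}$, and conclude by the $\Lip_n$-independence of the limiting constants in \eqref{eq:inde_1}. Your additional verifications (that \Cref{as:b_min}($0$) follows from \Cref{as:b_lemme_descente}, that $\sup_x\langle x,b(x)\rangle<+\infty$, and the check of \eqref{eq:pre_L2}) are exactly what the paper compresses into its opening citation.
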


\begin{proof}
  Le $\Time \geq 0$ and $x,y \in \rset^{\dim}$. Using \Cref{thm:limit_lip} or
  \Cref{prop:existence_integr} and \Cref{thm:limit_loc_lip} we have
        \begin{equation}
          \distV[\bfc_1](\updelta_x \Pker_{\Time}, \updelta_y \Pker_{\Time}) \leq \limsup_{n \to +\infty} \limsup_{m \to +\infty} \distV[\bfc_1](\updelta_x \Rker_{\Time/m, n}^{m}, \updelta_y \Rker_{\Time/m, n}^{m}) \eqsp .
      \end{equation}
      Let $n \in \nset$ and $m \in \nsets$ such that $x,y \in \cball{0}{n}$ and
      $T/m \leq 2\mttplusun / \Lip_n^2$.  Since
      \Cref{ass:def_Euler}$(\cball{0}{n})$ holds and \Cref{ass:loc_lip_bb}
      implies \Cref{as:item:lip} on $\cball{0}{n}$, we can apply
      \Cref{propo:cvx_outside_bounds} and we get
      \begin{multline}
        \distV[\bfc_1](\updelta_x \Rker_{\Time/m, n}^{m}, \updelta_y
        \Rker_{\Time/m, n}^{m}) \leq \lambda_{\Time/m, a}^{\Time/4}
        (D_{\Time/m, 1, a} \bfc_1(x,y) + D_{\Time/m, 2,
          a}\1_{\Delta^{\complementary}}(x,y)) + C_{\Time /m, a} \rho_{\Time/m,
          a}^{\Time /4}\1_{\Delta^{\complementary}}(x,y) \eqsp ,
      \end{multline}
      where $D_{\Time/m, 1, a}, D_{\Time/m, 2, a}, C_{\Time/m, a}, \lambda_{\Time/m, a}$ and $\rho_{\Time/m, a}$ are given in \eqref{eq:wc1_convergence}. In addition, these quantities admit limits $D_{1,a}, D_{2,a}, C_a \geq 0$ and $\lambda_a, \rho_a\in \ooint{0,1}$ when $m \to +\infty$ which do not depend on $\Lip_n$, hence on $n$, see \eqref{eq:inde_1}.
    \end{proof}
    Note that \Cref{as:item:lip} implies \Cref{ass:loc_lip_bb} and
    $\sup_{x \in \rset^d}\{ \norm{b(x)}^{2(1+\vareps_b)}
    \rme^{-\mttplusun\norm{x}^2}\} < +\infty$ for some $\vareps_b >0$.
    \begin{corollary}
      \label{coro:cont_w1_wp}
  Assume either \tup{\Cref{as:b_min}($\mtt$)} for $\mtt \in \rset_-$ or
  \tup{\Cref{as:b_lemme_descente}}.  Assume 
  \tup{\Cref{assum:strong_convex_outside_ball}}, 
  \tup{\Cref{ass:loc_lip_bb}} and also that 
  $\sup_{x \in \rset^d}\{ \norm{b(x)}^{2(1+\vareps_b)}
  \rme^{-\mttplusun\norm{x}^2}\} < +\infty$ for some $\vareps_b >0$.  Then, for any $p \in \nset$, $\upalpha \in \ooint{p, +\infty}$, $\Time \geq 0$, and $x,y \in \rset^d$ we have
    \begin{align}
      & \wassersteinD[1](\updelta_x \Pker_T, \updelta_y \Pker_T) \leq D_{3, a} \rho_{a}^{T/4} \norm{x-y} \eqsp , \label{eq:w1_contrac_a_cont}\\
      & \wassersteinD[p](\updelta_x \Pker_T, \updelta_y \Pker_T) \leq D_{\upalpha, a} \rho_{a}^{T/(4\upalpha)} \defEns{ \norm{x-y} + \norm{x-y}^{1/\upalpha}} \label{eq:wp_contrac_a_cont} \eqsp ,
    \end{align}
  where $\rho_{a}$ is given in \eqref{eq:inde_1}, $D_{3, a} = \lim_{\bgamma \to 0} D_{3, \bgamma, a}$  and $D_{\upalpha, a} = \lim_{\bgamma \to 0} D_{\upalpha, \bgamma, a}$ with $D_{3, \bgamma, a}$ and  $D_{\upalpha, \bgamma, a}$ given in \Cref{coro:w1_wp}.
    \end{corollary}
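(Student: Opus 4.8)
The plan is to follow the scheme of the proof of \Cref{propo:cv_wass_continuous} essentially verbatim, simply feeding in the discrete-time estimates of \Cref{coro:w1_wp} in place of \Cref{propo:cvx_outside_bounds}. Fix $\Time\geq0$ and $x,y\in\rset^{\dim}$. First I would record that the standing hypotheses imply those of \Cref{thm:limit_loc_lip}: \tup{\Cref{as:b_min}($\mtt$)} holds (either by assumption, or with $\mtt=0$ when \tup{\Cref{as:b_lemme_descente}} is assumed), \tup{\Cref{ass:loc_lip_bb}} is assumed, and \tup{\Cref{assum:strong_convex_outside_ball}} together with $\sup_{x\in\rset^{\dim}}\{\norm{b(x)}^{2(1+\vareps_b)}\rme^{-\mttplusun\norm{x}^2}\}<+\infty$ give both $\sup_{x\in\rset^{\dim}}\langle x,b(x)\rangle<+\infty$ (take $y=0$ in \Cref{assum:strong_convex_outside_ball} and use $b(0)=0$ plus continuity of $b$ on $\cball{0}{R_1}$) and, via \Cref{prop:existence_integr}, the integrability condition \eqref{eq:pre_L2}. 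The construction underlying \Cref{thm:limit_lip} and \Cref{thm:limit_loc_lip}, developed in \Cref{sec:proof-crefs-results-3}, controls the weak limit of the interpolated projected Euler schemes together with the relevant moments; hence, by lower semicontinuity of $\wassersteinD[\bfc]$ under weak convergence, the domination
\[
  \wassersteinD[\bfc](\updelta_x\Pker_{\Time},\updelta_y\Pker_{\Time})\leq\limsup_{n\to+\infty}\limsup_{m\to+\infty}\wassersteinD[\bfc](\updelta_x\Rker_{\Time/m, n}^{m},\updelta_y\Rker_{\Time/m, n}^{m})\eqsp
\]
holds not only for costs of the form $\1_{\Deltar^{\complementary}}\lyap$ with $\lyap\geq1$, but also for the bare costs $\bfc(x,y)=\norm{x-y}$ and $\bfc(x,y)=\norm{x-y}^{p}$, where $\Rker_{\gamma,n}$ is the Markov kernel of \eqref{eq:langevin_discrete} with $\Tg(x)=x+\gamma b(x)$, $\msx=\cball{0}{n}$ and $\Pi=\Pi_{\cball{0}{n}}$.

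For \eqref{eq:w1_contrac_a_cont} I would then fix $n\in\nset$ with $x,y\in\cball{0}{n}$ and $m\in\nsets$ large enough that $\Time/m<2\mttplusun/\Lip_n^2$; since \tup{\Cref{ass:def_Euler}($\cball{0}{n}$)} holds and \tup{\Cref{ass:loc_lip_bb}} yields \tup{\Cref{as:item:lip}} on $\cball{0}{n}$ with constant $\Lip_n$, \Cref{coro:w1_wp} applies to $\Rker_{\Time/m, n}$ and gives $\wassersteinD[1](\updelta_x\Rker_{\Time/m, n}^{m},\updelta_y\Rker_{\Time/m, n}^{m})\leq D_{3, \Time/m, a}\,\rho_{\Time/m, a}^{\Time/4}\norm{x-y}$. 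Applying the domination above with $\bfc(x,y)=\norm{x-y}$ and letting $m\to+\infty$, then $n\to+\infty$, and using that $\rho_{\bgamma, a}\to\rho_{a}$ and $D_{3, \bgamma, a}\to D_{3, a}$ as $\bgamma\to0$ — limits that, by the explicit expressions in \Cref{theo:discrete_contrac_wass_D_v2} and \Cref{prop:w1_contrac_iterees}, do not depend on $\Lip_n$, hence not on $n$ — the two $\limsup$ collapse and \eqref{eq:w1_contrac_a_cont} follows.

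For \eqref{eq:wp_contrac_a_cont} I would fix $p\in\nset$ and $\upalpha\in\ooint{p,+\infty}$ and run the same reduction with the cost $\bfc(x,y)=\norm{x-y}^{p}$: raising the $\wassersteinD[p]$ estimate of \Cref{coro:w1_wp} to the power $p$ gives $\wassersteinD[\bfc](\updelta_x\Rker_{\Time/m, n}^{m},\updelta_y\Rker_{\Time/m, n}^{m})\leq D_{\upalpha, \Time/m, a}^{p}\,\rho_{\Time/m, a}^{\Time p/(4\upalpha)}(\norm{x-y}+\norm{x-y}^{1/\upalpha})^{p}$; passing to the limit $m\to+\infty$, then $n\to+\infty$ (again using the $\Lip_n$-independence of the limiting constants, which here also invokes the $q$-th moment drift conditions of \Cref{propo:drift_strong_convex_wass}-\ref{item:reflex_ineq_c} required by \Cref{prop:from_1_to_p}), and finally taking the $p$-th root yields \eqref{eq:wp_contrac_a_cont} with $D_{\upalpha, a}=\lim_{\bgamma\to0}D_{\upalpha, \bgamma, a}$ and $\rho_a$ given by \eqref{eq:inde_1}.

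The main obstacle I anticipate is not in any single estimate but in justifying the two limiting steps cleanly: on one hand extending the domination of \Cref{thm:limit_loc_lip} from costs $\1_{\Deltar^{\complementary}}\lyap$ with $\lyap\geq1$ to the bare costs $\norm{x-y}$ and $\norm{x-y}^{p}$ — this should be immediate from lower semicontinuity of Wasserstein costs under weak convergence once the moment control of \Cref{sec:proof-crefs-results-3} is available — and on the other hand verifying that $D_{3, \bgamma, a}$, $D_{\upalpha, \bgamma, a}$ and $\rho_{\bgamma, a}$ admit finite limits as $\bgamma\to0$ that are uniform in the truncation level $n$. This last point is where the explicit, $\Lip$-free form of the limiting constants supplied by \Cref{theo:discrete_contrac_wass_D_v2}, \Cref{prop:w1_contrac_iterees} and \Cref{prop:from_1_to_p} is essential, since without it the $\limsup$ over $n$ could not be controlled.
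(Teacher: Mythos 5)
Your proposal is correct and follows essentially the same route as the paper, whose proof of this corollary is simply the argument of \Cref{propo:cv_wass_continuous} with the discrete bounds of \Cref{coro:w1_wp} substituted for \Cref{propo:cvx_outside_bounds}, together with the observation that the limiting constants do not depend on $\Lip_n$. Your extra care in extending the discrete-to-continuous domination from costs of the form $\1_{\Deltar^{\complementary}}\lyap$ to the bare costs $\norm{x-y}$ and $\norm{x-y}^{p}$ (via lower semicontinuity and the moment control of \Cref{sec:proof-crefs-results-3}, and taking $p$-th roots to recover the triangle inequality) is a valid way to fill a step the paper leaves implicit.
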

    \begin{proof}
      The proof is similar to the one of \Cref{propo:cv_wass_continuous}.
    \end{proof}

    The discussion on the dependency of $\rho_a$ with respect to the parameters
    of the problem conducted in \Cref{sec:applications} still holds. We
    distinguish the following cases, assuming that the conditions of
    \Cref{theo:discrete_contrac_wass_D_v2}
    are satisfied.
\begin{enumerate}[label=(\alph*)]
\item If \Cref{as:b_lemme_descente} holds, we have
    \begin{equation}
      \log^{-1}(\rho_{a}^{-1}) \leq (1+ \log(2)) / (\Phibf\defEnsLigne{-1/2}\mttplusun) + 4\Run^2/\Phibf\defEnsLigne{-1/2}\eqsp . \label{eq:rho_bgamma_wass_majo_a_cont}
\end{equation}

The leading term in \eqref{eq:rho_bgamma_wass_majo_a_cont} is of order $\max(\Run^2, 1/\mttplusun)$,
which corresponds to the one identified in \cite[Lemma 2.9]{eberle2016reflection} and is optimal, see \cite[Remark 2.10]{eberle2016reflection}.
\item If \Cref{as:b_min}($\mtt$) holds with $\mtt \in \rset_-$, we have
  \begin{align}
    &\log^{-1}(\rho_{a}^{-1}) \leq  \left[1 + \log(2) + \log\parentheseLigne{1 + 2\defEnsLigne{\mttplusun - \mtt}\defEnsLigne{1 + \Run^2}}  + 2\mttplusun (1+\Run^2)\right]   \\
    & \qquad \qquad \qquad \qquad \qquad \left .   \vphantom{1 + \log(2) + \log\parentheseLigne{1 + 4\defEnsLigne{\mttplusun - \mtt}}  + 3\mttplusun} \middle/ \parentheseDeux{\mttplusun \Phibf\defEnsLigne{-(-\mtt)^{1/2}\Run/(2 - 2\rme^{2\mtt \Run^2})^{1/2}}} \right. \eqsp . \label{eq:rho_bgamma_wass_majo_b_cont}
  \end{align}  
We now give an upper-bound for  \eqref{eq:rho_bgamma_wass_majo_b_cont} when both $R$ and $\mtt$ are large. For any $t \geq C$ with $C \geq 0$ we have
\begin{equation}
  \Phibf(-t)^{-1} \leq \sqrt{2\uppi}(1 + C^{-2}) t\rme^{t^2/2} \eqsp . \label{eq:majo_vareps}
\end{equation}
As a consequence if we also have $\Run \geq 2$, $1 \leq -\mtt \Run^2$ and using that for any $t \in (0,1)$, $-\log(1-t)\leq t$ as well as \eqref{eq:majo_vareps} we get that $\log^{-1}(\rho_a^{-1}) \leq \log^{-1}(\rhomax^{-1})$
\begin{align}
  &\log^{-1}(\rhomax^{-1}) = C\parentheseDeux{1 + \log(1 + 2\defEnsLigne{\mttplusun - \mtt}\defEnsLigne{1 + \Run^2}) + 2\mttplusun(1 + \Run^2)}\Run(-\mtt)^{1/2}\\ & \qquad \qquad \qquad \qquad \qquad \times \left. \exp\parentheseDeux{-\mtt\Run^2/(4 - 4\rme^{2\mtt \Run^2})} \middle/ \parentheseDeux{\mttplusun(1 - \rme^{2\mtt \Run^2})^{1/2}} \right. \eqsp , \label{eq:eberle_rate}
\end{align}
with $C = 2(1+\log(2))\sqrt{\uppi} \approx 6.00$.
\end{enumerate}
For a comparison of our results with recent works, see \Cref{sec:non-contr-sett}.

\subsubsection{Other curvature conditions}
\label{sec:other-curv-cond_cont}
\begin{theorem}
  \label{thm:curv_b_cont}
  Assume either \tup{\Cref{as:b_min}($\mtt$)} for $\mtt \in \rset_-$
  or \tup{\Cref{as:b_lemme_descente}}.  Assume
  \tup{\Cref{assum:drift_strong}}, \tup{\Cref{ass:loc_lip_bb}}. In
  addition, assume
  $\sup_{x \in \rset^d} \{\norm{b(x)}^{2(1+\vareps_b)}
  \rme^{-\mttplusdeux\norm{x}^2} \}< +\infty$ for some $\vareps_b >0$.
  Then for any $\Time \geq 0$ and $x,y \in \rset^d$
  \begin{equation}
    \Vnorm{\updelta_x \Pker_{\Time} -\updelta_y \Pker_{\Time}} \leq (D_{1, b} + D_{2,b} + C_b) \rho_b^{\Time} \bfc_2(x,y) \eqsp , \label{eq:c_ergo}
  \end{equation}
  with $D_{1,b}, D_{2,b}, C_b \geq 0$ and $\rho_b \in \ooint{0,1}$ given by \eqref{eq:inde_2} and $\bfc_2$ defined in \eqref{eq:def_V_1}.
\end{theorem}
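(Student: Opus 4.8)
The plan is to follow the same scheme as the proof of \Cref{propo:cv_wass_continuous}, replacing \Cref{propo:cvx_outside_bounds} by \Cref{propo:drift_strong_convex_bounds}. Fix $\Time \geq 0$ and $x,y \in \rset^{\dim}$. First I would reduce the claim to the discrete-time bounds of \Cref{sec:applications}: under \tup{\Cref{as:b_lemme_descente}} (which implies \tup{\Cref{as:item:lip}} and \tup{\Cref{as:b_min}($0$)}) or \tup{\Cref{as:b_min}($\mtt$)}, together with \tup{\Cref{ass:loc_lip_bb}} and \tup{\Cref{assum:drift_strong}}, one has $\sup_{x \in \rset^{\dim}} \langle x, b(x)\rangle < +\infty$, so \eqref{eq:diff} admits a unique strong solution for every starting point; the integrability condition \eqref{eq:pre_L2} follows from \Cref{prop:existence_integr} using the hypothesis $\sup_{x}\{\norm{b(x)}^{2(1+\vareps_b)}\rme^{-\mttplusdeux\norm{x}^2}\} < +\infty$; and $\VlyapDdeux$ of \eqref{eq:def_V_1} has at most polynomial growth, hence $\sup_{(x,y)}\{\VlyapDdeux(x,y)(V_M(x)+V_M(y))^{-1}\} < +\infty$ for any $M>0$ with $V_M$ as in \eqref{eq:lyap_m}. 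Applying \Cref{thm:limit_loc_lip} with $\lyap = \VlyapDdeux$ (so $\bfc = \bfc_2$) then gives
\begin{equation}
  \distV[\bfc_2](\updelta_x \Pker_{\Time}, \updelta_y \Pker_{\Time}) \leq \limsup_{n\to+\infty}\limsup_{m\to+\infty}\distV[\bfc_2](\updelta_x \Rker_{\Time/m,n}^{m}, \updelta_y \Rker_{\Time/m,n}^{m}) \eqsp ,
\end{equation}
where $\Rker_{\gamma,n}$ is associated with \eqref{eq:langevin_discrete} for $\Tg(x)=x+\gamma b(x)$, $\msx=\cball{0}{n}$ and $\Pi=\Pi_{\cball{0}{n}}$.

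Next I would estimate the right-hand side. For $n$ with $x,y\in\cball{0}{n}$ and $m$ large enough that $\Time/m<2\mttplusdeux/\Lip_n^2$, the set $\cball{0}{n}$ is closed and convex so \tup{\Cref{ass:def_Euler}($\cball{0}{n}$)} holds, \tup{\Cref{ass:loc_lip_bb}} gives \tup{\Cref{as:item:lip}} on $\cball{0}{n}$ with constant $\Lip_n$, and \tup{\Cref{assum:drift_strong}}, \tup{\Cref{as:b_min}($\mtt$)} remain valid in restriction to $\cball{0}{n}$ with the \emph{same} constants $\mttplusdeux,\Rdeux,\mtt$. Hence \Cref{propo:drift_strong_convex_bounds} applies with $\bgamma=\Time/m$, and by \eqref{eq:wc2_convergence_true} (an instance of \eqref{eq:convergence_c}), using that $m$ iterations at stepsize $\Time/m$ correspond to total time $\Time$,
\begin{equation}
  \distV[\bfc_2](\updelta_x \Rker_{\Time/m,n}^{m}, \updelta_y \Rker_{\Time/m,n}^{m}) \leq \bigl(D_{\Time/m,1,b}+D_{\Time/m,2,b}+C_{\Time/m,b}\bigr)\rho_{\Time/m,b}^{\Time}\bfc_2(x,y) \eqsp .
\end{equation}
By \Cref{theo:discrete_contrac_wass_D_v2} and \eqref{eq:inde_2} these constants converge, as $m\to+\infty$ (equivalently $\Time/m\to0$), to $D_{1,b},D_{2,b},C_b,\rho_b$, limits that do not involve $\Lip_n$ and are therefore independent of $n$; taking $\limsup_{m}$ then $\limsup_{n}$ yields $\distV[\bfc_2](\updelta_x \Pker_{\Time}, \updelta_y \Pker_{\Time}) \leq (D_{1,b}+D_{2,b}+C_b)\rho_b^{\Time}\bfc_2(x,y)$. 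Finally, since $\VlyapDdeux(x,y)=\{V(x)+V(y)\}/2$ with $V(x)=1+\norm[2]{x}$, the identity $\distV[\bfc_2](\mu,\nu)=\Vnorm{\mu-\nu}$ recalled in the notation section (see \cite[Theorem 19.1.7]{douc:moulines:priouret:soulier:2018}) turns this into \eqref{eq:c_ergo}.

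I expect the main obstacle to be the justification that the double limit is legitimate and stable rather than any algebra: one must check that \eqref{eq:pre_L2} genuinely holds through the moment estimate of \Cref{prop:existence_integr}, and, crucially, that the limiting constants $D_{1,b},D_{2,b},C_b,\rho_b$ of \eqref{eq:inde_2} are independent of the localization radius $n$. The latter is where the explicit form of the constants in \Cref{propo:drift_strong_convex} and \Cref{theo:discrete_contrac_wass_D_v2} matters: the Lipschitz constant $\Lip_n$ enters only via the stepsize constraint $\bgamma$ and thus disappears in the limit $\bgamma\to0$, whereas the Lyapunov constant $A$ and the radius $R$ depend only on $d,\mtt,\mttplusdeux,\Rdeux$, which are the same for all $n$.
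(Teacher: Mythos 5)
Your proposal is correct and matches the paper's own argument, which proves this theorem exactly as you do: by repeating the proof of \Cref{propo:cv_wass_continuous} (via \Cref{prop:existence_integr} and \Cref{thm:limit_loc_lip} with $\lyap = \VlyapDdeux$) with \Cref{propo:cvx_outside_bounds} replaced by \Cref{propo:drift_strong_convex_bounds}, and then using that the limiting constants in \eqref{eq:inde_2} do not depend on $\Lip_n$. Your additional checks (finiteness of $\sup_x\langle x,b(x)\rangle$, the domination of $\VlyapDdeux$ by $V_M$, and the $n$-independence of $A$ and $R$) are precisely the points the paper leaves implicit.
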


\begin{proof}
The proof is identical to the one of \Cref{propo:cv_wass_continuous} upon replacing \Cref{propo:cvx_outside_bounds} by \Cref{propo:drift_strong_convex_bounds}.
\end{proof}

\begin{theorem}
  \label{theo:c_ergo_v_norm}
  Assume either \tup{\Cref{as:b_min}($\mtt$)} for $\mtt \in \rset_-$
  or \tup{\Cref{as:b_lemme_descente}}.  Assume
  \tup{\Cref{assum:curvature}}, \tup{\Cref{ass:loc_lip_bb}}. In
  addition, assume
  $\sup_{x\in \rset^d} \norm{b(x)}^{2(1 + \vareps_b)} \rme^{-\mttun (1
    + \norm{x})^{1/2}} <+\infty$ for some $\vareps_b >0$.  Then for
  any $T \geq 0$ and $x, y \in \rset^{\dim}$
  \begin{equation}
    \Vnorm{\updelta_x \Pker_{\Time} -\updelta_y \Pker_{\Time}} \leq (D_{1, c} + D_{2,c} + C_c) \rho_c^{\Time} \bfc_3(x,y) \eqsp , \label{eq:c_ergo}
  \end{equation}
  with $D_{1,c}, D_{2,c}, C_c \geq 0$ and $\rho_c \in \ooint{0,1}$
  given by \Cref{sec:rates-crefpr} and $\bfc_3$ defined in
  \eqref{eq:def_c3}.
\end{theorem}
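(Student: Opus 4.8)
The plan is to follow the same scheme as the proof of \Cref{propo:cv_wass_continuous} (equivalently \Cref{thm:curv_b_cont}), replacing the discrete-time convergence input by \Cref{propo:weak_outside_bounds}. First I would dispose of the standing hypotheses of \Cref{thm:limit_loc_lip}: \Cref{assum:curvature} gives $\sup_{x \in \rset^d}\ps{x}{b(x)} \leq \cconst/2 < +\infty$, so that under \Cref{as:b_min}($\mtt$) (or \Cref{as:b_lemme_descente}) and \Cref{ass:loc_lip_bb}, \eqref{eq:diff} admits a unique strong solution; and the extra assumption $\sup_{x \in \rset^d}\defEns{\norm{b(x)}^{2(1+\vareps_b)}\rme^{-\mttun(1+\norm{x})^{1/2}}} < +\infty$ is precisely what is needed to feed \Cref{prop:existence_integr} and obtain the integrability condition \eqref{eq:pre_L2}. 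I would then apply \Cref{thm:limit_loc_lip} (or the more general statement of \Cref{sec:proof-crefs-results-3} that it specializes) with $\lyap = \VlyapDtrois$ from \eqref{eq:def_c3}, which is admissible for $M$ large enough because $\mtttrois \leq \mttun/2$ makes $\VlyapDtrois$ dominated by $V_M$ of \eqref{eq:lyap_m}. Recalling from the notation section that $\lyap(x,y) = \defEns{V(x)+V(y)}/2$ yields $\distV[\bfc_3](\mu,\nu) = \Vnorm{\mu-\nu}$, this produces
\[
\Vnorm{\updelta_x \Pker_{\Time}-\updelta_y \Pker_{\Time}} \leq \limsup_{n\to+\infty}\ \limsup_{m\to+\infty}\ \distV[\bfc_3]\bigl(\updelta_x \Rker_{\Time/m,n}^{m},\updelta_y \Rker_{\Time/m,n}^{m}\bigr)\eqsp,
\]
where $\Rker_{\gamma,n}$ is the projected Euler--Maruyama kernel associated with \eqref{eq:langevin_discrete} for $\Tg(x) = x+\gamma b(x)$, $\msx = \cball{0}{n}$ and $\Pi = \Pi_{\cball{0}{n}}$.

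Next I would bound the inner quantity for fixed $n$ and $m$. Fix $n$ so large that $x,y \in \cball{0}{n}$ and $m \geq \Time/\bgamma$, where $\bgamma \in \ooint{0,2\mttdeux}$ is the stepsize threshold of \Cref{propo:drift_convex} — note that it does not depend on $\Lip_n$. On $\cball{0}{n}$, \Cref{ass:def_Euler}$(\cball{0}{n})$ holds ($\Pi_{\cball{0}{n}}$ is the orthogonal projection), \Cref{ass:loc_lip_bb} implies \Cref{as:item:lip} with constant $\Lip_n$, \Cref{assum:curvature} is a global hypothesis hence still in force, and either \Cref{as:b_min}($\mtt$) or \Cref{as:b_lemme_descente} holds; \Cref{propo:weak_outside_bounds} therefore applies and gives a bound on $\distV[\bfc_3](\updelta_x \Rker_{\Time/m,n}^{m},\updelta_y \Rker_{\Time/m,n}^{m})$ of the form \eqref{eq:theo:discrete_contrac_wass_D_v2_a} with $\lyap = \VlyapDtrois$, constants $D_{\Time/m,1,c},D_{\Time/m,2,c},C_{\Time/m,c} \geq 0$ and rates $\lambda_{\Time/m,c} \leq \rho_{\Time/m,c}$ in $\ooint{0,1}$.

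The last step is to let $m\to+\infty$ and then $n\to+\infty$. By \Cref{theo:discrete_contrac_wass_D_v2} fed with \Cref{propo:drift_convex}, the constants $D_{\Time/m,1,c},D_{\Time/m,2,c},C_{\Time/m,c},\lambda_{\Time/m,c},\rho_{\Time/m,c}$ converge as $m\to+\infty$ to finite limits $D_{1,c},D_{2,c},C_c,\lambda_c,\rho_c$ (with $\rho_c \in \ooint{0,1}$) depending only on $d,\mtt,\mttun,\mttdeux,\cconst,\Rtrois,\mtttrois$, and in particular not on $\Lip_n$: the Lipschitz constant enters solely through $\kappa(\gamma) = -2\mtt + \Lip_n^2\gamma$ inside the minorization function $\Psibf$, and for fixed $\ell$ the quantity $\Xi_{\ell\step}(\kappa)$ of \eqref{eq:def_Xi} converges to $(1-\rme^{2\mtt\ell})/(-2\mtt)$, which is $\Lip_n$-free. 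Since the resulting bound no longer involves $n$, the outer $\limsup_n$ is harmless; using $\lambda_c \leq \rho_c$ and $\1_{\Deltar^{\complementary}} \leq \bfc_3$ (because $\VlyapDtrois \geq 1$), and renaming the rate to absorb the factor $1/4$ exactly as in the passage from \eqref{eq:convergence_c} to \eqref{eq:wc2_convergence_true}, I would conclude $\Vnorm{\updelta_x \Pker_{\Time}-\updelta_y \Pker_{\Time}} \leq (D_{1,c}+D_{2,c}+C_c)\rho_c^{\Time}\bfc_3(x,y)$, the constants being the ones identified in \Cref{sec:rates-crefpr}.

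The main obstacle I anticipate is exactly this uniformity in the last step: establishing that the minorization constants delivered by \Cref{theo:discrete_contrac_wass_D_v2} admit limits as the stepsize tends to $0$ that are uniform in the truncation radius $n$. At positive stepsize the minorization rate genuinely depends on $\Lip_n$, so one must track $\Xi_{\ell\step}(\kappa)$ carefully to see this dependence vanish in the limit. A subsidiary technical point is reconciling the exponential Lyapunov weight $\VlyapDtrois$ with the growth tolerated by the limiting machinery of \Cref{thm:limit_loc_lip}, which is the reason the extra exponential-integrability hypothesis on $b$ is imposed.
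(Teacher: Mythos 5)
There is a genuine gap, and it sits exactly at the point you dismiss as a "subsidiary technical point." You propose to invoke \Cref{thm:limit_loc_lip} with $\lyap = \VlyapDtrois$, justified by the claim that $\VlyapDtrois$ is dominated by $V_M$ of \eqref{eq:lyap_m} for $M$ large enough. This is false: $\VlyapDtrois$ is built from $V(x) = \exp(\mtttrois(1+\norm{x}^2)^{1/2})$, which grows exponentially in $\norm{x}$, whereas $V_M(x) = \exp[M(1+\norm{x})^{1/2}]$ grows only like the exponential of $\norm{x}^{1/2}$; no choice of $M$ gives $\sup_{x,y}\defEns{\lyap(x,y)(V_M(x)+V_M(y))^{-1}} < +\infty$. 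Even granting a generous reading of \eqref{eq:lyap_m}, the machinery behind \Cref{thm:limit_loc_lip} does not deliver what you need: the theorem is proved through \Cref{prop:general_bound}, whose hypothesis \eqref{eq:integr_condition_3} requires both $\Pker_T V^2(x) < +\infty$ and $\limsup_n\limsup_m (\Rker_{T/m,n}^m + \tRker_{T/m,n}^m) V^2(x) < +\infty$ for the weight actually used, and the moment bounds available inside \Cref{thm:limit_loc_lip} (via \Cref{prop:integrability_R_loc_lip}) are established only for the subexponential $V_M$, not for $V^2 = \exp(2\mtttrois\phi)$. Your proposal never produces these uniform-in-$(m,n)$ exponential moment bounds for the projected and unprojected Euler kernels, so the limiting inequality you start from is not justified for $\bfc_3$.

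The paper's proof avoids this by not routing through \Cref{thm:limit_loc_lip} at all: it applies \Cref{prop:general_bound} directly, checking \Cref{assum:unif_integr} from \Cref{propo:majo_cont} (using $\sup_x\langle b(x),x\rangle \leq \cconst/2$ from \Cref{assum:curvature}) together with the assumed bound $\sup_x \norm{b(x)}^{2(1+\vareps_b)}\rme^{-\mttun(1+\norm{x})^{1/2}} < +\infty$; obtaining $\Pker_T V^2 < +\infty$ from \Cref{propo:majo_cont} and $2\mtttrois \leq \mttun$; and, crucially, obtaining the discrete-side bound in \eqref{eq:integr_condition_3} by applying \Cref{propo:drift_convex} with $\mtttrois$ replaced by $2\mtttrois$ (admissible since $\mtttrois \leq \mttun/4$), which gives the drift condition \hyperlink{ass:drift_discrete}{$\bfDd(V^2,\lambda^{T/m},AT/m,\msx^2)$} for both $\Rker_{T/m,n}$ and $\tRker_{T/m,n}$ uniformly in $n$. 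Your related appeal to \Cref{prop:existence_integr} for \eqref{eq:pre_L2} is also off target, since that proposition covers polynomial growth of $b$ or the Gaussian weight under \Cref{assum:drift_strong}, not the weight $\rme^{\mttun(1+\norm{x})^{1/2}}$ relevant here. The remainder of your argument — applying \Cref{propo:weak_outside_bounds} for fixed $n,m$, noting that the limiting constants and rate from \Cref{theo:discrete_contrac_wass_D_v2} are independent of $\Lip_n$, and passing to the limit — does match the paper; the missing piece is the verification of the exponential-weight integrability that licenses the continuous-to-discrete comparison in the first place.
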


\begin{proof}
  The proof is postponed to \Cref{sec:theo:c_ergo_v_norm:proof}.
\end{proof}
The rates we obtain  in \Cref{thm:curv_b_cont}, respectively \Cref{theo:c_ergo_v_norm}, are identical to the ones derived taking the limit $\bgamma \to 0$ in \Cref{propo:drift_strong_convex_bounds}, respectively \Cref{propo:weak_outside_bounds}. An upper bound on $\rho_b$, respectively $\rho_c$,  is provided in \eqref{eq:rho_bgamma_1_majo_lim} and \eqref{eq:rho_bgamma_2_majo_lim}, respectively \Cref{sec:rates-crefpr}.


\subsection{From discrete to continuous processes}
\label{sec:proof-crefs-results-3}

In this section we present the general theory which leads to \Cref{thm:limit_lip} and \Cref{thm:limit_loc_lip}.
First, we derive bounds between the discrete and continuous process given a family of approximating drift functions in
\Cref{sec:appr-family-drift}. Second, we show in \Cref{sec:regul-appr} that under mild regularity assumptions
on $b$ such families can be explicitly constructed.

\subsubsection{Quantitative convergence bounds for diffusion processes}
\label{sec:appr-family-drift}

We recall that the \SDE \ under study is given by 
\begin{equation}
  \rmd \bfX_t = b(\bfX_t) \rmd t + \rmd \bfB_t \eqsp ,
\end{equation}
where $(\bfB_t)_{t \geq 0}$ is a $d$-dimensional Brownian motion and $b : \ \rset^d \to \rset^d$ is a continuous drift. 
In the sequel we will always consider the following assumption.
\begin{assumptionL}
    \label{ass:loc_lip_b}
    There exists a unique strong solution of \eqref{eq:diff} for any starting point $\bfX_0 = x$, with $x \in \rset^d$.
  \end{assumptionL}
  Under \Cref{ass:loc_lip_b}, the Markov semigroup $\Pker_t$, whose definition is given in \Cref{sec:main-results-3}, exists for any time $t \geq 0$. Consider the extended infinitesimal generator $\generator$ associated with $(\Pker_t)_{t \geq 0}$ and defined for any $f \in \rmC^2(\rset^d, \rset)$ by 
  \begin{equation} \generator f  = (1/2)\Delta f + \langle \nabla f, b \rangle \eqsp . \label{eq:def_generator}
  \end{equation}
  Let $V \in \rmC^2(\rset^d, \coint{1,+\infty})$, $\zeta \in \rset $ and $B \geq 0$
   \begin{assumptionD}[\hypertarget{ass:drift_continuous}{$\bfDc(V,\zeta,B)$}]
    The extended infinitesimal generator $\generator$  satisfies the continuous Foster-Lyapunov drift condition  if  for all $x \in \rset^d$
\begin{equation}
  \label{eq:continuous_drift}
  \mathcal{A}V(x) \leq - \zeta V(x) + B \eqsp .
\end{equation}
\end{assumptionD}
This assumption is the continuous counterpart of \hyperlink{ass:drift_discrete}{$\bfDd(V,\lambda,A,\rset^d)$}. We start by drawing a link between the continuous drift condition \hyperlink{ass:drift_continuous}{$\bfDc(V,\zeta,B)$} and the discrete drift condition \hyperlink{ass:drift_discrete}{$\bfDd(V,\lambda, A, \rset^d)$}. The result and its proof are standard \cite[Theorem 2.1]{meyn1993criteria_iii} but are given here for completeness. Denote by $(\mcf_t)_{t \geq 0}$ the filtration associated with $(\bfB_t)_{t \geq 0}$ satisfying the usual conditions \cite[Chapter I, Section 5]{ikeda1989sto}.
  \begin{lemma}
  \label{lemma:disclyapfromc}
Let $\zeta \in \rset$, $B \geq 0$ and $V \in \rmC^2(\rset^d, \coint{1,+\infty})$ such that $\lim_{\norm{x} \to \plusinfty} V(x) = \plusinfty$. 
Assume \tup{\Cref{ass:loc_lip_b}} and \hyperlink{ass:drift_continuous}{$\bfDc(V,\zeta,B)$}.  
\begin{enumerate}[label=(\alph*)]
\item \label{lemma:disclyapfromc_a} If $B=0$, then for any $x \in \rset^d$, $(V(\bfX_t) \rme^{\zeta t})_{t \geq 0}$ is a $(\mcf_t)_{t \geq 0}$-supermartingale where $(\bfX_t)_{t \geq 0}$ is the solution of \eqref{eq:diff} starting from $\bfX_0 = x$.   \item \label{lemma:disclyapfromc_b} For any $t_0 >0$, $\Pker_{t_0}$ satisfies \hyperlink{ass:drift_discrete}{$\bfDd(V,\exp(-\zeta t_0), B(1-\exp(-\zeta t_0))/\zeta, \rset^d)$}.
\end{enumerate}
 \end{lemma}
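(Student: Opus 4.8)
The plan is to apply Itô's formula to $(t,y) \mapsto \rme^{\zeta t} V(y)$ along the diffusion and to combine it with a standard localization argument, which is needed because $V$ and $\nabla V$ are not assumed bounded and \hyperlink{ass:drift_continuous}{$\bfDc(V,\zeta,B)$} is only a pointwise inequality.

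First I would fix $x \in \rset^d$ and let $(\bfX_t)_{t \geq 0}$ be the global, non-exploding strong solution of \eqref{eq:diff} with $\bfX_0 = x$ granted by \Cref{ass:loc_lip_b}. Introduce the hitting times $\tau_n = \inf\{t \geq 0 \,:\, \norm{\bfX_t} \geq n\}$, which increase to $+\infty$ almost surely. Since $V \in \rmC^2(\rset^d, \coint{1,+\infty})$, Itô's formula applied to $\rme^{\zeta t}V(\bfX_t)$ and stopped at $\tau_n$ gives, for every $t \geq 0$,
\[
  \rme^{\zeta (t \wedge \tau_n)} V(\bfX_{t\wedge \tau_n}) = V(x) + \int_0^{t \wedge \tau_n} \rme^{\zeta s}\, \bigl(\zeta V + \generator V\bigr)(\bfX_s)\, \rmd s + \int_0^{t \wedge \tau_n} \rme^{\zeta s} \langle \nabla V(\bfX_s), \rmd \bfB_s \rangle \eqsp,
\]
where $\generator$ is defined in \eqref{eq:def_generator}. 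Because $\bfX_s \in \cball{0}{n}$ for $s \in [0, t\wedge\tau_n]$ and $\nabla V$ is continuous, the integrand of the stochastic integral is bounded on $[0,t\wedge\tau_n]$, so that term is a genuine $(\mcf_t)_{t \geq 0}$-martingale; moreover \hyperlink{ass:drift_continuous}{$\bfDc(V,\zeta,B)$} gives $\zeta V + \generator V \leq B$ everywhere.

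For \ref{lemma:disclyapfromc_a}, taking $B = 0$ makes the finite-variation term in the display non-increasing, so $(\rme^{\zeta(t\wedge\tau_n)}V(\bfX_{t\wedge\tau_n}))_{t\geq0}$ is a non-negative $(\mcf_t)_{t\geq0}$-supermartingale for each $n$. Since $\tau_n \uparrow +\infty$ a.s.\ and $(\rme^{\zeta t}V(\bfX_t))_{t\geq0}$ has continuous paths, $\rme^{\zeta(t\wedge\tau_n)}V(\bfX_{t\wedge\tau_n}) \to \rme^{\zeta t}V(\bfX_t)$ a.s.; taking expectations in the display with $B=0$ shows $\PE_x[\rme^{\zeta t}V(\bfX_t)] \leq V(x) < \infty$, and then the conditional Fatou lemma lets one pass the supermartingale inequality $\PE[\rme^{\zeta(t\wedge\tau_n)}V(\bfX_{t\wedge\tau_n}) \mid \mcf_s] \leq \rme^{\zeta(s\wedge\tau_n)}V(\bfX_{s\wedge\tau_n})$ to the limit $n \to \infty$, for $s \leq t$. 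For \ref{lemma:disclyapfromc_b}, I would take expectations in the display, use $\zeta V + \generator V \leq B$ together with $\PE_x\bigl[\int_0^{t\wedge\tau_n}\rme^{\zeta s}\,\rmd s\bigr] \leq \int_0^t \rme^{\zeta s}\,\rmd s = (\rme^{\zeta t}-1)/\zeta$, and let $n \to \infty$ using $\tau_n \uparrow +\infty$, path continuity and Fatou's lemma to obtain $\PE_x[\rme^{\zeta t}V(\bfX_t)] \leq V(x) + B(\rme^{\zeta t}-1)/\zeta$. Multiplying by $\rme^{-\zeta t}$ and setting $t = t_0$ yields $\Pker_{t_0}V(x) \leq \rme^{-\zeta t_0}V(x) + B(1-\rme^{-\zeta t_0})/\zeta$ for all $x \in \rset^d$, which is exactly \hyperlink{ass:drift_discrete}{$\bfDd(V,\exp(-\zeta t_0),B(1-\exp(-\zeta t_0))/\zeta,\rset^d)$}.

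The only genuine difficulty is the bookkeeping of the localization: checking that the stopped stochastic integral is a true martingale, that $\tau_n \uparrow +\infty$ almost surely — this is precisely where non-explosion of the strong solution (\Cref{ass:loc_lip_b}) is used — and that Fatou's lemma, in both its plain and conditional forms, can legitimately be applied to pass the estimates to the limit. These steps are routine but must be spelled out since $V$ is only assumed $\rmC^2$ with compact sublevel sets and without any global growth bound.
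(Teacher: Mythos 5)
Your proposal is correct and follows essentially the same route as the paper's proof: Itô's formula applied to $\rme^{\zeta t}V(\bfX_t)$, localization by the hitting times $\tau_n$, the pointwise drift inequality $\generator V + \zeta V \leq B$, and Fatou's lemma (in conditional form for the supermartingale property) to remove the localization. The only cosmetic difference is that for part (b) the paper phrases the conclusion via the compensated supermartingale $\rme^{\zeta t}V(\bfX_t) - B\int_0^t \rme^{\zeta s}\,\rmd s$ before taking expectations, whereas you take expectations in the stopped identity directly; the computation is the same.
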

 \begin{proof}
      The proof is postponed to \Cref{sec:proof-crefl-1}.
   \end{proof}

Consider a family of drifts
 $\ensembleLigne{\bgM:  \ \rset^d \to \rset^d}{\gamma \in \ocint{0, \bgamma}, n \in \nset}$ for some $\bgamma >0$.
 For all $\gamma\in \ocint{0, \bgamma}$ and $n \in \nset$, we denote by $\tRker_{\gamma, n}$ the Markov kernel associated with \eqref{eq:langevin_discrete} where $\Tg(x) = x + \gamma \bgM(x)$, $\msx = \rset^d$ and $\Pi = \Id$. 
We will show that under the following assumptions the family $\ensembleLigne{\tRker_{\gamma,n}^{\ceil{\Time/\gamma}}:  \ \rset^d \to \rset^d}{\gamma \in \ocint{0, \bgamma}, n \in \nset}$ approximates $\Pker_{\Time}$ for $\Time \geq 0$ as $\gamma \to 0$ and $n \to \plusinfty$.

\begin{assumptionL}
  \label{ass:diff_disc}
  There exist $\beta > 0$ and $C_1  \geq 0$ such that for any $\gamma \in \ocint{0, \bgamma}$, $n \in \nset$, $b_{\gamma, n} \in \rmc(\rset^d, \rset^d)$ and for any $x \in \rset^d$, 
    \begin{equation}
      \| b(x) - \bgM(x) \|^2 \leq C_1 \gamma^{\beta} \norm[2]{b(x)}   \eqsp .
    \end{equation}
\end{assumptionL}
 The following assumption is mainly technical and is satisfied in our applications.
 \begin{assumptionL}
   \label{assum:unif_integr}
   There exists $\vareps_b >0$ such that  
$\sup_{s \in \ccint{0,\Time}} \defEnsLigne{ \updelta_x \Pker_s  \norm[2(1+\vareps_b)]{b(x)}} < \plusinfty$, for any $x \in \rset^d$ and $\Time \geq 0$.
\end{assumptionL}
By \Cref{lemma:disclyapfromc}-\ref{lemma:disclyapfromc_a}, if \hyperlink{ass:drift_continuous}{$\bfDc(V,\zeta,0)$} is satisfied with $\zeta \in \rset$, it holds that for any starting point $x \in \rset^d$,  $\sup_{t \in \ccint{0,\Time}} \expeLigne{V(\bfX_t)} \leq \rme^{-\zeta \Time} V(x)$, where $(\bfX_t)_{t \geq 0}$ is solution of \eqref{eq:diff} starting from $x$. Therefore, if $\normLigne[2(1+\vareps_b)]{b(x)} \leq V(x)$ for any $x \in \rset^d$,  \Cref{assum:unif_integr} is satisfied. 

The proof of the next result relies on the combination of the Girsanov theorem with estimates on the drift functions, adapting \cite[Theorem 10]{durmus2017nonasymp}.  Similar strategies have also been used in \cite{dalalyan2017theoretical, fang2019multilevel, raginsky2017nonconvex}. 
 \begin{proposition}
  \label{lemma:V-norm_control}
      Assume 
      \tup{\Cref{ass:loc_lip_b}}, \tup{\Cref{ass:diff_disc}} and \tup{\Cref{assum:unif_integr}}. Let $V : \rset^d \to \coint{1,+\infty}$. In addition, assume that for any $n \in \nset$, $T \geq 0$ and $x \in \rset^d$ 
      \begin{equation} \Pker_{\Time} V^2(x) < +\infty \eqsp , \qquad \limsup_{m \to +\infty} \tRker_{\Time/m, n}^{m} V^2(x) < +\infty \eqsp . \label{eq:integr_condition}\end{equation}     
      Then for any $n \in \nset$, $\Time \geq 0$ and $x \in \rset^d$
      \begin{equation}
   \label{eq:girsanov}
 \lim_{m \to +\infty} \Vnorm{\updelta_x \Pker_{\Time} - \updelta_x \tRker_{\Time/m, n}^{m} } = 0  \eqsp ,
\end{equation}
where $(\Pker_t)_{t \geq 0}$ is the semigroup associated with \eqref{eq:diff} and  for any $\gamma\in \ocint{0, \bgamma}$ and $n \in \nset$,  $\tRker_{\gamma, n}$ is the Markov kernel associated with \eqref{eq:langevin_discrete} where $\Tg(x) = x + \gamma \bgM(x)$ and $\Pi = \Id$.
\end{proposition}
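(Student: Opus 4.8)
The plan is to represent $\updelta_x\tRker_{\Time/m,n}^m$ as the time-$\Time$ marginal of a continuous-time process driven by the \emph{same} Brownian motion as the solution of \eqref{eq:diff}, and to compare the two via Girsanov's theorem; this is legitimate since both have diffusion coefficient $\Id$. Fix $n\in\nset$, $\Time>0$ and $x\in\rset^d$, and for $m\in\nsets$ set $\gamma=\Time/m$ with $m$ large enough that $\gamma\leq\bgamma$. Let $(\bfX_t)_{t\in\ccint{0,\Time}}$ solve \eqref{eq:diff} from $x$ with driving Brownian motion $(\bfB_t)$, and let $(\Ybar_t)_{t\in\ccint{0,\Time}}$ be the continuous interpolation of the Euler--Maruyama chain with drift $\bgM$ and stepsize $\gamma$, i.e.\ $\Ybar_0=x$ and $\rmd\Ybar_t=\bgM(\Ybar_{\gamma\floor{t/\gamma}})\,\rmd t+\rmd\bfB_t$. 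Then $(\Ybar_{k\gamma})_{0\leq k\leq m}$ is a Markov chain with kernel $\tRker_{\gamma,n}$, so that $\Ybar_{\Time}\sim\updelta_x\tRker_{\gamma,n}^m$, while $\bfX_{\Time}\sim\updelta_x\Pker_{\Time}$. Writing $\mu_m,\nu_m$ for the laws of the two paths on $\rmC(\ccint{0,\Time},\rset^d)$ and using that marginalization does not increase the total variation distance, it suffices to prove $\tvnorm{\mu_m-\nu_m}\to0$ and then to upgrade this to convergence in $\Vnorm{\cdot}$.

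For the first step I would use Girsanov's theorem and Pinsker's inequality. Since \tup{\Cref{ass:loc_lip_b}} rules out explosion, the stopping times $\tau_{\msr}=\inf\{t\geq0:\norm{\bfX_t}\geq\msr\}$ increase to $+\infty$; on each $\ccint{0,\Time\wedge\tau_{\msr}}$ both $b$ (continuous, hence bounded on $\cball{0}{\msr}$) and $\bgM$ (controlled by $b$ via \tup{\Cref{ass:diff_disc}}) are bounded, so Girsanov applies along this localizing sequence, and letting $\msr\to+\infty$ shows $\mu_m$ and $\nu_m$ are mutually absolutely continuous with
\[
\KL{\mu_m}{\nu_m}=\tfrac12\,\expe{\int_0^{\Time}\norm[2]{b(\bfX_s)-\bgM(\bfX_{\gamma\floor{s/\gamma}})}\,\rmd s}\eqsp.
\]
I then split $b(\bfX_s)-\bgM(\bfX_{\gamma\floor{s/\gamma}})=\{b(\bfX_s)-b(\bfX_{\gamma\floor{s/\gamma}})\}+\{b-\bgM\}(\bfX_{\gamma\floor{s/\gamma}})$. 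By \tup{\Cref{ass:diff_disc}} the second term is at most $C_1^{1/2}\gamma^{\beta/2}\norm{b(\bfX_{\gamma\floor{s/\gamma}})}$, whose squared integral has expectation $O(\gamma^{\beta})\to0$ because of the bound $\sup_{s\leq\Time}\updelta_x\Pker_s\norm[2]{b(\cdot)}<+\infty$ contained in \tup{\Cref{assum:unif_integr}}. The first term tends to $0$ almost surely for each $s$ by continuity of $b$ and of the paths of $(\bfX_t)$; since it is bounded in $\mathrm{L}^{1+\vareps_b}$ uniformly in $m$ and $s$ by \tup{\Cref{assum:unif_integr}}, it is uniformly integrable, and Vitali's theorem gives that its contribution to the integral also vanishes. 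Hence $\KL{\mu_m}{\nu_m}\to0$, so by Pinsker, $\tvnorm{\updelta_x\Pker_{\Time}-\updelta_x\tRker_{\gamma,n}^m}\to0$ as $m\to+\infty$.

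For the second step I would truncate at a level $\msr>0$: for measurable $f$ with $\absLigne{f}\leq V$, splitting $f=f\1_{\{V\leq\msr\}}+f\1_{\{V>\msr\}}$ and using $V\1_{\{V>\msr\}}\leq V^2/\msr$ yields
\[
\Vnorm{\updelta_x\Pker_{\Time}-\updelta_x\tRker_{\gamma,n}^m}\leq\msr\,\tvnorm{\updelta_x\Pker_{\Time}-\updelta_x\tRker_{\gamma,n}^m}+\tfrac1{2\msr}\bigl\{\Pker_{\Time}V^2(x)+\tRker_{\gamma,n}^m V^2(x)\bigr\}\eqsp.
\]
Taking $\limsup_{m\to+\infty}$, using the first step and the two finiteness conditions in \eqref{eq:integr_condition}, and then letting $\msr\to+\infty$ forces the limit to be $0$, which is \eqref{eq:girsanov}.

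The step I expect to be the main obstacle is making the Girsanov argument rigorous under the weak hypotheses of the statement: here $b$ is only assumed continuous (no local Lipschitz control is available), so the exponential martingale in Girsanov's theorem need not satisfy Novikov's condition globally and one must localize and then pass to the limit; and it is precisely in controlling the resulting relative-entropy integral that one uses \tup{\Cref{assum:unif_integr}} — with the slack provided by the exponent $1+\vareps_b$, which upgrades $\mathrm{L}^1$ boundedness to uniform integrability — together with the $V^2$-integrability hypotheses \eqref{eq:integr_condition}.
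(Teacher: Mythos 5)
Your proposal is correct and follows essentially the same route as the paper's proof: a Girsanov/relative-entropy comparison between the diffusion and the continuous interpolation of the Euler scheme driven by the same Brownian motion, the same splitting of the drift error into a time-discretization term (handled by path continuity plus the uniform integrability supplied by the $1+\vareps_b$ slack in \Cref{assum:unif_integr}) and an approximation term controlled by \Cref{ass:diff_disc}, followed by an upgrade from total variation to the $V$-norm using the $V^2$-moment hypotheses \eqref{eq:integr_condition}. The only cosmetic differences are that the paper justifies absolute continuity of the path laws by citing Liptser--Shiryaev rather than your localization argument, and bounds the $V$-norm directly by $\mathrm{KL}^{1/2}$ through a weighted Pinsker-type inequality instead of your Pinsker-plus-truncation step.
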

\begin{proof}
  The proof is postponed to \Cref{lemma:V-norm_control:proof}.
\end{proof}
If $V = 1$, \Cref{lemma:V-norm_control} implies that $
\lim_{m \to +\infty} \tvnorm{\updelta_x \Pker_{\Time} - \updelta_x \tRker_{\Time/m, n}^{m} } = 0$.
Let $V : \rset^d \to \coint{1,\plusinfty}$ and $\bfc : \ \rset^d \times \rset^d \to \coint{1,+\infty}$ a distance such that for any $x,y \in \rset^d$, $\bfc(x,y) \leq \defEns{V(x) + V(y)}/2$. Then, under the conditions of \Cref{lemma:V-norm_control}, we obtain that for any $\Time \geq 0$, $n \in \nset$ and $x,y \in \rset^d$
  \begin{equation}
    \distV(\updelta_x \Pker_{\Time}, \updelta_y \Pker_{\Time}) \leq \limsup_{m \to +\infty} \distV(\updelta_x \tRker_{\Time/m,n}^{ m}, \updelta_y \tRker_{\Time/m,n}^{m }) \eqsp ,\label{eq:theo_preli}
  \end{equation}
  Therefore, if for any $\Time \geq 0$, $\distV(\updelta_x \tRker_{\Time/m,n}^{m }, \updelta_y \tRker_{\Time/m,n}^{m})$ can be bounded uniformly in $m$ using \Cref{theo:discrete_contrac_wass_D_v2}, we obtain an explicit bound for $    \distV(\updelta_x \Pker_{\Time}, \updelta_y \Pker_{\Time})$ for any  $\Time \geq 0$. As a consequence, this result  easily implies non-asymptotic convergence bounds of $(\Pker_{t})_{t \geq 0}$ to its invariant measure if it exists.  However, in our applications, global Lipschitz regularity on $b_{\Time/m, n}: \ \rset^d \to \rset^d$ is needed in order to apply  \Cref{theo:discrete_contrac_wass_D_v2} to  $\tRker_{\Time/m,n}$ for $\Time \geq 0$, $m \in \nsets$ and $n \in \nset$. To be able to deal with the fact that $b_{\Time/m, n}$ is non necessarily globally Lipschitz, we consider an appropriate sequence of projected Euler-Maruyama schemes associated to a sequence of subsets of $\rset^d$, $(\msk_n)_{n \in \nset}$ satisfying the following assumption.
  \begin{assumptionL}
    \label{ass:seq_k_n}
    For any $n \in \nset$, $\msk_n$ is convex and  closed, and $\cball{0}{n} \subset \msk_n$. 
  \end{assumptionL}

Consider for any $\gamma \in \ocint{0, \bgamma}$ and $n \in \nset$ the Markov chain associated \eqref{eq:langevin_discrete}, where for any $x \in \rset^d$, $\Tg(x) = x + \gamma b_{\gamma,n}(x)$, $\msx = \msk_n$ and $\Pi = \Pi_{\msk_n}$, the projection on $\msk_n$. The Markov kernel associated with this chain is denoted $\Rker_{\gamma, n}$ for any $\gamma \in \ocint{0, \bgamma}$ and $n \in \nset$. Assuming only local Lipschitz regularity we can apply \Cref{theo:discrete_contrac_wass_D_v2} to the projected version of the Markov chain associated with $\Rker_{\Time/m,n}$. Therefore we want to replace $\tRker_{\Time/m,n}$ by $\Rker_{\Time/m,n}$ in \eqref{eq:theo_preli}. In order to do so we consider the following assumption on the family of drifts $\{b_{\gamma, n} \, ; \, \gamma \in \ocint{0,\bgamma}, n \in \nset\}$.
  \begin{assumptionL}
   \label{ass:lyap_majo}
 There exist $\tilde{A} > 0$ and $\tilde{V}:\rset^d \to [1,+\infty)$ such that for any $n \in \nset$ there exist $\En \geq 0$,  $\varepsn >0$ and $\bgamma_n \in \ocint{0, \bgamma}$ satisfying for any $\gamma \in \ocint{0,\bgamma_n}$  and $x \in \rset^d$,
 \begin{equation} \tRker_{\gamma, n} \tilde{V}(x) \leq \exp\parentheseDeux{\log(\tilde{A})\gamma (1  + \En\gamma^{\varepsn})} \tilde{V}(x) \eqsp , \qquad \sup_{x \in \rset^d}\defEns{  \norm{x} /\tilde{V}(x) } \leq 1 \eqsp,
 \end{equation}
 where for any $\gamma\in \ocint{0, \bgamma}$ and $n \in \nset$,  $\tRker_{\gamma, n}$ is the Markov kernel associated with \eqref{eq:langevin_discrete} where $\Tg(x) = x + \gamma \bgM(x)$ and $\Pi = \Id$.
 \end{assumptionL}

\begin{proposition}
  \label{propo:compare}
  Let $V : \rset^d \to \coint{1,+\infty}$. Assume 
      \tup{\Cref{ass:loc_lip_b}}, \tup{\Cref{ass:seq_k_n}}, \tup{\Cref{ass:lyap_majo}} and that for any $\Time \geq 0$, $x \in \rset^d$ 
      \begin{equation}\limsup_{n \to +\infty} \limsup_{m \to +\infty} \parenthese{\Rker_{\Time/m, n}^{m} + \tRker_{\Time/m, n}^{m}} V^2(x) < +\infty \eqsp .
        \end{equation}
      Then for any $\Time \geq 0$ and $x \in \rset^d$
      \begin{equation}
   \label{eq:compare_proj}
 \lim_{n \to +\infty} \limsup_{m \to +\infty} \Vnorm{\updelta_x \Rker_{\Time/m, n}^{mk} - \updelta_x \tRker_{\Time/m, n}^{mk} } = 0  \eqsp ,
\end{equation}
\end{proposition}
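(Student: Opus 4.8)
The plan is to compare the two Markov kernels $\Rker_{\Time/m,n}$ (projected Euler--Maruyama on the convex body $\msk_n$) and $\tRker_{\Time/m,n}$ (unprojected Euler--Maruyama with the same drift $b_{\gamma,n}$) along the whole trajectory of length $m$, and to show that the difference between their $m$-step laws, measured in $V$-norm, vanishes as first $m\to+\infty$ and then $n\to+\infty$. The elementary observation is that the two one-step kernels agree on any point whose noisy Euler update lands inside $\msk_n$; since $\cball{0}{n}\subset\msk_n$ by \Cref{ass:seq_k_n}, a single step of $\tRker_{\gamma,n}$ starting from $x$ differs from one step of $\Rker_{\gamma,n}$ only on the event that the Gaussian increment pushes the process outside the ball $\cball{0}{n}$, an event whose probability can be controlled by a Markov/Chebyshev argument once we know that the iterates do not escape too fast. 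First I would write the one-step discrepancy $\tvnorm{\delta_x \Rker_{\gamma,n} - \delta_x\tRker_{\gamma,n}}$, and more generally the $V$-weighted discrepancy, as a quantity bounded by $2\,\tRker_{\gamma,n}(x,\cball{0}{n}^{\complementary})$ times the relevant growth factor of $V$; the projection $\Pi_{\msk_n}$ is non-expansive (\Cref{ass:non_expansive_Pi}), so it can only decrease distances, which is what lets us bound the error by an ``escape probability'' rather than something worse.

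Next I would iterate this one-step bound over the $m$ steps using a telescoping/coupling argument: couple the two chains by using the same Gaussian increments $Z_{k+1}$, so that they coincide up to the first time $\tau$ the unprojected chain would leave $\cball{0}{n}$, and after that bound the contribution by the $V$-mass carried on the event $\{\tau\le m\}$. Here \Cref{ass:lyap_majo} is the key input: it provides a Lyapunov function $\tilde V$ with $\norm{x}\le \tilde V(x)$ and a multiplicative drift bound $\tRker_{\gamma,n}\tilde V(x)\le \exp[\log(\tilde A)\gamma(1+\En\gamma^{\varepsn})]\tilde V(x)$, so that $\expe{\tilde V(\tilde X_k)}\le \exp[\log(\tilde A)\,k\gamma(1+\En\gamma^{\varepsn})]\tilde V(x)$ uniformly along the trajectory; for $k\le m$ and $\gamma=\Time/m$ this is bounded by $\exp[\log(\tilde A)\Time(1+\En(\Time/m)^{\varepsn})]\tilde V(x)$, hence uniformly in $m$ (for $m$ large enough that $\gamma\le\bgamma_n$). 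A union bound over $k\in\{0,\dots,m-1\}$ and Markov's inequality give $\PP(\tau\le m)\le \sum_{k} \tRker_{\gamma,n}^{k}(x,\cball{0}{n}^{\complementary})\cdot(\text{one-step escape prob.})$, and the one-step Gaussian escape probability from a point of norm $\le n$ scales like a Gaussian tail in $n$ times $\gamma$, so summing $m$ of them keeps a factor $\Time$ and a Gaussian-in-$n$ tail. Thus $\limsup_{m\to\infty}$ of the error is bounded by a quantity that tends to $0$ as $n\to+\infty$.

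Finally, the $V$-norm (as opposed to total variation) requires controlling the second-moment-type quantities $\Rker_{\Time/m,n}^m V^2(x)$ and $\tRker_{\Time/m,n}^m V^2(x)$: on the escape event we lose the exact coupling and must pay the full size of $V$ on both chains, so we use Cauchy--Schwarz to split $\Vnorm{\delta_x\Rker^m_{\cdot,n}-\delta_x\tRker^m_{\cdot,n}}$ into (escape probability)$^{1/2}$ times $\big(\Rker^m V^2(x)+\tRker^m V^2(x)\big)^{1/2}$, and the hypothesis $\limsup_n\limsup_m (\Rker_{\Time/m,n}^m+\tRker_{\Time/m,n}^m)V^2(x)<+\infty$ is exactly what makes the second factor harmless. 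Putting the pieces together: the error is at most $C(x,\Time)\cdot(\text{Gaussian tail in }n)^{1/2}\to 0$, first letting $m\to\infty$ (using the uniform-in-$m$ moment bounds and uniform-in-$m$ drift estimates) and then $n\to\infty$. The main obstacle I anticipate is making the escape-probability estimate genuinely uniform in $m$: one must check that the drift constant $\exp[\log(\tilde A)\gamma(1+\En\gamma^{\varepsn})]$ compounded $m=\Time/\gamma$ times stays bounded as $\gamma\to 0$ (it does, since $(\Time/m)^{\varepsn}\to 0$), and that the Gaussian escape probability from $\cball{0}{n}$, which involves $\norm{\Tg(x)}\le\norm{x}+\gamma\norm{b(x)}$, can be bounded using $\tilde V$ and \Cref{ass:lyap_majo} rather than direct control of $b$ --- this is where the inequality $\norm{x}\le\tilde V(x)$ and the multiplicative drift are used together, and where the bookkeeping is most delicate.
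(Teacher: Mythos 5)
Your overall architecture is the same as the paper's: couple $\Rker_{\Time/m,n}$ and $\tRker_{\Time/m,n}$ synchronously (same Gaussian increments), observe that the two chains agree until the unprojected chain first leaves $\cball{0}{n}\subset\msk_n$, and then split the $V$-norm error by Cauchy--Schwarz into (escape probability)$^{1/2}$ times the $V^2$-moment factor, which is exactly what the hypothesis $\limsup_n\limsup_m(\Rker^m_{\Time/m,n}+\tRker^m_{\Time/m,n})V^2(x)<+\infty$ is there to absorb. Up to that point the proposal matches the paper.

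However, there is a genuine gap in your control of the escape probability, which is the heart of the proof. You propose a union bound over the $m$ steps together with the claim that the one-step escape probability from a point of norm $\le n$ ``scales like a Gaussian tail in $n$ times $\gamma$''. That claim is false: before time $\tau$ the chain is only known to lie in $\cball{0}{n}$, and from a point at distance $O(\sqrt{\gamma})$ from the boundary the one-step exit probability is of order one, with no factor $\gamma$ and no Gaussian decay in $n$. Without that per-step factor $\gamma$, the union bound gives at best $\PP(\tau\le m)\le\sum_{k\le m}\PP(\norm{\tilde X_k}>n)\le (m+1)\,C\,\tilde V(x)/n$ via Markov and your fixed-time moment bound $\expe{\tilde V(\tilde X_k)}\le C\tilde V(x)$ — and the factor $m$ destroys the bound in the iterated limit $\limsup_{m\to\infty}$ before $n\to\infty$. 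You flag the uniformity-in-$m$ issue yourself, but the route you sketch does not resolve it.

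The missing ingredient, and the one the paper uses, is Doob's maximal inequality for a positive supermartingale: \Cref{ass:lyap_majo} makes $j\mapsto\tilde V(\tilde X_j)\exp\bigl[-j\log(\tilde A)(\Time/m)\bigl(1+\En(\Time/m)^{\varepsn}\bigr)\bigr]$ a supermartingale, and since $\norm{x}\le\tilde V(x)$,
\begin{equation}
\probaLigne{\textstyle\sup_{j\le m}\norm{\tilde X_j}\ge n}\le \probaLigne{\textstyle\sup_{j\le m}\tilde V(\tilde X_j)\ge n}\le n^{-1}\exp\parentheseDeux{\log(\tilde A)\Time\bigl(1+\En(\Time/m)^{\varepsn}\bigr)}\tilde V(x)\eqsp,
\end{equation}
a bound on the running maximum that is uniform in $m$ and decays (only like $1/n$, which suffices) as $n\to+\infty$. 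Replacing your union-bound step by this maximal inequality closes the gap and reduces your argument to the paper's proof.
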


\begin{proof}
  The proof is postponed to \Cref{propo:compare:proof}.
\end{proof}

Based on \Cref{lemma:V-norm_control} and \Cref{propo:compare}, we have the following result which establishes a  clear link between the convergence of the family of the projected Euler-Maruyama scheme $\{\Rker_{\gamma,n} \, : \, \gamma \in \ocint{0,\bgamma}, n \in \nset\}$ and the semigroup $(\Pker_t)_{t \geq 0}$ associated with \eqref{eq:diff}.

\begin{theorem}
  \label{prop:general_bound}
      Let $\lyap : \rset^d \times \rset^d \to \coint{1,+\infty}$ and $V : \rset^d \to \coint{1,\plusinfty}$ satisfying for any $x,y \in \rset^d$, $\sup_{(x,y) \in \rset^d\times \rset^d} \lyap(x,y) \defEns{V(x) + V(y)}^{-1}< +\infty$. Assume \tup{\Cref{ass:loc_lip_b}}, 
\tup{\Cref{ass:diff_disc}},        \tup{\Cref{assum:unif_integr}}, \tup{\Cref{ass:seq_k_n}} and  \tup{\Cref{ass:lyap_majo}}. 
 In addition, assume that for any $\Time \geq 0$ and $x \in \rset^d$ 
      \begin{equation} \Pker_{\Time} V^2(x) < +\infty \eqsp , \quad \limsup_{n \to +\infty} \limsup_{m \to +\infty} \parenthese{\Rker_{\Time/m, n}^{m} + \tRker_{\Time/m, n}^{m}}  V^2(x) < +\infty \eqsp . \label{eq:integr_condition_3}\end{equation}      
      Then,
      \begin{equation}
        \distV(\updelta_x \Pker_{\Time}, \updelta_y \Pker_{\Time}) \leq \limsup_{n \to +\infty} \limsup_{m \to +\infty} \distV(\updelta_x \Rker_{\Time/m, n}^{m}, \updelta_y \Rker_{\Time/m, n}^{m}) \eqsp, \label{eq:conclu_1}
      \end{equation}
where for any $x,y \in \rset^d$, $\bfc(x,y) = \1_{\Deltar^{\complementary}}(x,y) \lyap(x,y)$, $(\Pker_t)_{t \geq 0}$ is the semigroup associated with \eqref{eq:diff} and  for any $\gamma\in \ocint{0, \bgamma}$, $n \in \nset$,  $\Rker_{\gamma, n}$ is the Markov kernel associated with \eqref{eq:langevin_discrete} where $\Tg(x) = x + \gamma \bgM(x)$, $\msx = \msk_n$ and $\Pi = \Pi_{\msk_n}$, $\tRker_{\gamma, n}$ is the Markov kernel associated with \eqref{eq:langevin_discrete} where $\Tg(x) = x + \gamma \bgM(x)$, $\msx = \rset^d$ and $\Pi = \Id$.
\end{theorem}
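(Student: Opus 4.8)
The plan is to insert the discretized laws $\updelta_x\Rker_{\Time/m,n}^{m}$ and $\updelta_y\Rker_{\Time/m,n}^{m}$ between $\updelta_x\Pker_\Time$ and $\updelta_y\Pker_\Time$ via the triangle inequality for $\wassersteinD[\bfc]$, and to make the two resulting approximation errors vanish in the iterated limit $\limsup_n\limsup_m$ by invoking the two comparison results already established: \Cref{lemma:V-norm_control} (semigroup versus the unprojected chains $\tRker_{\Time/m,n}$) and \Cref{propo:compare} (unprojected versus projected chains). A preliminary reduction replaces the cost $\bfc$ by a weighted total variation norm. Setting $c_{\lyap} = 1 + 2\sup_{(x,y) \in \rset^d \times \rset^d} \lyap(x,y)\defEns{V(x) + V(y)}^{-1} \in \coint{1,\plusinfty}$ and $\bar V = c_{\lyap} V$, which still maps $\rset^d$ into $\coint{1,\plusinfty}$, one has $\lyap(x,y) \leq \defEns{\bar V(x) + \bar V(y)}/2$, hence $\bfc(x,y) \leq \1_{\Deltar^{\complementary}}(x,y)\defEns{\bar V(x) + \bar V(y)}/2$. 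Since increasing the cost function increases the associated Wasserstein cost and, by \cite[Theorem 19.1.7]{douc:moulines:priouret:soulier:2018}, the Wasserstein distance with cost $(x,y) \mapsto \1_{\Deltar^{\complementary}}(x,y)\defEns{\bar V(x) + \bar V(y)}/2$ equals $\Vnorm[\bar V]{\cdot - \cdot}$, we obtain $\distV(\mu,\nu) \leq \Vnorm[\bar V]{\mu - \nu} = c_{\lyap}\Vnorm{\mu - \nu}$ for all probability measures $\mu,\nu$ on $(\rset^d, \mcb{\rset^d})$. Moreover $\bar V^2 = c_{\lyap}^2 V^2$, so every $V^2$-integrability condition in the hypotheses carries over verbatim to $\bar V^2$.

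Fix $x,y \in \rset^d$ and $\Time \geq 0$. For $n \in \nset$ and $m \in \nsets$ in the ranges where all the kernels below are defined, the triangle inequality for $\wassersteinD[\bfc]$ (valid since $\bfc$ is a metric on $\rset^d$ under the standing assumptions on $\lyap$) gives
\begin{equation}
  \distV(\updelta_x\Pker_\Time, \updelta_y\Pker_\Time) \leq \distV(\updelta_x\Pker_\Time, \updelta_x\Rker_{\Time/m,n}^{m}) + \distV(\updelta_x\Rker_{\Time/m,n}^{m}, \updelta_y\Rker_{\Time/m,n}^{m}) + \distV(\updelta_y\Rker_{\Time/m,n}^{m}, \updelta_y\Pker_\Time) \eqsp,
\end{equation}
and for $z \in \{x,y\}$, using the reduction above and the triangle inequality for $\Vnorm[\bar V]{\cdot}$,
\begin{equation}
  \distV(\updelta_z\Pker_\Time, \updelta_z\Rker_{\Time/m,n}^{m}) \leq \Vnorm[\bar V]{\updelta_z\Pker_\Time - \updelta_z\tRker_{\Time/m,n}^{m}} + \Vnorm[\bar V]{\updelta_z\tRker_{\Time/m,n}^{m} - \updelta_z\Rker_{\Time/m,n}^{m}} \eqsp.
\end{equation}
Since $\tRker_{\Time/m,n}^{m} V^2 \leq (\Rker_{\Time/m,n}^{m} + \tRker_{\Time/m,n}^{m}) V^2$, the hypothesis $\limsup_{n}\limsup_{m}(\Rker_{\Time/m,n}^{m} + \tRker_{\Time/m,n}^{m})V^2(z) < \plusinfty$ gives $\limsup_{m}\tRker_{\Time/m,n}^{m}\bar V^2(z) < \plusinfty$ for all $n$ large enough; together with $\Pker_\Time\bar V^2(z) < \plusinfty$ and \tup{\Cref{ass:loc_lip_b}}, \tup{\Cref{ass:diff_disc}}, \tup{\Cref{assum:unif_integr}}, \Cref{lemma:V-norm_control} applied with $\bar V$ in place of $V$ yields $\lim_{m \to \plusinfty}\Vnorm[\bar V]{\updelta_z\Pker_\Time - \updelta_z\tRker_{\Time/m,n}^{m}} = 0$. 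Likewise, \Cref{propo:compare} applied with $\bar V$ in place of $V$, which uses \tup{\Cref{ass:loc_lip_b}}, \tup{\Cref{ass:seq_k_n}}, \tup{\Cref{ass:lyap_majo}} and the same $\bar V^2$-integrability, yields $\lim_{n \to \plusinfty}\limsup_{m \to \plusinfty}\Vnorm[\bar V]{\updelta_z\Rker_{\Time/m,n}^{m} - \updelta_z\tRker_{\Time/m,n}^{m}} = 0$.

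It then remains to take $\limsup_{m \to \plusinfty}$ and then $\limsup_{n \to \plusinfty}$ in the first display, restricting to the $n$ for which the above applies. The left-hand side is constant in $m$ and $n$; by subadditivity of $\limsup$ and the first limit above, the two error terms survive the inner limit only as $\limsup_{m}\Vnorm[\bar V]{\updelta_z\tRker_{\Time/m,n}^{m} - \updelta_z\Rker_{\Time/m,n}^{m}}$, which then vanish under the outer limit by the second limit above. What remains is exactly $\distV(\updelta_x\Pker_\Time, \updelta_y\Pker_\Time) \leq \limsup_{n \to \plusinfty}\limsup_{m \to \plusinfty}\distV(\updelta_x\Rker_{\Time/m,n}^{m}, \updelta_y\Rker_{\Time/m,n}^{m})$, the claimed bound. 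The proof is essentially organisational once \Cref{lemma:V-norm_control} and \Cref{propo:compare} are at hand; the only points demanding care are keeping the two limits in the right order ($m$ inside, $n$ outside), matching the way the hypotheses and \Cref{propo:compare} are phrased, and checking that the $V^2$-integrability assumptions transfer to $\bar V^2$ and hold for all sufficiently large $n$ — this is the main, and fairly mild, obstacle.
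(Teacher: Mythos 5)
Your proposal is correct and follows essentially the same route as the paper: a triangle-inequality decomposition inserting the projected kernels $\Rker_{\Time/m,n}^{m}$ and the unprojected kernels $\tRker_{\Time/m,n}^{m}$, with the error terms dominated by a constant times the $V$-norm (the paper's $C_V$, your $c_{\lyap}$) and sent to zero via \Cref{lemma:V-norm_control} and \Cref{propo:compare} in the order $m$ then $n$. Your extra bookkeeping (transferring the $V^2$-integrability to $\bar V$ and tracking the order of limits) is just a more explicit version of what the paper leaves implicit.
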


\begin{proof}
  Let $\Time \geq 0$, $x,y \in \rset^d$ and \begin{equation}
    C_V = 2 \sup_{(x,y) \in \rset^d\times \rset^d} \lyap(x,y) \defEns{V(x) + V(y)}^{-1} < +\infty \eqsp . \end{equation}
    We have for any $n \in \nset$ and $m \in \nsets$ such that $\Time/m \leq \bgamma$
  \begin{multline}
    \distV(\updelta_x \Pker_{\Time}, \updelta_y \Pker_{\Time}) \leq C_V\Vnorm{\updelta_x \Pker_{\Time}- \updelta_x \tRker_{\Time/m,n}^{m}} + C_V\Vnorm{\updelta_x \Rker_{\Time/m, n}^{m}- \updelta_x \tRker_{\Time/m,n}^{m}}  \\ + \distV(\updelta_x \Rker_{\Time/m, n}^{m}, \updelta_y \Rker_{\Time/m, n}^{m})  + C_V \Vnorm{\updelta_y \Pker_{\Time}- \updelta_y \tRker_{\Time/m,n}^{m}} + C_V \Vnorm{\updelta_y \Rker_{\Time/m, n}^{m}- \updelta_y \tRker_{\Time/m,n}^{m}} \eqsp ,
  \end{multline}
  which concludes the proof upon combining \Cref{lemma:V-norm_control} and \Cref{propo:compare}.
\end{proof}

\subsubsection{Explicit approximating family of drifts }
\label{sec:regul-appr}

In this section we show that under regularity and curvature assumptions on the
drift function $b$ we can construct explicit families of approximating drift
functions satisfying the assumptions of \Cref{prop:general_bound}. The section is
divided into two parts. First, we show under regularity conditions
\tup{\Cref{ass:loc_lip_b}}, \tup{\Cref{ass:diff_disc}},
\tup{\Cref{assum:unif_integr}}, \tup{\Cref{ass:seq_k_n}} and
\tup{\Cref{ass:lyap_majo}} are satisfied.  Second, we show, under similar, that
the summability assumptions \eqref{eq:integr_condition_3} in
\Cref{prop:general_bound} hold for $V \leftarrow V_M$ with
$V_M : \ \rset^{\dim} \to \coint{1, +\infty}$ given by \eqref{eq:lyap_m} for $M \geq 0$.
We start with the case where $b$ satisfies \Cref{as:item:lip}. 

\begin{proposition}
  \label{prop:L_ass_check_lip}
  Assume \tup{\Cref{as:item:lip}}. Let $\{\rmb_{\gamma, n}  \, : \, \gamma \in \ocint{0, \bgamma}, n \in \nset\}$ be given  for any $\gamma >0$, $n \in \nset$ and $x \in \rset^d$ by $\rmb_{\gamma, n}(x) = b(x)$. Let $\msk_n = \rset^{\dim}$ for any $n \in \nset$.
  Then, \tup{\Cref{ass:loc_lip_b}}, 
  \tup{\Cref{ass:diff_disc}},        \tup{\Cref{assum:unif_integr}}, \tup{\Cref{ass:seq_k_n}} and  \tup{\Cref{ass:lyap_majo}}
  are satisfied.
\end{proposition}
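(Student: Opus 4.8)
The plan is to check the five conditions in turn; four of them are essentially immediate, and only \Cref{ass:lyap_majo} requires any real work. Since the approximating family is the constant family $\rmb_{\gamma,n} = b$ and $\msk_n = \rset^{\dim}$ for every $n$, \Cref{ass:seq_k_n} is trivial ($\rset^{\dim}$ is closed and convex and contains every ball), and \Cref{ass:diff_disc} holds with, say, $\beta = 1$ and $C_1 = 0$ because $b - \rmb_{\gamma,n} \equiv 0$, the continuity of $\rmb_{\gamma,n} = b$ being a consequence of \Cref{as:item:lip}. For \Cref{ass:loc_lip_b} I would invoke the classical existence and uniqueness theorem for stochastic differential equations with globally Lipschitz coefficients, see e.g.\ \cite[Chapter IV]{ikeda1989sto}, which applies directly since $b$ is $\Lip$-Lipschitz and the diffusion coefficient is the identity.

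For \Cref{assum:unif_integr}, first note that \Cref{as:item:lip} together with $b(0) = 0$ yields $\norm{b(x)} \leq \Lip \norm{x}$ and $\ps{x}{b(x)} \leq \Lip \norm{x}^2$ for all $x \in \rset^{\dim}$. Choosing $\vareps_b = 1$, it then suffices to control the fourth moment $\sup_{s \in \ccint{0,\Time}} \expeMarkov{x}{\norm[4]{\bfX_s}}$ of the diffusion. I would obtain this by applying Itô's formula to $t \mapsto (1 + \norm{\bfX_t}^2)^2$, using the bound $\ps{x}{b(x)} \leq \Lip\norm{x}^2$ to dominate the finite-variation part, the Burkholder--Davis--Gundy inequality for the martingale part, and then Gronwall's lemma; this gives $\sup_{s \in \ccint{0,\Time}} \expeMarkov{x}{\norm[4]{\bfX_s}} < +\infty$ for every $x \in \rset^{\dim}$ and $\Time \geq 0$, hence $\sup_{s \in \ccint{0,\Time}} \updelta_x \Pker_s \norm[4]{b(x)} < +\infty$. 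Alternatively one can simply cite standard moment estimates for diffusions with Lipschitz drift.

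The heart of the argument is \Cref{ass:lyap_majo}, and the only point where one has to be careful --- hence the main obstacle --- is the choice of $\tilde V$. The naive candidate $\tilde V(x) = 1 + \norm{x}$ dominates $\norm{\cdot}$ but fails the one-step drift bound, since the Gaussian increment contributes a term of order $\gamma^{1/2}$ to $\tRker_{\gamma,n}\tilde V$, which cannot be absorbed into $\exp[\log(\tilde A)\gamma(1 + \En\gamma^{\varepsn})]$ because $\varepsn > 0$ forces that exponent to be $O(\gamma)$. I would therefore take the quadratic function $\tilde V(x) = 1 + \norm{x}^2$, for which $\sup_{x \in \rset^{\dim}} \norm{x}/\tilde V(x) \leq 1$ still holds. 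Writing $\tRker_{\gamma,n} \tilde V(x) = 1 + \expeLigne{\norm{x + \gamma b(x) + \sqrt{\gamma}Z}^2}$ with $Z$ a standard $d$-dimensional Gaussian vector, the cross term vanishes because $\expeLigne{Z} = 0$, while $\expeLigne{\norm{Z}^2} = d$ and $\norm{x + \gamma b(x)}^2 \leq \norm{x}^2(1 + \gamma\Lip)^2$ by the two inequalities of the previous paragraph; therefore
\begin{equation*}
  \tRker_{\gamma,n} \tilde V(x) \leq (1 + \gamma \Lip)^2 (1 + \norm{x}^2) + \gamma d \leq \defEns{1 + \gamma(2\Lip + d) + \gamma^2 \Lip^2}\, \tilde V(x) \leq \exp\bigl[\gamma(2\Lip + d) + \gamma^2 \Lip^2\bigr]\, \tilde V(x) \eqsp,
\end{equation*}
using $1 + t \leq \rme^t$ in the last step. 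Since $\rmb_{\gamma,n}$ does not depend on $n$, this establishes \Cref{ass:lyap_majo} with $\tilde V(x) = 1 + \norm{x}^2$, $\tilde A = \rme^{2\Lip + d}$, and, for every $n$, $\varepsn = 1$, $\En = \Lip^2/(2\Lip + d)$ and $\bgamma_n = \bgamma$, which finishes the verification.
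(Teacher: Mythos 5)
Your proof is correct, and on the only genuinely substantive point — \Cref{ass:lyap_majo} — it coincides with the paper's argument: the paper also takes $\tilde V(x)=1+\norm{x}^2$ and derives $\Rker_{\gamma}\tilde V(x)\leq (1+2\gamma\Lip+\gamma^2\Lip^2+\gamma d)\tilde V(x)$ from \Cref{as:item:lip}, exactly your computation (and your remark that $1+\norm{x}$ would fail because of the $\gamma^{1/2}$ Gaussian contribution is a sound justification for this choice). The only divergence is in how the routine conditions are handled: for \Cref{ass:loc_lip_b} and \Cref{assum:unif_integr} the paper runs everything through the Lyapunov drift $\generator V\leq [2p(p-1)+p(d+2\Lip)]V$ with $V(x)=1+\norm{x}^{2p}$, invoking Khasminskii's non-explosion theorem and \Cref{lemma:disclyapfromc} to bound $\sup_{s\leq \Time}\Pker_s\norm[2(1+\vareps_b)]{b(\cdot)}(x)$, whereas you appeal to the classical well-posedness theorem for globally Lipschitz SDEs and to a direct It\^o--Gronwall (or BDG) fourth-moment estimate with $\vareps_b=1$. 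Both routes are standard and of comparable length; yours is slightly more direct for existence, the paper's has the advantage of reusing the same drift machinery (\Cref{lemma:disclyapfromc}) that it needs elsewhere.
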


\begin{proof}
  The proof is postponed to \Cref{prop:L_ass_check_lip:proof}.
\end{proof}

We now consider the more challenging case where \Cref{as:item:lip} does not hold and is replaced by the weaker condition \Cref{ass:loc_lip_bb}. 
  In this setting, by \cite[Chapter 4, Theorem 2.3]{ikeda1989sto}, \eqref{eq:diff} admits a unique solution $(\bfX_t)_{t \in \coint{0,+\infty}}$ with $\bfX_0 = x \in \rset^d$ and let 
  $e = \inf \ensemble{s \geq 0}{\norm{\bfX_s} = +\infty}$. 
  In particular, the condition $e = +\infty$ is met \as \ if we assume that $b$ is sub-linear \cite[Chapter 4, Theorem 2.3]{ikeda1989sto} or that the condition \hyperlink{ass:drift_continuous}{$\bfDc(V,\zeta,0)$} holds with $\zeta \in \rset$ and $\lim_{\norm{x}\to +\infty} V(x) = +\infty$ \cite[Theorem 3.5]{khasminskii2011stochastic}. This last condition is satisfied for all the applications we consider in \Cref{sec:applications-1}.

\begin{proposition}
  \label{prop:existence_integr}
  Assume \tup{\Cref{as:b_min}($\mtt$)} with $\mtt \in \rset$ and  \tup{\Cref{ass:loc_lip_bb}}, then \tup{\Cref{ass:loc_lip_b}} holds. In addition:
  \begin{enumerate}[label=(\alph*), wide, labelwidth=!, labelindent=0pt]
  \item if there exists $\vareps_b >0$ and $p \in \nsets$ such that $\sup_{x \in \rset^d}  \{\norm{b(x)}^{2(1+\vareps_b)} (1 + \norm{x}^{2p})^{-1}\} < +\infty$ then \tup{\Cref{assum:unif_integr}} holds ;
  \item 
    assume that \tup{\Cref{assum:drift_strong}
    } holds and $\sup_{x \in \rset^d} \{ \norm{b(x)}^{2(1+\vareps_b)} \rme^{-\mttplusdeux\norm{x}^2}\}<+\infty$ for some $\vareps_b >0$ satisfying  then \tup{\Cref{assum:unif_integr}} holds.
  \end{enumerate}
\end{proposition}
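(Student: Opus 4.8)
The plan is to prove the three assertions in turn, all via Itô's formula together with localization at exit times of balls.

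First I would establish \Cref{ass:loc_lip_b}. Under \Cref{ass:loc_lip_bb} the drift is locally Lipschitz, so by \cite[Chapter 4, Theorem 2.3]{ikeda1989sto} there is a unique strong solution $(\bfX_t)_{t \geq 0}$ of \eqref{eq:diff} from any $\bfX_0 = x$, possibly up to an explosion time $e = \inf\ensemble{s \geq 0}{\norm{\bfX_s} = +\infty}$. To rule out explosion I take $V(x) = 1 + \norm[2]{x} \in \rmC^2(\rset^d,\coint{1,+\infty})$; using \Cref{as:b_min}($\mtt$) together with $b(0)=0$ gives $\ps{b(x)}{x} = \ps{b(x)-b(0)}{x} \leq -\mtt\norm[2]{x}$, hence $\generator V(x) = d + 2\ps{b(x)}{x} \leq d + 2\abs{\mtt}\norm[2]{x} \leq (d + 2\abs{\mtt})V(x)$, i.e. \hyperlink{ass:drift_continuous}{$\bfDc(V,\zeta,0)$} holds with $\zeta = -(d+2\abs{\mtt}) \in \rset$. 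Since $\lim_{\norm{x}\to+\infty}V(x)=+\infty$, \cite[Theorem 3.5]{khasminskii2011stochastic} yields $e = +\infty$ almost surely, so \Cref{ass:loc_lip_b} holds; this is exactly the mechanism already pointed out before the statement.

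Next I would record a general moment estimate: if $W \in \rmC^2(\rset^d,\coint{1,+\infty})$ satisfies $\generator W \leq c_W W$ for some $c_W \geq 0$, then $\updelta_x \Pker_t W(x) = \expe{W(\bfX_t)} \leq \rme^{c_W t} W(x)$ for all $t \geq 0$. To see this I apply Itô's formula to $s \mapsto \rme^{-c_W s} W(\bfX_s)$ stopped at $\tau_n = \inf\ensemble{s \geq 0}{\norm{\bfX_s} \geq n}$; since $\nabla W$ is bounded on $\cball{0}{n}$ the stopped stochastic integral is a genuine martingale, so $\expe{\rme^{-c_W(t\wedge\tau_n)}W(\bfX_{t\wedge\tau_n})} \leq W(x)$ and thus $\expe{W(\bfX_{t\wedge\tau_n})} \leq \rme^{c_W t}W(x)$; letting $n \to +\infty$, so that $\tau_n \uparrow e = +\infty$ a.s.\ by the previous step, and using Fatou's lemma gives the claim, and in particular $\sup_{s\in\ccint{0,\Time}}\updelta_x\Pker_s W(x) < +\infty$. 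For \textbf{(a)} I take $W = V_p$ with $V_p(x) = (1+\norm[2]{x})^p$; a direct computation gives $\generator V_p(x) = p(1+\norm[2]{x})^{p-1}[2\ps{b(x)}{x}+d] + 2p(p-1)(1+\norm[2]{x})^{p-2}\norm[2]{x}$, and bounding $\ps{b(x)}{x} \leq \abs{\mtt}\norm[2]{x}$ and $\norm[2]{x} \leq 1+\norm[2]{x}$ yields $\generator V_p \leq c_p V_p$ for an explicit $c_p \geq 0$. The moment estimate then gives $\sup_{s\in\ccint{0,\Time}}\updelta_x\Pker_s V_p < +\infty$, and since the hypothesis provides $C_b \geq 0$ with $\norm[2(1+\vareps_b)]{b(x)} \leq C_b(1+\norm[2p]{x}) \leq 2C_b V_p(x)$, we obtain $\sup_{s\in\ccint{0,\Time}}\updelta_x\Pker_s\norm[2(1+\vareps_b)]{b(\cdot)} < +\infty$, which is \Cref{assum:unif_integr}.

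For \textbf{(b)} I take $W(x) = \rme^{\mttplusdeux\norm[2]{x}}$, for which $\generator W(x) = [\mttplusdeux d + 2(\mttplusdeux)^2\norm[2]{x} + 2\mttplusdeux\ps{b(x)}{x}]\rme^{\mttplusdeux\norm[2]{x}}$. The key point — and the only place the constants must line up — is that the exponent is chosen to be exactly the curvature constant $\mttplusdeux$ of \Cref{assum:drift_strong}, so that for $\norm{x} \geq \Rdeux$ one has $\ps{b(x)}{x} \leq -\mttplusdeux\norm[2]{x}$, hence $2(\mttplusdeux)^2\norm[2]{x} + 2\mttplusdeux\ps{b(x)}{x} \leq 0$ and $\generator W(x) \leq \mttplusdeux d\, W(x)$ there; for $\norm{x} < \Rdeux$ the continuity of $b$ bounds $\ps{b(x)}{x}$ over the compact ball $\cball{0}{\Rdeux}$, giving $\generator W(x) \leq c' W(x)$ with an explicit $c' \geq 0$. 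Thus $\generator W \leq \max(\mttplusdeux d, c')\, W$, the moment estimate applies, and since $\norm[2(1+\vareps_b)]{b(x)} \leq C_b W(x)$ by hypothesis, \Cref{assum:unif_integr} follows. The computations are routine; the only steps requiring genuine care are the localization in the moment estimate (verifying that the stopped stochastic integral is a true martingale and that $\tau_n \to e = +\infty$) and, in part \textbf{(b)}, the observation that the Lyapunov weight has to be $\rme^{\mttplusdeux\norm{\cdot}^2}$ with precisely the constant of \Cref{assum:drift_strong}, so that the quadratic term is annihilated rather than merely dominated.
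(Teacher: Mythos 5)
Your proof is correct and follows essentially the same route as the paper: non-explosion via a polynomial Lyapunov function and the Khasminskii criterion for \Cref{ass:loc_lip_b}, then the continuous drift conditions $\generator V_p \leq c_p V_p$ for part (a) and $\generator \rme^{\mttplusdeux\norm{\cdot}^2} \leq \zeta\, \rme^{\mttplusdeux\norm{\cdot}^2}$ (using \Cref{assum:drift_strong} to cancel the quadratic term outside $\cball{0}{\Rdeux}$) for part (b), combined with the hypothesized domination of $\norm{b}^{2(1+\vareps_b)}$ by the Lyapunov function. The only cosmetic difference is that you re-derive the semigroup moment bound by Itô plus localization and Fatou, where the paper simply invokes \Cref{lemma:disclyapfromc}.
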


\begin{proof}
  The proof is postponed to \Cref{prop:existence_integr:proof}.
\end{proof}

\Cref{prop:existence_integr} gives conditions under which \tup{\Cref{ass:loc_lip_b}} and \tup{\Cref{assum:unif_integr}} hold. In addition, \Cref{ass:seq_k_n} is satisfied if we take for any $n \in \nset$, $\msk_n = \cball{0}{n}$. 
Therefore, it only remains to find a family of drift functions which satisfies \Cref{ass:diff_disc} and \Cref{ass:lyap_majo}. 
To this end, consider the following family of drift functions $\{\rmb_{\gamma, n}  \, : \, \gamma \in \ocint{0, \bgamma}, n \in \nset\}$ defined for any $\gamma >0$, $n \in \nset$ and $x \in \rset^d$ by
\begin{equation}
  \rmb_{\gamma, n}(x) = \varphi_n(x) b(x) + (1 - \varphi_n(x)) \frac{b(x)}{1 + \gamma^{\alpha} \norm{b(x)}} \eqsp , \label{eq:def_fam}
\end{equation}
with $\alpha <1/2$ and $\varphi_n \in \rmC(\rset^d, \rset)$ such that for any $n \in \nset$ and $x \in \rset^d$,
\begin{equation}
  \label{eq:varphi_condition}
 \varphi_n(x) \in \ccint{0,1} \quad \text{ and } \quad \varphi_n(x)
  =
  \begin{cases}
    1 & \text{ if $x \in \cball{0}{n}$}, \\
    0 & \text{ if $x \in \cball{0}{n+1}^{\complementary}$ } \eqsp.
  \end{cases}
\end{equation}
An example of such a family is displayed in \Cref{fig:sinus_drift}. 
\begin{figure}[t]
  \centering
  \subfloat[]{\includegraphics[width=0.495\linewidth]{./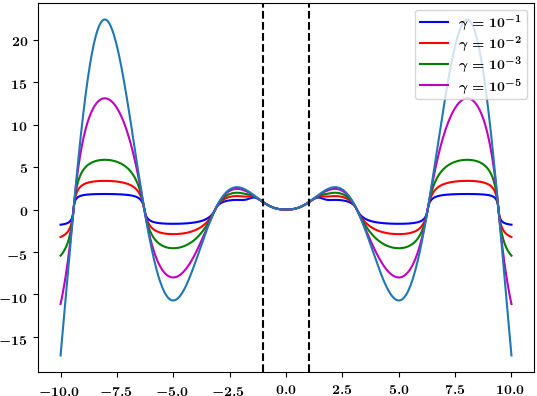}} \hfill
  \subfloat[]{\includegraphics[width=0.495\linewidth]{./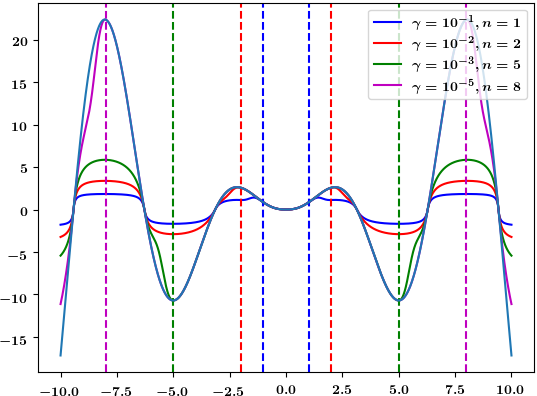}} \hfill
  \caption{In this figure we illustrate the approximation properties of the family of drift functions defined by \eqref{eq:def_fam}. Let $b(x) = |x|^{1.5}\sin(x)$ and for any $n \in \nset$, $\varphi_n(x) = d(x, \cball{0}{+1}^{\complementary})^2/(d(x, \cball{0}{n})^2+ d(x, \cball{0}{n+1}^{\complementary})^2)$. In both figures the original drift is displayed in cyan and we fix $\alpha = 0.3$. In (a), we fix $n = 1$, represented by the black dashed lines, and observe the behavior of the drift functions for different values of $\gamma>0$. In (b), we plot the drift for different $\gamma >0$ and $n \in \nset$.}
  \label{fig:sinus_drift}
\end{figure}
\begin{proposition}
  \label{prop:fam_prop}
      Assume \tup{\Cref{as:b_min}($\mtt$)} for $\mtt \in \rset$ and \tup{\Cref{ass:loc_lip_bb}}, then
      \tup{\Cref{ass:diff_disc}} and \tup{\Cref{ass:lyap_majo}} hold for the family $\{\rmb_{\gamma, n}  \, : \, \gamma \in \ocint{0, \bgamma}, n \in \nset\}$ defined by \eqref{eq:def_fam}.
\end{proposition}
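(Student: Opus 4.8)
The plan is to check \tup{\Cref{ass:diff_disc}} and \tup{\Cref{ass:lyap_majo}} directly from the formula \eqref{eq:def_fam}; no dynamical information is needed beyond continuity of $b$ and the dissipativity inequality $\langle x,b(x)\rangle\le-\mtt\norm{x}^2$, which follows from \tup{\Cref{as:b_min}($\mtt$)} applied at $y=0$ together with $b(0)=0$. The key algebraic observation is that $\rmb_{\gamma,n}$ is a pointwise nonnegative rescaling of $b$: one has $\rmb_{\gamma,n}(x)=c_{\gamma,n}(x)\,b(x)$ with $c_{\gamma,n}(x)=\varphi_n(x)+(1-\varphi_n(x))/(1+\gamma^\alpha\norm{b(x)})\in\ocintLigne{0,1}$. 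Continuity of $\rmb_{\gamma,n}$ is then immediate since the denominator is bounded below by $1$, so $\rmb_{\gamma,n}\in\rmc(\rset^d,\rset^d)$ for every $\gamma\in\ocint{0,\bgamma}$ and $n\in\nset$.

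For \tup{\Cref{ass:diff_disc}} I would write $b(x)-\rmb_{\gamma,n}(x)=(1-c_{\gamma,n}(x))\,b(x)$ and bound $1-c_{\gamma,n}(x)=(1-\varphi_n(x))\,\gamma^\alpha\norm{b(x)}/(1+\gamma^\alpha\norm{b(x)})\le\min\{1,\gamma^\alpha\norm{b(x)}\}$, using $0\le1-\varphi_n\le1$; hence $\norm{b(x)-\rmb_{\gamma,n}(x)}^2\le\min\{\norm{b(x)}^2,\gamma^{2\alpha}\norm{b(x)}^4\}$. In particular the difference vanishes on $\cball{0}{n}$. Interpolating between these two bounds (i.e. using $A\wedge B\le A^{1-\theta}B^\theta$) produces an estimate of the form $\gamma^{\beta}$ times the power of $\norm{b(x)}$ required in \tup{\Cref{ass:diff_disc}}, with $\beta$ a positive multiple of $\alpha$; this is precisely where $\alpha>0$ is used, and it is harmless since $\beta$ may be taken as small as one likes.

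For \tup{\Cref{ass:lyap_majo}} I would take $\tilde V(x)=1+\norm{x}^2$, which satisfies $\tilde V\ge1$ and $\sup_x\norm{x}/\tilde V(x)\le1$. Fix $n$ and set $M_n=\sup_{\cball{0}{n+1}}\norm{b}<+\infty$ (finite by continuity of $b$). Two bounds on the modified drift, both uniform in $x$, are needed: first $\norm{\rmb_{\gamma,n}(x)}=c_{\gamma,n}(x)\norm{b(x)}\le M_n+\gamma^{-\alpha}$, because $\varphi_n\norm{b}$ is supported in $\cball{0}{n+1}$ while the tamed part has norm at most $\gamma^{-\alpha}$; and second $\langle x,\rmb_{\gamma,n}(x)\rangle=c_{\gamma,n}(x)\langle x,b(x)\rangle\le\maxplus{-\mtt}\,\norm{x}^2$, using $c_{\gamma,n}(x)\in\ocintLigne{0,1}$ and $\langle x,b(x)\rangle\le-\mtt\norm{x}^2$. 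Since $Z_1$ is centred with identity covariance and $\Tg(x)=x+\gamma\rmb_{\gamma,n}(x)$, I would expand $\tRker_{\gamma,n}\tilde V(x)=1+\norm{x}^2+2\gamma\langle x,\rmb_{\gamma,n}(x)\rangle+\gamma^2\norm{\rmb_{\gamma,n}(x)}^2+\gamma d$, plug in the two bounds and use $\tilde V\ge1$ to absorb the constants, obtaining $\tRker_{\gamma,n}\tilde V(x)\le(1+\gamma[\,d+2\maxplus{-\mtt}+2\gamma M_n^2+2\gamma^{1-2\alpha}\,])\,\tilde V(x)$.

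To conclude I would repackage this into the multiplicative form of \tup{\Cref{ass:lyap_majo}} with $\tilde A$ independent of $n$: set $\log(\tilde A)=d+2\maxplus{-\mtt}+1$. Here is where $\alpha<1/2$ is used, since $1-2\alpha>0$ forces $2\gamma M_n^2+2\gamma^{1-2\alpha}\to0$ as $\gamma\to0$, hence $2\gamma M_n^2+2\gamma^{1-2\alpha}\le1$ for $\gamma\in\ocint{0,\bgamma_n}$ with $\bgamma_n$ small enough (depending on $n$ through $M_n$); for such $\gamma$, $\tRker_{\gamma,n}\tilde V(x)\le(1+\gamma\log(\tilde A))\tilde V(x)\le\rme^{\gamma\log(\tilde A)}\tilde V(x)$, which is the asserted bound with $\En=0$ and any $\varepsn>0$ (alternatively one keeps $\bgamma_n=\bgamma$ and absorbs $2\gamma M_n^2+2\gamma^{1-2\alpha}$ into the correction $\En\gamma^{\varepsn}$ with $\varepsn=1-2\alpha$ and $\En$ large in terms of $n$). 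The only genuinely delicate point of the whole argument is this last step: $\rmb_{\gamma,n}$ equals $b$ on $\cball{0}{n}$, so the $n$-dependent constant $M_n$ is unavoidable in the second-moment expansion, and one must verify that it enters only through the higher-order term $\gamma^2\norm{\rmb_{\gamma,n}(x)}^2$ so that $\tilde A$ can still be chosen uniformly in $n$ — which is exactly what the taming exponent $\alpha<1/2$ guarantees via the a priori bound $\norm{\rmb_{\gamma,n}(x)}\le M_n+\gamma^{-\alpha}$.
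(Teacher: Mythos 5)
Your verification of \Cref{ass:lyap_majo} is correct and is essentially the paper's own argument: the paper likewise expands the one-step second moment of the unprojected kernel for the quadratic Lyapunov function $\tilde V(x)=1+\normLigne{x}^2$, uses $\langle x,b(x)\rangle\le-\mtt\normLigne{x}^2$ (from \Cref{as:b_min}($\mtt$) at $y=0$ together with $b(0)=0$), controls the modified drift by an $n$-dependent quantity plus $\gamma^{-\alpha}$ (through $\Lip_{n+1}$ where you use $M_n=\sup_{\cball{0}{n+1}}\normLigne{b}$, an immaterial difference), and concludes with $\tilde A$ independent of $n$, $\varepsn=1-2\alpha$ and an $n$-dependent $\En$; your second variant (keeping $\bgamma_n=\bgamma$ and pushing the $n$-dependence into $\En\gamma^{1-2\alpha}$) is precisely the paper's choice of constants.

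The gap is in your treatment of \Cref{ass:diff_disc}. From the exact identity $b(x)-\rmb_{\gamma,n}(x)=(1-\varphi_n(x))\,b(x)\,\gamma^{\alpha}\normLigne{b(x)}/(1+\gamma^{\alpha}\normLigne{b(x)})$ your min-bound is correct, but the interpolation $A\wedge B\le A^{1-\theta}B^{\theta}$ with $A=\normLigne{b(x)}^2$ and $B=\gamma^{2\alpha}\normLigne{b(x)}^4$ yields $\gamma^{2\alpha\theta}\normLigne{b(x)}^{2+2\theta}$, whose exponent in $\normLigne{b(x)}$ exceeds $2$ for every $\theta>0$. It therefore never produces the inequality $\normLigne{b(x)-\rmb_{\gamma,n}(x)}^2\le C_1\gamma^{\beta}\normLigne{b(x)}^2$ that \Cref{ass:diff_disc} demands, and no choice of constants can: under \Cref{ass:loc_lip_bb} alone $b$ may be unbounded, and at any $x\notin\cball{0}{n+1}$ with $\gamma^{\alpha}\normLigne{b(x)}\ge1$ one has $\normLigne{b(x)-\rmb_{\gamma,n}(x)}^2/\normLigne{b(x)}^2=\bigl(\gamma^{\alpha}\normLigne{b(x)}/(1+\gamma^{\alpha}\normLigne{b(x)})\bigr)^2\ge1/4$, uniformly in $\gamma$, whereas $C_1\gamma^{\beta}<1/4$ for $\gamma$ small. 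What the construction actually gives is $\normLigne{b(x)-\rmb_{\gamma,n}(x)}^2\le\gamma^{2\alpha}\normLigne{b(x)}^4$, which is the bound behind the paper's terse claim ``$\beta=2\alpha$''; so your difficulty tracks an imprecision in the exponent appearing in \Cref{ass:diff_disc} itself, but the sentence asserting that interpolation recovers ``the power of $\normLigne{b(x)}$ required'' is the step that fails, and it should be replaced by the direct computation above together with an explicit statement of which power of $\normLigne{b(x)}$ (four, or $2+2\theta$ after interpolation) one actually obtains.
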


\begin{proof}
  The proof is postponed to \Cref{prop:fam_prop:proof}.
\end{proof}

The following proposition is a generalization of \Cref{prop:L_ass_check_lip}.
\begin{proposition}
  \label{prop:L_ass_check_loc_lip}
  Assume \tup{\Cref{as:b_min}($\mtt$)} with $\mtt \in \rset$ and  \tup{\Cref{ass:loc_lip_bb}}.
Let $\{\rmb_{\gamma, n}  \, : \, \gamma \in \ocint{0, \bgamma}, n \in \nset\}$ be given  for any $\gamma >0$, $n \in \nset$ and $x \in \rset^d$ by \eqref{eq:def_fam}. Let $\msk_n = \cball{0}{n}$ for any $n \in \nset$.
  Then, \tup{\Cref{ass:loc_lip_b}}, 
  \tup{\Cref{ass:diff_disc}}, \tup{\Cref{ass:seq_k_n}} and  \tup{\Cref{ass:lyap_majo}}
  are satisfied. 
\end{proposition}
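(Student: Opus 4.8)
The plan is to obtain the proposition as a bookkeeping corollary of the results already established for the taming family \eqref{eq:def_fam}. First, \Cref{ass:loc_lip_b} --- existence and uniqueness of a strong solution of \eqref{eq:diff} for every starting point $\bfX_0 = x \in \rset^d$ --- is exactly the first assertion of \Cref{prop:existence_integr}, whose hypotheses (\Cref{as:b_min}($\mtt$) with $\mtt \in \rset$ together with \Cref{ass:loc_lip_bb}) coincide with the ones assumed here, so nothing new needs to be proved for this point.

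Second, \Cref{ass:seq_k_n} holds by inspection with the prescribed choice $\msk_n = \cball{0}{n}$: a closed Euclidean ball is convex and closed, and trivially $\cball{0}{n} \subset \msk_n$. This is the only place where the particular sequence $(\msk_n)_{n\in\nset}$ enters; it serves downstream (in \Cref{propo:compare} and \Cref{prop:general_bound}) to make the projected Euler--Maruyama kernels $\Rker_{\gamma,n}$ on $\msk_n$ well-defined, since the orthogonal projection onto $\cball{0}{n}$ satisfies \Cref{ass:non_expansive_Pi}.

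Third, \Cref{ass:diff_disc} and \Cref{ass:lyap_majo} for the family $\{\rmb_{\gamma,n} : \gamma \in \ocint{0,\bgamma},\ n \in \nset\}$ defined by \eqref{eq:def_fam} are precisely the content of \Cref{prop:fam_prop}, again under the same hypotheses \Cref{as:b_min}($\mtt$) and \Cref{ass:loc_lip_bb}. Concretely, the taming factor $1/(1+\gamma^{\alpha}\norm{b(x)})$ with $\alpha < 1/2$ produces the $\mathcal{O}(\gamma^{\beta})$ bound of \Cref{ass:diff_disc} with $\beta = 1 - 2\alpha > 0$, while the one-sided Lipschitz inequality \Cref{as:b_min}($\mtt$) controls the mean drift of the (unprojected) kernel $\tRker_{\gamma,n}$ against a function $\tilde V$ of the required type, giving \Cref{ass:lyap_majo}. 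Assembling the three observations yields the claim. I expect no genuine obstacle; the only point deserving a moment's care is that \Cref{ass:lyap_majo} is a statement about the \emph{unprojected} kernel $\tRker_{\gamma,n}$ on $\rset^d$ (with $\Pi = \Id$), which is exactly the object estimated in \Cref{prop:fam_prop}, whereas the projected kernels $\Rker_{\gamma,n}$ on $\msk_n = \cball{0}{n}$ only appear in the later assembly via \Cref{prop:general_bound}.
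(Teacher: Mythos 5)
Your proposal is correct and follows exactly the paper's route: the paper proves this proposition as "a straightforward combination of \Cref{prop:existence_integr} and \Cref{prop:fam_prop}", with the verification of \Cref{ass:seq_k_n} for $\msk_n = \cball{0}{n}$ being immediate, which is precisely your argument. One cosmetic remark: in the paper's proof of \Cref{prop:fam_prop}, \Cref{ass:diff_disc} is obtained with $\beta = 2\alpha$ (the exponent $1-2\alpha$ is the one appearing as $\varepsn$ in \Cref{ass:lyap_majo}), but since you only invoke that proposition as a black box this does not affect your proof.
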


\begin{proof}
  The proof is a straightforward combination of \Cref{prop:existence_integr} and \Cref{prop:fam_prop}.
\end{proof}

In \Cref{prop:integrability_R_lip} and \Cref{prop:integrability_R_loc_lip} we show that the second part of \eqref{eq:integr_condition_3} holds under regularity assumptions on the drift function $b$.

\begin{proposition}
  \label{prop:integrability_R_lip}
      Assume \tup{\Cref{as:b_min}($\mtt$)} for $\mtt \in \rset$ and \tup{\Cref{as:item:lip}}, then
 for any $\Time, M \geq 0$ and $x \in \rset^d$
      \begin{equation}
\limsup_{m \to +\infty} \Rker_{\Time/m}^{m}   V_M(x) < +\infty \eqsp , 
      \end{equation}
      with $V_M$ given in \eqref{eq:lyap_m} and where for any $\gamma\in \ocint{0, \bgamma}$, $\Rker_{\gamma}$ is the Markov kernel associated with \eqref{eq:langevin_discrete} where $\Tg(x) = x + \gamma b(x)$, $\msx = \rset^{\dim}$ and $\Pi = \Id$.
    \end{proposition}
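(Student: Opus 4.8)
The plan is to avoid looking for a one‑step Foster--Lyapunov drift for $V_M$: such an inequality cannot hold with an error term of order $\gamma$, because near the origin $\Tg(x)$ is small and the Gaussian increment $\sqrt{\gamma}Z$ moves $\phi$ by an amount of order $\sqrt{\gamma}\,\norm{Z}$, whose expectation is of order $\sqrt{\gamma \dim}$; iterated $m=\Time/\gamma$ times this would diverge. Instead I would estimate $\Rker_{\Time/m}^{m}V_M(x)=\PE_x[V_M(\XEM_m)]$ directly and \emph{uniformly in $m$}, where $(\XEM_k)_{k\in\nset}$ is the chain $\XEM_{k+1}=\Tg(\XEM_k)+\sqrt{\gamma}Z_{k+1}$ with $\gamma=\Time/m$, $\XEM_0=x$, through a second moment bound and a Gaussian concentration bound for $\norm{\XEM_m}$, and then dominate $V_M$ by $x\mapsto \rme^{\norm{x}}$.

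\emph{Step 1 (uniform second moment).} Using $\langle b(x),x\rangle\le -\mtt\norm{x}^2$ (from \Cref{as:b_min}($\mtt$) and $b(0)=0$) and $\norm{b(x)}\le\Lip\norm{x}$ (from \Cref{as:item:lip}), one gets $\norm{\Tg(x)}^2\le(1+\gamma\kappa(\gamma))\norm{x}^2$ with $\kappa(\gamma)=-2\mtt+\Lip^2\gamma$ as in \Cref{prop:a1_type}-\ref{item:a1_1}. Since the cross term $\langle\Tg(\XEM_k),Z_{k+1}\rangle$ has vanishing conditional mean, $\PE[\norm{\XEM_{k+1}}^2\mid\XEM_k]=\norm{\Tg(\XEM_k)}^2+\gamma\dim\le(1+\gamma\kappa(\gamma))\norm{\XEM_k}^2+\gamma\dim$, so $a_k:=\PE_x[\norm{\XEM_k}^2]$ satisfies $a_{k+1}\le(1+\gamma\kappa_+)a_k+\gamma\dim$ with $\kappa_+:=\max(-2\mtt+\bgamma\Lip^2,0)$. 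Iterating and using $m\gamma=\Time$ and $(1+\gamma\kappa_+)^m\le\rme^{\Time\kappa_+}$ yields $\PE_x[\norm{\XEM_m}^2]\le \rme^{\Time\kappa_+}\norm{x}^2+D$ with $D$ depending only on $\dim,\Lip,\mtt,\Time,\bgamma$ and \emph{not} on $m$.

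\emph{Step 2 (Lipschitz dependence on the noise and concentration).} Since $\Tg=\Id+\gamma b$ is $(1+\gamma\Lip)$‑Lipschitz, an induction on $k$ gives $\norm{\XEM_m-\XEM_m'}\le\sqrt{\gamma}\sum_{k=1}^{m}(1+\gamma\Lip)^{m-k}\norm{Z_k-Z_k'}$ for two runs driven by $(Z_k)$ and $(Z_k')$; Cauchy--Schwarz together with $(1+\gamma\Lip)^{m-k}\le\rme^{\Time\Lip}$ shows that $(Z_1,\dots,Z_m)\mapsto\XEM_m$ is Lipschitz from $\rset^{m\dim}$ to $\rset^{\dim}$ with constant at most $\rme^{\Time\Lip}\sqrt{\Time}$, \emph{uniformly in $m$} (no differentiability of $b$ is needed here, only the Lipschitz constant of $\Tg$). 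Gaussian concentration then gives, for all $t\ge0$, $\PE_x[\rme^{t\norm{\XEM_m}}]\le\rme^{t\PE_x[\norm{\XEM_m}]+t^2\rme^{2\Time\Lip}\Time/2}$, and by Step 1 and $\PE_x[\norm{\XEM_m}]\le\PE_x[\norm{\XEM_m}^2]^{1/2}$ this is bounded by $\rme^{t(\rme^{\Time\kappa_+/2}\norm{x}+\sqrt{D})+t^2\rme^{2\Time\Lip}\Time/2}$, again uniformly in $m$.

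\emph{Step 3 (domination and conclusion).} The elementary inequality $M(1+s)^{1/2}\le M^2/4+1+s$ gives $V_M(x)=\rme^{M\phi(x)}\le\rme^{M^2/4+1}\rme^{\norm{x}}$. Taking $t=1$ in Step~2 yields $\Rker_{\Time/m}^{m}V_M(x)=\PE_x[V_M(\XEM_m)]\le\rme^{M^2/4+1+\sqrt{D}+\rme^{2\Time\Lip}\Time/2}\,\rme^{\rme^{\Time\kappa_+/2}\norm{x}}$ for every $m\in\nsets$ with $\Time/m\le\bgamma$, a bound independent of $m$; letting $m\to+\infty$ gives the claim. The only genuine obstacle is the conceptual one noted at the outset --- recognising that an $m$‑independent estimate must be obtained directly rather than by iterating a drift for $V_M$, because the Gaussian increment contributes a bias of order $\sqrt{\gamma}$ per step; once this is accepted, Steps~1--3 are routine, the only point requiring a little care being the use of the Lipschitz constant of $\Tg$ in Step~2 (as $b$ is merely Lipschitz). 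The same argument, applied to $V_M^2=V_{2M}$, also supplies the second integrability condition in \eqref{eq:integr_condition_3}.
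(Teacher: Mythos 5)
Your proof is correct, but it takes a genuinely different route from the paper. The paper's argument (the proof of \Cref{prop:integrability_R_lip} is the proof of \Cref{prop:integrability_R_loc_lip} with \eqref{eq:control_norm_2} replaced by the global-Lipschitz second-moment bound) is precisely the one-step route you declare impossible in your preamble: applying the Gaussian log-Sobolev/Herbst inequality at each step, with $\phi$ Lipschitz, gives $\Rker_{\gamma}V_M^{\pow}(x)\leq\exp[\pow M\,\Rker_{\gamma}\phi(x)+(\pow M)^2\gamma/2]$, and since the first-order Gaussian contribution has zero conditional mean, the per-step penalty is of order $\gamma$ (namely $(\pow M)^2\gamma/2$), not $\sqrt{\gamma}$; combined with Jensen and the second-moment recursion this yields $\Rker_{\gamma}V_M^{\pow}\leq V_M^{\pow(1+C_1\gamma+C_2\gamma^2)}\exp(\pow^2C_3\gamma)$, and iterating, the exponent $(1+C_1T/m+C_2(T/m)^2)^m$ stays bounded as $m\to+\infty$. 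So your heuristic objection to that route is mistaken, but it does not affect your own argument, which is valid: your Step 1 second-moment recursion is the same ingredient as the paper's, while in place of the step-by-step concentration you view $X_m$ as a Lipschitz function of the whole driving noise $(Z_1,\dots,Z_m)\in\rset^{md}$ with the $m$-independent constant $\sqrt{T}\rme^{T\Lip}$ (correctly obtained via the $(1+\gamma\Lip)$-Lipschitz property of $\Tg$ and Cauchy--Schwarz), apply Gaussian concentration once, and dominate $V_M(x)\leq\rme^{M^2/4+1}\rme^{\norm{x}}$. What each approach buys: yours is a clean "one-shot" estimate whose constants are explicit but involve $\rme^{T\Lip}$ in the concentration parameter, so it leans on the global Lipschitz constant of $b$ and would not transfer as directly to the locally Lipschitz setting of \Cref{prop:integrability_R_loc_lip}; the paper's step-wise version has a dimension-free $O(\gamma)$ penalty per step depending only on $M$, which is why the same computation carries over (with $\Lip_{n+1}$ appearing only in terms that vanish as $\gamma\to0$) and produces a limit bound independent of $n$ there. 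For the present proposition both give $\limsup_{m}\Rker_{T/m}^mV_M(x)<+\infty$, and your closing remark that the same argument applied with $2M$ covers the $V_M^2$ integrability is fine.
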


    \begin{proof}
      The proof is postponed to \Cref{sec:prop:integrability_R_lip:proof}.
    \end{proof}

\begin{proposition}
  \label{prop:integrability_R_loc_lip}
      Assume \tup{\Cref{as:b_min}($\mtt$)} for $\mtt \in \rset$ and \tup{\Cref{ass:loc_lip_bb}}, then
 for any $\Time, M \geq 0$ and $x \in \rset^d$
      \begin{equation}
        \limsup_{n \to +\infty} \limsup_{m \to +\infty} \parenthese{\Rker_{\Time/m, n}^{m} + \tRker_{\Time/m, n}^{m}}  V_M(x) < +\infty \eqsp , 
      \end{equation}
      with $V_M$ given in \eqref{eq:lyap_m} and where for any $\gamma\in \ocint{0, \bgamma}$, $n \in \nset$,  $\Rker_{\gamma, n}$ is the Markov kernel associated with \eqref{eq:langevin_discrete} where $\Tg(x) = x + \gamma \rmb_{\gamma,n}(x)$, $\msx = \cball{0}{n}$ and $\Pi = \Pi_{\cball{0}{n}}$, $\tRker_{\gamma, n}$ is the Markov kernel associated with \eqref{eq:langevin_discrete} where $\Tg(x) = x + \gamma \rmb_{\gamma,n}(x)$, $\msx = \rset^d$ and $\Pi = \Id$.
    \end{proposition}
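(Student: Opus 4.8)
The plan is to dominate $V_M=\exp[M\phi(\cdot)]$ (with $\phi(x)=(1+\norm{x})^{1/2}$ as in \eqref{eq:lyap_m}) by a smooth surrogate, to establish a one-step sub-multiplicative drift inequality for that surrogate valid for both $\Rker_{\gamma,n}$ and $\tRker_{\gamma,n}$, to iterate it $m$ times with stepsize $\Time/m$, and to observe that every $n$-dependent contribution carries a strictly positive power of $\Time/m$, so that it vanishes in the inner limit $m\to+\infty$ and the outer $\limsup_{n\to+\infty}$ becomes harmless. Concretely, set $\psi(x)=(1+\norm{x}^2)^{1/2}$; since $1+\norm{x}\le2(1+\norm{x}^2)$ we have $\phi(x)\le\sqrt2\,\psi(x)$, hence $V_M(x)\le\exp[\sqrt2 M\psi(x)]$, and it suffices to bound $(\Rker_{\Time/m,n}^{m}+\tRker_{\Time/m,n}^{m})\exp[\mu\psi(\cdot)](x)$ with $\mu=\sqrt2 M$. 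The function $\psi$ is $\rmC^2$ on all of $\rset^d$ with $\norm{\nabla\psi}\le1$, $\norm{\nabla^2\psi}\le1$ and $\norm{x}^2/\psi(x)^2\le1$; these three facts are what make the argument go through. Finally, since $\cball{0}{n}$ is convex and contains $0$, $\norm{\Pi_{\cball{0}{n}}(z)}\le\norm{z}$; as $z\mapsto\exp[\mu'\psi(z)]$ is nondecreasing in $\norm{z}$, for $x\in\cball{0}{n}$ we get $\Rker_{\gamma,n}[\exp(\mu'\psi(\cdot))](x)\le Q_{\gamma,n}[\exp(\mu'\psi(\cdot))](x)$, where $Q_{\gamma,n}f(x)=\expe{f(x+\gamma\rmb_{\gamma,n}(x)+\sqrt{\gamma}Z_1)}$ and $\rmb_{\gamma,n}$ is given by \eqref{eq:def_fam}; for $\tRker_{\gamma,n}$ the same holds with equality, and the same monotonicity lets the one-step estimate be reapplied at every step.

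The core step is to show that there are $c_1=\max(-\mtt,0)$ and a nondecreasing function $\mu'\mapsto c_2(\mu')$ depending only on $\mu'$ and $d$ such that for all $\gamma$ small enough (depending on $\mu'$ and, through the taming, on $n$) and all $x\in\rset^d$,
\[
Q_{\gamma,n}\bigl[\exp(\mu'\psi(\cdot))\bigr](x)\le\exp\bigl[(1+c_1\gamma)\,\mu'\psi(x)+c_2(\mu')\,\gamma+\mu'\,\gamma^{2-2\alpha}\bigr]\eqsp.
\]
To prove it I would Taylor-expand $\psi(x+v)=\psi(x)+\langle\nabla\psi(x),v\rangle+R$ with $v=\gamma\rmb_{\gamma,n}(x)+\sqrt{\gamma}Z_1$ and $\abs{R}\le\tfrac12\norm{v}^2\le\gamma^2\norm{\rmb_{\gamma,n}(x)}^2+\gamma\norm{Z_1}^2$. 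The deterministic first-order term equals $\gamma\psi(x)^{-1}\langle x,\rmb_{\gamma,n}(x)\rangle$; since, by the form \eqref{eq:def_fam}, $\rmb_{\gamma,n}(x)$ is $\langle x,b(x)\rangle$ times a scalar in $(0,1]$, and $\langle x,b(x)\rangle\le-\mtt\norm{x}^2$ (by \Cref{as:b_min}($\mtt$) and $b(0)=0$), while $\norm{x}^2/\psi(x)\le\psi(x)$, this term is at most $c_1\gamma\psi(x)$. The random first-order term $\sqrt{\gamma}\langle x/\psi(x),Z_1\rangle$ is centred Gaussian of variance $\gamma\norm{x}^2/\psi(x)^2\le\gamma$, so its exponential moment is $\exp[O(\gamma)]$ — this is $O(\gamma)$ and not $O(\sqrt{\gamma})$, which is essential. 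For the remainder, the taming gives $\norm{\rmb_{\gamma,n}(x)}\le\gamma^{-\alpha}$ outside $\cball{0}{n+1}$ and $\norm{\rmb_{\gamma,n}(x)}\le\Lip_{n+1}(n+1)$ inside, whence $\gamma^2\norm{\rmb_{\gamma,n}(x)}^2\le\gamma^{2-2\alpha}$ once $\gamma$ is small (using $\alpha<1/2$), while $\expe{\exp[\mu'\gamma\norm{Z_1}^2]}=\exp[O(\gamma)]$. A single joint Gaussian computation, which also absorbs the correlation between the linear term and $\norm{Z_1}^2$, then gives the displayed bound.

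The inequality above is self-reproducing on functions $x\mapsto\exp[\mu_k\psi(x)]$: applying it with $\mu_{k+1}=(1+c_1\gamma)\mu_k$, $\mu_0=\mu$, and noting that $\mu_k\le e^{c_1\Time}\mu=:\bar\mu$ for all $k\le m$ when $\gamma=\Time/m$, one obtains after $m$ compositions
\[
\Rker_{\Time/m,n}^{m}\exp[\mu\psi(\cdot)](x)\le\tRker_{\Time/m,n}^{m}\exp[\mu\psi(\cdot)](x)\le\exp\Bigl[\bigl(1+c_1\tfrac{\Time}{m}\bigr)^{m}\mu\psi(x)+c_2(\bar\mu)\Time+\bar\mu\,\Time\,\bigl(\tfrac{\Time}{m}\bigr)^{1-2\alpha}\Bigr]\eqsp.
\]
Since $\bigl(1+c_1\Time/m\bigr)^{m}\to e^{c_1\Time}$ and $\bigl(\Time/m\bigr)^{1-2\alpha}\to0$ as $m\to+\infty$ (here $1-2\alpha>0$ is used again), the right-hand side converges to $\exp[e^{c_1\Time}\mu\psi(x)+c_2(\bar\mu)\Time]$, a quantity independent of $n$. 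Therefore
\[
\limsup_{n\to+\infty}\limsup_{m\to+\infty}\bigl(\Rker_{\Time/m,n}^{m}+\tRker_{\Time/m,n}^{m}\bigr)V_M(x)\le 2\exp\bigl[e^{c_1\Time}\sqrt2 M\psi(x)+c_2(\bar\mu)\Time\bigr]<+\infty\eqsp,
\]
which is the assertion; the same computation with $M$ replaced by $2M$ yields the $V^2$-version entering \eqref{eq:integr_condition_3}.

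The main obstacle is the one-step estimate, and within it the need to make the Gaussian contribution genuinely $O(\gamma)$ per step: a naive bound such as $\norm{x+\sqrt{\gamma}z}\le\norm{x}+\sqrt{\gamma}\norm{z}$ produces an $O(\sqrt{\gamma})$ increment which, summed over the $m=\Time/\gamma$ steps, diverges; passing to the smooth surrogate $\psi$ and using $\norm{x}^2/\psi(x)^2\le1$ is precisely what cures this, while the smoothness of $\psi$ also removes the corner of $\phi$ at the origin. The remaining difficulty is bookkeeping — one has to check that the only $n$-dependent quantity, the remainder term involving $\Lip_{n+1}$, always appears multiplied by a strictly positive power of $\gamma$, which is guaranteed by the choice $\alpha<1/2$ in \eqref{eq:def_fam} through $\gamma^{2-2\alpha}=o(\gamma)$.
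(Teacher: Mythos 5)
Your proposal is correct: the structural idea is the same as the paper's (a one-step exponential drift whose exponent inflates by a factor $1+O(\gamma)$ per step, in which the only $n$-dependent contribution carries an extra power of $\gamma$, so that iterating $m$ times with $\gamma=\Time/m$ and letting $m\to+\infty$ produces a bound independent of $n$), but the one-step estimate is obtained by different machinery. The paper bounds $\Rker_{\gamma,n}V_M^{\pow}(x)$ directly: since $\phi$ is $1$-Lipschitz and $\Pi_{\cball{0}{n}}$ is non-expansive, the Gaussian concentration (Herbst/log-Sobolev) inequality gives $\exp[\pow M\,\Rker_{\gamma,n}\phi(x)+(\pow M)^2\gamma/2]$, and then Jensen together with the already-established second-moment estimate \eqref{eq:control_norm_2} (from the proof of \Cref{prop:fam_prop}) and $\sqrt{1+t}\le 1+t/2$ yields the recursion $\Rker_{\gamma,n}V_M^{\pow}\le V_M^{\pow(1+C_1\gamma+C_{2,n}\gamma^2)}\exp[\pow^2C_3\gamma]$, with the $n$-dependence confined to $C_{2,n}\gamma^2=\Lip_{n+1}^2\gamma^2$. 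You instead pass to the smooth surrogate $\psi(x)=(1+\norm{x}^2)^{1/2}$, Taylor-expand, and control the Gaussian linear and quadratic terms by hand (Cauchy--Schwarz plus the $\chi^2$ moment generating function), with the $n$-dependence entering through $\gamma^2\norm{\rmb_{\gamma,n}(x)}^2\le\gamma^{2-2\alpha}$ for $\gamma$ small depending on $n$ via \eqref{eq:def_fam}; this is fine because $n$ is fixed in the inner limit. Your route is self-contained (it does not reuse \eqref{eq:control_norm_2} nor the concentration inequality) at the cost of a longer per-step computation; the paper's route is shorter because the Lipschitz/non-expansive composition handles the projection and the noise in one stroke. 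Two cosmetic points: the intermediate inequality $\Rker_{\Time/m,n}^m\exp[\mu\psi](x)\le\tRker_{\Time/m,n}^m\exp[\mu\psi](x)$ in your final display is not justified for iterates (after the first projection the two chains see the drift at different points, so there is no pathwise domination); it is also not needed, since each kernel separately satisfies the same one-step bound and hence the same iterated bound, which is how you in fact argue. Also, your domination $\Rker_{\gamma,n}g\le\tRker_{\gamma,n}g$ per step uses that $g$ is a nondecreasing function of the norm and $\norm{\Pi_{\cball{0}{n}}(z)}\le\norm{z}$, which is valid and plays the role of the paper's non-expansiveness argument.
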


    \begin{proof}
      The proof is postponed to \Cref{sec:prop:integrability_R_loc_lip:proof}.
    \end{proof}
    
Finally, we show that under mild curvature assumption on the drift function $b$, the first part of \eqref{eq:integr_condition_3} holds.

\begin{proposition}
  \label{propo:majo_cont}
  Assume \tup{\Cref{ass:loc_lip_b}} and that $\sup_{x \in \rset^d} \langle b(x), x \rangle < +\infty$.
  Then for any $M \geq 0$, there exists $\zeta \in \rset$ such that \hyperlink{ass:drift_continuous}{$\bfDc(V_M,\zeta,0)$} holds  with $V_M$ given in \eqref{eq:lyap_m}. In particular, for any $\Time, M \geq 0$, $\Pker_{\Time}V_M(x) < +\infty$.
\end{proposition}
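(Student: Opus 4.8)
The plan is to verify the continuous drift condition \hyperlink{ass:drift_continuous}{$\bfDc(V_M,\zeta,0)$} directly, by computing $\generator V_M$ with $\generator$ the generator of \eqref{eq:def_generator} and showing that it is dominated by a constant multiple of $V_M$; this is enough because $\zeta$ is allowed to be any real number, so no genuine contraction is needed, only that $\generator V_M/V_M$ be bounded above. Writing $V_M = \rme^{M\phi}$ with $\phi(x) = (1+\norm{x})^{1/2}$, a direct computation on $\rset^d\setminus\{0\}$ (where $\phi$ is smooth) gives
\begin{equation*}
  \generator V_M(x)/V_M(x) = (M/2)\Delta\phi(x) + (M^2/2)\norm{\nabla\phi(x)}^2 + M\langle\nabla\phi(x), b(x)\rangle \eqsp .
\end{equation*}
Setting $r=\norm{x}$ one has $\nabla\phi(x) = x/(2r(1+r)^{1/2})$, hence $\norm{\nabla\phi(x)}^2 = (1/4)(1+r)^{-1}$, while $\Delta\phi(x) = -(1/4)(1+r)^{-3/2} + ((d-1)/(2r))(1+r)^{-1/2}$ and $\langle\nabla\phi(x),b(x)\rangle = \langle x,b(x)\rangle/(2r(1+r)^{1/2})$.

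Next I would use the hypothesis $c_b := \sup_{x\in\rset^d}\langle b(x),x\rangle < +\infty$: for $\norm{x}\geq 1$ it yields $\langle\nabla\phi(x),b(x)\rangle \leq \max(c_b,0)/2$, together with $\Delta\phi(x)\leq (d-1)/2$ and $\norm{\nabla\phi(x)}^2\leq 1/4$, so that $\generator V_M(x)\leq C_1 V_M(x)$ on $\cball{0}{1}^{\complementary}$ with $C_1 = C_1(M,d,c_b)$ explicit. On $\cball{0}{1}$ one must argue separately, since $\phi$ fails to be $\rmC^2$ at the origin (the radial term $((d-1)/(2r))(1+r)^{-1/2}$ in $\Delta\phi$ is singular as $r\to 0$ when $d\geq 2$); I would therefore first replace $V_M$ by a function $\widetilde V_M\in\rmC^2(\rset^d,\coint{1,+\infty})$ coinciding with $V_M$ on $\cball{0}{1}^{\complementary}$, a substitution that leaves unchanged every statement in which $V_M$ is used elsewhere, since those only involve its growth at infinity and its boundedness on compacts. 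On $\cball{0}{1}$ both $\widetilde V_M$ and $\generator\widetilde V_M$ are bounded and $\widetilde V_M\geq 1$, hence $\generator\widetilde V_M(x)\leq C_2\widetilde V_M(x)$ there for some $C_2\geq 0$, and taking $\zeta = -\max(C_1,C_2)$ gives \hyperlink{ass:drift_continuous}{$\bfDc(V_M,\zeta,0)$}.

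For the last assertion I would invoke \Cref{lemma:disclyapfromc}: for $M>0$ one has $\lim_{\norm{x}\to+\infty}V_M(x)=+\infty$, so from \hyperlink{ass:drift_continuous}{$\bfDc(V_M,\zeta,0)$} (with $B=0$), \Cref{lemma:disclyapfromc}-\ref{lemma:disclyapfromc_a} shows that $(V_M(\bfX_t)\rme^{\zeta t})_{t\geq 0}$ is a supermartingale, whence $\Pker_{\Time}V_M(x) = \expeMarkov{x}{V_M(\bfX_{\Time})}\leq \rme^{-\zeta\Time}V_M(x) < +\infty$ for all $\Time,M\geq 0$ and $x\in\rset^d$ (the case $M=0$ being trivial, as then $V_M\equiv 1$). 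I expect the only real difficulty to be the non-smoothness of $V_M$ at the origin, handled by the mollification above; the estimate at infinity is a short computation once the explicit form of $\phi$ is substituted.
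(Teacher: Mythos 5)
Your proof is correct and follows essentially the same route as the paper's: write $V_M=\rme^{M\phi}$, compute $\generator V_M/V_M=(M/2)\Delta\phi+(M^2/2)\norm{\nabla\phi}^2+M\langle\nabla\phi,b\rangle$, bound it by a constant using $\sup_{x\in\rset^d}\langle b(x),x\rangle<+\infty$ together with the bounded derivatives of $\phi$, and conclude via the supermartingale statement of \Cref{lemma:disclyapfromc}-\ref{lemma:disclyapfromc_a}. The one point where you diverge is the regularity of $\phi$: the paper's own computation uses $\nabla\phi(x)=x/\phi(x)$, i.e.\ it implicitly works with $\phi(x)=(1+\norm{x}^2)^{1/2}$ (consistent with \eqref{eq:def_V_2} but not with the literal \eqref{eq:lyap_m}), which is $\rmC^2$ on all of $\rset^d$, so the bound $\langle\nabla\phi(x),b(x)\rangle\le\langle b(x),x\rangle_+$ holds globally and no mollification is needed. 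You instead take \eqref{eq:lyap_m} at face value, correctly observe that $\rmC^2$ regularity fails at the origin, and repair this by replacing $V_M$ with a smooth $\widetilde V_M$ agreeing with $V_M$ off the unit ball; your estimates on $\cball{0}{1}^{\complementary}$ are right, and the patch is sound. The only wrinkle is that what you actually establish is $\bfDc(\widetilde V_M,\zeta,0)$ rather than $\bfDc(V_M,\zeta,0)$, so \Cref{lemma:disclyapfromc} should be invoked for $\widetilde V_M$ (which does satisfy the $\rmC^2$ and coercivity hypotheses for $M>0$), the conclusion $\Pker_{\Time}V_M(x)<+\infty$ then following from $V_M\le C\,\widetilde V_M$. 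This is a cosmetic adjustment, not a gap; in effect your argument documents and fixes the $\phi$ inconsistency between \eqref{eq:lyap_m} and the paper's proof, while the substantive estimate is identical.
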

\begin{proof}
  The proof is postponed to \Cref{propo:majo_cont:proof}.
\end{proof}


\bibliographystyle{plainnat}
\bibliography{main.bbl}
\appendix
\section{Proofs of \Cref{sec:presentation} }
\label{sec:presentation_proof}

\subsection{Proof of \Cref{propo:doeb} }
\label{sec:proof-crefl}

First, we prove the following technical lemma.

\begin{lemma}
  \label{lemma:technical_doeb_0}
  Let $\bgamma >0$ and $\upkappa : \ocint{0,\bgamma} \to \rset$, with $\upkappa(\gamma)\gamma \in \ooint{-1,+\infty}$ for any $\gamma \in \ocint{0, \bgamma}$.
    We have for any $\gamma \in \ocint{0, \bgamma}$ such that $\upkappa(\gamma) \neq 0$ and $\ell \in \nsets$
    \begin{equation}
      \Xi_{\ell \ceil{1/\gamma}}(\upkappa) =  - \upkappa^{-1}(\gamma) \defEns{ \exp\parentheseDeux{-\ell\ceil{1/\gamma} \log \defEns{1 + \gamma  \upkappa(\gamma)}} -1} \eqsp, \label{eq:Xi_equality}
    \end{equation}
    where $      \Xi_{\ell \ceil{1/\gamma}}(\upkappa)$ is defined by \eqref{eq:def_Xi}. 
  In addition, for any $\ell \in \nsets$ and $\gamma \in \ocint{0, \bgamma}$
  \begin{enumerate}[label= (\alph*),  wide, labelwidth=!, labelindent=0pt]
  \item $    \Xi_{\ell \step}(\upkappa) \geq \alpha_{-}(\upkappa,\gamma, \ell) =   -\upkappa^{-1}(\gamma) \parentheseDeux{ \exp(-\ell \upkappa(\gamma)) - 1}$ if for any $\gamma \in \ocint{0,\bgamma}$, $ \upkappa(\gamma) <  0$ ; \label{item:r_neg}
  \item $\Xi_{\ell \step}(\upkappa) \geq \alpha_0(\upkappa,\gamma, \ell) = \ell$ if for any $\gamma \in \ocint{0,\bgamma}$, $  \upkappa(\gamma) \leq  0$ ; \label{item:r_0}    
  \item $\displaystyle \Xi_{\ell \step}(\upkappa) \geq \alpha_{+}(\upkappa,\gamma, \ell) =  \upkappa^{-1}(\gamma) \parentheseDeux{ 1 - \exp \defEns{-\frac{\ell \upkappa(\gamma)}{1 + \gamma  \upkappa(\gamma)}}}$ if for any $\gamma \in \ocint{0,\bgamma}$, $  \upkappa(\gamma) > 0$. \label{item:r_pos}
  \end{enumerate}
  \end{lemma}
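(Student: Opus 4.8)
The plan is to first establish the closed-form identity \eqref{eq:Xi_equality} by summing a geometric series, and then derive each of the three lower bounds (a)--(c) by comparing the resulting expression with the analogous quantity in which $\lceil 1/\gamma\rceil$ is replaced by $1/\gamma$. For \eqref{eq:Xi_equality}: by definition, $\Xi_{n}(\upkappa) = \gamma\sum_{k=1}^{n}(1+\gamma\upkappa(\gamma))^{-k}$, which is a finite geometric sum with ratio $r = (1+\gamma\upkappa(\gamma))^{-1}$. Since $\gamma\upkappa(\gamma)\in\ooint{-1,+\infty}$, we have $1+\gamma\upkappa(\gamma)>0$, so $r>0$, and when $\upkappa(\gamma)\neq 0$ we have $r\neq 1$; summing gives $\Xi_n(\upkappa) = \gamma\, r\,(1-r^{n})/(1-r) = \gamma(1-r^n)/(1+\gamma\upkappa(\gamma)-1) = \upkappa^{-1}(\gamma)(1 - (1+\gamma\upkappa(\gamma))^{-n})$. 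Writing $(1+\gamma\upkappa(\gamma))^{-n} = \exp[-n\log(1+\gamma\upkappa(\gamma))]$ and specializing to $n = \ell\lceil 1/\gamma\rceil$ yields \eqref{eq:Xi_equality} after factoring out the sign.

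For the three bounds, the key elementary fact is that $\lceil 1/\gamma\rceil \geq 1/\gamma$, together with the bound $\log(1+u) \leq u$ valid for $u > -1$. In case (a), $\upkappa(\gamma)<0$: here $\log(1+\gamma\upkappa(\gamma)) \leq \gamma\upkappa(\gamma) < 0$, so $-\ell\lceil 1/\gamma\rceil\log(1+\gamma\upkappa(\gamma)) \geq -\ell\lceil 1/\gamma\rceil\,\gamma\upkappa(\gamma) \geq -\ell\upkappa(\gamma)$ (the last step using $\lceil 1/\gamma\rceil\gamma \geq 1$ and $\upkappa(\gamma)<0$, which flips to preserve the inequality after multiplying by the negative quantity and by $-\ell$; one must track signs carefully here). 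Since $-\upkappa^{-1}(\gamma)>0$ and $t\mapsto -\upkappa^{-1}(\gamma)(e^t-1)$ is increasing, this gives $\Xi_{\ell\step}(\upkappa) \geq -\upkappa^{-1}(\gamma)[\exp(-\ell\upkappa(\gamma))-1] = \alpha_-(\upkappa,\gamma,\ell)$. Case (c), $\upkappa(\gamma)>0$: now $\log(1+\gamma\upkappa(\gamma)) \leq \gamma\upkappa(\gamma)$, and one also needs a matching lower bound; using $\log(1+u) \geq u/(1+u)$ for $u>-1$ (equivalently $\log(1+\gamma\upkappa(\gamma)) \geq \gamma\upkappa(\gamma)/(1+\gamma\upkappa(\gamma))$), together with $\lceil 1/\gamma\rceil \geq 1/\gamma$, gives $\ell\lceil 1/\gamma\rceil\log(1+\gamma\upkappa(\gamma)) \geq \ell\upkappa(\gamma)/(1+\gamma\upkappa(\gamma))$; since $\upkappa^{-1}(\gamma)>0$ and $t\mapsto \upkappa^{-1}(\gamma)(1-e^{-t})$ is increasing in $t\geq 0$, this yields $\Xi_{\ell\step}(\upkappa)\geq \alpha_+(\upkappa,\gamma,\ell)$.

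Case (b), $\upkappa(\gamma)\leq 0$, splits into the subcase $\upkappa(\gamma)<0$ and $\upkappa(\gamma)=0$. When $\upkappa(\gamma)=0$, directly $\Xi_{\ell\lceil 1/\gamma\rceil}(\upkappa) = \gamma\lceil 1/\gamma\rceil\ell \geq \ell$. When $\upkappa(\gamma)<0$, we need $\Xi_{\ell\step}(\upkappa)\geq \ell$; from \eqref{eq:Xi_equality} this reads $-\upkappa^{-1}(\gamma)[\exp(-\ell\lceil 1/\gamma\rceil\log(1+\gamma\upkappa(\gamma))) - 1] \geq \ell$, which follows from the convexity inequality $e^s - 1 \geq s$ applied with $s = -\ell\lceil 1/\gamma\rceil\log(1+\gamma\upkappa(\gamma)) \geq -\ell\lceil 1/\gamma\rceil\gamma\upkappa(\gamma) \geq -\ell\upkappa(\gamma)$ (using $\log(1+u)\leq u$ then $\lceil 1/\gamma\rceil\gamma\geq 1$ and $\upkappa(\gamma)<0$), combined with $-\upkappa^{-1}(\gamma)>0$. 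I expect the main obstacle to be purely bookkeeping: keeping the direction of every inequality correct through the repeated multiplications by the sign-indefinite quantities $\upkappa(\gamma)$ and $\upkappa^{-1}(\gamma)$, and making sure the monotonicity of the relevant exponential maps is invoked with the correct sign of the argument. No deep idea is needed beyond the two standard logarithm bounds $u/(1+u) \leq \log(1+u)\leq u$ and the convexity bound $e^s \geq 1+s$.
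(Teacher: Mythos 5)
Your proposal is correct and follows essentially the same route as the paper: the closed-form geometric-sum identity, then the bounds $u/(1+u)\leq\log(1+u)\leq u$ together with $\gamma\ceil{1/\gamma}\geq 1$ and monotonicity of the exponential maps. The only (harmless) deviation is in case (b), where the paper simply notes each summand $(1+\gamma\upkappa(\gamma))^{-i}\geq 1$ so $\Xi_{\ell\step}(\upkappa)\geq\gamma\step\ell\geq\ell$, whereas you split off $\upkappa(\gamma)<0$ and argue via the identity and $\rme^{s}\geq 1+s$.
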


  \begin{proof}
    Let $\ell \in \nsets$ and $\gamma \in \ocint{0,\bgamma}$. First note that the  following equalities hold if $\upkappa(\gamma) \neq 0$
    \begin{align}
    \Xi_{\ell \ceil{1/\gamma}}(\upkappa) &=  \gamma \sum_{i=1}^{\ell \ceil{1/\gamma}} (1+ \gamma  \upkappa(\gamma))^{-i}\\
    &= \gamma (1 + \gamma  \upkappa(\gamma))^{-1}\frac{1 - (1+\gamma  \upkappa(\gamma))^{-\ell \ceil{1/\gamma}}} {1 - (1 + \gamma  \upkappa(\gamma))^{-1}} \\
      &= - \upkappa^{-1}(\gamma) \defEns{\parentheseDeux{1 + \gamma  \upkappa(\gamma)}^{-\ell \step} -1} \\
      &= - \upkappa^{-1}(\gamma) \defEns{ \exp\parentheseDeux{-\ell\ceil{1/\gamma} \log \defEns{1 + \gamma  \upkappa(\gamma)}} -1} \eqsp . \numberthis \label{eq:Xi_majo_2}
    \end{align}
  We now give a lower-bound on $\Xi_{\ell \step}(\upkappa)$ depending on the condition satisfied by $\gamma \mapsto \upkappa(\gamma)$.
  \begin{enumerate}[label= (\alph*),  wide, labelwidth=!, labelindent=0pt]
  \item 
    Assume that for any $\tgamma \in \ocint{0,\bgamma}$ $,  \upkappa (\tgamma) < 0$. 
    Using that $\log(1-t) \le -t$ for $t \in \ooint{0,1}$, we obtain that
  \begin{equation}
    \exp\parentheseDeux{-\ell\ceil{1/\gamma} \log \defEns{1 + \gamma  \upkappa(\gamma)}} \geq \exp (-\ell\ceil{1/\gamma} \gamma  \upkappa(\gamma) ) \geq \exp(-\ell \upkappa(\gamma)) \eqsp ,
  \end{equation}
  which together with \eqref{eq:Xi_majo_2} concludes the proof for \Cref{propo:doeb}-\ref{item:kappa_neg}.
\item     Assume that for any $\tgamma \in \ocint{0,\bgamma}$, $  \upkappa (\tgamma) \leq 0$. Then,
  \begin{equation}
    \Xi_{\ell \step}(\upkappa) =  \gamma \sum_{i=1}^{\ell \ceil{1/\gamma}} (1+ \gamma  \upkappa(\gamma))^{-i} \geq   \gamma \step \ell \geq   \ell \eqsp .
  \end{equation}
  \item      Assume that for any $\tgamma \in \ocint{0,\bgamma}$ $,  \upkappa (\tgamma) >  0$. Using that $\log(1+t) \geq t / (1+t)$ for $t >0$, we obtain that
  \begin{align}
    \exp\parentheseDeux{-\ell\ceil{1/\gamma} \log \defEns{1 + \gamma  \upkappa(\gamma)}} &\leq \exp \parentheseDeux{-(\ell/\gamma) \log\defEns{1 + \gamma  \upkappa(\gamma)} } 
    \\ &\leq \exp \parentheseDeux{-\ell \upkappa(\gamma) /(1+\gamma  \upkappa(\gamma))} \eqsp ,
  \end{align}
  which concludes the proof for \Cref{propo:doeb}-\ref{item:kappa_neg}.
\end{enumerate}
\end{proof}

\begin{proof}[Proof of \Cref{propo:doeb}]
  The proof is a direct application of
  \Cref{theo:minorization_general} and \Cref{lemma:technical_doeb_0}
  with $\upkappa(\gamma) = \kappa(\gamma)$.
\end{proof}

\subsection{Proof of \Cref{coro:doeblin_lemme_1}}
\label{sec:proof-crefc}
  \begin{enumerate}[label= (\alph*),  wide, labelwidth=!, labelindent=0pt]
  \item Consider $V : \msx \to \ccint{1,\plusinfty}$ given for any
    $x \in \msx$ by $V(x) = 1 + \norm{x} $. Then since
    \Cref{assum:lip_op}($\msx^2$) with
    $\sup_{\gamma \in \ocint{0,\bgamma}} \kappa(\gamma) \leq \kappa_-
    < 0$ holds, using the triangle inequality and the Cauchy-Schwarz
    inequality, we have for any $\gamma \in \ocint{0, \bgamma}$ and
    $x \in \msx$
  \begin{equation}
    \Rcoupling_{\gamma} V(x) \leq \norm{\Tg(x)} + \sqrt{\gamma d} \leq (1+\kappa_-\gamma)\norm{x} + \norm{\Tg(0)} + \sqrt{\gamma d} +1 \leq \lambda V(x) + A \eqsp,
  \end{equation}
  with $\lambda \in \ooint{0,1}$ and $A \geq 0$.  As a result, since
  for any $\gamma \in \ocint{0, \bgamma}$, $\Rker_{\gamma}$ is a
  Feller kernel and the level sets of $V$ are compact,
  $\Rker_{\gamma}$ admits a unique invariant probability measure
  $\pi_{\gamma}$ for any $\gamma \in \ocint{0, \bgamma}$ by
  \cite[Theorem 12.3.3]{douc:moulines:priouret:soulier:2018}. Then the
  last result is a straightforward consequence of
  \Cref{propo:doeb}-\ref{item:kappa_neg}, \eqref{eq:distrib_coupling}
  and the fact that for any $\ell \in \nsets$ and
  $\gamma \in \ocint{0,\bgamma}$,
  $\alpha_-(\kappa,\gamma,\ell) \geq -(\exp(-\ell
  \kappa_-)-1)/\kappa_-$ since $t \mapsto (\exp(\ell t)-1)/t$ is
  increasing on $\rset$.
  
\item This result is a direct consequence of
  \Cref{propo:doeb}-\ref{item:kappa_0}, \eqref{eq:distrib_coupling}
  and the fact that $\Rker_{\gamma}$ admits an invariant probability
  measure $\pi_{\gamma}$.
  \end{enumerate}

  \subsection{Proof of \Cref{coro:doeblin_lemme_2}}
\label{coro:doeblin_lemme_2:proof}

  \begin{enumerate}[label= (\alph*),  wide, labelwidth=!, labelindent=0pt]
  \item The proof is a direct application of
    \Cref{propo:doeb}-\ref{item:kappa_0}, the fact that
    $(X_k,Y_k) \in \msx^2$ for any $k \in \nset$ and that
    $\Kker_{\gamma}$ is the Markov kernel associated with
    $(X_k,Y_k)_{k \in \nset}$.
  \item 
Consider the case where \Cref{assum:lip_op}($\msx^2$)-\ref{assum:lip_op_non_convex} holds. Using that for any $t \geq 0$, $1- \rme^{-t} \geq t/(t+1)$ we obtain that for any $\gamma \in \ocint{0, \bgamma}$ and $\ell \in \nsets$
\begin{equation}
  \label{eq:alpha_ineg}
  \alpha_+(\kappa, \gamma, \ell)
  \geq  \ell / (1 + (\ell  + \bgamma) \kappa(\gamma) ) \geq (1 + (1 + \bgamma) \kappa_+)^{-1} \geq (1 + \bgamma)^{-1}(1 + \kappa_+)^{-1} \eqsp,
\end{equation}
where $\alpha_+$ is given in
\Cref{lemma:technical_doeb_0}-\ref{item:kappa_pos}.  Then, combining
this result and \Cref{propo:doeb}-\ref{item:kappa_pos} complete the
proof.
\end{enumerate}

\subsection{Proof of \Cref{theo:discrete_contrac_wass_D_v2}}
\label{sec:theo:discrete_contrac_wass_D_v2:proof}

We start with the following theorem.

\begin{theorem}
  \label{thm:uno}
  Under the assumptions of \Cref{theo:discrete_contrac_wass_D_v2}, we have 
for any $\gamma \in \ocint{0, \bgamma}$,  $x, y \in \msx$ and $k \in \N$
\begin{equation}
  \label{eq:theo:discrete_contrac_wass_D_v2_a}
  \distV(\updelta_x \Rcoupling_{\gamma}^k, \updelta_y \Rcoupling_{\gamma}^k) \leq \KkerD_{\gamma}^k \bfc(x,y) \leq \lambda^{k\gamma/4} [D_{\gamma,1} 
 \bfc(x,y) + D_{\gamma,2}\1_{\Deltar^{\complementary}}(x,y)] +  \tC_{\gamma} \trho_{\gamma}^{k\gamma/4}\1_{\Deltar^{\complementary}}(x,y)  \eqsp , 
  \end{equation}
  where $\distV$ is the Wasserstein metric associated with $\bfc$
  defined by \eqref{eq:def_wbf},
  \begin{equation}
    \begin{aligned}
      D_{\gamma,1} &= 1+ 4A  [\log(1/\lambda)\lambda^{\gamma}]^{-1} \eqsp,    \qquad    D_{\gamma,2} =       D_{\gamma,1} \parentheseDeux{ A \lambda^{-\gamma \step \ell} \gamma  \step \ell}  \eqsp, \\ \tC_{\gamma} &= 8 A \log^{-1}(1/\trho_{\gamma})/ \trho_{\gamma}^{\gamma} \eqsp ,\\ 
      \log(\trho_{\gamma}) \eqsp &= \defEns{\log(\lambda) \log(1 - \tvareps_{\discrete, \gamma}) } / \defEns{-\log(\tc_{\gamma})  + \log(1 - \tvareps_{\discrete, \gamma}) } \eqsp ,
\\
\tBdisc &= \sup_{(x,y) \in \msc }\lyap(x,y) \eqsp, \quad   \tc_{\gamma}  = \tBdisc  + A \lambda^{-\gamma \step \ell} \gamma \step \ell \eqsp ,      \\ \tvareps_{\discrete, \gamma} &=  \inf_{(x,y) \in \Delta_{\msx,\tM_{\discrete}}} \Psibf(\gamma, \ell, \norm{x-y}) \eqsp.
  \end{aligned}
\end{equation}  
\end{theorem}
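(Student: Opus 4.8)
The plan is to reduce everything to an estimate on $\KkerD_\gamma^k\bfc(x,y)$ and to derive that estimate by the classical drift--minorization scheme. Since $\KkerD_\gamma$ is a Markov coupling kernel for $\Rcoupling_\gamma$, the composed kernel $\KkerD_\gamma^k((x,y),\cdot)$ is a transference plan between $\updelta_x\Rcoupling_\gamma^k$ and $\updelta_y\Rcoupling_\gamma^k$, so by definition of $\distV$ (cf. \cite[Theorem~19.1.6]{douc:moulines:priouret:soulier:2018}) one has $\distV(\updelta_x\Rcoupling_\gamma^k,\updelta_y\Rcoupling_\gamma^k)\leq\KkerD_\gamma^k\bfc(x,y)$, and the whole content is then the second inequality in \eqref{eq:theo:discrete_contrac_wass_D_v2_a}. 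First I would extract the consequences of the drift. Because $\msc\subset\Delta_{\msx,\tM_{\discrete}}$, $\bfc\leq\VlyapD$, and $\Delta_{\msx}$ is absorbing for $\KkerD_\gamma$ (so $\bfc$ stays null once it is null), \hyperlink{ass:drift_discrete}{$\bfDd(\VlyapD,\lambda^\gamma,A\gamma,\msc)$} gives $\KkerD_\gamma\bfc\leq\lambda^\gamma\bfc+A\gamma\1_\msc$, hence by induction $\KkerD_\gamma^n\bfc(x,y)\leq\lambda^{n\gamma}\bfc(x,y)+A\gamma\sum_{i=0}^{n-1}\lambda^{i\gamma}\1_{\Deltar^{\complementary}}(x,y)$, where $A\gamma\sum_{i=0}^{n-1}\lambda^{i\gamma}\leq A/(\lambda^\gamma\log(1/\lambda))$ using $1-\rme^{-t}\geq t\rme^{-t}$. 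Writing $N=\step\ell$ (so $\ell\leq N\gamma\leq(1+\gamma)\ell$), the same iteration gives $\KkerD_\gamma^N\VlyapD(x,y)\leq\lambda^{N\gamma}\VlyapD(x,y)+A\gamma N$, whence $\sup_{(x,y)\in\msc}\lambda^{-N\gamma}\KkerD_\gamma^N\VlyapD(x,y)\leq\tBdisc+A\lambda^{-N\gamma}\gamma N=\tc_\gamma$, the ``Lyapunov growth per block started in $\msc$'' constant entering $\log(\trho_\gamma)$.

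The heart of the proof is a coupling-time/renewal argument coupling this drift with the $N$-step minorization \eqref{eq:minorization_condition_v2}. Running the coupled chain $(X_k,Y_k)_{k\geq0}$ from $(x,y)$ and letting $\tau=\inf\{k\geq0:X_k=Y_k\}$, the absorbing property gives $\bfc(X_n,Y_n)=\1_{\{\tau>n\}}\VlyapD(X_n,Y_n)$, so $\KkerD_\gamma^n\bfc(x,y)=\expeMarkov{(x,y)}{\1_{\{\tau>n\}}\VlyapD(X_n,Y_n)}$. I would decompose this according to successive length-$N$ ``coupling attempts'': with $T_0=0$ and $T_{i+1}=N+\inf\{k\geq T_i:(X_k,Y_k)\in\msc\}$, on $[T_i,T_{i+1})$ the chain first runs outside $\msc$, where $\VlyapD$ contracts by $\lambda^\gamma$ per step (no $\msc$-term in the drift there, since $\msc\subset\Delta_{\msx,\tM_{\discrete}}$), then spends $N$ steps from a point of $\msc$ where $\VlyapD\leq\tBdisc$, so the block multiplies the expectation of $\VlyapD$ by at most $\lambda^{N\gamma}\tc_\gamma/\tBdisc$; meanwhile, by \eqref{eq:minorization_condition_v2} and the strong Markov property each attempt fails with conditional probability $\leq1-\tvareps_{\discrete,\gamma}$. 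Summing $\expeMarkov{(x,y)}{\1_{\{\tau>n\}}\VlyapD(X_n,Y_n)}$ over the number $j$ of completed attempts before $n$ produces a geometric-type series whose per-attempt cost is $\tc_\gamma$ and per-attempt gain is $1-\tvareps_{\discrete,\gamma}$, traded off against the $\lambda^\gamma$-decay on the intervening stretches; optimizing how a trajectory surviving to time $n$ distributes its decay between the $\msc$-excursions and the rest is exactly what turns the naive rate $\lambda\,(\tc_\gamma(1-\tvareps_{\discrete,\gamma}))^{1/(N\gamma)}$ into the sharp one, $\log(\trho_\gamma)=\log(\lambda)\log(1-\tvareps_{\discrete,\gamma})/(\log(1-\tvareps_{\discrete,\gamma})-\log(\tc_\gamma))$, i.e. $\trho_\gamma=\lambda^{\theta}$ with $\theta=\log(1/(1-\tvareps_{\discrete,\gamma}))/\log(\tc_\gamma/(1-\tvareps_{\discrete,\gamma}))\in(0,1)$, so in particular $\lambda\leq\trho_\gamma$.

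Finally, to pass from multiples of $N$ to general $k$, write $k=qN+r$ with $0\leq r<N$, apply the block bound to the first $qN$ steps and the iterated one-step drift to the remaining $r<N\leq\step\ell$ steps, so that $\lambda^{-r\gamma}\leq\lambda^{-\gamma\step\ell}$ and the remainder contributes the factor $A\lambda^{-\gamma\step\ell}\gamma\step\ell$ appearing in $D_{\gamma,2}$; the factor $4$ in the exponents and in $D_{\gamma,1}=1+4A[\lambda^\gamma\log(1/\lambda)]^{-1}$ comes from these crude bounds on $N\gamma$ and $\step$ together with the block/remainder split. Keeping the drift-perturbation and remainder terms in the $\1_{\Deltar^{\complementary}}(x,y)$-slot (rather than multiplying $\VlyapD(x,y)$ by a large constant) and using $\lambda\leq\trho_\gamma$ to put the $\bfc(x,y)$-dependence on the faster-decaying $\lambda^{k\gamma/4}$ term yields \eqref{eq:theo:discrete_contrac_wass_D_v2_a} with the stated $D_{\gamma,1},D_{\gamma,2},\tC_\gamma,\trho_\gamma$. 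The main obstacle is the renewal estimate of the second paragraph: the attempts are not i.i.d., so one must argue cycle by cycle with the strong Markov property, controlling the return times to $\msc$ through the supermartingale $\VlyapD(X_k,Y_k)\lambda^{-\gamma k}\prod_{j<k}\1_{(X_j,Y_j)\notin\msc}$, and then carry out the optimization that produces the precise rate $\trho_\gamma$; the rest is routine drift bookkeeping.
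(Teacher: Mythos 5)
Your high-level strategy---reduce to bounding $\KkerD_{\gamma}^k\bfc(x,y)$, then combine the drift $\bfDd(\VlyapD,\lambda^{\gamma},A\gamma,\msc)$ with the $\ell\step$-step minorization on $\msc$ through a renewal argument over successive visits to $\msc$, optimizing the number of attempts to obtain $\trho_{\gamma}$---is the same one the paper follows (there it is packaged as an application of \Cref{theo:quanti_v_alain_v_norm}). The genuine gap is in your second paragraph, which is where all the work is. You bound $\expeMarkov{(x,y)}{\1_{\{\tau>n\}}\VlyapD(X_n,Y_n)}$ in a single pass by assigning to each attempt a ``gain'' $1-\tvareps_{\discrete,\gamma}$ (failure probability) and a ``cost'' $\tc_{\gamma}$ (Lyapunov growth) and multiplying these along the trajectory. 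But the event of not having coupled and the size of $\VlyapD(X_n,Y_n)$ are functionals of the same path and are in general positively correlated, so the per-block estimate you implicitly need, namely a bound of $\expeMarkov{}{\1_{\mathrm{fail}}\,\VlyapD}$ by the product $(1-\tvareps_{\discrete,\gamma})\,\lambda^{\gamma\ell\step}\tc_{\gamma}$, does not follow from having each factor separately; you note the attempts are not i.i.d., but the strong Markov property alone does not decouple the survival indicator from the Lyapunov factor inside a block. The paper avoids exactly this by a two-stage argument: it first bounds the bounded quantity $\expeMarkov{(x,y)}{\1_{\Deltar^{\complementary}}(X_n,Y_n)}\leq(1-\tvareps_{\discrete,\gamma})^m+\probaMarkov{(x,y)}{T^{(m)}\geq n}$, where $T^{(m)}$ are the return times to $\msc$ delayed by $\ell\step$, controls the hitting-time term by Markov's inequality applied to the exponential moments $\expeMarkov{(x,y)}{\lambda^{-\gamma T^{(m)}}}\leq\Xibf(x,y,\ell\step)\,\tc_{\gamma}^{\,m-1}$ supplied by the drift (\Cref{propo:adapt_ahip}, \Cref{lem:bound_T_m}), and optimizes over $m$; the unbounded cost enters only afterwards, through the deterministic one-step inequality $\KkerD_{\gamma}\bfc\leq\lambda^{\gamma}\bfc+A\gamma\1_{\Deltar^{\complementary}}$ convolved over $k$ steps (\Cref{theo:quanti_v_alain_v_norm}), so no joint expectation of ``survival times Lyapunov'' is ever required.

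This mismatch also shows in your accounting of the constants. The exponent $k\gamma/4$ and the factors $\log^{-1}(1/\lambda)$, $\log^{-1}(1/\trho_{\gamma})$ in $D_{\gamma,1}$ and $\tC_{\gamma}$ do not come from ``crude bounds on $N\gamma$'' or from a $k=qN+r$ block/remainder split: one factor of $1/2$ in the exponent comes from splitting $\{T^{(m)}\geq n\}$ into $\{T^{(m)}-T^{(1)}\geq n/2\}\cup\{T^{(1)}\geq n/2\}$ so as to isolate the dependence on the starting point, and the passage from $n/2$ to $n/4$ together with the logarithmic factors comes from absorbing the linear prefactor $n$ produced by the convolution sum via $n\,t^{n/2}\leq 4\log^{-1}(1/t)\,t^{n/4}$. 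Likewise $D_{\gamma,2}$, i.e.\ the term proportional to $A\lambda^{-\gamma\step\ell}\gamma\step\ell$, is not a remainder contribution; it is the additive constant in the exponential-moment bound for the delayed first return time, $\Xibf(x,y,\ell\step)=\lyap(x,y)+A\gamma\lambda^{-\gamma\ell\step}\ell\step$. Your formula for $\trho_{\gamma}$ is the right one, but as written the sketch would not deliver the stated inequality with these constants; to repair it, either follow the paper's two-stage route or prove an honest bivariate estimate in place of the asserted per-attempt product bound---this is precisely the ``routine bookkeeping'' you defer, and it is where the proof actually lives.
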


\begin{proof}
  The proof of this proposition is an application of
  \Cref{theo:quanti_v_alain_v_norm} in \Cref{sec:quant-bounds-geom}
  with $\distance \leftarrow \1_{\Delta_{\msx}^{\complementary}}$
  which satisfies \Cref{ass:metric}.  Let
  $\gamma \in \ocint{0, \bgamma}$. Then, since $\KkerD_{\gamma}$ and
  $\Psibf$ satisfy
  \hyperlink{ass:drift_discrete}{$\bfDd(\VlyapD,\lambda^{\gamma},
    A\gamma, \msc)$} and \eqref{eq:minorization_condition_v2}
  respectively, and $\Delta_{\msx}$ is absorbing for
  $\KkerD_{\gamma}$, \Cref{ass:kernel_coupling}($\Kcoupling_{\gamma}$)
  and \Cref{assum:drift_d}($\Kcoupling_{\gamma}$) are satisfied.  More
  precisely, for any $\gamma \in \ocint{0, \bgamma}$ setting
  $\tvareps_{\discrete, \gamma} = \inf_{(x,y) \in
    \Delta_{\msx,\tM_{\discrete}}} \Psibf(\gamma, \ell, \norm{x-y})$,
  then
  \Cref{ass:kernel_coupling}($\Kcoupling_{\gamma}$)-\ref{ass:kernel_coupling_a}
  is satisfied since for any
  $x,y \in \msc \subset \Delta_{\msx,\tM_{\discrete}}$,
  \begin{align} \KkerD_{\gamma}^{\step
      \ell}\1_{\Deltar^{\complementary}}(x,y) &\leq \defEns{1 -
      \inf_{(x,y) \in \Delta_{\msx,\tM_{\discrete}}}\Psibf(\gamma,
      \ell, \norm{x-y})} \1_{\Deltar^{\complementary}}(x,y) \\ &\leq
    (1 - \tvareps_{\discrete,
      \gamma})\1_{\Deltar^{\complementary}}(x,y) \eqsp. \end{align}
  \Cref{ass:kernel_coupling}($\Kcoupling_{\gamma}$)-\ref{ass:kernel_coupling_b}
  is satisfied since for any $\gamma \in \ocint{0, \bgamma}$ and
  $x,y \in \msx$,
  $\Kcoupling_{\gamma} \1_{\Deltar^{\complementary}}(x,y) \leq
  \1_{\Deltar^{\complementary}}(x,y)$.  Finally, the conditions
  \Cref{ass:kernel_coupling}($\Kcoupling_{\gamma}$)-\ref{ass:kernel_coupling_c}
  and \Cref{assum:drift_d}($\Kcoupling_{\gamma}$) hold using
  \hyperlink{ass:drift_discrete}{$\bfDd(\VlyapD,\lambda^{\gamma},
    A\gamma, \msc)$} with $\lyap_1 \leftarrow \lyap$,
  $\lyap_2 \leftarrow \lyap \distY$,
  $\lambda_1 = \lambda_2 \leftarrow \lambda^{\gamma}$,
  $A_1=A_2 \leftarrow A \gamma$, $\ntt \leftarrow \ell \step$.
  Applying \Cref{theo:quanti_v_alain_v_norm}, we obtain that for any
  $k \in \nset$, $\gamma \in \ocint{0, \bgamma}$ and $x,y \in \msx$
  \begin{align}
     &\distV(\updelta_x \Rcoupling_{\gamma}^k, \updelta_y \Rcoupling_{\gamma}^{k})   \\ & \leq \lambda^{k \gamma} \lyap(x,y) + A\gamma  \parentheseDeux{ \trho_{\gamma}^{k \gamma/4} r_1 (1+\1_{\Deltar^{\complementary}}(x,y))  +  \lambda^{k \gamma /4} r_2 \Xibf(x,y,\ell\step)}  \\
     & \leq \lambda^{k \gamma /4}\lyap(x,y) + 2r_1 A\gamma \trho_{\gamma}^{k \gamma /4} + A\gamma r_2 \lambda^{k \gamma /4}\Xibf(x,y,\ell \step) \\
    &\leq \lambda^{k \gamma /4}(1 + A\gamma r_2 ) \parentheseDeux{ \lyap(x,y) + A\gamma \lambda^{-\ell \step \gamma} \ell \step \gamma }+ 2 r_1 A \gamma \trho_{\gamma}^{k \gamma /4} \eqsp ,
  \end{align}
  where
  \begin{equation}
       r_1 = 4 \log^{-1}(1/\trho_{\gamma})/(\gamma \trho_{\gamma}^{\gamma}) \eqsp, \quad r_2 = 4 \log^{-1}(1/\lambda)/(\gamma \lambda^{\gamma})  \eqsp. 
  \end{equation}
  This concludes the proof of \eqref{eq:theo:discrete_contrac_wass_D_v2_a} upon using that $\Delta_{\msx}$ is absorbing for $\KkerD_{\gamma}$.
  \end{proof}

  \begin{proof}[Proof of \Cref{theo:discrete_contrac_wass_D_v2}]
  The first part of the proof is straightforward using \Cref{thm:uno} and that $\lambda^{\gamma} \geq \lambda^{\bgamma}$.
  
By assumption on $\bgamma$ and $\lambda$, we have $\lambda^{-\gamma \step \ell}\gamma \step \ell \leq \lambda^{-(1+\bgamma)\ell}(1+ \bgamma)\ell$.  
As a result and using  the fact that $\log(1-t) \leq -t$ for any $t\in \ooint{0,1}$, $\log((1 - \bvareps_{\discrete, 1})^{-1}) \leq 1$ and $\lyap(x,y) \geq 1$ for any $x,y \in \msx$,  we obtain that
\begin{align}
  &\log^{-1}(\brho_1^{-1}) \leq [ \log(\lambda^{-1}) \log((1 - \bvareps_{\discrete, \bgamma})^{-1})]^{-1} \parentheseDeux{1 + \log(\bc_{2})} \\
    &\phantom{aaaa} \leq \parentheseDeux{\log(\lambda^{-1}) \bvareps_{\discrete, 1}}^{-1} \parentheseDeux{1 + \log\parentheseLigne{\tBdisc} + \log(1 + 2A\ell \lambda^{-2\ell})} \eqsp, \\ 
              &\phantom{aaaa}\leq \parentheseDeux{\log(\lambda^{-1}) \bvareps_{\discrete, 1}}^{-1} \parentheseDeux{1 + \log\parentheseLigne{\tBdisc} + \log(1 + 2A\ell) + 2\ell \log(\lambda^{-1})} \eqsp, 
\end{align}
which completes the proof. 
  \end{proof}

  \subsection{Proof of \Cref{prop:w1_contrac_iterees}}
\label{sec:prop:w1_contrac_iterees:proof}

  Let $\gamma \in \ocint{0, \bgamma}$, $x,y \in \msx$ and $k \in \nset$.
  We divide the proof into two parts.
  \begin{enumerate}[label= (\alph*),  wide, labelwidth=!, labelindent=0pt]
  \item If $k \leq \step$. Then using 
    we get that
    \begin{equation}
      \KkerD_{\gamma}^k \norm{x-y} \leq (1 + \gamma \varkappa)^k \norm{x-y} \leq (1 + \gamma \varkappa)^{\step} \norm{x-y} \leq \exp[\varkappa(1 + \bgamma)] \norm{x-y} \eqsp .
    \end{equation}
  \item If $k>\step$ then using
    \Cref{theo:discrete_contrac_wass_D_v2},
    \eqref{eq:minorization_condition_v2} and $\rho_1 \geq
    \lambda$ 
    we get that
    \begin{align}
      &\distV(\updelta_x \Rcoupling_{\gamma}^k, \updelta_y \Rcoupling_{\gamma}^k) \leq \KkerD_{\gamma}^{\step} \Kker_{\gamma}^{k - \step} \bfc(x,y) \\
                                                                                 & \leq \KkerD_{\gamma}^{\step} \defEns{ \lambda^{(k-\step)\gamma/4} [\bD_{1} \lyap(x,y) + \bD_{2} \1_{\Delta^{\complementary}}(x,y)] +  \bC_{2} \brho_1^{(k-\step)\gamma/4} \1_{\Delta^{\complementary}}(x,y)}  \eqsp \\
                                                                                 & \leq \brho_1^{(k-\step)\gamma/4}\KkerD_{\gamma}^{\step} \defEns{( \bD_1 + \bD_2 + \bC_1)\1_{\Delta^{\complementary}}(x,y) + \vartheta \bD_1 \norm{x-y}} \\
                                                                                 & \leq \brho_1^{k \gamma / 4} \defEns{( \bD_1 + \bD_2 + \bC_1)(1 - \Psibf(\gamma, 1, \norm{x-y})) + \vartheta \bD_1 \exp[\varkappa(1 + \bgamma)] \norm{x-y}} / \brho_1^{(1+\bgamma)/4} \\
      & \leq -\mathbf{a} ( \bD_1 + \bD_2 + \bC_1)\brho_1^{k\gamma/4} \norm{x-y}/\brho_1^{1/4} + \vartheta \bD_1 \exp[\varkappa(1 + \bgamma)] \brho_1^{k\gamma/4} \norm{x-y} / \brho_1^{(1+\bgamma)/4} \eqsp ,
    \end{align}
    which concludes the proof upon noting that
    $\distV(\updelta_x \Rcoupling_{\gamma}^k, \updelta_y \Rcoupling_{\gamma}^k)
    \geq \vartheta \wassersteinD[1](\updelta_x \Rcoupling_{\gamma}^k, \updelta_y
    \Rcoupling_{\gamma}^k)$. 
  \end{enumerate}

  \subsection{Proof of \Cref{prop:from_1_to_p}}
\label{sec:prop:from_1_to_p:proof}

  Let $q \in \nset$ and $\gamma \in \ocint{0,\bgamma}$. Using that $\KkerD_{\gamma}$ satisfies
  \hyperlink{ass:drift_discrete}{$\bfDd((x,y) \mapsto
    \normLigne{x-y}^q,\tilde{\lambda}_q^{\gamma}, \tilde{A}_q\gamma)$}, we get that
  for any $x, y \in \msx$ and $k \in \nset$ we
  have
  \begin{equation}
    \label{eq:unif_q}
    \KkerD_{\gamma}^k\norm{x-y}^q \leq \norm{x-y}^q + \tilde{A}_q\gamma \sum_{\ell=0}^{k-1} \tilde{\lambda}_q^{\ell \gamma} \leq \norm{x-y}^q + \tilde{A}_q\log^{-1}(1/\tilde{\lambda}_q)\tilde{\lambda}_q^{-\bgamma} \eqsp .
  \end{equation}
  Let $p \geq 1$, $\upalpha \in \ooint{p, +\infty}$, $x,y \in \msx$ and $k \in \nset$ and consider  $q = p(\upalpha -1)/(\upalpha -p)$. Note that we have
  \begin{equation}
    (1-1/\upalpha)p/\ceil{q} \leq (1-1/\upalpha)p/q \leq 1 - p/\upalpha \leq 1 \eqsp .
  \end{equation}
  Using this result, 
  \eqref{eq:unif_q}, Hölder's inequality, Jensen's inequality and that for any $a,b \geq 0$ and $r \geq 1$, $(a+b)^{1/r} \leq a^{1/r} + b^{1/r}$, we have
  \begin{align}
    &\KkerD_{\gamma}^k \norm{x-y}^p \leq \KkerD_{\gamma}^k \defEns{\norm{x-y}^{p(1-1/\upalpha)}\norm{x-y}^{p/\upalpha}} \\
                                   & \qquad \leq \parenthese{\KkerD_{\gamma}^k \norm{x-y}^{p(1-1/\upalpha)/(1-p/\upalpha)}}^{1-p/\upalpha} \parenthese{\KkerD_{\gamma}^k \norm{x-y}}^{p/\upalpha} \\
                                   & \qquad \leq \parenthese{\KkerD_{\gamma}^k \norm{x-y}^q}^{1-p/\upalpha} \bD^{p/\upalpha} \brho^{k\gamma p/\upalpha} \norm{x-y}^{p/\upalpha} \\
                                   & \qquad \leq \parenthese{\KkerD_{\gamma}^k \norm{x-y}^{\ceil{q}}}^{(1-p/\upalpha)q/\ceil{q}} \bD^{p/\upalpha} \brho^{k\gamma p/\upalpha} \norm{x-y}^{p/\upalpha} \\
                                   & \qquad \leq \parenthese{\norm{x-y}^{\ceil{q}} + \tilde{A}_{\ceil{q}}\log^{-1}(1/\tilde{\lambda}_{\ceil{q}})\tilde{\lambda}_{\ceil{q}}^{-\bgamma}}^{(1-p/\upalpha)q/\ceil{q}} \bD^{p/\upalpha} \brho^{k\gamma p/\upalpha} \norm{x-y}^{p/\upalpha}    \\
                                   & \qquad \leq \parenthese{\norm{x-y}^{\ceil{q}} + \tilde{A}_{\ceil{q}}\log^{-1}(1/\tilde{\lambda}_{\ceil{q}})\tilde{\lambda}_{\ceil{q}}^{-\bgamma}}^{(1-1/\upalpha)p/\ceil{q}} \bD^{p/\upalpha} \brho^{k\gamma p/\upalpha} \norm{x-y}^{p/\upalpha} \\
    & \qquad \leq \parenthese{\norm{x-y}^{(1-1/\upalpha)p} + \defEns{\tilde{A}_{\ceil{q}}\log^{-1}(1/\tilde{\lambda}_{\ceil{q}})\tilde{\lambda}_{\ceil{q}}^{-\bgamma}}^{(1-1/\upalpha)p/\ceil{q}}} \bD^{p/\upalpha} \brho^{k\gamma p/\upalpha} \norm{x-y}^{p/\upalpha} \\
    & \qquad \leq \parenthese{\norm{x-y}^p + \defEns{\tilde{A}_{\ceil{q}}\log^{-1}(1/\tilde{\lambda}_{\ceil{q}})\tilde{\lambda}_{\ceil{q}}^{-\bgamma}}^{(1-1/\upalpha)p/\ceil{q}} \norm{x-y}^{p/\upalpha}} \bD^{p/\upalpha} \brho^{k\gamma p/\upalpha}  \\    
    & \qquad \leq \bD_{4, \upalpha}^{p} \brho^{k \gamma p /\upalpha}(\norm{x-y}^p + \norm{x-y}^{p/\upalpha}) \eqsp ,
  \end{align}
which completes the proof upon using that for any $a,b \geq 0$ and $p \geq 1$, $(a+b)^{1/p} \leq a^{1/p} + b^{1/p}$.


\section{Proofs of  \Cref{sec:applications}}
\label{sec:applications:proof}

\subsection{Proof of \Cref{prop:a1_type}}

  \label{prop:a1_type:proof}
    \begin{enumerate}[label= (\alph*),  wide, labelwidth=!, labelindent=0pt]    
    \item By \Cref{as:item:lip} and
      \Cref{as:b_min}($\mtt$) we have for any
      $\gamma >0$ and $x,y \in \msx$,
      $ \norm{\Tg(x) - \Tg(y)}^2 \leq (1 - 2\gamma \mtt + \gamma^2 \Lip^2)
      \norm{x-y}^2 \leq (1 + \gamma \kappa(\gamma)) \norm{x-y}^2$, which
      concludes the proof.
  \item 
    We have for any $\gamma >0$ and $x,y \in \msx$,
$      \norm{\Tg(x) - \Tg(y)}^2 \leq  \norm{x-y}^2 + \gamma(-2\Lipb + \gamma)\norm{b(x) - b(y)}^2$.    Then if $\gamma \leq 2 \Lipb$, $      \norm{\Tg(x) - \Tg(y)}^2 \leq  \norm{x-y}^2$, which concludes the proof.
\end{enumerate}

\subsection{Proof of \Cref{propo:drift_strong_convex_wass}}
\label{propo:drift_strong_convex_wass:proof}

Let $\gamma \in \ocint{0, \bgamma}$, $x,y \in \msx$ and set   $\rmE = \Tg(y)  - \Tg(x)$. We divide the proof into three parts.

\begin{enumerate}[label=(\alph*),wide, labelwidth=!, labelindent=0pt]
\item First, we show that
  \Cref{propo:drift_strong_convex_wass}-\ref{item:reflex_ineq_a}
  holds.  If $\rmE = 0$ then the proposition is trivial, therefore we
  suppose that $\rmE \neq 0$ and let $\rme = \rmE / \norm{\rmE}$.
  Consider $Z_1$, a $d$-dimensional Gaussian random variable with zero
  mean and covariance identity. By \eqref{eq:coupling_form} and the
  fact that $\Pi_{\msx}$ is non expansive, we have for any
  $\gamma \in \ocint{0, \bgamma}$
\begin{align}
  \Kcoupling_{\gamma} \norm{x-y} & \leq \expe{\parenthese{1 - p_{\gamma}(x,y,\sqrt{\gamma}Z_1)} \norm{(\Tg(x)+\sqrt{\gamma} Z_1) - (\Tg(y) + \sqrt{\gamma}(\Id - 2\rme \rme^{\transpose})Z_1)}} \\
                                 & =\expe{\norm{\rmE - 2\sqrt{\gamma}\rme \rme^{\transpose}Z_1}\parenthese{1 - p_{\gamma}(x,y,\sqrt{\gamma}Z_1)}} \\
                                 &=  \int_{\rset} \norm{\rmE - 2 z\rme} \defEns{\vphibf_{\gamma}(z) - (\vphibf_{\gamma}(z) \wedge \vphibf_{\gamma}(\| \rmE \| - z))}  \rmd z \\
                                 &= \int_{-\infty}^{\norm{\rmE}/2} (\norm{\rmE} -2z) \defEns{\vphibf_{\gamma}(z) - \vphibf_{\gamma}(\| \rmE \|- z)} \rmd z \leq \norm{\rmE}\eqsp ,  \label{eq:norm_reflec}
\end{align}
where we have used the change of variable
$z \mapsto \normLigne{\rmE} - z$ for the last line.  We conclude this
part of the proof upon using \tup{\Cref{as:item:lip}} and
\tup{\Cref{as:b_min}($\mtt$)}.
\item Second, we show that
  \Cref{propo:drift_strong_convex_wass}-\ref{item:reflex_ineq_b}
  holds. Consider the case
  $(x,y) \in \Delta_{\msx,\Run}^{\complementary}$.  By
  \Cref{as:item:lip}, \Cref{assum:strong_convex_outside_ball}, and
  since for any $t \in \coint{-1,+\infty}$,
  $\sqrt{1 + t} \leq 1 + t/2$, we have that
\begin{equation}
  \label{eq:norm_reflec_22}
  \| \Tg(x) - \Tg(y) \| \leq (1 - 2\gamma \mttplusun + \gamma^2 \Lip^2)^{1/2}\norm{x-y} \leq  (1 - \gamma \mttplusun + \gamma^2 \Lip^2/2)\norm{x-y} \eqsp.
\end{equation}
   Combining 
   \eqref{eq:norm_reflec} and \eqref{eq:norm_reflec_22} and since $\gamma < 2 \mttplusun/\Lip^2$,
we obtain that for any $(x,y) \in \Delta_{\msx,\Run}^{\complementary}$,
\begin{align}
  \Kcoupling_{\gamma} \VlyapDun(x,y) &\leq (1 - \gamma \mttplusun + \gamma^2\Lip^2/2) \norm{x-y}/\Run +1 \\
                                 &\leq (1 - \gamma\mttplusun/2 + \gamma^2 \Lip^2/4)(1 + \norm{x-y}/\Run) \leq \lambda^{\gamma}\VlyapDun(x,y) \eqsp . \label{eq:drift_outside}
\end{align}
Similarly, we obtain using \Cref{prop:a1_type}-\ref{item:a1_1} that for any $(x,y) \in \Delta_{\msx,\Run}$
\begin{align}
  \Kcoupling_{\gamma} \VlyapDun &\leq (1 - \gamma \mtt + \gamma^2\Lip^2/2) \norm{x-y}/\Run +1 \\
  &\leq (1 - \gamma \mttplusun/2 + \gamma^2\Lip^2/4) \norm{x-y}/\Run +1 + \gamma \defEns{ \mttplusun/2 - \mtt + \gamma\Lip^2/4} \\
  &\leq (1 - \gamma \mttplusun/2 + \gamma^2\Lip^2/4) \VlyapDun(x,y) + \gamma \parentheseDeux{\mttplusun - \mtt} \leq \lambda^{\gamma} \VlyapDun(x,y) + A \gamma \eqsp .\label{eq:drift_inside}   
\end{align}
We conclude the proof upon combining \eqref{eq:drift_outside} and \eqref{eq:drift_inside}.
\item Finally we show that
  \Cref{propo:drift_strong_convex_wass}-\ref{item:reflex_ineq_c}
  holds.  Let $ p \in \nset$ with $p \geq 2$.  Similarly to
  \Cref{propo:drift_strong_convex_wass}-\ref{item:reflex_ineq_a}, we
  have
  \begin{equation}
    \label{eq:ineq_p}
    \Kker_{\gamma} \norm{x-y}^p = \int_{\rset} (\| \rmE \| - 2z)^p \vphibf_{\gamma}(z) \rmd z \eqsp .
  \end{equation}
  For any $k \in \nset$, let $c_k = \int_{\rset} z^k \vphibf_1(z) \rmd z$ and
  \begin{equation}
    \label{eq:kappa}
    \kappa_{1, \gamma} = 1 - \gamma \mttplusun +\gamma^2 \Lip^2/2 \eqsp , \qquad \kappa_{2, \gamma} = \max(1, 1 - \gamma \mtt +\gamma^2 \Lip^2/2) \eqsp , \qquad \bR = \max(1, R_1) \eqsp .
  \end{equation}
  Note that for any $k \in \nset$, $c_{2k+1} = 0$.  Consider the case
  $\norm{x-y} \geq \bR$. Using \eqref{eq:ineq_p}, \eqref{eq:kappa},
  \tup{\Cref{as:item:lip}},
  \tup{\Cref{assum:strong_convex_outside_ball}} we have
  \begin{align}
    \Kker_{\gamma} \norm{x-y}^p &\leq \norm{\rmE}^p + \sum_{k=2}^p {p \choose k} \norm{\rmE}^{p-k} (2\gamma)^k c_k \\
                                &\leq \kappa_{1, \gamma} \norm{x-y}^p + \sum_{k=2}^p {p \choose k} \norm{x-y}^{p-k} (2\gamma)^k c_k \\
                                &\leq \kappa_{1, \gamma} \norm{x-y}^p + \gamma c_{2p}2^{2p} \max(1, \bgamma)^p  \norm{x-y}^{p-2}\\
                                &\leq \kappa_{2, \gamma/2} \norm{x-y}^p + \gamma \defEns{c_{2p}2^{2p} \max(1, \bgamma)^p  \norm{x-y}^{p-2}  - \mttplusun \norm{x-y}^p/2} \\
                                &\leq \kappa_{2, \gamma/2} \norm{x-y}^p + \gamma \sup_{t \in \coint{0,+\infty}} \defEns{c_{2p}2^{2p} \max(1, \bgamma)^p  t^{p-2}  - \mttplusun t^p/2} \eqsp .
  \end{align}
Note that we have for any $a \geq b \geq 0$ and $t \geq 0$
\begin{equation}
  \label{eq:ineq_a_b}
  (1+ta)^p - (1+tb) \leq t \defEns{-b + \max(1, t)^p \sum_{k=1}^p {p \choose k} a^k } \leq t \defEns{\max(1, t)^p (1+ a)^p -b } \eqsp .
\end{equation}
Now, consider the case $\norm{x-y} \leq \bR$. Using \eqref{eq:ineq_p},
\eqref{eq:kappa}, \eqref{eq:ineq_a_b}, \tup{\Cref{as:item:lip}},
\tup{\Cref{as:b_min}($\mtt$)} we have
\begin{align}
  &\Kker_{\gamma} \norm{x-y}^p - \kappa_{1, \gamma/2} \norm{x-y}^p\leq (\kappa_{2, \gamma}^p - \kappa_{1, \gamma/2})  \norm{x-y}^p + \gamma c_{2p}2^{2p} \max(1, \bgamma)^p \kappa_{2, \gamma}^p  \bR^{p-2} \\
                              &\leq  \gamma c_{2p}2^{2p} \max(1, \bgamma)^p \kappa_{2, \gamma}^p  \bR^{p-2} + (\kappa_{2, \gamma}^p - \kappa_{1, \gamma/2}) \bR^p \\
&\leq \gamma c_{2p}2^{2p} \max(1, \bgamma)^p \kappa_{2, \gamma}^p  \bR^{p-2} +   \gamma \bR^p   \defEns{\max(1, \bgamma)^p (1 - \mtt/2 + \Lip^2 \bgamma /4)^p + \mttplusun} \eqsp ,
\end{align}
which concludes the proof upon setting
\begin{equation}
  \begin{aligned}
    \lambda_p &= \exp[-\mttplusun/2 + \bgamma \Lip^2/4] \eqsp , \\
    A_p &= \max \defEns{A_{p,1}, A_{p,2}} \eqsp ,\\
    A_{p,1} &= \sup_{t \in \coint{0,+\infty}} \defEns{c_{2p}2^{2p} \max(1, \bgamma)^p  t^{p-2}  - \mttplusun t^p/2} \eqsp ,\\
    A_{p,2} &= c_{2p}2^{2p} \max(1, \bgamma)^p \kappa_{2, \gamma}^p  \bR^{p-2} +  \bR^p   \defEns{\max(1, \bgamma)^p (1 - \mtt/2 + \Lip^2 \bgamma /4)^p + \mttplusun} \eqsp .
  \end{aligned}
\end{equation}

\end{enumerate}

\subsection{Proof of \Cref{coro:w1_wp}}
\label{coro:w1_wp:proof}
    Let $\bgamma >0$. Then for any $\gamma \in \ocint{0, \bgamma}$, $\Psibf: \ t \mapsto 2\Phibf\defEnsLigne{-t/(2\Xi_{
        \step}^{1/2}(\kappa))}$ is convex on $\coint{0, +\infty}$, differentiable on $\rset$, and for any $\gamma \in \ocint{0, \bgamma}$
    \begin{equation}
      \label{eq:bound}
      \Psibf'(0) \geq -(\uppi \inf_{\gamma \in \ocint{0, \bgamma}} \Xi_{\step}(\kappa))^{-1/2} \eqsp .
    \end{equation}
    We divide the rest of the proof into two parts.
    \begin{enumerate}[label=(\alph*),wide, labelwidth=!, labelindent=0pt]      
    \item First combining \eqref{eq:bound},
      \Cref{propo:drift_strong_convex_wass}-\ref{item:reflex_ineq_a},
      \eqref{eq:wc1_convergence}, \Cref{propo:cvx_outside_bounds} and
      \Cref{prop:w1_contrac_iterees} shows
      that \begin{equation}\wassersteinD[1](\updelta_x
        \Rker_{\gamma}^k, \updelta_y \Rker_{\gamma}^k ) \leq D_{3,
          \bgamma,a} \rho_{\bgamma, a}^{k \gamma/4} \norm{x-y} \eqsp
        . \end{equation}
    \item Second, combining
      \Cref{propo:drift_strong_convex_wass}-\ref{item:reflex_ineq_c}
      and \Cref{prop:from_1_to_p} shows that
      \begin{equation}\wassersteinD[p](\updelta_x \Rker_{\gamma}^k, \updelta_y
        \Rker_{\gamma}^k ) \leq D_{\upalpha, \bgamma,a} \rho_{\bgamma, a}^{k
          \gamma/(4\upalpha)} \defEns{ \norm{x-y} + \norm{x-y}^{1/\upalpha}}
        \eqsp .\end{equation}
    \end{enumerate}

\subsection{Proof of \Cref{propo:drift_strong_convex}}
  \label{propo:drift_strong_convex:proof}
  We preface the proof by a technical result.
  \begin{lemma}
  \label{lemma:drift_2_drift_1}
  Let $\bgamma >0$, such that for any $\gamma \in \ocint{0, \bgamma}$, $\Pker_{\gamma}$ is a Markov kernel and $\Qker_{\gamma}$ is a Markov coupling kernel for $\Pker_{\gamma}$. Assume that there exist $V : \ \msx \to \coint{1,+\infty}$ measurable, $\lambda \in \ooint{0,1}$ and $A \geq 0$ such that for any $\gamma \in \ocint{0, \bgamma}$, $\Pker_{\gamma}$ satisfies \hyperlink{ass:drift_discrete}{$\bfDd(V,\lambda^{\gamma}, A\gamma,\msx)$}. Let $\lyap: \ \msx^2 \to \coint{1, +\infty}$ given for any $x,y \in \msx$ by  $\lyap(x,y) = \defEns{V(x) + V(y)}/2$. The following properties hold.
  \begin{enumerate}[label= (\alph*)]
  \item $\Qker_{\gamma}$ satisfies \hyperlink{ass:drift_discrete}{$\bfDd(\lyap,\lambda^{\gamma}, A\gamma,\msx^2)$} \label{item:a_drift}
  \item if $\lim_{\normLigne{x} \to +\infty} V(x) = +\infty$, $\Qker_{\gamma}$ satisfies \hyperlink{ass:drift_discrete}{$\bfDd(\lyap,\lambda^{\gamma/2}, A\gamma, \cballdeux{0}{R})$} where $R = \inf \ensembleLigne{r \geq 0}{\text{for any $x \in \cball{0}{r}^{\complementary}$, } V(x) \geq 2 (\lambda^{1/2})^{-2\bgamma} A \log^{-1}(1/\lambda^{1/2})}$\label{item:b_drift} and $\cballdeux{0}{R} \subset \Delta_{\msx, 2R}$.
  \end{enumerate}
\end{lemma}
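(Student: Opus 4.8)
The plan is to prove the two parts essentially by pushing the scalar drift condition $\bfDd(V,\lambda^\gamma,A\gamma,\msx)$ through the averaging $\lyap(x,y) = \{V(x)+V(y)\}/2$, using that $\Qker_\gamma$ is a coupling kernel, i.e.\ its two marginals are $\Pker_\gamma$. For part~\ref{item:a_drift}, fix $\gamma \in \ocint{0,\bgamma}$ and $(x,y) \in \msx^2$. Since $\Qker_\gamma((x,y),\cdot)$ is a transference plan between $\Pker_\gamma(x,\cdot)$ and $\Pker_\gamma(y,\cdot)$, and $\lyap$ is an average of a function of the first coordinate and a function of the second coordinate, we get by linearity
\begin{equation}
  \Qker_\gamma \lyap(x,y) = \tfrac12\{\Pker_\gamma V(x) + \Pker_\gamma V(y)\}
  \leq \tfrac12\{\lambda^\gamma V(x) + A\gamma + \lambda^\gamma V(y) + A\gamma\}
  = \lambda^\gamma \lyap(x,y) + A\gamma \eqsp,
\end{equation}
which is exactly $\bfDd(\lyap,\lambda^\gamma,A\gamma,\msx^2)$. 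This step is routine; the only thing to check is that the coupling kernel indeed integrates coordinatewise functions against its marginals, which is the definition of a Markov coupling kernel recalled in the Notation section.

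For part~\ref{item:b_drift}, the idea is the standard trick of trading a fraction of the geometric decay for a larger exceptional set. Start from part~\ref{item:a_drift}, and write $\lambda^\gamma = \lambda^{\gamma/2}\cdot\lambda^{\gamma/2}$. For $(x,y)$ outside $\cballdeux{0}{R}$, at least one of $\norm{x}, \norm{y}$ exceeds $R$, hence (by definition of $R$ and $\lim_{\norm{x}\to\infty}V(x)=+\infty$) $\lyap(x,y) \geq \tfrac12 V(\cdot) \geq (\lambda^{1/2})^{-2\bgamma} A \log^{-1}(1/\lambda^{1/2})$ for that coordinate. One then wants to show $\lambda^{\gamma/2}\lyap(x,y) \cdot \lyap(x,y) + A\gamma \leq \lambda^{\gamma/2}\lyap(x,y)$ off $\cballdeux{0}{R}$; more precisely one bounds
\begin{equation}
  \Qker_\gamma \lyap(x,y) \leq \lambda^{\gamma/2}\lyap(x,y) + \lambda^{\gamma/2}\{(\lambda^{\gamma/2}-1)\lyap(x,y)\} + A\gamma \eqsp,
\end{equation}
and checks that the bracketed correction $(\lambda^{\gamma/2}-1)\lyap(x,y) + A\gamma$ is $\leq 0$ whenever $\lyap(x,y)$ is large enough. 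Using $1-\lambda^{\gamma/2} = 1-\rme^{(\gamma/2)\log\lambda} \geq -(\gamma/2)\log\lambda \cdot \lambda^{\gamma/2} \geq (\gamma/2)\log(1/\lambda)\lambda^{\bgamma/2}$ (for $\gamma\le\bgamma$, using convexity of the exponential), this correction is nonpositive as soon as $\lyap(x,y) \geq A\gamma / \{(\gamma/2)\log(1/\lambda)\lambda^{\bgamma/2}\} = 2A\lambda^{-\bgamma/2}/\log(1/\lambda)$, which matches the stated threshold $2(\lambda^{1/2})^{-2\bgamma}A\log^{-1}(1/\lambda^{1/2})$ up to the identities $\log(1/\lambda^{1/2}) = \tfrac12\log(1/\lambda)$ and $(\lambda^{1/2})^{-2\bgamma} = \lambda^{-\bgamma}$. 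Finally, $\cballdeux{0}{R} \subset \Delta_{\msx,2R}$ is immediate from the triangle inequality, since $\norm{x},\norm{y}\le R$ forces $\norm{x-y}\le 2R$.

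The only mildly delicate point — and the step I would write out most carefully — is the elementary inequality chain controlling $1-\lambda^{\gamma/2}$ from below uniformly in $\gamma \in \ocint{0,\bgamma}$, so that the threshold on $\lyap$ can be taken independent of $\gamma$; everything else is bookkeeping with the constants. Once Lemma~\ref{lemma:drift_2_drift_1} is in place, the drift condition of Proposition~\ref{propo:drift_strong_convex} for $\Kcoupling_\gamma$ will follow by applying part~\ref{item:b_drift} with $\Pker_\gamma \leftarrow \Rker_\gamma$, $\Qker_\gamma \leftarrow \Kcoupling_\gamma$, and $V(x) = 1+\norm{x}^2$, after first establishing the scalar drift $\bfDd(V,\lambda^\gamma,A\gamma,\msx)$ for $\Rker_\gamma$ from \Cref{as:item:lip}, \Cref{as:b_min}($\mtt$) and \Cref{assum:drift_strong}.
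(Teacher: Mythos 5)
Your proof follows essentially the same route as the paper's: part~\ref{item:a_drift} is the identical marginal computation, and for part~\ref{item:b_drift} the paper likewise writes $\lambda^{\gamma}=\lambda^{\gamma/2}\cdot\lambda^{\gamma/2}$, uses the same elementary convexity bound on the exponential, and absorbs $A\gamma$ using $\lyap(x,y)\geq \max(V(x),V(y))/2$ outside $\cballdeux{0}{R}$. One harmless bookkeeping slip: your ``bracketed correction'' drops the $\lambda^{\gamma/2}$ prefactor, so the threshold you derive, $2A\lambda^{-\bgamma/2}\log^{-1}(1/\lambda)$, does not literally match the bound $2A\lambda^{-\bgamma}\log^{-1}(1/\lambda)$ on $\lyap$ implied by the definition of $R$ (keeping the prefactor yields exactly that bound); since the implied bound is the larger of the two, your argument still closes.
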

\begin{proof}
  Let $\gamma \in \ocint{0, \bgamma}$ and $x,y \in \msx$.
  \begin{enumerate}[label= (\alph*),  wide, labelwidth=!, labelindent=0pt]
  \item Since $\updelta_{(x,y)} \Qker_{\gamma}$ is a transference plan between $\updelta_x \Pker_{\gamma}$ and $\updelta_y \Pker_{\gamma}$ we have
  \begin{equation}
    \Qker_{\gamma}\lyap(x,y) = \Qker_{\gamma}\defEns{V(x) + V(y)}/2 = \Pker_{\gamma}V(x)/2 + \Pker_{\gamma}V(y)/2 \leq \lambda^{\gamma} \lyap(x,y) + A \gamma \eqsp .
  \end{equation}
\item Let $x,y \in \msx$. If $(x,y) \in \cballdeux{0}{R}$ then the result is immediate using \Cref{lemma:drift_2_drift_1}-\ref{item:a_drift}. Now, assume that $(x,y) \notin \cballdeux{0}{R}$.
  By definition of $R$, $\max(V(x), V(y)) \geq 4 \lambda^{-\bgamma} A \log^{-1}(1/\lambda)$. Without loss of generality assume that $V(x) \geq V(y)$. Using this result, \Cref{lemma:drift_2_drift_1}-\ref{item:a_drift} and that for any $b \geq a$, $(\rme^{b} - \rme^{a}) \geq \rme^{a}(b-a)$, we have
  \begin{align}
    \Qker_{\gamma} \lyap(x,y) &\leq \lambda^{\gamma} \lyap(x,y) + A \gamma \\
                              &\leq \lambda^{\gamma/2} \lyap(x,y) + \gamma \parentheseDeux{A + \lambda^{\gamma} \defEns{\log(\lambda) - \log(\lambda)/2}\lyap(x,y)} \\
                              &\leq \lambda^{\gamma/2} \lyap(x,y) + \gamma \parentheseDeux{A - \lambda^{\bgamma}  \log(\lambda^{-1}) V(x)/4}  \leq \lambda^{\gamma/2} \lyap(x,y) \eqsp .
  \end{align}

  \end{enumerate}
\end{proof}

\begin{proof}[Proof of \Cref{propo:drift_strong_convex}]
  Let $\gamma \in \ocint{0, \bgamma}$ and $x \in \msx$.  We divide the
  proof into two parts.  Using \eqref{eq:langevin_discrete},
  \Cref{as:item:lip}, \Cref{as:b_min}($\mtt$),
  \Cref{assum:drift_strong}, that the projection $\Pi_{\msx}$ is non
  expansive and $\gamma < 2 \mttplusdeux/\Lip^2$, we obtain for any
  $x \in \msx$
  \begin{align}
    \Rcoupling_{\gamma}V(x)
                              &\leq 1 + \norm{x + \gamma b(x)}^2 + \gamma d \\
                     &\leq1 + \norm{x}^2 + 2\gamma \langle x, b(x) \rangle + \gamma^2 \norm{b(x)}^2 + \gamma d \\
    &\leq (1 + \norm{x}^2) \parentheseDeux{1 - \gamma(2\mttplusdeux - \bgamma \Lip^2)} + \gamma \parenthese{d + 2\Rdeux^2(\mttplusdeux - \mtt)_+ + 2\mttplusdeux} \eqsp .
  \end{align}
In addition, for any $x \in \msx$, such that $\norm{x} \geq 2 A^{1/2}\log^{-1/2}(1/\lambda)$, we have $V(x) \geq 4 A \log^{-1}(1/\lambda)$. We conclude the proof using \Cref{lemma:drift_2_drift_1}-\ref{item:b_drift}.

\end{proof}


\subsection{Proof of \Cref{propo:drift_convex}}
\label{propo:drift_convex:proof}
   Let $\gamma \in \ocint{0, \bgamma}$. Using the fact that $\Pi_{\msx}$ is non expansive, the Log-Sobolev inequality, the fact that $\pi$ is $1$-Lipschitz, \cite[Theorem 5.5]{boucheron2013concentration} and the Jensen inequality we obtain for any $x \in \rset^d$
   \begin{align}
     \Rker_{\gamma} V(x) &\leq \exp \parentheseDeux{\mtttrois \Rker_{\gamma}\phi(x) + \gamma (\mtttrois)^2/2} \leq \exp \parentheseDeux{\mtttrois \sqrt{1 + \Rker_{\gamma}\norm{x}^2} + \gamma (\mtttrois)^2/2 } \\ &\leq  \exp \parentheseDeux{\mtttrois \sqrt{1 + \norm{\Tg(x)}^2 + \gamma d} + \gamma (\mtttrois)^2/2 } \eqsp . \numberthis \label{eq:ineq_1_curv}
   \end{align}
   Let $x \in \rset^d$. The rest of the proof is divided in two parts.
  \begin{enumerate}[label= (\alph*),  wide, labelwidth=!, labelindent=0pt]
  \item    
   In the first case, $\norm{x} \geq \Rquatre$.
   Since $\norm{x} \geq \Rtrois$ and $\gamma \leq 2\mttdeux$, we have using \Cref{assum:curvature}
   \begin{equation}
  \norm{\Tg(x)}^2 \leq \norm{x}^2 - 2\gamma \mttun  \norm{x} + \gamma(\gamma - 2\mttdeux) \norm{b(x)}^2 + \gamma \cconst \leq \norm{x}^2 - 2\gamma \mttun \norm{x} + \gamma \cconst \eqsp . \label{eq:ineq_2_curv}
\end{equation}
Since $\norm{x} \geq 1$ we have $2 \norm{x} \geq \phi(x)$ and therefore, using that $\norm{x} \geq (d+\cconst)/\mttun$, $2 \mttun \norm{x} \geq 2 \mtttrois \phi(x) + d +\cconst$. This inequality, combined with the fact that for any $t \in \ooint{-1,+\infty}$, $\sqrt{1 + t} \leq 1 + t/2$, yields
\begin{align}
  &\sqrt{1 + \norm{x}^2 + \gamma (-2\mttun \norm{x} + d +\cconst)} - \phi(x) \\ & \qquad \qquad \qquad \leq \gamma (-2 \mttun \norm{x} + d+\cconst)/(2\phi(x)) \leq -  \gamma \mtttrois \eqsp . \label{eq:ineq_3_curv}
\end{align}
Combining \eqref{eq:ineq_1_curv}, \eqref{eq:ineq_2_curv} and \eqref{eq:ineq_3_curv} we get
\begin{equation}
  \Rker_{\gamma} V(x) \leq \lambda^{\gamma} V(x) \eqsp .
\end{equation}
\item In the second case $\norm{x} \leq \Rquatre$. We have the following inequality using \tup{\Cref{assum:curvature}} and that $\gamma \leq 2 \mttdeux$
  \begin{equation}
    \norm{\Tg(x)}^2 \leq \norm{x}^2  + \gamma(\gamma - 2\mttdeux) \norm{b(x)}^2 + \gamma c \leq \norm{x}^2  + \gamma \cconst \eqsp . \label{eq:ineq_4_curv}
  \end{equation}
 Combining \eqref{eq:ineq_1_curv}, \eqref{eq:ineq_4_curv} and the fact that for any $t \in \ooint{-1,+\infty}$, $\sqrt{1 + t} \leq 1 + t/2$ we get
 \begin{align}
   \label{eq:ineq_5_curv} \Rker_{\gamma} V(x) &\leq \exp \parentheseDeux{ \gamma \mtttrois (d+\cconst)/(2\phi(x)) + \gamma (\mtttrois)^2/2 }V(x) \\ &\leq \exp\parentheseDeux{\gamma (\mtttrois (d+\cconst) + (\mtttrois)^2)/2}V(x) \eqsp . 
 \end{align}
Note that for any $c_1 \geq c_2$ and $t \in \ccintLigne{0, \bar{t}}$  we have the following inequality
\begin{equation}
  \rme^{c_1 t} \leq \rme^{c_2 t} + \rme^{c_1 \bar{t}}(c_1 - c_2)t \eqsp . \label{eq:ineq_6_curv}
\end{equation}
Combining \eqref{eq:ineq_5_curv} and \eqref{eq:ineq_6_curv} we get
\begin{equation}
\Rker_{\gamma} V(x) \leq \lambda^{\gamma}V(x) + \exp\parentheseDeux{\bgamma (\mtttrois (d+\cconst) + (\mtttrois)^2)/2}C_{\cconst} \gamma \eqsp ,
\end{equation}
with $C_{\cconst} = (\mtttrois (d+\cconst)/2 + (\mtttrois)^2)\exp(\mtttrois (1 + \Rquatre^2)^{1/2})$, 
which concludes the proof using \Cref{lemma:drift_2_drift_1}.
  \end{enumerate}




\section{Proofs of \Cref{sec:quant-conv-bounds}}
\label{sec:quant-conv-bounds:proof}

\subsection{Proof of \Cref{thm:limit_lip}}
\label{thm:limit_lip:proof}

Combining \Cref{prop:L_ass_check_lip}, \Cref{prop:integrability_R_lip}
and \Cref{propo:majo_cont} in \Cref{prop:general_bound} concludes the
proof.

\subsection{Proof of \Cref{thm:limit_loc_lip}}
\label{thm:limit_loc_lip:proof}

Combining \Cref{prop:L_ass_check_lip}, \Cref{prop:integrability_R_lip}
and \Cref{propo:majo_cont} in \Cref{prop:general_bound} concludes the
proof.

\subsection{Proof of \Cref{theo:c_ergo_v_norm}}
\label{sec:theo:c_ergo_v_norm:proof}

Let $T \geq 0$ and $x \in \rset^{\dim}$.  First, using
\Cref{prop:L_ass_check_loc_lip} we have that
\tup{\Cref{ass:loc_lip_b}}, \tup{\Cref{ass:diff_disc}},
\tup{\Cref{ass:seq_k_n}} and \tup{\Cref{ass:lyap_majo}} are
satisfied. In addition, using \Cref{propo:majo_cont} we get
  \begin{equation}
    \Pker_T V_{\mttun}(x) < +\infty \eqsp ,
  \end{equation}
  where $V_{\mttun} = V_M$ with $M \leftarrow \mttun$ and $V_M$ given
  in \eqref{eq:lyap_m}. Since,
  $\sup_{x\in \rset^d} \norm{b(x)}^{2(1 + \vareps_b)} \rme^{-\mttun (1
    + \norm{x})^{1/2}} <+\infty$, we get that \Cref{assum:unif_integr}
  is satisfied.

  In addition, using that $2 \mtttrois \leq \mttun$ we have
  \begin{equation}
    \Pker_T V^2(x) < +\infty \eqsp .
  \end{equation}
  Thus, the first part of \eqref{eq:integr_condition_3} is
  satisfied. Second using \Cref{propo:drift_convex} and replacing
  $\mtttrois \leftarrow 2\mtttrois$ (which is valid since
  $\mtttrois \leq \mttun /4$), we obtain that for any
  $n, m \in \nset$, with $T/m < 2 \mttdeux$, $\Rker_{T/m, n}$ and
  $\tRker_{T/m}$ satisfy
  \hyperlink{ass:drift_discrete}{$\bfDd(V^2,\lambda^{T/m},AT/m,
    \msx^2)$}. Hence, for any $m \in \nset$ with $T/m < 2 \mttdeux$ we
  have
  \begin{equation}
    \Rker_{\Time/m, n}^{m} + \tRker_{\Time/m, n}^{m} V^2(x) \leq V^2(x) + A T m^{-1} \sum_{k \in \nset} \lambda^{T/m} \leq V^2(x) + A \log^{-1}(1/\lambda) \lambda^{-\bgamma } \eqsp .
  \end{equation}
  Therefore, the second part of \eqref{eq:integr_condition_3} is
  satisfied and we can apply \Cref{prop:general_bound}.  Using
  \Cref{propo:weak_outside_bounds} and we
  get that for any $m,n \in \nset$ with $x,y \in \rset^{\dim}$ and
  $\Time/m \in \ooint{0,2\mttdeux}$
    \begin{equation}
      \Vnorm{\updelta_x \Rker_{\Time/m, n}^{m}- \updelta_y \Rker_{\Time/m, n}^{m} } \leq C_{1/m,c} \rho_{1/m,c}^{\Time} \defEns{V(x) + V(y)}\eqsp,
    \end{equation}
where $C_{1/m,c} \geq 0$ and $\rho_{1/m,c} \in \ooint{0,1}$, see \Cref{sec:rates-crefpr}. We conclude upon noting that $C_{1/m,c}$ and $\rho_{1/m,c}$ admit limits $C_c$ and $\rho_c$ when $m \to +\infty$ which do not depend on $n$.

\subsection{Proof of \Cref{lemma:disclyapfromc}}
\label{sec:proof-crefl-1}

\begin{enumerate}[label= (\alph*),  wide, labelwidth=!, labelindent=0pt]
\item Let $x \in \rset^d$ and let $(\bfX_t)_{t \geq 0}$ a solution of
  \eqref{eq:diff} starting from $x$. Define for any $k \in \nsets$,
  $\tau_k = \inf\{ t \geq 0 \, : \, \norm{\bfX_t} \geq k\}$ and for
  any $t \geq 0$, $\bfM_t = \int_0^t \ps{\nabla V(\bfX_s)}{\rmd B_s}$.
  Using the Itô formula we obtain that for every $t \geq 0$ and
  $k \in \nsets$
     \begin{align}
&       V(\bfX_{t \wedge \tau_k})\rme^{\zeta (t \wedge \tau_k)} =
                                                                 \int_0^{t \wedge \tau_k}  \parentheseDeux{\rme^{\zeta (t \wedge \tau_k)} \generator V (\bfX_u) + \zeta \rme^{\zeta u} V(\bfX_u)} \rmd u   + \bfM_{t \wedge \tau_k} + V(x) \\
                                                                 & \qquad = V(\bfX_{s \wedge \tau_k})\rme^{\zeta (s \wedge \tau_k)} + \bfM_{t \wedge \tau_k} - \bfM_{s \wedge \tau_k} + \int_{s \wedge \tau_k}^{t \wedge \tau_k}  \parentheseDeux{\rme^{\zeta (t \wedge \tau_k)} \generator V (\bfX_u) + \zeta \rme^{\zeta u} V(\bfX_u)} \rmd u  \\
       &\qquad \leq V(\bfX_{s \wedge \tau_k})\rme^{\zeta (s \wedge \tau_k)} + \bfM_{t \wedge \tau_k} - \bfM_{s \wedge \tau_k} \eqsp.
     \end{align}
     Therefore since for any $k \in \nsets$, $(\bfM_{t \wedge \tau_k})_{t \geq 0}$ is a $(\mcf_t)_{t \geq 0}$-martingale, we get for every $t \geq s \geq 0$ and $k \in \nsets$
     \begin{equation}
\CPE{V(\bfX_{t \wedge \tau_k})\rme^{\zeta (t \wedge \tau_k)}}{\mathcal{F}_s} \leq V(\bfX_{s \wedge \tau_k})\rme^{\zeta (s \wedge \tau_k)} \eqsp ,
\end{equation}
which concludes the first part of the proof taking  $k \to \plusinfty$ and using Fatou's lemma.
   \item Similarly we have that $( V(\bfX_t)\rme^{\zeta t} - B(1 - \exp(-\zeta t ))/\zeta )_{t \geq 0}$ is a $(\mcf_t)_{t \geq 0}$-supermartingale which concludes the proof  upon taking the expectation of $V(\bfX_t)\rme^{\zeta t} - B(1 - \exp(-\zeta t ))/\zeta$.
     \end{enumerate}

     \subsection{Proof of \Cref{lemma:V-norm_control}}
\label{lemma:V-norm_control:proof}

Let $\Time \geq 0$, $x \in \rset^d$, $n \in \nset$ and $m \in \nsets$ with $\Time/m \leq \bgamma$.
Using  \cite[Lemma 24]{durmus2017nonasymp}, we obtain
\begin{align}
  &\Vnorm{\updelta_x \Pker_{\Time} - \updelta_x \tRker_{\Time/m, n}^{m} } \\ & \qquad \leq (1/\sqrt{2}) \parenthese{\updelta_x \Pker_{\Time} V^2(x) + \updelta_x \tRker_{\Time/m, n}^mV^2(x)}^{1/2}\KL{\updelta_x \Pker_{\Time}}{\updelta_x \tRker_{\Time/m, n}^{m} }^{1/2} \eqsp .
\end{align}
Let $M \geq 0$, $n \in \nsets$ with $n^{-1} < \bgamma$, $x \in \rset^d$ and
$k \in \nset$.  Therefore, we only need to show that
$\lim_{m \to +\infty} \KLLigne{\updelta_x \Pker_{\Time}}{\updelta_x
  \tRker_{\Time/m, n}^{m} }= 0$.  Consider the two processes
$(\bfX_t)_{t \in \ccint{0,\Time}}$ and $(\bbfX_t)_{t \in \ccint{0,\Time}}$
defined by \eqref{eq:diff} with $\bfX_0 = \bbfX_0 = x$ and
\begin{equation}
  \rmd \bbfX_t =  \tb_{\Time/m, n}(t, (\bbfX_s)_{s \in \ccint{0,\Time}}) \rmd t +  \rmd \bfB_t \eqsp , \qquad \bbfX_0 = x \eqsp ,
\end{equation}
where for any $(\rmw_s)_{s \in \ccint{0,\Time}} \in \rmC(\ccint{0,\Time}, \rset^d)$, $t \in \ccint{0,\Time}$,
\begin{equation}
  \label{eq:def_bbar_langevin_girsanov_V_norm}
  \tb_{\Time/m, n}(t, (\rmw_s)_{s \in \ccint{0,\Time}}) = \sum_{i=0}^{m-1}b_{\Time/m, n}(\rmw_{i\Time/n}) \1_{\coint{i \Time/m, (i+1)\Time/m}}(t) \eqsp.
\end{equation}
Note for any $i \in \lbrace 0, \dots, m \rbrace$, the distribution of $\bbfX_{i \Time/ m}$ is $\updelta_x \tRker_{\Time/m,n}^{i}$. Using that $b$ and $b_{T/m, n}$ are continuous and that $(\bfX_t)_{t\in \ccint{0,\Time}}$ and $(\bbfX_t)_{t \in \ccint{0,\Time}}$ take their values in $\rmC(\ccint{0,\Time}, \rset^d)$, we obtain that
\begin{equation}
  \begin{aligned}
    &\proba{\int_0^{\Time} \| b (\bfX_t) \|^2 \rmd t  < +\infty} = 1 \eqsp , \\
    &\proba{\int_0^{\Time} \| \tb_{\Time/m, n}(t, (\bbfX_s)_{s \in \ccint{0,T}}) \|^2 \rmd t  < +\infty } = 1 \eqsp ,
    \end{aligned}
\end{equation}
and
\begin{equation}
  \begin{aligned}
    &\proba{\int_0^{\Time} \| b(\bfB_t) \|^2 \rmd t  < +\infty} = 1 \eqsp , \\
    & \proba{\int_0^{\Time} \| \tb_{\Time/m, n}(t, (\bfB_s)_{s \in \ccint{0,\Time}}) \|^2 \rmd t  < +\infty } = 1 \eqsp , \end{aligned}
\end{equation}
where $(\bfB_t)_{t \in \ccint{0,\Time}}$ is the $d$-dimensional
Brownian motion associated with \eqref{eq:diff}.  Therefore by
\cite[Theorem 7.7]{liptser2013statistics} the distributions of
$(\bfX_t)_{t \in \ccint{0,\Time}}$ and
$(\bbfX_t)_{t \in \ccint{0,\Time}}$, denoted by $\mu^x$ and
$\tilde{\mu}^x$ respectively, are equivalent to the distribution of
the Brownian motion $\mu_B^x$ starting at $x$. In addition, $\mu^x$
admits a Radon-Nikodym density \wrt \ to $\mu_{B}^x$ and $\mu_{B}^x$
admits a Radon-Nikodym density \wrt \ to $\tilde{\mu}^x$, given
$\mu_B^x$-almost surely for any
$(\rmw_t)_{t \in \ccint{0,\Time}} \in \rmC(\ccint{0,\Time}, \rset^d)$
by
\begin{align}
  \label{eq:expo_density}
   \frac{\rmd \mu^x}{\rmd \mu_B^x}((\rmw_t)_{t \in \ccint{0,\Time}}) &= \exp \left( (1/2) \int_0^{\Time} \langle b(\rmw_s),  \rmd \rmw_s \rangle  - (1/4) \int_0^{\Time} \| b (\rmw_s) \|^2 \rmd s \right)\eqsp , \\
    \frac{\rmd \mu_B^x}{\rmd \tilde{\mu}^x}((\rmw_t)_{t \in \ccint{0,\Time}}) &= \exp \left( -(1/2) \int_0^{\Time} \langle \tb_{\Time/m, n}(s, (\rmw_u)_{u \in \ccint{0,\Time}}),  \rmd \rmw_s \rangle \right. \\ &\phantom{aaaaaaaaaaaa} \left.  + (1/4) \int_0^{\Time} \| \tb_{\Time/m, n} (s, (\rmw_u)_{u \in \ccint{0, \Time}}) \|^2 \rmd s \right)\eqsp .
\end{align}
Finally we obtain that $\mu_B^x$-almost surely for any $(\rmw_s)_{s \in \ccint{0,\Time}} \in \rmC(\ccint{0, \Time}, \rset^d)$ 
\begin{align}
  \frac{\rmd \mu^x}{\rmd \tilde{\mu}^x}((\rmw_t)_{t \in \ccint{0,\Time}}) &= \exp \left( (1/2) \int_0^{\Time} \langle   b(\rmw_s) - \tb_{\Time/m, n}(s, (\rmw_u)_{u \in \ccint{0, \Time}}),  \rmd \rmw_s \rangle \right . \\
\label{eq:density_exp_girsanov}
                                                                & \phantom{aaaaaa}\left. + (1/4) \int_0^{\Time} \| \tb_{\Time/m, n} (s, (\rmw_u)_{u \in \ccint{0, \Time}}) \|^2 - \| b(\rmw_s)  \|^2 \rmd s  \right) \eqsp .
\end{align}
Now define  for any $(\rmw_s)_{s \in \ccint{0,\Time}} \in \rmC(\ccint{0, \Time}, \rset^d)$ and $t \in \ccint{0,\Time}$
\begin{equation}
  \label{eq:def_bdisc}
\bdisc_{\Time/m}(t,(\rmw_s)_{s \in \ccint{0,\Time}}) = \sum_{i=0}^{m-1} b(\rmw_{i\Time/m}) \1_{\coint{i\Time/m,(i+1)\Time/m}} (t) \eqsp.
\end{equation}
Using \eqref{eq:diff}, \eqref{eq:def_bbar_langevin_girsanov_V_norm},
\eqref{eq:density_exp_girsanov}, \Cref{ass:diff_disc}, and for any
$a_1,a_1 \in \rset^d$,
$\norm[2]{a_1-a_2} \leq 2 (\norm[2]{a_1}+ \norm[2]{a_2})$, we obtain
that
\begin{align}
      \label{eq:kl_error_expli}
  &\quad  \quad 2\KL{\updelta_x \Pker_{\Time}}{\updelta_x \tRker_{\Time/m, n}^{m} }  
  \leq 2^{-1} \expe{  \int_0^{\Time} \| b(\bfX_s) - \tb_{\Time/m, n} (s, (\bfX_u)_{u \in \ccint{0, \Time}}) \|^2 \rmd s} \\
  &\leq   \expe{  \int_0^{\Time} \| b(\bfX_s) - \bdisc_{\Time/m}(s,(\bfX_u)_{u \in \ccint{0,\Time}}) \|^2 \rmd s }\\
  & \qquad +  \sum_{i=0}^{m-1}
\expe{  \int_{i\Time/m}^{(i+1)\Time/m} \| b(\bfX_{i\Time/m}) - b_{\Time/m,n}(\bfX_{i\Time/m}) \|^2 \rmd s } \\
  &\leq    \expe{  \int_0^{\Time} \| b(\bfX_s) - \bdisc_{\Time/m}(s,(\bfX_u)_{u \in \ccint{0,\Time}}) \|^2 \rmd s }  \\ & \qquad +  C_1 \Time^{1 + \beta} m^{-\beta}  \sup_{s \in \ccint{0,\Time}} \expe{\norm[2]{b(\bfX_s)}}  \eqsp .
\end{align}
It only remains to show that the first term goes to $0$ as $m \to \plusinfty$. Note that since $(\bfX_s)_{s \in \ccint{0,\Time}}$ is almost surely continuous and $b$ is continuous on $\rset^d$,  $\lim_{m \to \plusinfty} \| b(\bfX_s) - \bdisc_{\Time/m} (s, (\bfX_u)_{u \in \ccint{0,\Time}}) \|^2 = 0$ for any $s \in \ccint{0,\Time}$ almost surely. Then, using the Lebesgue dominated convergence theorem and the continuity of $b$, we obtain that for any $M \geq 0$,
\begin{equation}
  \lim_{m \to +\infty} \expe{ \1_{\ccint{0,M}}\parenthese{ \sup_{s \in \ccint{0,\Time}} \norm{\bfX_s}}  \int_0^{\Time} \| b(\bfX_s) - \bdisc_{\Time/m} (s, (\bfX_u)_{u \in \ccint{0, \Time}}) \|^2 \rmd s} = 0 \eqsp . \label{eq:limite_un}
\end{equation}
On the other hand, using Hölder's inequality and the definition of $\bdisc_{\Time/m}$ \eqref{eq:def_bdisc}, we obtain that for any $M \geq 0$,
\begin{align}
  &\expe{    \1_{\ooint{M,\plusinfty}}\parenthese{ \sup_{s \in \ccint{0,\Time}} \norm{\bfX_s}} \int_0^{\Time} \| b(\bfX_s) - \bdisc_{\Time/m} (s, (\bfX_s)_{s \in \ccint{0, \Time}}) \|^2 \rmd s}\\
  & \phantom{} \leq  
    2   \parenthese{\proba{\sup_{s \in \ccint{0,\Time}} \norm{\bfX_s} >M}}^{\vareps_b/(1+\vareps_b)}\\
  &  \phantom{} \int_0^{\Time} \parenthese{  \expeExpo{1/(1+\vareps_b)}{\norm[2(1+\vareps_b)]{ b(\bfX_s)}}  +\expeExpo{1/(1+\vareps_b)}{ \norm[2(1+\vareps_b)]{\bdisc_{\Time/m}(s,(\bfX_u)_{u \in \ccint{0,\Time}})}}} \rmd s \\
  &\phantom{} \leq 4\Time \parenthese{\proba{\sup_{s \in \ccint{0,\Time}} \norm{\bfX_s} >M}}^{\vareps_b/(1+\vareps_b)}\parenthese{  \sup_{s \in \ccint{0,\Time}} \expe{\norm{b (\bfX_s) }^{2(1+\vareps_b)}}}^{1/(1+\vareps_b)} \eqsp.
\end{align}
Combining this result, \Cref{assum:unif_integr}, and \eqref{eq:limite_un} in \eqref{eq:kl_error_expli}, we obtain that for any $M \geq 0$,
\begin{multline}
  \limsup_{m \to \plusinfty} \KL{\updelta_x \Pker_{\Time}}{\updelta_x \tRker_{\Time/m, n}^{m} }   \\ \leq 2 \Time \parenthese{\proba{\sup_{s \in \ccint{0,\Time}} \norm{\bfX_s} >M}}^{\vareps_b/(1+\vareps_b)}\parenthese{  \sup_{s \in \ccint{0,\Time}} \expe{\norm[2(1+\vareps_b)]{b(  \bfX_s)}}}^{1/(1+\vareps_b)}   \eqsp. 
\end{multline}
Since $(\bfX_s)_{s \in \ccint{0,\Time}}$ is \as \ continuous, we get
by the monotone convergence theorem and \Cref{assum:unif_integr},
taking $M \to \plusinfty$, that
$\lim_{m \to \plusinfty} \KLLigne{\updelta_x \Pker_{\Time
  }}{\updelta_x \tRker_{\Time/m, n}^{m} } = 0$, which concludes the
proof.

\subsection{Proof of \Cref{propo:compare}}
\label{propo:compare:proof}

  For any $n \in \nset$ and $ \gamma \in \ocint{0, \bgamma}$, we consider the synchronous Markov coupling $\Qker_{\gamma, n}$ for $\Rker_{\gamma, n}$ and $\tRker_{\gamma, n}$ defined for any $(x, y) \in \rset^d \times \rset^d$ and $\msa \in \mcb{\rset^d}$ by
 \begin{align}
    \label{eq:synchronous_coupling} &\Qker_{\gamma, n}((x,y), \msa) \\ &= \frac{1}{(2 \uppi \gamma)^{d/2}}\int_{\rset^d} \1_{(\Id, \Pi_{\msk_n})^{\inv}(\msa)}(\Tg(x) + \sqrt{\gamma} z, \Tg(y) + \sqrt{\gamma} z) \rme^{-\norm[2]{z}/2} \rmd z \eqsp , 
 \end{align}
 with $\Tg(x) = x + \gamma b(x)$. Let $\Time \geq 0$, $n \in \nset$,
 $m \in \nsets$ such that $T/m \leq \bgamma$.  Consider
 $(X_j,\tilde{X}_j)_{j \in \nset}$ a Markov chain with Markov kernel
 $\Qker_{\Time/m,n}$ and started from $X_0 = \tilde{X}_0 = x$ for a
 fixed $x \in \rset^d$. Note that by definition and
 \Cref{ass:seq_k_n}, we have that for $k < \tau$, $X_k = \tilde{X}_k$
 where
 $\tau = \inf\{j \in \nset \, : \, \tilde{X}_j \not \in
 \cball{0}{n}\}$. Using \Cref{ass:lyap_majo},
 $\parenthese{\tilde{V}(\tilde{X}_j)
   \exp\parentheseDeux{-j\log(\tilde{A})(\Time/m) (1 + \En
     (\Time/m)^{\varepsn})}}_{j \in \nset}$ is a positive
 supermartingale.  Combining \eqref{eq:synchronous_coupling}, the
 Cauchy-Schwarz inequality, \Cref{ass:lyap_majo} and the Doob maximal
 inequality for positive supermartingale \cite[Proposition
 II-2-7]{neveu1975discrete}
 , we get for any $x \in \rset^d$
  \begin{align}
 &   \Vnorm{\updelta_x \Rker_{\Time/m, n}^{m} - \updelta_x \tRker_{\Time/m, n}^{m} }  
 \leq \expe{\1_{\Delta_{\rset^d}^{\complementary}}(X_{m},\tilde{X}_{m}) (V(X_{m}) + V(\tilde{X}_{m}))/2} \\
&                                                                         \leq  (1/2) \proba{\sup_{j \in \{ 0, \dots, m\}} \norm{\tilde{X}_j} \geq n}\parenthese{\expe{V^2(X_{m})}^{1/2} + \expe{V^2(\tilde{X}_{m})}^{1/2}}  \\
    & \leq (1/2)
  \proba{\sup_{ j \in  \{ 0, \dots, m\}} \tilde{V}(\tilde{X}_j) \geq n}\parenthese{\expe{V^2(X_{m})}^{1/2} + \expe{V^2(\tilde{X}_{m})}^{1/2}} 
 \\
    & \leq (2n)^{-1} \exp\parentheseDeux{\log(\tilde{A})(\Time/m)(1 + \En (T/m)^{\vareps_n })} \tilde{V}(x) \\ & \qquad \qquad \qquad \times \parenthese{(\Rker_{\Time/m, n}^{m} V^2(x))^{1/2} + (\tRker_{\Time/m, n}^{m} V^2(x))^{1/2}}\eqsp ,
  \end{align}
which concludes the proof upon taking $m\to +\infty$ then $n\to +\infty$.

\subsection{Proof of \Cref{prop:L_ass_check_lip}}
\label{prop:L_ass_check_lip:proof}
Let $p \in \nsets$ and $V \in \rmC^2(\rset^d, \coint{1,+\infty})$ be
defined for any $x \in \rset^d$ by $V(x) = 1 + \norm{x}^{2p}$.  For
any $x \in \rset^d$, $\nabla V(x) = 2p \norm{x}^{2(p-1)}x$ and
$\Delta V(x) = (4p(p-1) + 2pd)\norm{x}^{2(p-1)}$. Therefore, using
\Cref{as:item:lip} and the definition of $\generator$ we obtain that
for any $x \in \rset^d$
  \begin{equation}
    \generator V(x) \leq \parentheseDeux{2p(p-1) + p(d +2 \Lip)}V(x) \eqsp . \label{eq:drift_0}
  \end{equation}
  Hence, using \eqref{eq:drift_0} and \cite[Theorem
  3.5]{khasminskii2011stochastic}, we obtain that \Cref{ass:loc_lip_b}
  holds.  Using that for any
  $\sup_{x \in \rset^{\dim}} \normLigne{b(x)} (1+\normLigne{x}^2)^{-1}
  < +\infty$, \eqref{eq:drift_0} and
  \Cref{lemma:disclyapfromc}-\ref{lemma:disclyapfromc_b} we obtain
  that \Cref{assum:unif_integr} holds.

\Cref{ass:diff_disc} and \Cref{ass:seq_k_n} are trivially satisfied.
Finally, using once again \Cref{as:item:lip}, we have  that for any  $x \in \rset^{\dim}$ and $\gamma \in \ocint{0, \bgamma}$ we have
  \begin{align}
    \Rker_{\gamma} (1 + \norm{x}^2) &\leq 1 + \norm{x + \gamma b(x)}^2 + \gamma d \\
                                    &\leq 1 + \norm{x}^2 + 2 \gamma \norm{b(x)} \norm{x} + \gamma^2 \norm{b(x)}^2 + \gamma d \\
                                    &\leq 1 + \norm{x}^2 + 2 \gamma \Lip \norm{x}^2  + \gamma^2 \Lip^2 \norm{x}^2 + \gamma d \\
    &\leq (1  +2 \gamma \Lip  + \gamma^2 \Lip^2 + \gamma d) (1 + \norm{x}^2) \eqsp ,
  \end{align}
which implies that \Cref{ass:lyap_majo} holds.

\subsection{Proof of \Cref{prop:existence_integr}}
\label{prop:existence_integr:proof}

  Let $p \in \nsets$ and $V \in \rmC^2(\rset^d, \coint{1,+\infty})$ be defined for any $x \in \rset^d$ by $V(x) = 1 + \norm{x}^{2p}$.
For any $x \in \rset^d$,  $\nabla V(x) = 2p \norm{x}^{2(p-1)}x$ and $\Delta V(x) = (4p(p-1) + 2pd)\norm{x}^{2(p-1)}$. Therefore, using \Cref{as:b_min}($\mtt$) and the definition of $\generator$
  we obtain that for any $x \in \rset^d$
  \begin{equation}
    \generator V(x) \leq \parentheseDeux{2p(p-1) + p(d - 2\mtt)}V(x) \eqsp . \label{eq:drift_1}
  \end{equation}
  Hence, using \eqref{eq:drift_1} and \cite[Theorem
  3.5]{khasminskii2011stochastic}, we obtain that \Cref{ass:loc_lip_b}
  holds.
  \begin{enumerate}[label= (\alph*),  wide, labelwidth=!, labelindent=0pt]    
  \item If there exists $\vareps_b >0$ such that $\sup_{x \in \rset^d} \norm{b(x)}^{2(1+\vareps_b)}(1 + \norm{x}^{2p})^{-1} < +\infty$, using \eqref{eq:drift_1} and \Cref{lemma:disclyapfromc}-\ref{lemma:disclyapfromc_b} we obtain that \Cref{assum:unif_integr} holds.

  \item If there exists $\vareps_b >0$ such that
    $\sup_{x \in \rset^d} \norm{b(x)}^{2(1 +
      \vareps_b)}\rme^{-\mttplusdeux\norm{x}^2} < +\infty$, and
    \Cref{assum:drift_strong} holds, then consider for any
    $x \in \rset^d$, $V(x) = \rme^{\mttplusdeux \norm{x}^2}$. We have
    for any $x \in \rset^d$,
    $\nabla V(x) = 2 \mttplusdeux \rme^{\mttplusdeux \norm{x}^2} x$
    and
    $\Delta V(x) = 4 \mtt_2^{+2} \rme^{\mttplusdeux \norm{x}^2}
    \norm{x}^2 + 2 \mttplusdeux \rme^{\mttplusdeux \norm{x}^2} d$.
    Therefore, using \Cref{assum:drift_strong} we have for
    any $x \in \cball{0}{\Rdeux}^{\complementary}$
    \begin{equation}
      \generator V(x) \leq \mttplusdeux\parentheseDeux{d + (4\mttplusdeux/2 -2\mttplusdeux)\norm{x}^2}V(x) \leq \mttplusdeux d V(x) \eqsp . \label{eq:drift_2}
    \end{equation}
    Setting
    $\zeta = (\mttplusdeux d) \vee \sup_{x \in \cball{0}{\Rdeux}}
    \generator V(x)$, we obtain that $V$ satisfies
    \hyperlink{ass:drift_continuous}{$\bfDc(V,\zeta,0)$}. Therefore
    using \eqref{eq:drift_2} and
    \Cref{lemma:disclyapfromc}-\ref{lemma:disclyapfromc_b}, we obtain
    that \Cref{assum:unif_integr} holds.
  \end{enumerate}

\subsection{Proof of \Cref{prop:fam_prop}}
\label{prop:fam_prop:proof}

We preface the proof by a preliminary computation. Let $n \in \nset$,
$\gamma \in \ocint{0, \bgamma}$, $x \in \rset^d$ and
$X = x + \gamma \rmb_{\gamma, n}(x) + \sqrt{\gamma} Z$, where $Z$ is a
$d$-dimensional Gaussian random variable with zero mean and covariance
identity. We have using \Cref{as:b_min}($\mtt$) and \eqref{eq:def_fam}
  \begin{equation}
    \expe{\norm{X}^2} \leq  \|x \|^2 - 2 \gamma  \mtt \, \Phi_n(x) \norm{x}^2 + \gamma^2 \Phi_n(x)^2 \norm{b(x)}^2 +  \gamma d  \eqsp , \label{eq:numero_uno}
  \end{equation}
  with $\Phi_n(x) = \varphi_n(x) + (1 - \varphi_n(x))(1 + \gamma^{\alpha}\norm{b(x)})^{-1}$. We recall that
\begin{equation}
  \label{eq:varphi_condition}
 \varphi_n(x) \in \ccint{0,1} \quad \text{ and } \quad \varphi_n(x)
  =
  \begin{cases}
    1 & \text{ if $x \in \cball{0}{n}$}, \\
    0 & \text{ if $x \in \cball{0}{n+1}^{\complementary}$ } \eqsp.
  \end{cases}
\end{equation}
Using \Cref{ass:loc_lip_bb} and \eqref{eq:varphi_condition}, we have
  \begin{equation}
    \Phi_n(x) \norm{b(x)} \leq \Lip_{n+1}\norm{x}  + \gamma^{-\alpha} \eqsp . \label{eq:numero_duo}
  \end{equation}
Combining \eqref{eq:numero_uno} and \eqref{eq:numero_duo} and  since $\Phi_n(x) \leq 1$ by \eqref{eq:varphi_condition}, we obtain
\begin{equation}
  \expe{1 + \norm{X}^2} \leq (1 + \norm{x}^2)\parentheseDeux{1 +  2 \gamma \abs{\mtt} + 2 \gamma^2 \Lip_{n+1}^2} + 2 \gamma^{2 - 2\alpha} + \gamma d 
                          \eqsp. \label{eq:control_norm_2}
                        \end{equation}
                        We are now able to complete the proof of
                        \Cref{prop:fam_prop}.  It is easy to check
                        that \Cref{ass:diff_disc} holds with
                        $\beta = 2\alpha$. It only remains to show
                        that \Cref{ass:lyap_majo} holds. Consider for
                        any $x \in \rset^d$,
                        $\tilde{V}(x) = 1+\norm{x}$. By
                        \eqref{eq:control_norm_2}, for any
                        $\gamma \in\ocint{0,\bgamma}$, $n\in \nset$
                        and $x \in \rset^d$, we have using for any
                        $s \geq \rset$, $1+s \leq \rme^{s}$ we obtain
\begin{align}
        \Rker_{\gamma,n} \tilde{V}(x)
  & \leq \tilde{V}(x)\parentheseDeux{1 +  2 \gamma \abs{\mtt} + 2 \gamma^2 \Lip_{n+1}^2 + 2 \gamma^{2 - 2\alpha} + \gamma d} \\
  & \leq \tilde{V}(x) \exp\parentheseDeux{\gamma\defEns{2 \abs{\mtt} + d + 2 \gamma^{1-2\alpha}(\gamma^{2\alpha} \Lip_{n+1} + 1)}} \\
  & \leq \tilde{V}(x) \exp\parentheseDeux{2\gamma\defEns{2 \abs{\mtt} + d} \defEns{1 +  \gamma^{1-2\alpha}(\gamma^{2\alpha} \Lip_{n+1} + 1)}}
                          \eqsp. 
\end{align}
As a result using that $d \geq 1$, \Cref{ass:lyap_majo} holds upon taking $\tilde{A} = \exp(4 \abs{\mtt} + 2d)$,
$\varepsn = 1 - 2\alpha$ and $\En = 2(\Lip_{n+1} \bgamma^{2\alpha} + 1)$.

\subsection{Proof of \Cref{prop:integrability_R_lip}}
\label{sec:prop:integrability_R_lip:proof}

The proof is similar to the one of \Cref{prop:integrability_R_loc_lip}
upon replacing \eqref{eq:control_norm_2} by
 \begin{equation}
   \expe{1 + \norm{X}^2} \leq (1 + \norm{x}^2)(1 + 2 \gamma \Lip + 2 \gamma^2 \Lip^2) + \gamma d \eqsp .
 \end{equation}

\subsection{Proof of \Cref{prop:integrability_R_loc_lip}}
\label{sec:prop:integrability_R_loc_lip:proof}

Let $M \geq 0$, $n \in \nset$ and $\pow \geq 1$. Using the Log-Sobolev inequality \cite[Theorem 5.5]{boucheron2013concentration}, the fact that $\phi$ is $1$-Lipschitz and that $\Pi_{\cball{0}{n}}$ is non expansive, as well as the Jensen inequality we obtain for any $\gamma \in \ocint{0, \bgamma}$ and $x \in \rset^d$,
     \begin{align}
     \Rker_{\gamma,n} V_M^{\pow}(x) &\leq \exp \parentheseDeux{ \pow M\tRker_{\gamma,n}\phi(x) + (\pow M)^2\gamma/2} \\ &\leq \exp \parentheseDeux{ \pow M \sqrt{ \tRker_{\gamma,n}\phi^2(x) } + (\pow M)^2 \gamma/2 } \eqsp .\label{eq:log_sob}
   \end{align}
   Using \eqref{eq:control_norm_2} and that $\sqrt{1 + t}\leq 1 +t/2$ for any $t \in \ooint{-1, +\infty}$ we get for any $\gamma \in \ocint{0, \bgamma}$ and $x \in \cball{0}{n}$
     \begin{align}
       &\Rker_{\gamma,n} V_M^{\pow}(x) \\  &\leq \exp \parentheseDeux{\pow M \defEns{\phi(x)^2(1+ 2\gamma \abs{\mtt} + 2 \gamma^2 \Lip_{n+1}^2) + 2 \gamma^{2 - 2\alpha} + \gamma d }^{1/2} + (\pow M)^2 \gamma /2}  \\
                             &\leq \exp \parentheseDeux{(1 + \gamma \abs{\mtt} + \gamma^2 \Lip_{n+1}^2) \pow M \phi(x)} \exp\parentheseDeux{(1 + \pow M)^2 \defEns{\gamma (d+1)/2 + \gamma^{2 - 2a}}} \\
       &\leq V_M^{\pow(1 + C_1 \gamma+ C_{2,n} \gamma^2)}(x) \exp\parentheseDeux{\pow^2 C_3 \gamma} \eqsp,
     \end{align}
     with $C_1 = \abs{\mtt}$, $C_{2,n} = \Lip_{n+1}^2$ and $C_3 = (1+M)^2 (d + 3) /2$. By recursion, we obtain that for any $m, n \in \nset$ with $m^{-1} \in \ocint{0, \bgamma}$, $\Time \geq 0$ and $x\in \cball{0}{n}$
     \begin{align}
       &\Rker_{\Time/m,n}^m V_M(x) \\ & \leq V_M(x)^{a_m} \exp\parentheseDeux{\Time C_3\sum_{j=0}^{m-1} (1 + \Time C_1/m+C_{2,n}(\Time/m)^2)^{2j}/m} \\
       &\leq V_M(x)^{a_m} \exp\parentheseDeux{\Time C_3(1 + \Time C_1/m+ C_{2,n}(\Time/m)^2)^{2m}} \eqsp ,
     \end{align}
     with $a_m = (1 + \Time C_1/m +  C_{2,n}(\Time/m)^2)^{m}$.
     Since $\lim_{m \to +\infty} (1 + \Time C_1/m + C_{2,n}(\Time/m)^2)^{tm} = \exp(t\Time C_1)$ for any $t, \Time \geq 0$, we get that for any $n \in \nset$, $\Time \geq 0$ and $x \in \cball{0}{n}$
     \begin{equation}
       \limsup_{m \to +\infty} \Rker_{\Time/m,n}^{m}V_M(x) \leq \exp(\Time C_3\exp(2\Time C_1)) V_M^{\exp(\Time C_1)}(x) \label{eq:majo_grossiere} \eqsp .
     \end{equation}
We conclude the proof upon remarking that the right-hand side quantity in \eqref{eq:majo_grossiere} does not depend on $n$ and that the same inequality holds replacing $\Rker_{\Time/m,n}$ by $\tRker_{\Time/m,n}$ in \eqref{eq:majo_grossiere}.

\subsection{Proof of \Cref{propo:majo_cont}}
\label{propo:majo_cont:proof}
  We have for any $x \in \rset^d$,
  \begin{equation}
    \nabla \phi(x) = x /\phi(x) \eqsp , \qquad \nabla^2 \phi(x) = \Id/\phi(x) - x x^{\transpose} / \phi^2(x) \eqsp,
  \end{equation}
and therefore since $V_M(x) = \exp(M\phi(x))$, 
  \begin{equation}
    \begin{aligned} &\nabla V_M(x) = M \nabla \phi(x) V_M(x) \eqsp, \\
      &\nabla^2 V_M(x) = \defEns{M^2 \nabla \phi(x) (\nabla \phi(x))^{\transpose} + M \nabla^2 \phi(x)} V_M(x) \eqsp .
      \end{aligned}
  \end{equation}
  Therefore, for any $x \in \rset^d$,
  \begin{multline}
    (\generator V_M(x))/V_M(x) \\ \leq \left. \parentheseDeux{M^2 \norm{x}^2/\phi^2(x) + M\defEns{d / \phi(x) - \norm{x}^2/\phi^2(x)}}\middle/2 \right. + M \sup_{x \in \rset^d} \langle b(x), x \rangle_+ \eqsp .
  \end{multline}
  Hence, for any $x \in \rset^d$,
  $\generator V_M(x) \leq \zeta V_M(x)$ with
  $\zeta = M\{\sup_{x \in \rset^d} \langle b(x), x \rangle_+ +d/2\} +
  M^2$. We conclude using
  \Cref{lemma:disclyapfromc}-\ref{lemma:disclyapfromc_a} and the Doob
  maximal inequality.


\def\trho{\tilde{\rho}}
\def\tlambda{\tilde{\lambda}}
\def\Pker{\mathrm{P}}
\def\distYl{\mathbf{d}_{\wass}}
\def\X{\mathrm{X}}
\def\Y{\mathrm{Y}}
\def\rmZ{\mathrm{Z}}
\def\T{T}
\def\tcmf{\tilde{\mcf}}
\def\Lambdabf{\boldsymbol{\Lambda}}
\def\lyapU{\lyap_1}
\def\lyapD{\lyap_2}
\def\ttn{\mathtt{n}}
\def\ntt{\mathtt{n}_0}

\section{Quantitative bounds for geometric convergence of Markov chains in Wasserstein distance}
\label{sec:quant-bounds-geom}

In this section, we establish new quantitative bounds for Markov
chains in Wasserstein distance.  We consider a Markov
kernel $\Pker$ on the measurable space $(\msy,\mcy)$ equipped with the
bounded semi-metric $\distY: \msy \times \msy \to \rset_+$, \ie~which
satisfies the following condition.
\begin{assumptionH}
  \label{ass:metric}
For any $x, y \in \msy$, $\distY(x,y) \leq 1$, $\distY(x,y) = \distY(y,x)$ and $\distY(x,y) = 0$ if and only if $x=y$. 
\end{assumptionH}
Let $\Kcoupling$ be a Markov coupling kernel for $\Pker$. In this section, we assume the following condition on $\Kker$.
 \begin{assumptionH}[$\Kcoupling$]
   \label{ass:kernel_coupling}
   There exists $\msc \in \mcy^{\otimes 2}$ such that 
\begin{enumerate}[label=(\roman*)]
\item \label{ass:kernel_coupling_a} there exist $\ntt \in \nsets$ and $\varepsilon >0$ such that for any $x,y \in \msc$, $\Kker^{\ntt}\distY(x,y) \leq (1-\varepsilon) \distY(x,y)$ ;
\item \label{ass:kernel_coupling_b} for any $x,y \in \msy $, $\Kker\distY(x,y) \leq \distY(x,y)$ ;
\item \label{ass:kernel_coupling_c} there exist
  $\lyapU : \msy^2 \to \coint{1,\plusinfty}$ measurable,
  $\lambda_1 \in \ooint{0,1}$ and $A_1 \geq 0$ such that $\Kker$ satisfies
  \hyperlink{ass:drift_discrete}{$\bfDd(\lyapU,\lambda_1,A_1, \msc)$}.
\end{enumerate}
\end{assumptionH}

We consider the Markov chain $(\X_n,\Y_n)_{n\in\nset}$ associated with the Markov
kernel $\Kker$ defined on the canonical space
$((\msy\times \msy)^{\nset},(\mcy^{\otimes 2})^{\nset})$ and denote
by $\probaMarkovDD{(x,y)}$ and $\expeMarkovDD{(x,y)}$ the
corresponding probability distribution and expectation respectively
when $(\X_0,\Y_0) = (x,y)$. Denote by $(\mcg_n)_{n\in\nset}$ the
canonical filtration associated with $(\X_n,\Y_n)_{n\in\nset}$.  Note that for any
$n \in \nset$ and $x,y \in \msy$, under $\probaMarkovDD{(x,y)}$,
$(\X_n,\Y_n)$ is by definition a coupling of $\updelta_x \Pker^n$ and
$\updelta_y \Pker^n$. The main result of this section is the following
which by the previous observation implies quantitative bounds on
$\wassersteinD[\distY](\updelta_x \Pker^n, \updelta_y \Pker^n)$.
\begin{theorem}
  \label{theo:quanti_v_alain}
  Let $\Kcoupling$ be a Markov coupling kernel for $\Pker$ and assume \tup{\Cref{ass:metric}} and \tup{\Cref{ass:kernel_coupling}($\Kcoupling$)}. Then for any $n \in \nset$ and $x,y \in \msy$,
  \begin{multline}
    \label{theo:quanti_v_alain_1}
    \expeMarkov{(x,y)}{\distY(\X_n,\Y_n)} 
    \leq \min\parentheseDeux{\rho^{n} ( M_{\msc,\ntt}\Xibf(x,y,\ntt) + \distY(x,y)) ,\rho^{n/2} (1+\distY(x,y))  +  \lambda_1^{n/2} \Xibf(x,y,\ntt)}  \eqsp,
  \end{multline}
  where
  \begin{equation}
    \label{eq:def_const_thm3}
  \begin{aligned}
    \Xibf(x,y, \ntt) &=   \lyapU(x,y) + A_1 \lambda_1^{-\ntt}\ntt  \\
    \log(\rho) &= \left. \log(1-\vareps) \log(\lambda_1) \middle/ \parentheseDeux{-\log(M_{\msc,\ntt}) + \log(1-\vareps)} \right. \eqsp,\\
    M_{\msc, \ntt} &= \sup_{(x,y) \in \msc} \Xibf(x,y, \ntt) = \sup_{(x,y) \in \msc}  \parentheseDeux{ \lyapU(x,y)} +A_1\lambda_1^{-\ntt}\ntt \eqsp.  
  \end{aligned}
  \end{equation}
\end{theorem}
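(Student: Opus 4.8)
The plan is to establish the two bounds inside the minimum in \eqref{theo:quanti_v_alain_1} separately, both relying on the classical ``small set + geometric drift'' decomposition but tracking constants explicitly. The common device is to introduce the return times to $\msc$ and to observe that, thanks to \Cref{ass:kernel_coupling}($\Kcoupling$)-\ref{ass:kernel_coupling_b}, the process $n \mapsto \distY(\X_n,\Y_n)$ is a $(\mcg_n)_{n\in\nset}$-supermartingale under $\probaMarkovDD{(x,y)}$; in particular $\distY$ never increases in expectation, and on $\msc$ it contracts by a factor $1-\vareps$ after $\ntt$ steps by \ref{ass:kernel_coupling_a}. The drift condition \ref{ass:kernel_coupling_c}, i.e.\ \hyperlink{ass:drift_discrete}{$\bfDd(\lyapU,\lambda_1,A_1,\msc)$}, controls how long excursions away from $\msc$ can last: iterating it gives, for any starting point $(x,y)$, the bound $\expeMarkovDD{(x,y)}[\lyapU(\X_k,\Y_k)] \le \lambda_1^k \lyapU(x,y) + A_1 \lambda_1^{-\ntt}\ntt$ on the block $k \in \{0,\dots,\ntt\}$, which is exactly the quantity $\Xibf(x,y,\ntt)$ appearing in \eqref{eq:def_const_thm3} (the shift by $\ntt$ makes the block estimate uniform).

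For the first bound, $\rho^n(M_{\msc,\ntt}\Xibf(x,y,\ntt) + \distY(x,y))$, I would work along the subsampled chain at times that are multiples of $\ntt$ and run a standard renewal/coupling argument. Writing $g_m = \expeMarkovDD{(x,y)}[\distY(\X_{m\ntt},\Y_{m\ntt})]$, one splits according to whether $(\X_{(m-1)\ntt},\Y_{(m-1)\ntt})$ lies in $\msc$ or not: on $\msc$ one gains the factor $(1-\vareps)$ from \ref{ass:kernel_coupling_a}; off $\msc$ one uses the supermartingale property to not lose anything on $\distY$, but pays by carrying the Lyapunov mass, which is where $M_{\msc,\ntt} = \sup_{\msc}\Xibf(\cdot,\cdot,\ntt)$ enters when one finally re-enters $\msc$. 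Balancing the geometric decay $(1-\vareps)$ of the ``coupled'' part against the decay $\lambda_1$ of the Lyapunov part through the interpolation exponent forced by the recursion yields precisely $\log\rho = \log(1-\vareps)\log(\lambda_1)/(-\log M_{\msc,\ntt} + \log(1-\vareps))$ as in \eqref{eq:def_const_thm3}; then one passes from times $m\ntt$ back to arbitrary $n$ using \ref{ass:kernel_coupling_b} once more (monotonicity costs only a bounded factor absorbed into $M_{\msc,\ntt}$ and $\Xibf$). This is the kind of computation carried out in \cite[Chapter 19]{douc:moulines:priouret:soulier:2018}, which the paper cites, so I would mainly adapt that argument keeping the constants.

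For the second bound, $\rho^{n/2}(1+\distY(x,y)) + \lambda_1^{n/2}\Xibf(x,y,\ntt)$, the idea is to not insist on re-entering $\msc$: split the trajectory at the midpoint $n/2$. On $[0,n/2]$ run the drift recursion to control $\expeMarkovDD{(x,y)}[\lyapU(\X_{\lfloor n/2\rfloor},\Y_{\lfloor n/2\rfloor})] \le \lambda_1^{n/2}\lyapU(x,y) + (\text{bounded})$, producing the term $\lambda_1^{n/2}\Xibf(x,y,\ntt)$; on $[n/2,n]$ apply the first bound started from the (random) midpoint state together with $\distY \le 1$ and the supermartingale property, producing $\rho^{n/2}(1+\distY(x,y))$ after taking expectations and using that $\rho \ge \lambda_1$ (which follows from $M_{\msc,\ntt}\ge 1$, hence $-\log M_{\msc,\ntt}\le 0$, in the formula for $\rho$). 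Taking the minimum of the two estimates gives \eqref{theo:quanti_v_alain_1}.

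The main obstacle is the bookkeeping in the first bound: one must set up the renewal recursion for $g_m$ so that the coefficient in front of $\Xibf(x,y,\ntt)$ is exactly $M_{\msc,\ntt}$ and the exponent is exactly the stated $\rho$, rather than some cruder constant. Concretely, the delicate point is choosing how to interpolate between the contraction rate $1-\vareps$ on $\msc$ and the drift rate $\lambda_1$ off $\msc$ — this is what fixes the harmonic-mean-type exponent in $\log\rho$ — and verifying that the supermartingale property is strong enough to guarantee no growth of $\distY$ during the uncontrolled excursions so that only the Lyapunov term, and not $\distY$ itself, can blow up away from $\msc$. Once that recursion is written cleanly, everything else is routine iteration and the midpoint splitting for the second bound.
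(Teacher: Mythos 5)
Your skeleton (supermartingale property of $\distY(\X_n,\Y_n)$ from \Cref{ass:kernel_coupling}($\Kcoupling$)-\ref{ass:kernel_coupling_b}, contraction by $1-\vareps$ at each visit to $\msc$, drift to control excursions, then optimize) is the right one, but two genuine gaps remain. First, for the bound $\rho^{n}\bigl(M_{\msc,\ntt}\Xibf(x,y,\ntt)+\distY(x,y)\bigr)$, the mechanism that actually produces $M_{\msc,\ntt}$ and the harmonic-mean exponent is not the deterministic-block recursion on $g_m=\expeMarkov{(x,y)}{\distY(\X_{m\ntt},\Y_{m\ntt})}$ that you sketch: off $\msc$ the supermartingale property gives no decay at all, so that recursion cannot close without a quantitative control of how often the chain visits $\msc$ — and this is exactly the step you defer as the ``main obstacle''. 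The paper closes it by introducing the delayed successive return times $\T_{\msc,\ntt}^{(m)}$ and proving, via the comparison theorem applied to the drift \hyperlink{ass:drift_discrete}{$\bfDd(\lyapU,\lambda_1,A_1,\msc)$}, the exponential-moment bounds $\expeMarkov{(x,y)}{\lambda_1^{-\T_{\msc,\ntt}^{(1)}}}\le\Xibf(x,y,\ntt)$ and $\expeMarkov{(x,y)}{\lambda_1^{-(\T_{\msc,\ntt}^{(m)}-\T_{\msc,\ntt}^{(1)})}}\le M_{\msc,\ntt}^{m-1}$; then $\expeMarkov{(x,y)}{\distY(\X_n,\Y_n)}\le(1-\vareps)^m\distY(x,y)+\probaMarkov{(x,y)}{\T_{\msc,\ntt}^{(m)}\ge n}$, Markov's inequality applied to $\lambda_1^{-\T_{\msc,\ntt}^{(m)}}$, and the choice $m\asymp n\log(\lambda_1)/\{\log(1-\vareps)-\log(M_{\msc,\ntt})\}$ yield exactly $\rho$. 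Note also that $\Xibf(x,y,\ntt)$ enters as a bound on the exponential moment of $\T_{\msc,\ntt}^{(1)}$, not as a block bound on $\expeMarkov{(x,y)}{\lyapU(\X_k,\Y_k)}$ as you suggest.

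Second, your midpoint-splitting argument for the bound $\rho^{n/2}(1+\distY(x,y))+\lambda_1^{n/2}\Xibf(x,y,\ntt)$ provably does not give the stated constants. Applying the first bound from time $\lfloor n/2\rfloor$ and taking expectations gives $\rho^{n/2}M_{\msc,\ntt}\,\expeMarkov{(x,y)}{\Xibf(\X_{\lfloor n/2\rfloor},\Y_{\lfloor n/2\rfloor},\ntt)}+\rho^{n/2}\distY(x,y)$, and the drift only yields $\expeMarkov{(x,y)}{\lyapU(\X_{\lfloor n/2\rfloor},\Y_{\lfloor n/2\rfloor})}\le\lambda_1^{n/2}\lyapU(x,y)+A_1/(1-\lambda_1)$; hence your estimate carries an extra factor $M_{\msc,\ntt}$ on the $\lambda_1^{n/2}\lyapU(x,y)$ term and, worse, a term of order $\rho^{n/2}M_{\msc,\ntt}A_1/(1-\lambda_1)$, which is not dominated by $\rho^{n/2}(1+\distY(x,y))$ since $M_{\msc,\ntt}$ and $A_1/(1-\lambda_1)$ are unbounded. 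The whole point of this second estimate — used downstream so that the leading term is independent of the initial Lyapunov mass — is that no $M_{\msc,\ntt}$ appears in front of $\rho^{n/2}$ and no $\rho^{n/2}$ or $M_{\msc,\ntt}$ multiplies $\Xibf(x,y,\ntt)$. The paper achieves this not by splitting the trajectory at a deterministic time but by splitting the event $\{\T_{\msc,\ntt}^{(m)}\ge n\}$ at the first return time, bounding $\probaMarkov{(x,y)}{\T_{\msc,\ntt}^{(m)}-\T_{\msc,\ntt}^{(1)}\ge n/2}+\probaMarkov{(x,y)}{\T_{\msc,\ntt}^{(1)}\ge n/2}$ by Markov's inequality on each piece and reoptimizing $m$, so that $\Xibf(x,y,\ntt)$ multiplies only $\lambda_1^{n/2}$ while $M_{\msc,\ntt}^{m-1}$ is absorbed into $\rho^{n/2}$. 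You would need to replace your deterministic split by this (or an equivalent) random-time decomposition for the theorem as stated to follow.
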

In \Cref{theo:quanti_v_alain}, we obtain geometric contraction for $\Pker$ in bounded Wasserstein metric $\wassersteinD[\distY]$ since $\distY$ is assumed to be bounded. To obtain convergence associated with unbounded Wasserstein metric associated with  $\lyapD : \msy^2 \to \coint{0,+\infty}$, we consider the next assumption which is a generalized drift condition linking $\lyapD$ and the bounded semi-metric $\distY$. 
\begin{assumptionH}[$\Kcoupling$]
  \label{assum:drift_d}
  There exist $\lyapD : \msy^2 \to \coint{0,\plusinfty}$ measurable, $\lambda_2 \in \ooint{0,1}$ and $A_2 \geq 0$ such that for any $x, y \in \msy$, 
  \begin{equation}
    \Kcoupling \lyapD(x,y) \leq \lambda_2 \lyapD(x,y) + A_2 \distY(x,y) \eqsp .
  \end{equation}
\end{assumptionH}
In the special case where $\distance(x,y)  =\1_{\Delta_{\msy}^{\complementary}}(x,y)$, $\lyapD(x,y) =  \1_{\Delta_{\msy}^{\complementary}}(x,y)\lyapU(x,y) $ and for any $x \in \msy$, $\Kcoupling((x,x), \Delta_{\msy}) = 1$, we obtain that \hyperlink{ass:drift_discrete}{$\bfDd(\lyapU,\lambda_1,A_1, \msy)$} implies \Cref{assum:drift_d}($\Kcoupling$). The following result implies quantitative bounds on the Wasserstein distance $\distVdeux(\updelta_x \Pker^n, \updelta_y \Pker^n)$ for any $x,y\in \msy$ and $n \in\nsets$.

\begin{theorem}
  \label{theo:quanti_v_alain_v_norm}
  Let $\Kcoupling$ be a Markov coupling kernel for $\Pker$ and  assume \tup{\Cref{ass:metric}}, \tup{\Cref{ass:kernel_coupling}($\Kcoupling$)} and \tup{\Cref{assum:drift_d}($\Kcoupling$)}. Then for any $n \in \nset$ and $x,y \in \msy$,
  \begin{multline}
    \label{theo:quanti_v_alain_2}
    \expeMarkov{(x,y)}{ \lyapD(\X_n,\Y_n)} \leq  \lambda_2^n \lyapD(x,y) \\ +
    A_2  \min \parentheseDeux{ \trho^{n/4} r_{\rho} (\distY(x,y) + \Xibf(x,y, \ntt)), \trho^{n/4} r_{\rho} (1+\distY(x,y))  +  \tlambda^{n/4} r_{\lambda} \Xibf(x,y,\ntt)}  \eqsp ,
  \end{multline}
where
\begin{equation}
  \label{eq:def_const_thm15}
    \begin{aligned}
      \trho &= \max(\lambda_2, \rho) \in \ooint{0,1} \eqsp, \
      \tlambda &= \max(\lambda_{1}, \lambda_{2}) \in \ooint{0,1}
      \eqsp, \ r_{\rho}& = 4\log^{-1}(1/\trho)/\trho \eqsp ,& \
      r_{\lambda} & = 4\log^{-1}(1/\tlambda)/\tlambda \eqsp ,
    \end{aligned}
  \end{equation}
and   $\Xibf(x,y,\ntt)$, $ M_{\msc,\ntt}$ and $\rho$ are given in \eqref{eq:def_const_thm3}.
  \end{theorem}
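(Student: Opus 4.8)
The plan is to obtain \Cref{theo:quanti_v_alain_v_norm} as a corollary of \Cref{theo:quanti_v_alain}: the generalized drift condition \tup{\Cref{assum:drift_d}($\Kcoupling$)} transfers the bounded-metric estimate \eqref{theo:quanti_v_alain_1} to the possibly unbounded functional $\lyapD$. First I would iterate \tup{\Cref{assum:drift_d}($\Kcoupling$)} along the coupling chain. Setting $f_n(x,y) = \expeMarkov{(x,y)}{\lyapD(\X_n,\Y_n)}$, conditioning on $\mcg_n$ and using the Markov property give $f_{n+1}(x,y) = \expeMarkov{(x,y)}{\Kcoupling\lyapD(\X_n,\Y_n)} \leq \lambda_2 f_n(x,y) + A_2\expeMarkov{(x,y)}{\distance(\X_n,\Y_n)}$; since $f_0 = \lyapD$, an immediate induction yields, for all $n\in\nset$ and $x,y\in\msy$,
\[
  \expeMarkov{(x,y)}{\lyapD(\X_n,\Y_n)} \leq \lambda_2^n\lyapD(x,y) + A_2\sum_{k=0}^{n-1}\lambda_2^{n-1-k}\,\expeMarkov{(x,y)}{\distance(\X_k,\Y_k)} \eqsp .
\]

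Next I would insert into the sum the two upper bounds furnished by \eqref{theo:quanti_v_alain_1} for $\expeMarkov{(x,y)}{\distance(\X_k,\Y_k)}$, namely $\rho^k(M_{\msc,\ntt}\Xibf(x,y,\ntt)+\distance(x,y))$ on the one hand and $\rho^{k/2}(1+\distance(x,y)) + \lambda_1^{k/2}\Xibf(x,y,\ntt)$ on the other. Both are affine in the ``initial data'' $\distance(x,y)$, $\Xibf(x,y,\ntt)$ and $1$, so the $\min$ structure survives summation term by term, and everything reduces to estimating geometric convolutions $\sum_{k=0}^{n-1}\lambda_2^{n-1-k}b^{k}$ and $\sum_{k=0}^{n-1}\lambda_2^{n-1-k}b^{k/2}$ with $b\in\{\rho,\lambda_1\}$. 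For these I would use $\sum_{k=0}^{n-1}\lambda_2^{n-1-k}b^{k}\leq n\,\vartheta^{n-1}$ with $\vartheta=\max(\lambda_2,b)$, together with $\max(\lambda_2,b^{1/2})\leq\trho^{1/2}$ (resp. $\tlambda^{1/2}$) for the half powers, and then trade the polynomial prefactor for a slightly worse geometric rate via the elementary bound $n\,x^{m}=n\,x^{3m/4}x^{m/4}\leq\bigl(\sup_{t\geq0}t\,x^{3t/4}\bigr)x^{m/4}=\tfrac{4}{3\mathrm{e}\log(1/x)}\,x^{m/4}$, applied with $x=\trho=\max(\lambda_2,\rho)$ and $x=\tlambda=\max(\lambda_1,\lambda_2)$. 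Up to absorbing the constant $M_{\msc,\ntt}$ and the factor $n$ into the degraded rates, this produces the prefactors $r_\rho=4\log^{-1}(1/\trho)/\trho$ and $r_\lambda=4\log^{-1}(1/\tlambda)/\tlambda$ and the rates $\trho^{n/4}$, $\tlambda^{n/4}$ of \eqref{eq:def_const_thm15}, the leading term $\lambda_2^n\lyapD(x,y)$ being kept as is. Bounding $\distance(x,y)\leq 1$ and collecting terms gives \eqref{theo:quanti_v_alain_2}; finally, since under $\probaMarkovDD{(x,y)}$ the pair $(\X_n,\Y_n)$ is a transference plan between $\updelta_x\Pker^n$ and $\updelta_y\Pker^n$, the left-hand side dominates $\distVdeux(\updelta_x\Pker^n,\updelta_y\Pker^n)$ by definition of the Wasserstein cost, which is the announced quantitative bound.

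I expect the only genuine difficulty to be the constant accounting. One must verify that in each of the two branches of \eqref{theo:quanti_v_alain_1} the exponents recombine so that the common rates $\trho^{n/4}$ and $\tlambda^{n/4}$ come out, that the $n$-dependent prefactor and the constant $M_{\msc,\ntt}$ appearing in the first branch are absorbed without making the final coefficients depend on $n$, and that the numerical factor $4/(3\mathrm{e})<1$ is enough to bound everything by the clean coefficients $r_\rho$, $r_\lambda$ of \eqref{eq:def_const_thm15}. The small-$n$ cases cause no trouble, since $n\,\vartheta^{n-1}$ is controlled uniformly in $n\geq1$ by the stated suprema; all other steps (the iteration of the drift, the substitution, and the preservation of the $\min$) are routine.
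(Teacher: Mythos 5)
Your proposal follows essentially the same route as the paper's proof: iterate the drift condition \tup{\Cref{assum:drift_d}($\Kcoupling$)} to get $\expeMarkov{(x,y)}{\lyapD(\X_n,\Y_n)} \leq \lambda_2^n\lyapD(x,y) + A_2\sum_{k=0}^{n-1}\lambda_2^{n-1-k}\expeMarkov{(x,y)}{\distY(\X_k,\Y_k)}$, plug in both branches of \eqref{theo:quanti_v_alain_1}, bound each geometric convolution by $n$ times the $(n-1)$-th power of the relevant maximum, and trade the factor $n$ for the degraded rates $\trho^{n/4}$, $\tlambda^{n/4}$ via an elementary bound (the paper uses $n t^{n/2}\leq 4\log^{-1}(1/t)\,t^{n/4}$, you use an equivalent supremum estimate). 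This is correct, including your closing remark about $M_{\msc,\ntt}$, which the paper's own proof handles with the same (implicit) absorption when passing to the stated first branch of the minimum.
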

\Cref{theo:quanti_v_alain} and \Cref{theo:quanti_v_alain_v_norm} share
some connections with \cite[Theorem 5]{rosenthal:1995},
\cite{hairer2011asymptotic} and \cite{durmus2015quantitative} but hold
under milder assumptions than the ones considered in these
works. Compared to \cite{hairer2011asymptotic} and
\cite{durmus2015quantitative}, the main difference is that we allow
here only a contraction for the $\ntt$-th iterate of the Markov chain
(condition \Cref{ass:kernel_coupling}-\ref{ass:kernel_coupling_a})
which is necessary if we want to use \Cref{theo:minorization_general}
to obtain sharp quantitative convergence bounds for
\eqref{eq:langevin_discrete}. Finally, \cite[Theorem
5]{rosenthal:1995} also considers minorization condition for the the
$\ntt$-th iterate, however our results compared favourably for large
$\ntt$. Indeed, \Cref{theo:quanti_v_alain} implies that the rate of
convergence $\min(\rho,\lambda_1)$ is of the form $C \ntt^{-1}$ for
$C \geq 0$ independent of $\ntt$. Applying \cite[Theorem
5]{rosenthal:1995}, we found a rate of convergence of the form
$C \ntt^{-2}$. Finally, a recent work \cite{qin:hobert:2019} has
established new results based on the technique used in
\cite{hairer2011asymptotic}. However, we were not able to apply them since they  assume  as in \cite{hairer2011asymptotic}, a contraction for $\ntt=1$ which does not imply sharp bounds on the situations we consider. 

The rest of this section is devoted to the proof of \Cref{theo:quanti_v_alain} and \Cref{theo:quanti_v_alain_v_norm}. Denote by $\theta : (\msy \times \msy)^{\nset} \to (\msy \times \msy)^{\nset}$ the shift operator defined for any $(x_n,y_n)_{n\in\nset} \in (\msy \times \msy)^{\nset}$ by $\theta((x_n,y_n)_{n\in\nset}) = (x_{n+1},y_{n+1})_{n \in \nset}$.  Define by induction, for any $m \in \nset$, the sequence of $(\mcg_n)_{n \in\nset}$-stopping times $(T_{\msc, \ntt}^{(m)})_{m \in \nset}$, with $  \T_{\msc,\ntt}^{(0)}  = 0$ and for any $m \in \nsets$
\begin{equation}
  \label{eq:def_stopping}
  \begin{aligned}
    \T_{\msc,\ntt}^{(m)} &= \inf\ensemble{ k \geq   \T_{\msc,\ntt}^{(m-1)} + \ntt}{(\X_k,\Y_k) \in \msc}\\ &= \T_{\msc,\ntt}^{(m-1)} + \ntt + \tT_{\msc} \circ \theta^{\T_{\msc}^{(m-1)} + \ntt}  
     = \T_{\msc,\ntt}^{(1)} + \sum_{i=1}^{(m-1)} \T_{\msc,\ntt}^{(1)}  \circ \theta^{\T_{\msc,\ntt}^{(i)}} \eqsp,\\
\tT_{\msc}& = \inf\ensemble{ k \geq  0}{(\X_k,\Y_k) \in \msc}\eqsp.
\end{aligned}
\end{equation}
Note that $( \T_{\msc,\ntt}^{(m)})_{m \in \nsets}$ are the
successive return times to $\msc$ delayed by $\ntt-1$ and
$\tT_{\msc}$ is the first hitting time to $\msc$.
We will use the following lemma which borrows from \cite{durmus2016subgeometric} and \cite[Lemma 3.1]{jarner2001locally}.
\begin{lemma}[\protect{\cite[Proposition 14]{durmus2016subgeometric}}]
  \label{propo:adapt_ahip}
  Let $\Kcoupling$ be a Markov coupling kernel for $\Pker$ and  assume  
  \tup{\Cref{ass:kernel_coupling}($\Kcoupling$)-\ref{ass:kernel_coupling_a}-\ref{ass:kernel_coupling_b}}. Then for any $n,m\in \nset$, $x,y \in \msy$,
  \begin{equation}
    \expeMarkov{(x,y)}{\distY(\X_n,\Y_n)} \leq (1-\vareps)^{m}\distY(x,y)  +   \expeMarkov{(x,y)}{\distY(\X_n,\Y_n) \1_{\ccint{n,\plusinfty}}(\T_{\msc,\ntt}^{(m)})} \eqsp.
  \end{equation}
\end{lemma}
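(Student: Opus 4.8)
The statement to prove is \Cref{propo:adapt_ahip}, but it is cited from \cite{durmus2016subgeometric}; so what I would actually write is a short, self-contained argument reproducing the key steps. The plan is to condition on the $m$-th delayed return time $\T_{\msc,\ntt}^{(m)}$ to $\msc$ and split the expectation according to whether this time has already occurred by step $n$ or not. First I would write, for fixed $n,m \in \nset$ and $x,y \in \msy$,
\begin{equation}
  \expeMarkov{(x,y)}{\distY(\X_n,\Y_n)} = \expeMarkov{(x,y)}{\distY(\X_n,\Y_n)\1_{\cointLigne{0,n}}(\T_{\msc,\ntt}^{(m)})} + \expeMarkov{(x,y)}{\distY(\X_n,\Y_n)\1_{\ccintLigne{n,\plusinfty}}(\T_{\msc,\ntt}^{(m)})} \eqsp .
\end{equation}
The second term is kept as is, so the whole work is to bound the first term by $(1-\vareps)^m \distY(x,y)$.

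\textbf{Key steps.} For the first term, the idea is an induction on $m$ exploiting the strong Markov property at the stopping times $\T_{\msc,\ntt}^{(j)}$ together with the contraction \Cref{ass:kernel_coupling}($\Kcoupling$)-\ref{ass:kernel_coupling_a}. On the event $\{\T_{\msc,\ntt}^{(m)} < n\}$, one has $\T_{\msc,\ntt}^{(m)} \geq \T_{\msc,\ntt}^{(m-1)} + \ntt$, and $(\X_{\T_{\msc,\ntt}^{(m-1)}}, \Y_{\T_{\msc,\ntt}^{(m-1)}}) \in \msc$ (for $m \geq 1$); applying the strong Markov property at $\T_{\msc,\ntt}^{(m-1)}$ and using $\Kker^{\ntt}\distY \leq (1-\vareps)\distY$ on $\msc$, then the supermartingale-type bound $\Kker\distY \leq \distY$ everywhere (\ref{ass:kernel_coupling_b}) to propagate from $\T_{\msc,\ntt}^{(m-1)}+\ntt$ up to $n$, we gain one factor $(1-\vareps)$ per return. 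More precisely, I would show by induction that
\begin{equation}
  \expeMarkov{(x,y)}{\distY(\X_n,\Y_n)\1_{\cointLigne{0,n}}(\T_{\msc,\ntt}^{(m)})} \leq (1-\vareps)^m \distY(x,y) \eqsp .
\end{equation}
The base case $m=0$ is just $\distY(\X_n,\Y_n)\1_{\cointLigne{0,n}}(0) = \distY(\X_n,\Y_n)$ with the trivial bound $\Kker^n \distY(x,y) \leq \distY(x,y)$ obtained by iterating \ref{ass:kernel_coupling_b}. For the inductive step, partition the event $\{\T_{\msc,\ntt}^{(m)} < n\}$ over the value $k = \T_{\msc,\ntt}^{(m-1)} \in \{0,\dots,n-\ntt\}$; on $\{\T_{\msc,\ntt}^{(m-1)}=k\}$ the pair $(\X_k,\Y_k)\in\msc$, and $\T_{\msc,\ntt}^{(m)}\circ\theta^k = \T_{\msc,\ntt}^{(1)}$ from that point (by the decomposition in \eqref{eq:def_stopping}); apply the Markov property at $k$, then at $k+\ntt$ use the contraction together with $\distY(\X_{k+\ntt},\Y_{k+\ntt})\1_{\ccintLigne{k+\ntt,\plusinfty}}(\T_{\msc,\ntt}^{(m)}) \le \Kker^{\ntt}\distY(\X_k,\Y_k) \le (1-\vareps)\distY(\X_k,\Y_k)$ on the relevant event, and finally the contraction-free bound \ref{ass:kernel_coupling_b} to carry $\distY$ forward to step $n$; summing over $k$ and invoking the inductive hypothesis on the remaining $m-1$ returns gives the claim.

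\textbf{Main obstacle.} The delicate point is the careful bookkeeping of the stopping times and the use of the strong Markov property: one must verify that on $\{\T_{\msc,\ntt}^{(m)} < n\}$ the chain has indeed visited $\msc$ at time $\T_{\msc,\ntt}^{(m-1)}$ and then waited at least $\ntt$ steps before the next counted visit, so that the contraction factor $\Kker^{\ntt}\distY \le (1-\vareps)\distY$ can legitimately be applied at each of the $m$ stages, while the ``no expansion'' inequality $\Kker\distY \le \distY$ fills in the gaps. This is essentially the content of \cite[Proposition 14]{durmus2016subgeometric} and \cite[Lemma 3.1]{jarner2001locally}, so I would keep the argument brief, citing those references for the routine measure-theoretic details and only highlighting the structure above.
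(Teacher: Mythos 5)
Your overall strategy is the right one and is essentially the paper's: collect one contraction factor per delayed return time $\stopping{j}$ via the strong Markov property and \Cref{ass:kernel_coupling}($\Kcoupling$)-\ref{ass:kernel_coupling_a}, use the one-step non-expansion \Cref{ass:kernel_coupling}($\Kcoupling$)-\ref{ass:kernel_coupling_b} (equivalently, the supermartingale property of $(\distY(\X_n,\Y_n))_{n\in\nset}$) to bridge the gaps, and split according to whether $\stopping{m}$ occurs before $n$. The difference is organizational: the paper first proves $\expeMarkov{(x,y)}{\distY(\X_{\stopping{m}},\Y_{\stopping{m}})}\leq(1-\vareps)^m\distY(x,y)$ by a recursion at the stopping times, \eqref{eq:recursion_step}--\eqref{eq:ineq_stopping}, and then stops the supermartingale at $n\wedge\stopping{m}$, whereas you keep the time fixed at $n$ and try to run an induction on $m$ inside the event $\{\stopping{m}<n\}$.

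As written, though, your inductive step does not close. After partitioning over $k=\stopping{m-1}$ and applying the contraction on the window $[k,k+\ntt]$, the quantity that remains is $(1-\vareps)\,\expeMarkov{(x,y)}{\distY(\X_{\stopping{m-1}},\Y_{\stopping{m-1}})\1_{\{\stopping{m-1}<n-\ntt\}}}$, i.e.\ the expected distance \emph{at the stopping time}; your inductive hypothesis, however, concerns $\expeMarkov{(x,y)}{\distY(\X_n,\Y_n)\1_{\cointLigne{0,n}}(\stopping{m-1})}$, the distance at time $n$, and the two do not match (peeling off the first return instead would require the hypothesis for a chain restarted from a point of $\msc$, again not what you formulated). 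Also, the displayed inequality $\distY(\X_{k+\ntt},\Y_{k+\ntt})\1_{\ccintLigne{k+\ntt,\plusinfty}}(\stopping{m})\le \Kker^{\ntt}\distY(\X_k,\Y_k)$ is not a pointwise statement and only makes sense after conditioning on $\mcg_k$; note $\{\stopping{m}<n\}$ is not $\mcg_{k+\ntt}$-measurable, so the indicator must be discarded before, not after, the conditioning. The clean repair is exactly the paper's route: run the recursion on $\expeMarkov{(x,y)}{\distY(\X_{\stopping{j}},\Y_{\stopping{j}})}$ and only at the end compare $\distY(\X_n,\Y_n)$ with the stopped quantity via optional stopping at $n\wedge\stopping{m}$. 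Finally, your parenthetical claim that $(\X_{\stopping{m-1}},\Y_{\stopping{m-1}})\in\msc$ ``for $m\geq1$'' is only valid for $m\geq2$: $\stopping{0}=0$ and the starting point $(x,y)$ need not lie in $\msc$, so the contraction cannot be invoked at the first stage for an arbitrary start; the paper's own recursion \eqref{eq:recursion_step} applied at $m=0$ glosses over the same point (it costs at most one factor $(1-\vareps)$ and is harmless downstream), but you should not present it as immediate.
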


\begin{proof}
  Using \Cref{ass:kernel_coupling}($\Kcoupling$)-\ref{ass:kernel_coupling_b}, we have that $(\distY(X_n, Y_n))_{n \in \nset}$ is a $(\mcg_n)_{n \in \nset}$-supermartingale and therefore using the strong Markov property and \Cref{ass:kernel_coupling}($\Kcoupling$)-\ref{ass:kernel_coupling_a} we obtain for any $m \in \nset$ and $x,y \in \msy$ that
  \begin{align}    
    \expeMarkov{(x,y)}{\distY(\X_{\stopping{m+1}},\Y_{\stopping{m+1}})} &\leq \expeMarkov{(x,y)}{\CPE{\distY(\X_{\stopping{m} + \ntt },\Y_{\stopping{m} + \ntt})}{\mcg_{\stopping{m}}}}  \\ & \leq (1 - \vareps) \expeMarkov{(x,y)}{\distY(\X_{\stopping{m}},\Y_{\stopping{m}})} \eqsp .  \label{eq:recursion_step}
  \end{align}
Therefore by recursion and using \eqref{eq:recursion_step} we obtain that for any $m \in \nset$ and $x,y \in \msy$
\begin{equation}
  \expeMarkov{(x,y)}{\distY(\X_{\stopping{m}},\Y_{\stopping{m}})} \leq (1 - \vareps)^m \distY(x,y) \eqsp . \label{eq:ineq_stopping}
\end{equation}
For any $n, m \in \nset$ we have using \eqref{eq:ineq_stopping} and  that $(\distY(X_n, Y_n))_{n \in \nset}$ is a supermartingale, 
\begin{align}
  &\expeMarkov{(x,y)}{\distY(\X_n, \Y_n)} \leq \expeMarkov{(x,y)}{\distY(\X_{n \wedge \stopping{m}}, \Y_{n \wedge \stopping{m}})} \\ &\leq \expeMarkov{(x,y)}{\distY(\X_{\stopping{m}}, \Y_{\stopping{m}}) \1_{\ccint{0,n}}(\stopping{m})} + \expeMarkov{(x,y)}{\distY(\X_n,\Y_n) \1_{\ccint{n,\plusinfty}}(\T_{\msc,\ntt}^{(m)})} \\
  &\leq (1-\vareps)^{m}\distY(x,y)  +   \expeMarkov{(x,y)}{\distY(\X_n,\Y_n) \1_{\ccint{n,\plusinfty}}(\T_{\msc,\ntt}^{(m)})} \eqsp.
\end{align}
\end{proof}

By \Cref{propo:adapt_ahip} and  since $\distY$ is bounded by $1$, we need to
obtain a bound on $\PP_{(x,y)}(\stopping{m} \geq n)$ for
$x,y \in \msy$ and $m,n \in \nsets$. To this end, we will use  the following proposition which gives an
upper bound on exponential moment of the hitting times
$(\stopping{m})_{m \in \nsets}$.

\begin{lemma}
  \label{lem:bound_T_m}
  Let $\Kcoupling$ be a Markov coupling kernel for $\Pker$ and  assume \tup{\Cref{ass:kernel_coupling}($\Kcoupling$)-\ref{ass:kernel_coupling_c}}. Then for any $x,y \in \msy$ and $m \in \nsets$,
  \begin{equation}
    \label{eq:bound_T_1_T_m}
    \begin{aligned}
      &\expeMarkov{(x,y)}{\lambda_1^{-T_{\msc,\ntt}^{(1)}} } \leq
      \Xibf(x,y, \ntt) \eqsp, \
      \expeMarkov{(x,y)}{\lambda_1^{-T_{\msc,\ntt}^{(m)}+T_{\msc,\ntt}^{(1)}}
      } \leq M_{\msc,\ntt}^{m-1}\eqsp, \
      \expeMarkov{(x,y)}{\lambda_1^{-T_{\msc,\ntt}^{(m)}} } \leq
      \Xibf(x,y, \ntt) M_{\msc,\ntt}^{m-1} \eqsp,
    \end{aligned}
  \end{equation}
  where $\Xibf(x,y, \ntt)$ and $M_{\msc,\ntt}$ are defined in \eqref{eq:def_const_thm3}.
\end{lemma}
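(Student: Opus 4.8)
The plan is to turn the drift condition \tup{\Cref{ass:kernel_coupling}($\Kcoupling$)-\ref{ass:kernel_coupling_c}} into an exponential supermartingale and then propagate the resulting estimate through the regeneration structure of the delayed return times $(\T_{\msc,\ntt}^{(m)})_{m\in\nset}$ defined in \eqref{eq:def_stopping}. All three bounds in \eqref{eq:bound_T_1_T_m} will come out of this scheme, the first one being the base case of an induction that produces the other two.

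First I would control the first hitting time $\tT_\msc$. Since $\Kker$ satisfies $\bfDd(\lyapU,\lambda_1,A_1,\msc)$, on $\msc^{\complementary}$ we have $\Kker\lyapU\leq\lambda_1\lyapU$, so that $N_k=\lambda_1^{-(k\wedge\tT_\msc)}\lyapU(\X_{k\wedge\tT_\msc},\Y_{k\wedge\tT_\msc})$ is a nonnegative $(\mcg_k)_{k\in\nset}$-supermartingale under $\probaMarkovDD{(x,y)}$. Taking expectations, letting $k\to+\infty$ and applying Fatou's lemma (using $\lyapU\geq 1$ and the convention $\lambda_1^{-\infty}=+\infty$) yields $\expeMarkov{(x,y)}{\lambda_1^{-\tT_\msc}}\leq\lyapU(x,y)$ for every $(x,y)\in\msy^2$; in particular $\tT_\msc$ is $\probaMarkovDD{(x,y)}$-a.s.\ finite, which makes all quantities appearing below finite.

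Next, iterating the unrestricted drift $\Kker\lyapU\leq\lambda_1\lyapU+A_1$ (bounding $\1_\msc\leq 1$) gives $\Kker^{\ntt}\lyapU(x,y)\leq\lambda_1^{\ntt}\lyapU(x,y)+A_1\sum_{j=0}^{\ntt-1}\lambda_1^{j}\leq\lambda_1^{\ntt}\lyapU(x,y)+A_1\ntt$. Since $\T_{\msc,\ntt}^{(1)}=\ntt+\tT_\msc\circ\theta^{\ntt}$ by \eqref{eq:def_stopping}, the Markov property at time $\ntt$ combined with the previous step gives
\begin{equation}
  \expeMarkov{(x,y)}{\lambda_1^{-\T_{\msc,\ntt}^{(1)}}}=\lambda_1^{-\ntt}\,\expeMarkov{(x,y)}{\expeMarkov{(\X_\ntt,\Y_\ntt)}{\lambda_1^{-\tT_\msc}}}\leq\lambda_1^{-\ntt}\,\Kker^{\ntt}\lyapU(x,y)\leq\lyapU(x,y)+A_1\lambda_1^{-\ntt}\ntt=\Xibf(x,y,\ntt),
\end{equation}
which is the first inequality in \eqref{eq:bound_T_1_T_m}. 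For the remaining two, I would induct on $m$ using the strong Markov property at $\T_{\msc,\ntt}^{(1)}$ together with the regeneration identity $\T_{\msc,\ntt}^{(m)}=\T_{\msc,\ntt}^{(1)}+\T_{\msc,\ntt}^{(m-1)}\circ\theta^{\T_{\msc,\ntt}^{(1)}}$, which follows from \eqref{eq:def_stopping}: conditioning on $\mcg_{\T_{\msc,\ntt}^{(1)}}$, using $(\X_{\T_{\msc,\ntt}^{(1)}},\Y_{\T_{\msc,\ntt}^{(1)}})\in\msc$ and $\Xibf(\cdot,\cdot,\ntt)\leq M_{\msc,\ntt}$ there, the base case and the induction hypothesis yield $\expeMarkov{(x,y)}{\lambda_1^{-\T_{\msc,\ntt}^{(m)}+\T_{\msc,\ntt}^{(1)}}}\leq M_{\msc,\ntt}^{m-1}$ and $\expeMarkov{(x,y)}{\lambda_1^{-\T_{\msc,\ntt}^{(m)}}}\leq\Xibf(x,y,\ntt)M_{\msc,\ntt}^{m-1}$.

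The step I expect to be the main obstacle is not a computation but the careful bookkeeping around the stopping times: justifying the supermartingale and optional-stopping manipulations, establishing the shift and regeneration identities for the delayed returns directly from \eqref{eq:def_stopping}, and ensuring that every conditional expectation involved is finite — which is precisely what the $\tT_\msc$-level estimate of the first step secures.
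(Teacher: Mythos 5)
Your proposal is correct and follows essentially the same route as the paper: split $\T_{\msc,\ntt}^{(1)}=\ntt+\tT_{\msc}\circ\theta^{\ntt}$, control $\expeMarkovLigne{(x,y)}{\lambda_1^{-\tT_\msc}}$ via the drift outside $\msc$ (the paper invokes the comparison theorem where you build the stopped supermartingale and apply Fatou, which is the same estimate), bound $\lambda_1^{-\ntt}\Kker^{\ntt}\lyapU$ by $\Xibf(x,y,\ntt)$, and then recurse with the strong Markov property using that $(\X_{\T_{\msc,\ntt}^{(m)}},\Y_{\T_{\msc,\ntt}^{(m)}})\in\msc$ so $\Xibf\leq M_{\msc,\ntt}$ at the return times. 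No gaps.
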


\begin{proof}
  We first show that for any $x,y \in \msy$ we have that $\PP_{(x,y)}(\tT_{\msc}) < +\infty$. Let $x,y \in \msy$.  For any $n \in \nset$ we have using \Cref{ass:kernel_coupling}($\Kcoupling$)-\ref{ass:kernel_coupling_c} and the Markov property
  \begin{equation}
    \CPE[(x,y)]{\lyapU(\X_{n+1}, \Y_{n+1})}{\mcg_n} \leq \lambda_1 \lyapU(\X_{n}, \Y_{n}) + A_1 \1_{\msc}(\X_n, \Y_n)\eqsp .
  \end{equation}
  Therefore applying the comparison theorem \cite[Theorem 4.3.1]{douc:moulines:priouret:soulier:2018} we get that
  \begin{equation}
    (1 - \lambda_1) \expeMarkov{(x,y)}{\sum_{k=0}^{\tT_{\msc}-1} \lyapU(\X_k, \Y_k)} + \expeMarkov{(x,y)}{\1_{\coint{0,+\infty}}(\tT_{\msc})\lyap(\X_{\tT_{\msc}}, \Y_{\tT_{\msc}})} \leq \lyap(x,y) \eqsp .
  \end{equation}
  Since for any $\tilde{x},\tilde{y} \in \msy$, $1 \leq \lyapU(\tilde{x},\tilde{y}) < +\infty$ we obtain that $(1-\lambda_1)\expeMarkovLigne{(x,y)}{\tT_{\msc}} \leq \lyap(x,y)$ which implies $\PP_{(x,y)}(\tT_{\msc}) < +\infty$ since $\lambda_1 \in \ooint{0,1}$.
We now show the stated result. Let $x,y \in \msy$ and $(\rmS_n)_{n \in \nset}$ be defined for any $n \in \nset$ by $\rmS_n = \lambda_1^{-n}\lyapU(\X_n, \Y_n)$.
  For any $n \in \nset$ we have using \Cref{ass:kernel_coupling}($\Kcoupling$)-\ref{ass:kernel_coupling_c} and the Markov property
    \begin{align}
      \CPE{\rmS_{n+1}}{\mcg_n} &\leq \lambda_1^{-n} \lyapU(\X_n, \Y_n) + A_1\lambda_1^{-(n+1)} \1_{\msc}(\X_n, \Y_n)  \\ &\leq \rmS_n + A_1 \lambda_1^{-(n+1)} \1_{\msc}(\X_n, \Y_n) \eqsp. \label{eq:ineq_compa}
    \end{align}    
    Using the Markov property,  the definition of $\stopping{1}$ given in  \eqref{eq:def_stopping}, the comparison theorem \cite[Theorem 4.3.1]{douc:moulines:priouret:soulier:2018}, \eqref{eq:ineq_compa} and \Cref{ass:kernel_coupling}($\Kcoupling$)-\ref{ass:kernel_coupling_c}  we obtain that
    \begin{align}
      &\expeMarkov{(x,y)}{\rmS_{\stopping{1}}} =  \expeMarkov{(x,y)}{\CPE[(x,y)]{\rmS_{\stopping{1}}}{\mcg_{\ntt}}} = \expeMarkov{(x,y)}{\CPE[(x,y)]{\rmS_{\ntt + \tT_{\msc} \circ \theta^{\ntt}}}{\mcg_{\ntt}}}  \\
      &= \expeMarkov{(x,y)}{\lambda_1^{-\ntt} \,  \, \CPE[(x,y)]{\lyapU(X_{\ntt + \stopping{1}}, Y_{\ntt + \stopping{1}})\lambda_1^{-\stopping{1}}}{\mcg_{\ntt}}} \\ &\leq \expeMarkov{(x,y)}{\lambda_1^{-\ntt} \, \, \expeMarkov{(X_{\ntt}, Y_{\ntt})}{\rmS_{\tT_{\msc}}}} \\
      &\leq \expeMarkov{(x,y)}{\lambda_1^{-\ntt} \, \, \expeMarkov{(X_{\ntt}, Y_{\ntt})}{\rmS_{\tT_{\msc}}\1_{\coint{0,+\infty}}(\tT_{\msc})}} \\ 
      &\leq  \expeMarkov{(x,y)}{\lambda_1^{-\ntt} \,  \,   \expeMarkov{(X_{\ntt}, Y_{\ntt})}{\rmS_0 + A_1 \sum_{k=0}^{\tT_{\msc} - 1} \lambda_1^{-(k+1)}\1_{\msc}(X_k, Y_k) } } \\
      &\leq \expeMarkov{(x,y)}{\lambda_1^{-\ntt}\lyapU(X_{\ntt}, Y_{\ntt}) } \leq \lyapU(x,y) + A_1 \lambda_1^{-\ntt} \ntt  \eqsp . \label{eq:comparison_theorem}
    \end{align}
Combining \eqref{eq:comparison_theorem} and the fact that for any $x,y \in \msy$, $\lyapU(x,y) \geq 1$, we obtain that 
\begin{equation}
  \expeMarkov{(x,y)}{\lambda_1^{-\stopping{1}}} \leq  \lyapU(x,y) + A_1 \lambda_1^{-\ntt} \ntt  \eqsp . \label{eq:expo_t1}
\end{equation}
We conclude by a straightforward recursion and using \eqref{eq:expo_t1}, the definition of $\stopping{m}$ \eqref{eq:def_stopping} for $m \geq 1$, the strong Markov property and the fact that for any $m \in \nsets$, $(X_{\stopping{m}}, Y_{\stopping{m}}) \in \msc$.
\end{proof}
\begin{proof}[Proof of \Cref{theo:quanti_v_alain}]
Let $x,y \in \msy$ and $n \in \nset$. By \Cref{propo:adapt_ahip}, \Cref{lem:bound_T_m}, \Cref{ass:metric}, the fact that $M_{\msc, \ntt} \geq 1$ and the Markov inequality, we have
  for any $m \in \nset$,
  \begin{align}
    \expeMarkov{(x,y)}{\distY(\X_n,\Y_n)} &\leq (1-\vareps)^{m}\distY(x,y)  + \probaMarkov{(x,y)}{\T_{\msc,\ntt}^{(m)} \geq n} \\ 
    &\leq (1-\vareps)^{m}\distY(x,y)  + \lambda_1^{n} \, \expeMarkov{(x,y)}{ \lambda_1^{-\T_{\msc,\ntt}^{(m)}}} \\
    &\leq (1-\vareps)^{m}\distY(x,y) + \lambda_1^n M_{\msc,\ntt}^{m} \Xibf(x,y,\ntt)\eqsp ,
  \end{align}
  where $\Xibf(x,y,\ntt)$ is given in \Cref{theo:quanti_v_alain}.
  Combining this result and \Cref{lem:bound_T_m}, we can conclude that   $    \expeMarkov{(x,y)}{\distY(\X_n,\Y_n)} \leq \rho^{n} ( M_{\msc,\ntt} \Xibf(x,y,\ntt) + \distY(x,y) )$
setting
\begin{equation}
  \label{eq:def_m_quant_bounds}
  m = \ceil{n  \log(\lambda_1)/\{\log(1-\vareps) -\log(M_{\msc,\ntt})\}} \eqsp. 
  \end{equation}  
  To show that $    \expeMarkov{(x,y)}{\distY(\X_n,\Y_n)} \leq \rho^{n/2} (1+\distY(x,y))  +  \lambda_1^{n/2} \Xibf(x,y,\ntt) $, first note that \Cref{propo:adapt_ahip} and \Cref{ass:metric} imply that for any $m \in \nset$,
  \begin{align}
    &\expeMarkov{(x,y)}{\distY(\X_n,\Y_n)} \\ &\leq (1-\vareps)^{m}\distY(x,y)  + \probaMarkov{(x,y)}{\T_{\msc,\ntt}^{(m)} - \T_{\msc,\ntt}^{(1)} \geq n/2} + \probaMarkov{(x,y)}{\T_{\msc,\ntt}^{(1)} \geq n/2} \\
    & \leq (1-\vareps)^{m}\distY(x,y)  + \lambda_1^{n/2}  \expeMarkov{(x,y)}{ \lambda_1^{-\T_{\msc,\ntt}^{(m)} +\T_{\msc,\ntt}^{(1)} }} + \lambda_1^{n/2}\expeMarkov{(x,y)}{ \lambda_1^{-\T_{\msc,\ntt}^{(1)} }}  \eqsp,
  \end{align}
  where we have used the Markov inequality in the last line. Combining this result and \Cref{lem:bound_T_m}, we can conclude that   $    \expeMarkov{(x,y)}{\distY(\X_n,\Y_n)} \leq \rho^{n/2} (1+\distY(x,y))  +  \lambda_1^{n/2}\,  \Xibf(x,y,\ntt) $
setting \begin{equation}
  \label{eq:def_m_quant_bounds_2}
  m = \ceil{n  \log(\lambda_1)/\{2\log(1-\vareps) -2\log(M_{\msc,\ntt})\}} \eqsp. 
  \end{equation}  

\end{proof}

\begin{proof}[Proof of \Cref{theo:quanti_v_alain_v_norm}]
  Let $x, y \in \msy$ and $n \in \nset$. Using \Cref{assum:drift_d}($\Kcoupling$), we obtain by recursion
  \begin{equation}
    \expeMarkov{(x,y)}{\lyapD(X_n, Y_n)} \leq \lambda_2^n \lyapD(x,y) + A_2 \sum_{k=0}^{n-1} \lambda_2^{n - 1 - k} \, \, \expeMarkov{(x,y)}{\distY(X_k,Y_k)} \eqsp . \label{eq:wass_1}
  \end{equation}
  Applying \Cref{theo:quanti_v_alain} we obtain
  \begin{align}
    &\sum_{k=0}^{n-1} \lambda_2^{n - 1 - k} \, \, \expeMarkov{(x,y)}{\distY(X_k,Y_k)}  \\
    &   \leq  \sum_{k=0}^{n-1} \lambda_2^{n - 1 - k} \min\parentheseDeux{\rho^{k} ( M_{\msc,\ntt}\Xibf(x,y,\ntt) + \distY(x,y)) ,\rho^{k/2} (1+\distY(x,y))  +  \lambda^{k/2} \Xibf(x,y,\ntt)}  \\
& \leq  \min\parentheseDeux{ n \trho^{n-1} (\distY(x,y) + \Xibf(x,y,\ntt)) ,n\trho^{n/2-1} (1+\distY(x,y))  +  n\tlambda^{n/2-1} \Xibf(x,y,\ntt)} \eqsp .
  \end{align}
  We conclude plugging this result in \eqref{eq:wass_1} and using that for any $n \in \nset$ and $t \in (0,1)$, $n t^{n/2} \leq 4 \log^{-1}(1/t) t^{n/4}$.
\end{proof}


\section{Minorization conditions for  functional autoregressive models}
\label{sec:mino_condition_AR}
In this section, we extend and complete the results of  \cite[Section 6]{durmus:moulines:2016} on 
functional autoregressive models. Let $\msx \in \mcbb(\rset^d)$ equipped with its trace $\sigma$-field $\mcx = \{ \msa \cap \msx \, : \, \msa \in \mcbb(\rset^d)\}$.   In fact, we consider a slightly more general class of models than \cite{durmus:moulines:2016}  which is associated with non-homogeneous Markov chains $(\Xar_k)_{k \in \nset}$ with state space $(\msx,\mcx)$ defined for $k \geq 0$ by 
\begin{equation}
\label{eq:def:functio_AR}
  \Xar_{k+1} = \Pi\parenthese{\funreg_{k+1}(\Xar_k) + \sigma_{k+1} Z_{k+1}} \eqsp,
\end{equation}
where $\Pi$ is a measurable function from $\rset^d$ to $\msx$,
$\sequenceg{\funreg}[k][ 1]$ is a sequence of measurable functions
from $\msx$ to $\rset^d$, $\sequenceg{\sigma}[k][1]$ is a sequence of
positive real numbers and $\sequenceg{Z}[k][ 1]$ is a sequence of
\iid~$d$ dimensional standard Gaussian random variables. We assume
that $\Pi$ satisfies \Cref{ass:non_expansive_Pi}.  We also assume some
Lipschitz regularity on the operator $\funreg_k$ for any
$k \in \nsets$
\begin{assumptionAR}[$\msa$]
  \label{assum:strict_contraction_AR}
  For all $k \geq 1$ there exists $\kappar_k \in \rset$ such that  for all $(x,y) \in \msa$,
  \[\norm[2]{\funreg_k(x) - \funreg_k(y)} \leq (1+\kappar_k)\norm[2]{x-y} \eqsp .\]
\end{assumptionAR}
The sequence $\sequence{\Xar}[k][\nset]$ is an inhomogeneous
Markov chain associated with the family of   Markov kernels
$\sequenceg{\Par}[k][1]$ on $(\rset^d, \mcbb(\rset^d))$ given for all $x
\in \rset^d$ and $\eventA \in \rset^d$ by
\begin{equation}
\label{eq:def_markov_kernel_AR_0}
  \Par_k(x,\eventA) = \frac{1}{( 2 \pi \sigmakD)^{d/2}}\int_{\Pi^{-1}(\eventA)}\exp\parenthese{-\norm[2]{y-\funreg_k(x)}/(2\sigmakD)} \rmd y\eqsp.
\end{equation}
We denote for all $n \geq 1$ by $\Qar_n$ the marginal distribution of
$\Xar_n$ given by $ \Qar_n = \Par_1 \cdots \Par_n$.  To obtain an
upper bound of $\tvnorm{\updelta_x \Qar_n - \updelta_y \Qar_n}$ for
any $x,y \in \rset^d$, $n \in \nsets$, we introduce a Markov coupling
$(\Xar_k,\Yar_k)_{k \in \nset}$ such that for any $n \in \nsets$, the
distribution of $\Xar_n$ and $\Yar_n$ are $\updelta_x \Qar_n$ and
$\updelta_x \Qar_n$ respectively, exactly as we have introduced in the
homogeneous setting the Markov coupling with kernel $\Kker_{\gamma}$
defined by \eqref{eq:coupling_form} for $\Rker_{\gamma}$ defined in
\eqref{eq:kernel_langevin}. For completeness and readability, we
recall the construction of $(\Xar_k,\Yar_k)_{k \in \nset}$.  For all
$k \in \nsets$ and $x,y,z \in \rset^d$, define
\begin{equation} \label{eq:e_k_def}
  \rme_k(x,y) = \begin{cases} \rmE_k(x,y) / \| \rmE_k(x,y) \| & \text{if } \rmE_k(x,y) \neq 0 \\
    0 & \text{otherwise}\end{cases}\eqsp , \qquad \rmE_k(x,y) = \funreg_k(y) - \funreg_k(x) \eqsp ,
\end{equation}
\begin{align}
  \mathcal{S}_{k}(x,y,z) &= \funreg_{k}(y) + (\Id - 2\rme_k(x,y)\rme_k(x,y)^{\top})z \eqsp , \\
  p_{k}(x,y,z) &= 1 \wedge \frac{\vphibf_{\sigma_{k+1}^2}(\| \rmE_k(x,y) \| - \langle \rme_k(x,y),z \rangle)}{\vphibf_{\sigma_{k+1}^2}(\langle \rme_k(x,y), z \rangle)} \eqsp , \label{eq:success_prob_0}
\end{align}
where $\vphibf_{\sigma_{k}^2}$ is the one-dimensional zero mean Gaussian distribution function with variance $\sigma_{k}^2$.
Let $(U_k)_{k \in \nsets}$ be a sequence of \iid~uniform random variables on $\ccint{0,1}$ and define the Markov chain $(\Xar_k,\Yar_k)_{k \in \nset}$ starting from $(\Xar_0, \Yar_0) \in \msx^2$ by the  recursion: for any $k \geq 0$
\begin{equation}
  \label{eq:def_project_x_Y_1_ar}
\begin{aligned}
  &\Xart_{k+1} = \funreg_{k+1}(\Xar_k) + \sigma_{k+1} Z_{k+1} \eqsp , \\
  &\Yart_{k+1} =  \begin{cases} \Xart_{k+1} & \text{if} \ \funreg_{k+1}(\Xar_{k}) = \funreg_{k+1}(\Yar_{k}) \eqsp ; \\ \War_{k+1}\Xart_{k+1} + (1-\War_{k+1})\mathcal{S}_{k+1}(\Xar_k,\Yar_k,\sigma_{k+1}Z_{k+1}) & \text{otherwise} \eqsp , \end{cases} 
\end{aligned}
\end{equation}
where $  \War_{k+1} = \1_{\ocint{-\infty, 0}}(U_{k+1} - p_{k+1}(\Xar_k, \Yar_k,\sigma_{k+1}Z_{k+1}))$ and finally set 
\begin{equation}
  (\Xar_{k+1}, \Yar_{k+1}) = (\Pi(\Xart_{k+1}), \Pi(\Yart_{k+1})) \eqsp .
  \label{eq:mc_def_ar}
\end{equation}
For any $k \in \nsets$, marginally,  the distribution of $\Xar_{k+1}$ given $\Xar_k$ is $\Par_{k+1}(\Xar_k, \cdot)$, and it is well-know (see \eg \  \cite[Section 3.3]{bubley:dyer:jerrum:1998}) that $\Yart_{k+1}$ and $\Tg(\Yar_k) + \sigma_{k+1} Z_{k+1}$ have the same distribution given $Y_k$, and therefore the distribution of $Y_{k+1}$ given $Y_k$ is  $\Par_{k+1}(Y_k, \cdot)$. As a result for any $(x,y) \in \msx^2$ and $n \in \nsets$, $(\Xar_n,\Yar_n)$ with $(\Xar_0,\Yar_0) = (x,y)$ is a coupling between $\updelta_x \Qar_n$ and $\updelta_y \Qar_n$. Therefore, we obtain that $\tvnormLigne{\updelta_x \Qar_n - \updelta_y \Qar_n} \leq \probaLigne{\Xar_n \neq \Yar_n}$. Therefore to get an upper bound on $\tvnormLigne{\updelta_x \Qar_n - \updelta_y \Qar_n}$, it is sufficient to obtain a bound on $\probaLigne{\Xar_n \neq \Yar_n}$ which is a simple consequence of the following more general result. 

\begin{theorem}
  \label{theo:strict_convergence_AR}
  Let $\msa \in \mcbb(\rset^{2d})$ and assume \tup{\Cref{ass:non_expansive_Pi}} and \Cref{assum:strict_contraction_AR}$(\msa)$.  Let $(\Xar_k,\Yar_k)_{k \in \nset}$ be defined by \eqref{eq:mc_def_ar}, with  $(\Xar_0,\Yar_0) = (x,y) \in \msa$. Then for any $n \in \nsets$,
    \begin{multline}
\probaMarkov{}{\Xar_n \neq \Yar_n \text{ and for any $k \in \{1,\ldots,n-1\}$,} \, (\Xar_k,  \Yar_k) \in \msa^2}\\  \leq \1_{\Deltar^{\complem}}(x,y) \defEns{1-2\Phibf\parenthese{-\frac{\norm{x-y} }{2(\Xiar_n)^{1/2}}}} \eqsp,
  \end{multline}
 where $\Phibf$ is the cumulative distribution function of the standard normal distribution on $\rset$ and $\sequenceg{\Xiar}[i][1]$ is defined for all $k \geq 1$ by $\Xiar_{k} = \sum_{i=1}^{k} \{ \sigma_{i}^2  /  \prod_{j=1}^{i}(1+ \kappar_{j})\} $.
\end{theorem}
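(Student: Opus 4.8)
The plan is to reduce the left‑hand event to the event that a one‑dimensional process — the signed gap of the reflection coupling, rescaled by the contraction factors — stays positive for $n$ steps, and then to bound that probability by the analogous quantity for a Brownian motion killed at the origin, which is exactly the right‑hand side. First I would dispose of the diagonal and identify the relevant branch of the coupling: if $x=y$ then $\funreg_1(\Xar_0)=\funreg_1(\Yar_0)$, hence $\Yart_1=\Xart_1$ and $\Xar_k=\Yar_k$ for all $k$ by induction, so the left‑hand side vanishes, as does $\1_{\Deltar^{\complem}}(x,x)$. So assume $(x,y)\notin\Deltar$ and let $\mathcal A_n$ be the event in the statement. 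On $\mathcal A_n$ the two chains never couple before time $n$ (once $\Xar_k=\Yar_k$ they agree forever, by the same argument), so for every $k\in\{1,\dots,n\}$ one has $\funreg_k(\Xar_{k-1})\neq\funreg_k(\Yar_{k-1})$ and $\War_k=0$ (if $\War_k=1$ then $\Yart_k=\Xart_k$). From \eqref{eq:def_project_x_Y_1_ar} and \eqref{eq:e_k_def}, on $\mathcal A_n$ and for such $k$, $\Xart_k-\Yart_k=\bigl(2\sigma_k\langle\rme_k,Z_k\rangle-\norm{\rmE_k}\bigr)\rme_k$; writing $\xi_k=\langle\rme_k,Z_k\rangle$, a standard Gaussian independent of the past, \Cref{ass:non_expansive_Pi} gives $\norm{\Xar_k-\Yar_k}\leq\bigl|\,\norm{\rmE_k}-2\sigma_k\xi_k\,\bigr|$, and $\War_k=0$ forces $\xi_k<\norm{\rmE_k}/(2\sigma_k)$ (else $p_k=1$ by \eqref{eq:success_prob_0}), so $s_k:=\norm{\rmE_k}-2\sigma_k\xi_k>0$ and the absolute value may be dropped.

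Next I would compute the one‑step law and rescale. A direct Gaussian calculation from \eqref{eq:success_prob_0}, using $\vphibf_{\sigma_k^2}(\norm{\rmE_k}-\sigma_k\xi)=\vphibf_{\sigma_k^2}(\sigma_k\xi)\,\mathrm{e}^{(2\norm{\rmE_k}\sigma_k\xi-\norm{\rmE_k}^2)/(2\sigma_k^2)}$, shows that conditionally on the past the law of $s_k$ on $\{\War_k=0\}$ has sub‑probability density $s\mapsto\bigl[\vphibf_{4\sigma_k^2}(s-\norm{\rmE_k})-\vphibf_{4\sigma_k^2}(s+\norm{\rmE_k})\bigr]\1_{\{s>0\}}$, i.e.\ the image‑charge density: the transition density over a time interval of length $4\sigma_k^2$ of a Brownian motion killed at $0$, started from $\norm{\rmE_k}$. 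Since $(x,y)\in\msa$ and $(\Xar_k,\Yar_k)\in\msa$ for $k\leq n-1$ on $\mathcal A_n$, \Cref{assum:strict_contraction_AR}$(\msa)$ gives $\norm{\rmE_{k+1}}\leq(1+\kappar_{k+1})^{1/2}\norm{\Xar_k-\Yar_k}\leq(1+\kappar_{k+1})^{1/2}s_k$ and $\norm{\rmE_1}\leq(1+\kappar_1)^{1/2}\norm{x-y}$. Putting $c_k=\prod_{j=1}^k(1+\kappar_j)$, $\hat\sigma_k=\sigma_k/c_k^{1/2}$ and $r_k=\norm{\rmE_k}/c_k^{1/2}$ turns these into $r_1\leq\norm{x-y}$, $r_{k+1}\leq r_k-2\hat\sigma_k\xi_k$, while the density above, written in the rescaled variable $s/c_k^{1/2}$, retains the image form with variance $4\hat\sigma_k^2$.

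Finally I would compare with an idealized chain and invoke the reflection principle. On the same probability space, define $(g_k)_{k\geq1}$ with $g_1=\norm{x-y}$, driven by the same $\xi_k,U_k$, performing one‑dimensional reflection coupling with noise scales $\hat\sigma_k$ and no contraction or projection: it ``survives'' step $k$ iff $\xi_k<g_k/(2\hat\sigma_k)$ and $U_k>p_k(g_k,\xi_k)$, in which case $g_{k+1}=g_k-2\hat\sigma_k\xi_k>0$. Using that by \eqref{eq:success_prob_0} the acceptance probability $p_k(g,\xi)=1\wedge\mathrm{e}^{(2g\hat\sigma_k\xi-g^2)/(2\hat\sigma_k^2)}$ is nonincreasing in $g$ on $\{\xi<g/(2\hat\sigma_k)\}$ and the threshold $g/(2\hat\sigma_k)$ is nondecreasing in $g$, together with the pathwise bound $r_k\leq g_k$ (which propagates, as $r_{k+1}\leq r_k-2\hat\sigma_k\xi_k\leq g_k-2\hat\sigma_k\xi_k=g_{k+1}$ whenever both survive), one gets: if the idealized chain survives step $k$, so does the true coupling. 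Hence $\mathcal A_n$ implies $g_k>0$ for $k=1,\dots,n$, so $\PP(\mathcal A_n)\leq\PP(g_k>0,\ k=1,\dots,n)$. By the previous step the one‑step survival sub‑kernel of $(g_k)$ is $q_{4\hat\sigma_k^2}$, where $q_t$ is the transition kernel of a one‑dimensional Brownian motion killed on hitting $0$; by the Markov property of the killed motion its $n$‑fold composition is $q_T$ with $T=\sum_{k=1}^n4\hat\sigma_k^2=4\sum_{k=1}^n\sigma_k^2/c_k=4\Xiar_n$, whose total mass started at $\norm{x-y}$ is the probability that a Brownian motion from $\norm{x-y}$ avoids $0$ on $[0,T]$, namely $1-2\Phibf\bigl(-\norm{x-y}/\sqrt{T}\bigr)=1-2\Phibf\bigl(-\norm{x-y}/(2\Xiar_n^{1/2})\bigr)$; combined with the diagonal case this is the claimed bound.

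The hard part will be the last step: making the monotone coupling of the true and idealized chains fully rigorous — checking step by step, with the same randomness, that survival of the idealized chain forces survival of the true coupling, while carrying the rescaling and the non‑expansiveness of $\Pi$ — and then correctly identifying the composition of the rescaled image‑charge kernels (with variances $4\sigma_k^2/c_k$) with the time‑$4\Xiar_n$ transition of Brownian motion killed at $0$. The one‑step ingredients (the image‑charge form of the non‑coupling density, and the avoid‑zero formula via the reflection principle) are routine Gaussian computations once the indices are fixed.
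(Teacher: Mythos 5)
Your route is correct in substance but genuinely different from the paper's. The paper argues by backward induction on $k$: it computes the exact conditional coupling probability at the last step, $2\Phibf(-\norm{\rmE_n}/(2\sigma_n))$, and then propagates the function $u\mapsto 1-2\Phibf(-u/(2\Xi^{1/2}))$ through conditional expectations, using non-expansiveness of $\Pi$, the contraction hypothesis on $\funreg_k$ to absorb the factors $(1+\kappar_k)$ into the time parameter, and a Gaussian convolution lemma (\cite[Lemma 20]{durmus:moulines:2016}) to merge each increment's variance. Your forward argument---rescale by $c_k=\prod_{j\le k}(1+\kappar_j)$, dominate the rescaled gap $r_k$ by a one-dimensional reflection coupling with noise scales $\hat\sigma_k=\sigma_k/c_k^{1/2}$, identify its one-step survival sub-kernel with the image-charge (killed Brownian) kernel at time $4\hat\sigma_k^2$, and conclude by Chapman--Kolmogorov plus the reflection principle---encodes exactly the same semigroup identity probabilistically: the paper's Lemma~20 step is the analytic form of your composition of killed kernels. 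Your version buys an explicit interpretation of the bound (domination by Brownian motion killed at $0$, run for time $4\Xiar_n$) at the cost of constructing and controlling an auxiliary chain; the paper's induction avoids any auxiliary chain but leans on the external Gaussian lemma. Your one-step computations (the sub-density $\vphibf_{4\sigma_k^2}(s-\norm{\rmE_k})-\vphibf_{4\sigma_k^2}(s+\norm{\rmE_k})$ on the non-coupling event, the invariance of the acceptance ratio and of the threshold under the rescaling, and the recursion $r_{k+1}\le r_k-2\hat\sigma_k\xi_k$ on the event of the theorem) all check out.

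Two points must be repaired in the write-up. First, the key comparison sentence is stated backwards: what your monotonicity facts give, and what your conclusion requires, is that non-coupling of the \emph{true} chain at step $k$, together with $r_k\le g_k$, forces survival of the idealized chain, since then $\xi_k<r_k/(2\hat\sigma_k)\le g_k/(2\hat\sigma_k)$ and $U_k>p_k(r_k,\xi_k)\ge p_k(g_k,\xi_k)$, after which $g_{k+1}=g_k-2\hat\sigma_k\xi_k\ge r_{k+1}$ closes the induction. As literally written (``if the idealized chain survives step $k$, so does the true coupling'') the implication is false---the true chain has the smaller gap, hence the larger acceptance probability, and couples more easily---and it would yield the inequality between the two survival probabilities in the wrong direction; the sentence that follows (``hence $\mathcal{A}_n$ implies $g_k>0$\dots'') is the conclusion of the corrected implication, not of the one you wrote. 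Second, for the idealized chain's survival probability to equal the total mass of the killed heat kernel at time $4\Xiar_n$ you need its drivers $(\xi_k,U_k)$ to be i.i.d.\ with $\xi_k$ standard normal for \emph{every} $k$; since $\xi_k=\langle\rme_k,Z_k\rangle$ degenerates to $0$ on $\{\rmE_k=0\}$, define $\xi_k$ there through an arbitrary fixed unit vector (this changes nothing on $\mathcal{A}_n$, where $\rmE_k\neq0$ and $\War_k=0$). With these repairs the argument is complete and delivers the stated bound.
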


\begin{proof}
  Let $(\mcfar_k)_{k \in \nset}$ be the filtration associated to $(\Xar_k, \Yar_k)_{k \in \nset}$.  
Denote for any $k \in \nset$, 
\begin{equation}
  \scrA_k = \bigcap_{i=0}^k \{ (\Xar_i,\Yar_i) \in \msa\} \eqsp, \qquad \scrA_{-1} = \scrA_{0} \eqsp,
\end{equation}
 and for all $k_1,k_2 \in \nset^*$,  $k_1 \leq k_2$, $\Xiar_{k_1,k_2} = \sum_{i=k_1}^{k_2} \{ \sigma_{i}^2  /  \prod_{j=k_1}^{i}(1+ \kappar_{j})\}$.
Let $n \geq 1$ and $(x,y) \in \msa^2$. We show by backward induction that for all $k \in \defEns{0,\cdots,n-1}$,
\begin{equation}
\label{eq:proof_convergence_AR_induction_strict}
  \PP(\{\Xar_n \not = \Yar_n\} \cap \scrA_{n-1} )  \leq \expe{
\1_{\Deltar^{\complem}}(\Xar_{k},\Yar_{k}) \1_{\scrA_{k-1}} \parentheseDeux{1-2\Phibf\defEns{-\frac{\norm{\Xar_{k}-\Yar_{k}} }{2 \parenthese{\Xiar_{k+1,n}}^{1/2}}}}
} \eqsp.
\end{equation}
Note that the inequality for $k=0$ will conclude the proof.
Using by \eqref{eq:def_project_x_Y_1_ar} that $\Xart_n =  \Yart_n$ if $\Xar_{n-1} =  \Yar_{n-1}$ or $W_{n} = \1_{\ocint{-\infty, 0}}(U_{n} - p_{n}(\Xar_{n-1}, \Yar_{n-1}, \sigma_{n}Z_{n})) = 1$, where $p_{n}$ is defined by \eqref{eq:success_prob_0}, and $(U_{n},Z_{n})$ are independent random variables independent of $\mcfar_{n-1}$, we obtain on $\{\Xar_{n-1} \neq   \Yar_{n-1}\}$
\begin{multline}
  \CPE{\1_{\Delta_{\msx}}(\Xart_n, \Yart_n)}{\mcfar_{n-1}} =    \CPE{p_{n}(\Xar_{n-1},\Yar_{n-1}, \sigma_n Z_{n})}{\mcfar_{n-1}}\\
  =  2\Phi\defEns{- \norm{(2\sigma_n)^{-1}\rmE_{n}(\Xar_{n-1}, \Yar_{n-1})}} \label{eq:proba_coupling} \eqsp .
\end{multline}
Since $\{\Xar_n \not = \Yar_n\} \subset \{ \Xart_n \neq \Yart_n\} \subset \{\Xar_{n-1} \not = \Yar_{n-1}\} $ by \eqref{eq:mc_def_ar} and \eqref{eq:def_project_x_Y_1_ar}, we get 
\begin{align}
&\probaMarkov{}{\{ \Xar_n \not = \Yar_n \} \cap \scrA_{n-1}} \leq  \expeMarkov{}{
  \1_{\Deltar^{\complem}}(\Xar_{n-1},\Yar_{n-1}) \1_{\scrA_{n-1}} \CPE{\1_{\Deltar^{\complem}}(\Xart_{n},\Yart_{n})}{\mcfar_{n-1}}}
\\
 & \qquad \qquad \qquad  = \expeMarkov{}{
\1_{\Deltar^{\complem}}(\Xar_{n-1},\Yar_{n-1}) \1_{\scrA_{n-1}} \parentheseDeux{1-2\Phibf\defEns{-\norm{(2\sigma_n)^{-1}\rmE_{n}(\Xar_{n-1},\Yar_{n-1})}}}} \eqsp ,
\end{align}
Using that $(\Xar_{n-1}, \Yar_{n-1}) \in \msa^2$ on $\scrA_{n-1}$, \Cref{assum:strict_contraction_AR}($\msa$) and 
\eqref{eq:e_k_def}, we obtain that
\begin{equation}
  \normLigne{\rmE_{n}(\Xar_{n-1},\Yar_{n-1})}^2 \leq (1 + \kappar_{n})
\normLigne{\Xar_{n-1} - \Yar_{n-1}}^2 \eqsp,
\end{equation}
showing \eqref{eq:proof_convergence_AR_induction_strict} holds for
$k=n-1$ since $\scrA_{n-2} \subset \scrA_{n-1}$.
Assume that \eqref{eq:proof_convergence_AR_induction_strict} holds for $k \in \{1,\ldots,n-1\}$.
On $\defEnsLigne{\Xart_{k} \not =
  \Yart_{k}}$, we have
\begin{equation}
  \norm{\Xart_{k} - \Yart_{k}} = \abs{-\norm{\rmE_{k}(\Xar_{k-1},\Yar_{k-1})} + 2 \sigma_{k}
  \rme_{k}(\Xar_{k-1} , \Yar_{k-1})^{\transp} Z_{k}} \eqsp,
\end{equation}
which implies by \eqref{eq:mc_def_ar} and since $\Pi$ is non expansive
by \Cref{ass:non_expansive_Pi}
\begin{align}
 &\1_{\Deltar^{\operatorname{c}}}(\Xar_{k},\Yar_{k})
   \parentheseDeux{1-2\Phibf\defEns{-\frac{\norm{\Xar_{k}-\Yar_{k}}}{2 (\Xiar_{k+1,n})^{1/2}}}} \\
& \phantom{aaaaa} \leq \1_{\Deltar^{\operatorname{c}}}(\Xar_{k},\Yar_{k})
\parentheseDeux{1-2\Phibf\defEns{-\frac{\norm{\Xart_{k}-\Yart_{k}}}{2 (\Xiar_{k+1,n})^{1/2}}}}
\\
\nonumber
& \phantom{aaaaa}\leq  \1_{\Deltar^{\operatorname{c}}}(\Xar_{k},\Yar_{k})
\parentheseDeux{1-2\Phibf\defEns{-\frac{\abs{
 2  \sigma_k
  \rme_{k}( \Xar_{k-1} , \Yar_{k-1})^{\transp} Z_{k}-\norm{\rmE_{k}(\Xar_{k-1},\Yar_{k-1})}
}}{2 (\Xiar_{k+1,n})^{1/2}}}} \eqsp.
\end{align}
Since $Z_{k}$ is independent of $\mcfar_k$, $ \sigma_k \rme_k( \Xar_{k-1} , \Xar_{k-1})^{\transp} Z_{k}$ is a real  Gaussian random variable with zero mean and  variance $\sigma_{k}^2$. Therefore by \cite[Lemma 20]{durmus:moulines:2016} and since $\scrA_{k-1}$ is $\mcfar_{k-1}$-measurable, we get
\begin{multline}
\CPE{
\1_{\Deltar^{\complem}}(\Xar_{k},\Yar_{k}) \1_{\scrA_{k-1}}
\parentheseDeux{1-2\Phibf\defEns{-\frac{\norm{\Xar_{k}-\Yar_{k}} }{2 (\Xiar_{k+1,n})^{1/2}}}}
}{\mcfar_{k-1}}
\\
\leq \1_{\scrA_{k-1}} \1_{\Deltar^{\complem}}(\Xar_{k-1},\Yar_{k-1}) \parentheseDeux{1- 2 \Phibf\defEns{-\frac{\norm{\rmE_k(\Xar_{k-1},\Yar_{k-1})}}{2 \parenthese{ \sigma_{k} +  \Xiar_{k+1,n}}^{1/2}}}} \eqsp.
\end{multline}
Since \Cref{assum:lip_op}$(\msa)$ implies that
$\normLigne{\rmE_k(\Xar_{k-1},\Yar_{k-1})}^2 \leq (1 +
\kappar_{k-1})\norm{\Xar_{k-1}-\Yar_{k-1}}^2$ on $\scrA_{k-1}$ and
$\scrA_{k-2} \subset \scrA_{k-1}$ concludes the induction of
\eqref{eq:proof_convergence_AR_induction_strict}.
\end{proof}


\section{Quantitative convergence results based on \protect{\cite{douc:moulines:priouret:soulier:2018, douc:moulines:rosenthal:2004}}}
\label{sec:quant-conv-results}
We start by recalling the following lemma from \cite{douc:moulines:priouret:soulier:2018} which is inspired from the results of \cite{douc:moulines:rosenthal:2004}.

\begin{lemma}[\protect{\cite[Lemma 19.4.2]{douc:moulines:priouret:soulier:2018}}]
\label{lem:eric_book}
  Let $(\msy, \mcy)$ be a measurable space and $\Rcoupling$ be a Markov kernel over $(\msy, \mcy)$. Let $\Qcoupling$ be a Markov coupling kernel for $\Rcoupling$. Assume there exist $\msc \in \mcy^{\otimes 2}$, $M \geq 0$, a measurable function $\VlyapD : \msy \times \msy \to \coint{1,+\infty}$, $\lambda \in \coint{0,1}$ and $c \geq 0$ such that for any $x,y \in \msy$,
  \begin{equation}
    \label{eq:drift_precise}
    \Qcoupling \VlyapD(x,y) \leq \lambda \VlyapD(x,y) \1_{\msc^{\complementary}}(x,y) + c \1_{\msc}(x,y) \eqsp.
  \end{equation}
  In addition, assume that there exists $\vareps >0$ such that for any $(x,y) \in \msc$,
  \begin{equation}
    \Qcoupling((x, y), \Delta^{\complementary}_{\msy}) \le 1 - \vareps\eqsp,
  \end{equation}
  where $\Delta_{\msy} = \ensemble{(y,y)}{y \in \msy}$. 
  Then there exist $\rho \in \coint{0,1}$ and $C \geq 0$ such that for any $x,y \in \mcy$ and $n \in \nsets$
  \begin{equation}
    \int_{\msy \times \msy} \1_{\Delta_{\msy}}(\tilde{x},\tilde{y})\lyap(\tilde{x},\tilde{y}) \Qcoupling^{n}((x,y) , \rmd (\tilde{x}, \tilde{y})) \leq C \rho^n \VlyapD(x,y) \eqsp ,
  \end{equation}
  where 
    \begin{equation}
      \begin{aligned}        
         C &= 2(1 + \left. c \middle/ \defEns{(1-\vareps)(1- \lambda)}) \right. \eqsp , \\
           \log(\rho) &= \left. \defEns{\log(1-\vareps) \log(\lambda)} \middle/ \defEns{\log(1-\vareps) + \log(\lambda) - \log(c) } \right .\eqsp .
          \end{aligned} 
          \end{equation}
\end{lemma}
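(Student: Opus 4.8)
The proof is the Douc--Moulines--Rosenthal renewal argument (see \cite{douc:moulines:rosenthal:2004} and \cite[Ch. 19]{douc:moulines:priouret:soulier:2018}); I record the strategy here, as it runs parallel to \Cref{propo:adapt_ahip}--\Cref{lem:bound_T_m}--\Cref{theo:quanti_v_alain} with minorization lag $\ntt=1$. First I would pass to the canonical Markov chain $(\X_n,\Y_n)_{n\in\nset}$ associated with $\Qcoupling$, with distribution $\probaMarkovDD{(x,y)}$ and expectation $\expeMarkovDD{(x,y)}$ when $(\X_0,\Y_0)=(x,y)$, so that the left-hand side of the claimed inequality is $\expeMarkov{(x,y)}{\1_{\Delta_{\msy}^{\complementary}}(\X_n,\Y_n)\VlyapD(\X_n,\Y_n)}$ (the indicator being on $\Delta_{\msy}^{\complementary}$, so that the integrand is the cost $\bfc$ of \eqref{eq:def_wbf}). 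As is the case for all coupling kernels used in this paper, $\Delta_{\msy}$ is absorbing for $\Qcoupling$ and $\Delta_{\msy}\subset\msc$, hence $n\mapsto\1_{\Delta_{\msy}^{\complementary}}(\X_n,\Y_n)$ is nonincreasing and equals $\1_{\coint{\tau,\plusinfty}}(n)$ for the coupling time $\tau=\inf\{k\colon(\X_k,\Y_k)\in\Delta_{\msy}\}$. I would also note the soft route: \Cref{theo:quanti_v_alain_v_norm} (with $\Pker\leftarrow\Rcoupling$, $\Kcoupling\leftarrow\Qcoupling$, $\distance\leftarrow\1_{\Delta_{\msy}^{\complementary}}$, which satisfies \Cref{ass:metric}, $\lyapU\leftarrow\VlyapD$, $\lyapD\leftarrow\1_{\Delta_{\msy}^{\complementary}}\VlyapD$ and $(\lambda_1,A_1)=(\lambda_2,A_2)\leftarrow(\lambda,c)$ — all the hypotheses \Cref{ass:kernel_coupling}($\Qcoupling$) and \Cref{assum:drift_d}($\Qcoupling$) being immediate from \eqref{eq:drift_precise} and the minorization on $\msc$) already yields a bound of the required form; only the sharp values of $C$ and $\rho$ force the direct argument, since that route produces $M_{\msc,1}=\sup_{\msc}\VlyapD+c/\lambda$ where the direct proof has $c/\lambda$.

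For the direct proof I would introduce the delayed return times to $\msc$, $\sigma_1=\inf\{k\geq0\colon(\X_k,\Y_k)\in\msc\}$ and $\sigma_{m+1}=\inf\{k\geq\sigma_m+1\colon(\X_k,\Y_k)\in\msc\}$. Condition \eqref{eq:drift_precise} reads $\Qcoupling\VlyapD\leq\lambda\VlyapD$ on $\msc^{\complementary}$ and $\Qcoupling\VlyapD\leq c$ on $\msc$, so $(\lambda^{-k}\VlyapD(\X_k,\Y_k))_k$ is a supermartingale along every excursion away from $\msc$; running the comparison-theorem argument of \Cref{lem:bound_T_m} (\cite[Theorem 4.3.1]{douc:moulines:priouret:soulier:2018}) then gives $\expeMarkov{(x,y)}{\lambda^{-\sigma_1}\VlyapD(\X_{\sigma_1},\Y_{\sigma_1})}\leq\VlyapD(x,y)$ and, by the strong Markov property together with $\Qcoupling\VlyapD\leq c$ on $\msc$, $\expeMarkov{(x,y)}{\lambda^{-\sigma_m}\VlyapD(\X_{\sigma_m},\Y_{\sigma_m})}\leq(c/\lambda)^{m-1}\VlyapD(x,y)$ for all $m\in\nsets$; in particular each $\sigma_m$ is $\probaMarkovDD{(x,y)}$-a.s. finite and, since $\VlyapD\geq1$, Markov's inequality yields $\probaMarkovDD{(x,y)}(\sigma_m>n)\leq\lambda^n(c/\lambda)^{m-1}\VlyapD(x,y)$.

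Then I would bring in the minorization: by the strong Markov property, whenever $(\X_{\sigma_j},\Y_{\sigma_j})\in\msc$ and $\tau>\sigma_j$, the single step at time $\sigma_j$ lands in $\Delta_{\msy}$ with probability at least $\vareps$, and these attempts, made at the distinct times $\sigma_1<\sigma_2<\cdots$, are each resolved before the next return. Consequently $\{\tau>n\}$ forces all attempts among $\{j\colon\sigma_j<n\}$ to have failed, whence $\probaMarkovDD{(x,y)}(\tau>n)\leq(1-\vareps)^m+\probaMarkovDD{(x,y)}(\sigma_m>n)$ for every $m\in\nsets$; combining this with the supermartingale control of $\VlyapD$ on the last excursion before time $n$ — which is exactly what lets one replace the bare coupling probability by the $\VlyapD$-weighted expectation, as in the passage from \Cref{propo:adapt_ahip} to \Cref{theo:quanti_v_alain} — leads to $\expeMarkov{(x,y)}{\1_{\coint{\tau,\plusinfty}}(n)\VlyapD(\X_n,\Y_n)}\leq\big[2(1-\vareps)^m+2\lambda^n(c/\lambda)^{m-1}\big]\VlyapD(x,y)$, up to the renewal-series factors $(1-\vareps)^{-1}$ and $(1-\lambda)^{-1}$ which assemble into the stated $C=2(1+c/[(1-\vareps)(1-\lambda)])$. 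Optimising over the free integer $m\approx n\log\lambda/[\log(1-\vareps)+\log\lambda-\log c]$ balances the two terms and produces precisely the stated rate, $\log\rho=\log(1-\vareps)\log\lambda/[\log(1-\vareps)+\log\lambda-\log c]$ (here $(1-\vareps)\lambda<1\leq c$ makes the denominator negative, so $\rho\in(\lambda\vee(1-\vareps),1)$). The main obstacle is this last bookkeeping step: carrying the Lyapunov weight $\VlyapD(\X_n,\Y_n)$ through the renewal decomposition — rather than merely the coupling probability $\probaMarkovDD{(x,y)}(\tau>n)$ — while keeping the constant as clean as claimed, which requires pairing the supermartingale inequality on the terminal excursion with the two exponential-moment bounds above; it is also why \Cref{theo:quanti_v_alain} cannot be quoted verbatim (it is stated for a bounded semi-metric, whereas the weight $\1_{\Delta_{\msy}^{\complementary}}\VlyapD$ here is unbounded, which is precisely what \Cref{theo:quanti_v_alain_v_norm} and \Cref{assum:drift_d} were designed to accommodate).
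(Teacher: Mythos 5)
The paper contains no proof of this lemma: it is imported verbatim as a citation of \cite[Lemma 19.4.2]{douc:moulines:priouret:soulier:2018}, so there is no in-paper argument to compare against; the closest internal analogue is the self-contained machinery of \Cref{sec:quant-bounds-geom} (\Cref{propo:adapt_ahip}, \Cref{lem:bound_T_m}, \Cref{theo:quanti_v_alain_v_norm}). Your sketch follows exactly the drift-plus-minorization renewal route of \cite{douc:moulines:rosenthal:2004} and of the cited book, and its quantitative ingredients are sound: the exponential-moment bounds for the delayed return times to $\msc$, namely $\expeMarkov{(x,y)}{\lambda^{-\sigma_1}\VlyapD(X_{\sigma_1},Y_{\sigma_1})}\leq \VlyapD(x,y)$ and $\expeMarkov{(x,y)}{\lambda^{-\sigma_m}\VlyapD(X_{\sigma_m},Y_{\sigma_m})}\leq (c/\lambda)^{m-1}\VlyapD(x,y)$, do follow from \eqref{eq:drift_precise}, the comparison theorem and the strong Markov property (using $\Qcoupling\VlyapD\leq c$ on $\msc$ and $\sigma_{j+1}\geq\sigma_j+1$); the $(1-\vareps)^m$ failure estimate at the first $m$ returns is correct; and your observation that \Cref{theo:quanti_v_alain_v_norm} with $\ntt=1$ already gives a bound of this shape, only with rate $\trho^{n/4}$ and larger constants, is accurate.

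Two caveats. First, you quietly add the hypothesis that $\Delta_{\msy}$ is absorbing for $\Qcoupling$ (``as is the case for all coupling kernels used in this paper''). This is not cosmetic: the conclusion must be read with $\1_{\Delta_{\msy}^{\complementary}}$ (as you do; with $\1_{\Delta_{\msy}}$ the left-hand side tends to the coupled mass and cannot decay), and in that reading the statement is false under the printed hypotheses alone. For instance, on $\msy=\{0,1\}$ with $\Rcoupling(0,\cdot)=\Rcoupling(1,\cdot)$ uniform, let $\Qcoupling((0,0),\cdot)$ be uniform on $\{(0,1),(1,0)\}$ and $\Qcoupling((x,y),\cdot)$ be uniform on $\{(0,0),(1,1)\}$ for the other three states; with $\msc=\Delta_{\msy}^{\complementary}$, $\VlyapD\equiv 1$ on $\msc$, $\VlyapD(0,0)=2$, $\VlyapD(1,1)=5/2$, $\lambda=9/10$, $c=9/4$, $\vareps=1/2$ all hypotheses hold, yet the coupled chain visits $\msc$ with asymptotically positive probability, so $\expeMarkov{(x,y)}{\1_{\Delta_{\msy}^{\complementary}}(X_n,Y_n)\VlyapD(X_n,Y_n)}$ does not vanish. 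Absorption of the diagonal (true for every kernel this paper feeds into the lemma, and built into the coalescent coupling of the cited book) or a restatement in terms of the coupling time $\tau$ is therefore a genuine extra assumption, and a complete proof must state it rather than treat it as automatic; relatedly, your indicator should be $\1_{\coint{0,\tau}}(n)$, i.e.\ the event $\{n<\tau\}$, not $\1_{\coint{\tau,+\infty}}(n)$, and $\Delta_{\msy}\subset\msc$ is never needed. Second, the final bookkeeping --- splitting on the last return to $\msc$ before $n$, applying $\Qcoupling\VlyapD\leq c$ there and the supermartingale bound along the terminal excursion, summing the geometric series and then optimizing $m$ --- is precisely where the advertised $C=2\bigl(1+c/\{(1-\vareps)(1-\lambda)\}\bigr)$ and the exact $\log\rho$ come from, and you only assert it ``up to renewal-series factors''. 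As a reconstruction of a quoted result this level of detail is acceptable, but as a standalone proof it is incomplete at exactly the step you yourself flag.
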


\begin{theorem}
  \label{thm:v_ergo_gene}
  Let $(\msy, \mcy)$ be a measurable space and $\Rcoupling$ be a Markov kernel over $(\msy, \mcy)$. Let $\Qcoupling$ be a Markov coupling kernel of $\Rcoupling$. Assume that there exist $\lambda \in \coint{0, 1}$, $A \geq 0$ and a measurable function $\VlyapD: \msy \times \msy \to [1,+\infty)$, such that $\Qcoupling$ satisfies \hyperlink{ass:drift_discrete}{$\bfDd(\VlyapD,\lambda,A,\msy)$}. In addition, assume that there exist $\ell \in \nsets$, $ \vareps >0$ and $M \geq 1$ such that for any $(x, y) \in \msc_M = \lbrace (x,y) \in \msy \times \msy, \ \VlyapD(x,y) \leq M \rbrace$,
  \begin{equation}
\label{eq:minoration_eric_Q}
    \Qcoupling^\ell((x, y), \Delta^{\complementary}_{\msy}) \le 1 - \vareps\eqsp ,
  \end{equation}
  with $\Delta_{\msy} = \{(x,y) \in \msy^2 \, : \, x = y \}$ and  $M \geq 2A /(1 - \lambda)$. Then, there exist $\rho \in \coint{0,1}$ and $C \geq 0$ such that for any $n \in \nset$ and $x,y \in \msy$ 
  \begin{equation}
    \label{eq:true_geome_eric}
    \distV(\updelta_x \Rcoupling^n, \updelta_y \Rcoupling^n)  \le C \rho^{\floor{n/\ell}} \VlyapD(x,y)  \eqsp , 
  \end{equation}
  with
 \begin{align}
         C &= 2(1 +  A_\ell)(1 + \left. c_\ell \middle/ \defEns{(1-\vareps)(1- \lambda_\ell)}) \right. \eqsp , \\
         \lambda_\ell &= (\lambda^\ell + 1)/2 \eqsp, \quad c_\ell = \lambda^\ell M + A_\ell \eqsp, \quad A_\ell = A (1 - \lambda^{\ell})/(1 - \lambda)  \eqsp,
   \\
   \label{eq:def_rho_ell_theo_eric_et_al}
         \log(\rho_\ell) &= \left. \defEns{\log(1-\vareps) \log(\lambda_\ell)} \middle/ \defEns{\log(1-\vareps) + \log(\lambda_\ell) - \log(c_\ell) } \right. \eqsp.
      \end{align} 
\end{theorem}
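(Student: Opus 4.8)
The strategy is to apply \Cref{lem:eric_book} to the $\ell$-th iterate of the coupling kernel $\Qcoupling$, and then transfer the resulting $\rho_\ell$-geometric decay for $(\Qcoupling^\ell)^n$ back to $\Qcoupling^n$ (hence to $\distV(\updelta_x\Rcoupling^n,\updelta_y\Rcoupling^n)$) by writing $n=\ell\floor{n/\ell}+r$ with $0\le r<\ell$ and using the drift to control the leftover $r$ steps. First I would verify that $\Qcoupling^\ell$ satisfies a drift inequality of the form \eqref{eq:drift_precise} of \Cref{lem:eric_book} with the small set $\msc_M$. Iterating \hyperlink{ass:drift_discrete}{$\bfDd(\VlyapD,\lambda,A,\msy)$} $\ell$ times gives, for all $x,y\in\msy$,
\begin{equation}
  \Qcoupling^\ell\VlyapD(x,y)\le \lambda^\ell\VlyapD(x,y)+A_\ell,\qquad A_\ell=A\,\frac{1-\lambda^\ell}{1-\lambda}\eqsp.
\end{equation}
On $\msc_M^{\complementary}$ we have $\VlyapD(x,y)>M\ge 2A/(1-\lambda)\ge 2A_\ell$, so $A_\ell\le (1-\lambda^\ell)\VlyapD(x,y)/2$ and therefore $\Qcoupling^\ell\VlyapD(x,y)\le \lambda_\ell\VlyapD(x,y)$ with $\lambda_\ell=(\lambda^\ell+1)/2\in\coint{0,1}$; on $\msc_M$ we bound $\Qcoupling^\ell\VlyapD(x,y)\le \lambda^\ell M+A_\ell=c_\ell$. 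This is exactly \eqref{eq:drift_precise} for the kernel $\Qcoupling^\ell$ with constants $(\lambda_\ell,c_\ell,\msc_M)$. The minorization hypothesis \eqref{eq:minoration_eric_Q} is precisely the second hypothesis of \Cref{lem:eric_book} for $\Qcoupling^\ell$ on $\msc_M$.

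Applying \Cref{lem:eric_book} to $\Qcoupling^\ell$ then yields $\rho_\ell\in\coint{0,1}$ and $C$ as in \eqref{eq:def_rho_ell_theo_eric_et_al} such that, for all $x,y$ and all $k\in\nsets$,
\begin{equation}
  \int_{\msy\times\msy}\1_{\Delta_{\msy}^{\complementary}}(\tilde x,\tilde y)\,\lyap(\tilde x,\tilde y)\,\Qcoupling^{\ell k}((x,y),\rmd(\tilde x,\tilde y))\le C\rho_\ell^{k}\VlyapD(x,y)\eqsp.
\end{equation}
Next I would pass from multiples of $\ell$ to arbitrary $n$. Write $n=\ell k+r$ with $k=\floor{n/\ell}$ and $0\le r<\ell$, and use the semigroup identity $\Qcoupling^n=\Qcoupling^{\ell k}\Qcoupling^{r}$ together with the fact that $\distY=\1_{\Delta_{\msy}^{\complementary}}$ is non-increasing under $\Qcoupling$ (more precisely $\Qcoupling\distY\le\distY$, which follows from $\Qcoupling((x,x),\Delta_{\msy})=1$, i.e.\ $\Delta_{\msy}$ is absorbing for a coupling kernel). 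Hence the "disagreement indicator" cannot increase over the extra $r\le\ell-1$ steps, and by the generalized-drift bookkeeping one gets $\distV(\updelta_x\Rcoupling^n,\updelta_y\Rcoupling^n)\le C\rho_\ell^{\floor{n/\ell}}\VlyapD(x,y)$ after absorbing the bounded factor coming from the $r$ residual steps into $C$. Since $\distV(\updelta_x\Rcoupling^n,\updelta_y\Rcoupling^n)\le \int \1_{\Delta_{\msy}^{\complementary}}\lyap\,\rmd(\updelta_{(x,y)}\Qcoupling^n)$ because $\Qcoupling^n((x,y),\cdot)$ is a transference plan between $\updelta_x\Rcoupling^n$ and $\updelta_y\Rcoupling^n$ and $\bfc=\1_{\Delta_{\msy}^{\complementary}}\lyap$, this gives \eqref{eq:true_geome_eric}.

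The main obstacle I anticipate is the careful treatment of the two regimes $\msc_M$ versus $\msc_M^{\complementary}$ in establishing the $\ell$-step drift: one needs the hypothesis $M\ge 2A/(1-\lambda)$ exactly to guarantee $A_\ell\le(1-\lambda^\ell)\VlyapD/2$ off the small set and thereby turn the additive constant $A_\ell$ into a genuine geometric contraction rate $\lambda_\ell<1$. A secondary, more bookkeeping-heavy point is making the transfer from step counts that are multiples of $\ell$ to general $n$ fully rigorous while keeping the constant $C$ exactly as claimed (in particular tracking the factor $(1+A_\ell)$ that the residual-step correction produces); here I would invoke that $\distY\le 1$ is bounded and that $\Qcoupling^r\VlyapD\le \VlyapD+A_\ell$ uniformly in $r<\ell$. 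Neither of these is deep, but both require attention to constants, so I would structure the write-up as: (i) the $\ell$-step drift lemma, (ii) invoke \Cref{lem:eric_book} on $\Qcoupling^\ell$, (iii) the residual-step transfer and conclusion.
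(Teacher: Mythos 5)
Your steps (i) and (ii) are exactly the paper's argument: the two–regime $\ell$-step drift (where $M\ge 2A/(1-\lambda)$ is used precisely as you use it, to get $\Qcoupling^\ell\VlyapD\le\lambda_\ell\VlyapD$ off $\msc_M$ and $\Qcoupling^\ell\VlyapD\le c_\ell$ on $\msc_M$), followed by an application of \Cref{lem:eric_book} to the kernel $\Rcoupling^\ell$ with coupling kernel $\Qcoupling^\ell$, small set $\msc_M$ and constants $(\lambda_\ell,c_\ell,\vareps)$. The gap is in your step (iii). You factor $\Qcoupling^{n}=\Qcoupling^{\ell k}\Qcoupling^{r}$ with the $\ell k$-block temporally first and the residual $r$ steps last, and you control the residual by asserting $\Qcoupling\distY\le\distY$ on the grounds that ``$\Delta_{\msy}$ is absorbing for a coupling kernel''. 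That is false in general and is not among the hypotheses: a Markov coupling kernel is only required to have the correct marginals, and for $x=y$ the measure $\Qcoupling((x,x),\cdot)$ may charge $\Delta_{\msy}^{\complementary}$ (e.g.\ the independent coupling). The paper itself treats absorption of the diagonal as an \emph{extra} assumption when it is needed (see \Cref{theo:discrete_contrac_wass_D_v2}), precisely because it does not follow from the definition. Without it, your ordering cannot be repaired by dropping the indicator: one only gets
\begin{equation}
  \Qcoupling^{r}\bigl(\1_{\Delta_{\msy}^{\complementary}}\VlyapD\bigr)\le\Qcoupling^{r}\VlyapD\le(1+A_\ell)\VlyapD\eqsp,
\end{equation}
and then $\Qcoupling^{\ell k}\VlyapD(x,y)$ does not tend to $0$ (it is bounded below by $1$), so the geometric factor $\rho_\ell^{k}$ is lost.

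The fix is to put the residual steps temporally first, which is the paper's route: with $n=\ell\floor{n/\ell}+k_n$, $0\le k_n<\ell$, write $\Qcoupling^{n}f=\Qcoupling^{k_n}\bigl(\Qcoupling^{\ell\floor{n/\ell}}f\bigr)$ for $f=\1_{\Delta_{\msy}^{\complementary}}\VlyapD$; apply the block bound from \Cref{lem:eric_book} pointwise, $\Qcoupling^{\ell\floor{n/\ell}}f\le \tilde{C}_\ell\,\rho_\ell^{\floor{n/\ell}}\VlyapD$, and then the drift bound $\Qcoupling^{k_n}\VlyapD\le(1+A_\ell)\VlyapD$, which needs no absorbing-diagonal property and produces exactly the stated constant $C=(1+A_\ell)\tilde{C}_\ell$. (Your appeal to the boundedness of $\distY$ is then superfluous.) Apart from this localized but genuine flaw in the residual-step transfer, your identification of $\lambda_\ell$, $c_\ell$, $A_\ell$ and the role of $M\ge 2A/(1-\lambda)$ coincides with the paper's proof.
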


\begin{proof}
  We first show that  for any $(x,y) \in \msc_M $,
  \begin{equation}
    \label{eq:drift_Q_k_eric}
    \Qcoupling^\ell(x,y) \leq \lambda_\ell \VlyapD(x,y) \1_{\msc_{M}^{\complementary}}(x,y) + c_\ell \1_{\msc_{M}}(x,y) \eqsp,
  \end{equation}
in order to apply \Cref{lem:eric_book} to $\Rcoupling^{\ell}$ with the  Markov coupling kernel $\Qcoupling^{\ell}$.
By a straightforward induction,  for any $x, y \in \msy$ we have
  \begin{equation}
    \Qcoupling^\ell \VlyapD(x,y) \leq \lambda^\ell \VlyapD(x,y) + A (1 - \lambda^{\ell})/(1 - \lambda) \eqsp . \label{eq:drift_iterated_Q}
  \end{equation}
We distinguish two cases. If $(x,y) \notin \msc_{M}$, using that $A/M \geq (1 - \lambda)/2$ we have that 
\begin{equation}
  \Qcoupling^\ell \VlyapD(x,y) \leq \lambda^\ell \VlyapD(x,y) + A (1 - \lambda^{\ell}) \VlyapD(x,y) / (M(1 - \lambda)) \leq  2^{-1}(\lambda^\ell +1)\VlyapD(x,y) \eqsp . \label{eq:exte}
\end{equation}
If $(x,y) \in \msc_{M}$, we have 
\begin{equation}
  \Qcoupling^\ell \VlyapD(x,y) \leq \lambda^\ell M + A (1 - \lambda^{\ell}) /(1 - \lambda) \eqsp . \label{eq:inte}
\end{equation}
Therefore \eqref{eq:drift_Q_k_eric} holds.
As a result and since by assumption we have \eqref{eq:minoration_eric_Q}, we can apply \Cref{lem:eric_book} to $\Rker^\ell$. Then, we obtain that for any $i \in \nset$ and $x, y \in \msy$
\begin{equation}
    \int_{\msy \times \msy} \1_{\Delta_{\msy}}(\tilde{x},\tilde{y})\lyap(\tilde{x},\tilde{y}) \Qcoupling^{\ell i}((x,y) , \rmd (\tilde{x}, \tilde{y})) \leq C_{\ell} \rho^{\ell i}_{\ell} \VlyapD(x,y) \eqsp ,
 \label{eq:m_step}
\end{equation}
with $\rho_\ell$ defined by \eqref{eq:def_rho_ell_theo_eric_et_al} and $\tilde{C}_{\ell} = 2\defEns{1 + c_\ell\parentheseDeux{(1-\lambda_{\ell})(1-\vareps)}^{-1}}$.
In addition, using \eqref{eq:drift_iterated_Q}, for any $k \in \lbrace 0, \dots, \ell-1 \rbrace$ and $x,y \in \msy$, $\Qcoupling^k \lyap(x,y) \leq (1+ A_{\ell}) \VlyapD(x,y)$. Therefore,   for any  $n \in \nset$, since $n = i_n \ell + k_n$ with $i_n = \floor{n/\ell}$ and  $k_n \in \lbrace 0, \dots, \ell-1 \rbrace$, we obtain for any $x,y \in \msy$ that
\begin{multline}
  \distV(\updelta_x \Rcoupling^{n}, \updelta_y \Rcoupling^{n}) \leq \tilde{C}_{\ell} \rho_{\ell}^{\ell i_n}     \int_{\msy \times \msy} \1_{\Delta_{\msy}}(\tilde{x},\tilde{y})\lyap(\tilde{x},\tilde{y})\Qcoupling^{k_n}((x,y) , \rmd (\tilde{x}, \tilde{y}))  \\ 
  \leq (1+A_{\ell})\tilde{C}_{\ell} \rho_\ell^{\floor{n/\ell}} \VlyapD(x,y) \eqsp ,
\end{multline}
which concludes the proof.
\end{proof}

We now state an important consequence of \Cref{thm:v_ergo_gene}. The
comparison between \Cref{theo:discrete_contrac_wass} and
\Cref{theo:discrete_contrac_wass_D_v2} is conducted in the remarks
which follow \Cref{theo:discrete_contrac_wass_D_v2}.

\begin{theorem}
  \label{theo:discrete_contrac_wass}
  Assume that there exists a measurable function $\VlyapD : \msx \times \msx \to \coint{1,\plusinfty}$ such that for any $C \geq 0$,
  \begin{equation}
    \diam \ensemble{ (x,y) \in \msx^2}{  \VlyapD(x,y) \leq C } < +\infty \eqsp.
  \end{equation}
  Assume in addition that there exist $\lambda \in \coint{0, 1}$,
  $A \geq 0$ such that for any $\gamma \in \ocint{0,\bgamma}$, there
  exists $\KkerD_{\gamma}$, a Markov coupling kernel for
  $\Rcoupling_{\gamma}$, satisfying
  \hyperlink{ass:drift_discrete}{$\bfDd(\VlyapD,\lambda^{\gamma},
    A\gamma,\msx^2)$}.  Further, assume that there exists
  $\Psibf : \ocint{0,\bgamma} \times \nsets \times \rset_+ \to
  \ccint{0,1}$ such that for any $\gamma \in \ocint{0, \bgamma}$,
  $\ell \in \nsets$ and $x,y \in \msx$,
  \eqref{eq:minorization_condition_v2} is satisfied.  Then the
  following results hold.
\begin{enumerate}[wide, labelwidth=!, labelindent=0pt, label=(\alph*)]
\item   \label{propo:discrete_contrac_wass_a}
  For any $\gamma \in \ocint{0, \bgamma}$, $M_{\discrete} \geq \diameter \parenthese{\ensemble{(x,y) \in \msx^2}{\lyap(x,y) \leq \Bdisc}}$ with $ \Bdisc = 2 A(1 + \bgamma) \{1 + \log^{-1}(1/\lambda)\}$, $\ell \in \nsets$, $x, y \in \msx$ and $k \in \nset$
  \begin{equation}
   \distV(\updelta_x \Rcoupling_{\gamma}^k, \updelta_y \Rcoupling_{\gamma}^k) \leq C_{\gamma} \rho_{\gamma}^{\floor{ k (\ell\ceil{1/\gamma})^{-1}}} \VlyapD(x,y) \eqsp , \label{eq:brute_vcontrac}
  \end{equation}
  where $\distV$ is the Wasserstein metric associated with $\bfc$
  defined by \eqref{eq:def_wbf},
  \begin{equation}
  \begin{aligned}
    C_{\gamma} &= 2[1 + A_{\gamma}][1 + \left. c_{\gamma} \middle/\defEns{(1-\bvareps_{\discrete, 2})(1-\lambda_{\gamma})} \right.] \eqsp ,\\
\log(\rho_{\gamma}) &= \left. \defEns{\log(1-\bvareps_{\discrete, 2}) \log(\lambda_{\gamma}) } \middle/ \defEns{\log(1- \bvareps_{\discrete, 2}) + \log(\lambda_{\gamma}) - \log(c_{\gamma}) } \right. <0  \eqsp,\\
    A_{\gamma} &= A\gamma(1 - \lambda^{\gamma \ell \step})/(1 - \lambda^{\gamma}) \eqsp ,   \quad         c_{\gamma} = \lambda^{\gamma \ell \ceil{1/\gamma}} A_{\gamma} + \Bdisc \eqsp , \\       
        \bvareps_{\discrete, 2}  &= \inf_{\gamma \in \ocint{0, \bgamma}, \ (x,y) \in \Delta_{\msx,M_{\discrete}}} \Psibf(\gamma, \ell, \norm{x-y}) \eqsp , \quad
\lambda_{\gamma} = (\lambda^{\gamma \ell \ceil{1/\gamma}} + 1)/2 \eqsp .
\end{aligned}
\end{equation}
\item \label{propo:discrete_contrac_wass_b} For any $\gamma \in \ocint{0,\bgamma}$, $M_{\discrete} \geq \diameter \parenthese{\ensemble{(x,y) \in \msx^2}{\lyap(x,y) \leq \Bdisc}}$ with $ \Bdisc = 2 A(1 + \bgamma) \{1 + \log^{-1}(1/\lambda)\}$ and $\ell \in \nsets$, it holds that
  \begin{equation}
    \hspace{-1.5cm}
    \begin{aligned}
      C_{\gamma} & \leq \bC_{1} \eqsp, \quad \log(\rho_{\gamma}) \leq \log(\brho_2) \leq 0 \eqsp , \\
\bC_1  &= 2[1 + \bA_{1}][1 + \left. \bc_{1} \middle/\defEns{(1-\bvareps_{\discrete, 2})(1-\blambda_{1})} \right.] \eqsp, \\
        \log(\brho_{2}) &= \left. \defEns{\log(1-\bvareps_{\discrete, 2}) \log(\lambdab_{1}) } \middle/ \defEns{\log(1- \bvareps_{\discrete, 2}) + \log(\lambdab_{1}) - \log(\bc_{1}) } \right. < 0 \eqsp, \\
\bA_{1} &= A(1+\bgamma)\min(\ell, 1 + \log^{-1}(1/\lambda))  \eqsp, \quad        \bc_{1} =    \bA_1 + \Bdisc \eqsp, \quad   \lambdab_{1}= (\lambda+1)/2 \eqsp,\\
        \end{aligned}
      \end{equation}
    \item \label{propo:discrete_contrac_wass_c} In addition, if $\bgamma \leq 1$, $-\log(\lambda) \in \ccint{0,\log(2)}$, $A \geq 1$ and $0 < \bvareps_{\discrete, 2} \leq 1 - \rme^{-1}$, then
      \begin{equation}
        \log^{-1}(1/\brho_{2}) \leq 12 \log(2)  \left. \log\parentheseDeux{6A\defEns{1 + \log^{-1}(1/\lambda)}} \middle/ \parenthese{ \log(1/\lambda) \bvareps_{\discrete, \bgamma}} \right. \eqsp . \label{eq:majo_rho_1} 
      \end{equation}

\end{enumerate}
  
\end{theorem}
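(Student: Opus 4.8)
The plan is to derive all three parts from the general quantitative convergence result \Cref{thm:v_ergo_gene}, applied to the Markov kernel $\Rcoupling_\gamma$ together with its coupling kernel $\KkerD_\gamma$ for each fixed $\gamma \in \ocint{0,\bgamma}$, and then to make the resulting constants uniform in $\gamma$. For part \ref{propo:discrete_contrac_wass_a} I would fix $\gamma \in \ocint{0,\bgamma}$ and invoke \Cref{thm:v_ergo_gene} with $(\msy,\mcy) \leftarrow (\msx,\mcx)$, $\Rcoupling \leftarrow \Rcoupling_\gamma$, $\Qcoupling \leftarrow \KkerD_\gamma$, drift function $\VlyapD$, $\lambda \leftarrow \lambda^\gamma$, $A \leftarrow A\gamma$, minorization lag $\ell \leftarrow \ell\step$ and threshold $M \leftarrow \Bdisc$ (replaced by $\max(1,\Bdisc)$ if needed). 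The drift hypothesis $\bfDd(\VlyapD,\lambda^\gamma,A\gamma,\msx^2)$ is part of the assumptions. For the minorization hypothesis \eqref{eq:minoration_eric_Q} I would use that $M_{\discrete} \geq \diameter(\ensemble{(x,y)\in\msx^2}{\lyap(x,y)\leq\Bdisc})$ implies $\ensemble{(x,y)}{\lyap(x,y)\leq\Bdisc} \subset \Delta_{\msx,M_{\discrete}}$, so that \eqref{eq:minorization_condition_v2} gives $\KkerD_\gamma^{\ell\step}((x,y),\Deltar^{\complementary}) \leq 1 - \Psibf(\gamma,\ell,\norm{x-y}) \leq 1 - \bvareps_{\discrete,2}$ on that set. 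The remaining technical condition $M \geq 2A\gamma/(1-\lambda^\gamma)$ follows from the elementary inequality $1-\rme^{-t} \geq t/(1+t)$ with $t = \gamma\log(1/\lambda)$, which yields $\gamma/(1-\lambda^\gamma) \leq \log^{-1}(1/\lambda) + \bgamma \leq (1+\bgamma)(1+\log^{-1}(1/\lambda))$ and hence $2A\gamma/(1-\lambda^\gamma) \leq \Bdisc$. Then \Cref{thm:v_ergo_gene} produces \eqref{eq:brute_vcontrac} with the displayed $C_\gamma,\rho_\gamma$, noting that the $c_\gamma$ in the statement is an admissible upper bound for the constant coming out of \Cref{thm:v_ergo_gene} since $\lambda^{\gamma\ell\step}A_\gamma + \Bdisc \geq \lambda^{\gamma\ell\step}\Bdisc + A_\gamma$ (because $\Bdisc \geq A_\gamma$) and $\rho$ is monotone in that constant.

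For part \ref{propo:discrete_contrac_wass_b} I would uniformize the constants over $\gamma$ using $1 \leq \gamma\step \leq 1+\bgamma$. Since $\gamma\ell\step \geq 1$ and $\lambda < 1$ we have $\lambda^{\gamma\ell\step} \leq \lambda$, hence $\lambda_\gamma \leq \blambda_{1}$. Writing $A_\gamma = A\gamma\sum_{j=0}^{\ell\step-1}\lambda^{\gamma j}$, the two bounds $A_\gamma \leq A\gamma\ell\step \leq A\ell(1+\bgamma)$ and $A_\gamma \leq A\gamma/(1-\lambda^\gamma) \leq A(\bgamma+\log^{-1}(1/\lambda))$ give $A_\gamma \leq \bA_{1}$, and thus $c_\gamma \leq A_\gamma + \Bdisc \leq \bc_{1}$. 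The estimate $C_\gamma \leq \bC_1$ then follows from $1-\lambda_\gamma \geq 1-\blambda_{1} = (1-\lambda)/2$, and $\rho_\gamma \leq \brho_{2}$ from the monotonicity of the map defining $\rho$: with $a = -\log(1-\bvareps_{\discrete,2})$ one has $\log^{-1}(1/\rho_\gamma) = (a + \absLigne{\log\lambda_\gamma} + \log c_\gamma)/(a\,\absLigne{\log\lambda_\gamma})$, which is increasing in $\log c_\gamma$ and decreasing in $\absLigne{\log\lambda_\gamma}$, while $\absLigne{\log\lambda_\gamma} \geq \absLigne{\log\blambda_{1}}$ and $\log c_\gamma \leq \log\bc_{1}$.

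For part \ref{propo:discrete_contrac_wass_c} I would estimate $\log^{-1}(1/\brho_{2}) = (a + \absLigne{\log\blambda_{1}} + \log\bc_{1})/(a\,\absLigne{\log\blambda_{1}})$ using the extra hypotheses. From $-\log(1-t)\geq t$ and $\bvareps_{\discrete,2}\leq 1-\rme^{-1}$ one gets $\bvareps_{\discrete,2}\leq a\leq 1$; from $-\log(1-t/2)\geq t/2$ and $1-\lambda \geq \lambda\log(1/\lambda) \geq \tfrac12\log(1/\lambda)$ one gets $\absLigne{\log\blambda_{1}} = -\log(1-(1-\lambda)/2) \geq (1-\lambda)/2 \geq \tfrac14\log(1/\lambda)$, while $\lambda\geq 1/2$ gives $\absLigne{\log\blambda_{1}}\leq\log(4/3)$; and $\bgamma\leq 1$ with $A\geq 1$ gives $\bc_{1}=\bA_{1}+\Bdisc \leq 6A(1+\log^{-1}(1/\lambda))$, so $\log\bc_{1}\geq\log 12 > 1$. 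Plugging these in and absorbing the bounded quantity $a + \absLigne{\log\blambda_{1}} \leq 1+\log(4/3)$ into a small multiple of $\log\bc_{1}$ gives the claimed estimate $\log^{-1}(1/\brho_{2}) \leq 12\log 2 \cdot \log(6A(1+\log^{-1}(1/\lambda)))/(\log(1/\lambda)\,\bvareps_{\discrete,2})$.

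The main obstacle is the bookkeeping in parts \ref{propo:discrete_contrac_wass_b} and \ref{propo:discrete_contrac_wass_c}: one must verify carefully that replacing $\lambda_\gamma,c_\gamma,A_\gamma$ by the $\gamma$-free surrogates $\blambda_{1},\bc_{1},\bA_{1}$ degrades rather than improves both the geometric rate $\rho$ and the prefactor $C$, which rests on the monotonicity analysis of the explicit formula for $\rho$ in \Cref{thm:v_ergo_gene}, and that the minorization lag $\ell\step = \ell\ceil{1/\gamma}$ is carried consistently through every substitution.
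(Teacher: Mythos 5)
Your proposal is correct and follows essentially the same route as the paper: for each fixed $\gamma$ you apply \Cref{thm:v_ergo_gene} with $\lambda\leftarrow\lambda^{\gamma}$, $A\leftarrow A\gamma$, lag $\ell\ceil{1/\gamma}$ and $M\leftarrow\Bdisc$ (checking $\Bdisc\geq 2A\gamma/(1-\lambda^{\gamma})$ by the same elementary bound $\gamma/(1-\lambda^{\gamma})\leq\bgamma+\log^{-1}(1/\lambda)$ the paper uses), deduce the minorization on $\{\lyap\leq\Bdisc\}$ from the diameter hypothesis, and then uniformize $\lambda_{\gamma},A_{\gamma},c_{\gamma}$ in $\gamma$ and run the same numeric estimates for part (c), exploiting the monotonicity of $\rho$ and $C$ in $c$ and $\lambda$ exactly as the paper does. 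Your intermediate claim $\log\bc_1\geq\log 12$ is not quite right in general, but it is harmless since your argument only needs $a+\absLigne{\log\blambda_1}\leq\log\bc_1$, which does hold (e.g.\ $\bc_1\geq\Bdisc\geq 2(1+\log^{-1}(1/\lambda))\geq 2(1+1/\log 2)$), so the final bound with the constant $12\log 2$ still follows.
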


\begin{proof}
  First, note that 
$    1-\lambda^t = -\int_0^t \log(\lambda) \rme^{s \log(\lambda)} \rmd s \geq -\log(\lambda)\,  t\, \rme^{t \log(\lambda)}$  for any $t \in \ocintLigne{0,\bt}$, for $\bt >0$, and therefore
  \begin{equation}
    \label{eq:borne_discret_control_dirft_log}
    t/(1-\lambda^t) = t + t \lambda^t/(1-\lambda^t) \leq  \bt +  \log^{-1}(\lambda^{-1}) \eqsp.
  \end{equation}
  \begin{enumerate}[wide, labelwidth=!, labelindent=0pt, label=(\alph*)]
\item 
  To establish \eqref{eq:brute_vcontrac}, we apply \Cref{thm:v_ergo_gene}.
For any $x,y \in \msx$ such that $\VlyapD(x,y) \leq \Bdisc$ we have
\begin{equation}
  \KkerD_{\gamma}^{\ell \step}((x,y), \Deltar^{\complementary}) \leq 1 - \bvareps_{\discrete, 2} \eqsp . 
\end{equation}
 Using 
 that $\KkerD_{\gamma}$ satisfies \hyperlink{ass:drift_discrete}{$\bfDd(\lyap,\lambda^{\gamma}, A\gamma, \msx^2)$},  we can apply
\Cref{thm:v_ergo_gene} with $M \leftarrow  \Bdisc \geq 2 A \gamma /(1-\lambda^{\gamma })$ by \eqref{eq:borne_discret_control_dirft_log}, which completes the proof of \ref{propo:discrete_contrac_wass_a}.

\item  We now provide upper bounds for $C_{\gamma}$ and $\rho_{\gamma}$. These constants are non-decreasing in $c_{\gamma}$ and $ \lambda_{\gamma}$. Therefore it suffices to give upper bounds on $c_{\gamma}, \vareps_{\discrete, \gamma}$ and $ \lambda_{\gamma}$. The result is then  straightforward using that $(1 - \lambda^{\gamma \ell \step})/(1 - \lambda^{\gamma}) \leq \ell \step$, $\gamma(1 - \lambda^{\gamma \ell \step})/(1 - \lambda^{\gamma}) \leq \bgamma + \log^{-1}(1/\lambda)$ and $\lambda^{\gamma \ell \step } \leq \lambda$.

\item
By assumption on $\bgamma$, $\lambda$ and $\bvareps_{\discrete,1}$  we have that $\log((1 - \bvareps_{\discrete, 2})^{-1}) \leq 1$ and
  \begin{equation}
    \log(\blambda_{1}^{-1}) \leq \log(\lambda^{-1}) \leq \log(2) \eqsp , \quad \rme \leq 2(1 + 1/\log(2)) \leq \Bdisc \leq \bc_{1}  \eqsp .
  \end{equation}
  As a result, we obtain that $\log(\blambda_{1}^{-1})/\log(\bc_{1}) \leq 1$, $\log((1 - \bvareps_{\discrete, 2})^{-1})/\log(\bc_{1}) \leq 1$. Therefore we have 
 \begin{align}
   \log^{-1}(1/\brho_{2}) &= \parentheseDeux{\log(\blambda_{1}^{-1}) + \log((1-\bvareps_{\discrete, 2})^{-1}) + \log(\bc_{1})} \\ & \qquad \qquad \qquad \qquad /\parentheseDeux{\log(\blambda_{1}^{-1})\log((1-\bvareps_{\discrete, 2})^{-1})} \\
   &\leq 3 \log[6A(1+\log^{-1}(1/\lambda))] / \parentheseDeux{\log(\blambda_{1}^{-1})\log((1-\bvareps_{\discrete, 2})^{-1})} \eqsp .
 \end{align}
Using that $\log(1- t) \leq -t$ for any $t \in \ocint{0,1}$ and the definition of $\blambda_1$, we obtain that
\begin{align}
  \log^{-1}(\brho_{2}^{-1}) &\leq 6 \bvareps_{\discrete, 2}^{\, -1} (1 - \lambda)^{-1}\log[6A(1 +\log^{-1}(1/\lambda))] \eqsp .
\end{align}
Finally, we get \eqref{eq:majo_rho_1}   using that for any $t \in \ccint{0,\log(2)}$, $1 - e^{-t} \geq (2\log(2))^{-1}t$.
\end{enumerate}
\end{proof}
Note that \Cref{theo:discrete_contrac_wass} gives an upper bound on the rate of convergence $\brho_{2}$ in the worst case scenario for which the minorization constant $\bvareps_{\discrete, 2}$ is small and the constant $\lambda$ in \hyperlink{ass:drift_discrete}{$\bfDd(V,\lambda^{\gamma}, A\gamma, \msx^2)$} is close to one.

Some remarks are in order here concerning the bounds obtained in
\Cref{theo:discrete_contrac_wass} and
\Cref{theo:discrete_contrac_wass_D_v2}. Assume that $\ell =1$, we will
see in \Cref{sec:applications} that the leading term in the upper
bound in \Cref{theo:discrete_contrac_wass_D_v2}, respectively
\Cref{theo:discrete_contrac_wass}, is given by
$\log(A)/(\log(\lambda^{-1}) \bvareps_{\discrete,1})$, respectively
$\log(A)/(\log(\lambda^{-1}) \bvareps_{\discrete,2})$. In addition, in
our applications, $\bvareps_{\discrete,1}$ is larger than
$\bvareps_{\discrete,2}$. Therefore, in these cases the bounds
provided in \Cref{theo:discrete_contrac_wass_D_v2} yield better rates
than the ones in
\Cref{theo:discrete_contrac_wass}-\ref{propo:discrete_contrac_wass_c}.
The main difference between the two results is that in the proof of
\Cref{theo:discrete_contrac_wass} a drift condition on the
\textit{iterated} coupling kernel $\KkerD_{\gamma}^{\step}$ is
required. However, even if such drift conditions can be derived from a
drift condition on $\KkerD_{\gamma}$, the constants obtained using
this technique are not sharp in general. On the contrary, the proof of
\Cref{theo:discrete_contrac_wass_D_v2} uses the iterated minorization
condition and a drift condition on the \textit{original} coupling
$\KkerD_{\gamma}$.



\section{Tamed Euler-Maruyama discretization}
\label{sec:tamed-unadj-lang}

In this subsection we consider the following assumption.
\begin{assumptionT}
  \label{ass:def_tamed}
  $\msx = \rset^d$ and $\Pi = \Id$ and
  \begin{equation}
    \label{eq:def_tamed_euler}
   \Tg(x) = x + \gamma b(x)/(1 + \gamma \norm{b(x)} )  \text{ for any $\gamma >0$ and $x \in \rset^d$} \eqsp.   
  \end{equation}
\end{assumptionT}
Here, we focus on drift $b$ which is  no longer  assumed to be  Lipschitz.
Therefore, the ergodicity results obtained in \Cref{sec:applications} no longer hold since the minorization condition we derived
relied heavily on one-sided Lipschitz condition or Lipschitz regularity for $b$.  We now consider the following assumption on $b$. 

\begin{assumptionT}
  \label{assum:glip_tula}
  There exists $\tLip, \tell \geq 0$ such that for any $x,y \in \rset^d$
  \begin{equation}
    \norm{b(x) - b(y)} \leq \tLip (1 + \norm{x}^{\tell} + \norm{y}^{\tell}) \norm{x - y} \eqsp .
  \end{equation}
  In addition, assume that $b(0) = 0$ and $M_{\tell} = \sup_{x \in \rset^d} \ (1 + \norm{x}^{\tell})(1 + \norm{b(x)})^{-1} <+\infty$.
\end{assumptionT}

\begin{proposition}
  \label{prop:lip_tamed}
  Assume \tup{\Cref{ass:def_tamed}} and \tup{\Cref{assum:glip_tula}}
  then
  \tup{\Cref{assum:lip_op}($\rset^{2d}$)-\ref{assum:lip_op_non_convex}}
  holds with $\bgamma > 0$ and for any
  $\gamma \in \ocint{0, \bgamma}$,
  $\kappa(\gamma) = 2 \tLip_{\gamma} + \gamma \tLip_{\gamma}^2$ where
  \begin{equation}
    \tLip_{\gamma} = 2 \gamma^{-1} M_{\tell} (1 + M_{\tell}) \tLip \eqsp .
  \end{equation}
\end{proposition}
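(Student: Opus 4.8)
The goal is to verify the inequality $\|\Tg(x)-\Tg(y)\|^2 \le (1+\gamma\kappa(\gamma))\|x-y\|^2$ from \Cref{assum:lip_op}, together with positivity of $\kappa(\gamma)$, for the tamed drift $\Tg(x) = x + \gamma b(x)/(1+\gamma\|b(x)\|)$. The natural strategy is to show that the tamed drift $\tilde b_\gamma(x) := b(x)/(1+\gamma\|b(x)\|)$ is globally Lipschitz with an explicit constant $\tLip_\gamma$, and then to invoke the elementary bound already used in the Lipschitz case: if $\tilde b_\gamma$ is $\tLip_\gamma$-Lipschitz then for $\Tg(x) = x + \gamma \tilde b_\gamma(x)$ one has
\begin{equation*}
  \|\Tg(x)-\Tg(y)\|^2 \le \|x-y\|^2 + 2\gamma\langle \tilde b_\gamma(x)-\tilde b_\gamma(y), x-y\rangle + \gamma^2 \|\tilde b_\gamma(x)-\tilde b_\gamma(y)\|^2 \le (1 + 2\gamma\tLip_\gamma + \gamma^2\tLip_\gamma^2)\|x-y\|^2,
\end{equation*}
using Cauchy--Schwarz on the middle term. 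This is exactly the form $(1+\gamma\kappa(\gamma))\|x-y\|^2$ with $\kappa(\gamma) = 2\tLip_\gamma + \gamma\tLip_\gamma^2 > 0$, so \Cref{assum:lip_op}($\rset^{2d}$)-\ref{assum:lip_op_non_convex} holds.

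\textbf{Key steps.} First I would establish the Lipschitz bound for $\tilde b_\gamma$. Write $\tilde b_\gamma(x)-\tilde b_\gamma(y) = \frac{b(x)-b(y)}{1+\gamma\|b(x)\|} + b(y)\left(\frac{1}{1+\gamma\|b(x)\|}-\frac{1}{1+\gamma\|b(y)\|}\right)$. The first term is bounded by $\|b(x)-b(y)\|$ and the second by $\frac{\gamma\|b(y)\|\,\big|\|b(x)\|-\|b(y)\|\big|}{(1+\gamma\|b(x)\|)(1+\gamma\|b(y)\|)} \le \big|\|b(x)\|-\|b(y)\|\big| \le \|b(x)-b(y)\|$, since $\gamma\|b(y)\| \le 1+\gamma\|b(y)\|$. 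Hence $\|\tilde b_\gamma(x)-\tilde b_\gamma(y)\| \le 2\|b(x)-b(y)\|$. Next I use \Cref{assum:glip_tula}: $\|b(x)-b(y)\| \le \tLip(1+\|x\|^{\tell}+\|y\|^{\tell})\|x-y\|$. To turn this into a global Lipschitz constant for $\tilde b_\gamma$ I need to control $\|x\|^{\tell}$ and $\|y\|^{\tell}$ by something that is uniformly bounded after division by $1+\gamma\|b(\cdot)\|$; this is where $M_{\tell} = \sup_x (1+\|x\|^{\tell})(1+\|b(x)\|)^{-1} < \infty$ enters. The cleaner route is to bound the difference quotient directly: from the first display, $\|\tilde b_\gamma(x)-\tilde b_\gamma(y)\| \le \frac{\|b(x)-b(y)\|}{1+\gamma\|b(x)\|} + \frac{\|b(x)-b(y)\|}{1+\gamma\|b(y)\|}$, and then $\frac{1+\|x\|^{\tell}}{1+\gamma\|b(x)\|} \le \gamma^{-1}\frac{1+\|x\|^{\tell}}{1+\|b(x)\|} \le \gamma^{-1} M_{\tell}$ (and likewise in $y$); combined with $1 \le 1+\|x\|^{\tell} \le$ the same quantity, one gets $\|\tilde b_\gamma(x)-\tilde b_\gamma(y)\| \le \tLip \cdot 2\gamma^{-1}M_{\tell}(1+M_{\tell})\|x-y\| = \tLip_\gamma \|x-y\|$ after also absorbing the ``$1$'' in $(1+\|x\|^{\tell}+\|y\|^{\tell})$ into the $M_{\tell}$ factor — here one checks $1+\|x\|^{\tell}+\|y\|^{\tell} \le (1+M_{\tell}\|b(x)\|\wedge\cdots)$-type bookkeeping carefully so that the constant works out to $\tLip_\gamma = 2\gamma^{-1}M_{\tell}(1+M_{\tell})\tLip$ as stated. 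Finally, plug $\tLip_\gamma$ into the elementary quadratic expansion above to conclude.

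\textbf{Main obstacle.} The only delicate point is the constant-chasing in the Lipschitz estimate for $\tilde b_\gamma$: correctly distributing the factor $(1+\|x\|^{\tell}+\|y\|^{\tell})$ against the two denominators $1+\gamma\|b(x)\|$ and $1+\gamma\|b(y)\|$ so that every stray $\|x\|^{\tell}$ or constant is absorbed by a matching $1+\|b(\cdot)\|$ term, and verifying that the resulting bound is precisely $\tLip_\gamma = 2\gamma^{-1}M_{\tell}(1+M_{\tell})\tLip$ rather than some larger multiple. One must be slightly careful that $\|b(y)\|\big|\|b(x)\|-\|b(y)\|\big|/\{(1+\gamma\|b(x)\|)(1+\gamma\|b(y)\|)\}$ is handled using $\gamma\|b(y)\|/(1+\gamma\|b(y)\|)\le 1$ so that no extra $\gamma^{-1}$ appears there, keeping that term at most $\|b(x)-b(y)\|$. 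Everything else — the quadratic expansion, Cauchy--Schwarz, and the identification of $\kappa(\gamma)$ — is routine. No restriction on $\bgamma$ beyond $\bgamma>0$ is needed because the tamed drift is globally bounded, which is exactly why case \ref{assum:lip_op_non_convex} (rather than the contractive cases) is the relevant one.
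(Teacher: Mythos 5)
Your proposal follows essentially the same route as the paper's proof: decompose $\tilde b_\gamma(x)-\tilde b_\gamma(y)$ into the common-denominator term and the denominator-difference term, bound the latter via $\gamma\|b(y)\|/(1+\gamma\|b(y)\|)\le 1$, use $M_{\tell}$ through \Cref{assum:glip_tula} to get a global Lipschitz constant of order $\gamma^{-1}M_{\tell}(1+M_{\tell})\tLip$, and then expand $\|\Tg(x)-\Tg(y)\|^2\le(1+\gamma\tLip_\gamma)^2\|x-y\|^2$ to read off $\kappa(\gamma)=2\tLip_\gamma+\gamma\tLip_\gamma^2$. The only caveat (shared with the paper, whose own constant bookkeeping is equally loose) is that bounds like $(1+\|x\|^{\tell})/(1+\gamma\|b(x)\|)\le\gamma^{-1}M_{\tell}$ tacitly use $\gamma\le 1$, so one should read $\bgamma$ as a suitably chosen finite constant rather than arbitrary.
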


\begin{proof}
  Let $x,y \in \rset^d$ and assume that $\norm{x} \geq \norm{y}$. We have the following inequalities
  \begin{align}
    \norm{\frac{b(x)}{1 + \gamma \norm{b(x)}} - \frac{b(y)}{1 + \gamma \norm{b(y)}}} &\leq \frac{\norm{b(x) - b(y)}}{1 + \gamma \norm{b(x)}} +   \abs{\frac{\norm{b(y)}}{1 + \gamma \norm{b(x)}} - \frac{\norm{b(y)}}{1 + \gamma \norm{b(y)}}} \\
                                                                                     &\leq \gamma^{-1} 2 M_{\tell} \tLip \norm{x-y} + \gamma \frac{\norm{b(y)} \norm{b(x) - b(y)}}{(1 + \gamma \norm{b(x)})(1 + \gamma \norm{b(y)})} \\
    &\leq \gamma^{-1} M_{\tell} (1 + M_{\tell}) \tLip \norm{x-y} \eqsp .
  \end{align}
  The same inequality holds with $\norm{y} \geq \norm{x}$. Therefore, we have 
  \begin{equation}
    \norm{\Tg(x) - \Tg(y)}^2 \leq \parenthese{1 + 2\gamma\tLip_{\gamma} + \gamma^2\tLip_{\gamma}^2}\norm{x-y}^2 \eqsp ,
  \end{equation}
which concludes the proof.
\end{proof}
\Cref{prop:lip_tamed} implies that the conclusions of \Cref{propo:doeb}-\ref{item:kappa_pos} hold.
Note that contrary to the conclusion of \Cref{prop:a1_type}, we do not get that $\sup_{\gamma \in \ocint{0,\bgamma}} \kappa(\gamma) < +\infty$. Hence we have for any $\tell \in \nsets$, $\inf_{\gamma \in \ocint{0, \bgamma}} \alpha_+(\kappa, \gamma, \tell) = 0$.
\begin{assumptionT}
  \label{assum:curvature_tula}
  There exist $\tR$ and $\tmttplus
  $ such that for any $x \in \cball{0}{\tR}^{\complementary}$,
  \[ \langle b(x), x \rangle \leq - \tmttplus \| x \| \norm{b(x)} \eqsp.\]
\end{assumptionT}
Under \Cref{assum:glip_tula} and \Cref{assum:curvature_tula} it is shown in \cite{brosse2018tamed} that there exists $\bgamma >0$, $\lambda \in (0,1)$ and $A \geq 0$ such that for any $\gamma \in \ocint{0,\bgamma}$,  $\Rcoupling_{\gamma}$ satisfies  \hyperlink{ass:drift_discrete}{$\bfDd(V,\lambda^{\gamma}, A\gamma, \msx)$} with $V(x) = \exp(a(1+\|x\|^2)^{1/2})$ for some fixed $a$.

\begin{theorem}
Assume  \tup{\Cref{assum:glip_tula}}
  and \tup{\Cref{assum:curvature_tula}} then there exists $\bgamma >0$ such that for any $\gamma \in \ocint{0, \bgamma}$ there exist $C_{\gamma} \geq 0$ and $\rho_{\gamma} \in \ooint{0,1}$ with for any $\gamma \in \ocint{0,\bgamma}$, $x,y \in \rset^d$ and $k \in \nset$
    \begin{equation}
    \label{eq:contrac_TULA}
    \Vnorm{ \updelta_x \Rcoupling_{\gamma}^k - \updelta_y \Rcoupling_{\gamma}^k} \leq C_{\gamma} \rho_{\gamma}^{k \gamma} \defEns{V(x) + V(y)} \eqsp . 
  \end{equation}
\end{theorem}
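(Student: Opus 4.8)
The statement is the exact analogue, in the tamed setting, of the strongly-convex-at-infinity results of \Cref{sec:strongly-convex-at}, and the plan is to reduce it to \Cref{theo:discrete_contrac_wass_D_v2} (or its variant \Cref{theo:discrete_contrac_wass}) applied to $\Rker_{\gamma}$ with the Lyapunov function $\lyap(x,y) = \{V(x) + V(y)\}/2$, where $V(x) = \exp(a(1+\norm{x}^2)^{1/2})$. First I would recall the two ingredients already in place: by \Cref{prop:lip_tamed}, \Cref{assum:lip_op}($\rset^{2d}$)-\ref{assum:lip_op_non_convex} holds for $\Tg$ given by \eqref{eq:def_tamed_euler} with $\kappa(\gamma) = 2\tLip_\gamma + \gamma\tLip_\gamma^2$; and by the cited result of \cite{brosse2018tamed} (under \Cref{assum:glip_tula} and \Cref{assum:curvature_tula}), there exist $\bgamma>0$, $\lambda\in(0,1)$ and $A\geq0$ such that $\Rker_\gamma$ satisfies \hyperlink{ass:drift_discrete}{$\bfDd(V,\lambda^\gamma,A\gamma,\msx)$} for all $\gamma\in\ocint{0,\bgamma}$.

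The second step is to promote the drift on $\Rker_\gamma$ to a drift on the reflection coupling kernel $\Kker_\gamma$ defined by \eqref{eq:coupling_form}. This is exactly the content of \Cref{lemma:drift_2_drift_1}: part \ref{item:a_drift} gives that $\Kker_\gamma$ satisfies \hyperlink{ass:drift_discrete}{$\bfDd(\lyap,\lambda^\gamma,A\gamma,\msx^2)$}, and then part \ref{item:b_drift}, using $\lim_{\norm{x}\to+\infty}V(x)=+\infty$, upgrades this to a drift towards a product ball $\cballdeux{0}{R}\subset\Delta_{\msx,2R}$ with rate $\lambda^{\gamma/2}$ (possibly shrinking $\bgamma$ or adjusting constants). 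The third step is the minorization condition: since \Cref{assum:lip_op}($\rset^{2d}$)-\ref{assum:lip_op_non_convex} holds, \Cref{coro:doeblin_lemme_2}-\ref{coro:doeblin_lemme_2_b} gives, for every $\gamma\in\ocint{0,\bgamma}$ and $\ell\in\nsets$,
\begin{equation}
  \Kker_\gamma^{\ell\step}((x,y),\Deltar^{\complementary}) \leq 1 - 2\Phibf\bigl\{-(1+\bgamma)^{1/2}(1+\kappa_+)^{1/2}\norm{x-y}/2\bigr\}\eqsp,
\end{equation}
which is of the form $1-\Psibf(\gamma,\ell,\norm{x-y})$ required by \eqref{eq:minorization_condition_v2}, with $\inf_{(x,y)\in\Delta_{\msx,M}}\Psibf>0$ for every $M\geq0$; here one must use that $\kappa_+ = \sup_{\gamma\in\ocint{0,\bgamma}}\kappa(\gamma)<+\infty$ — this requires choosing $\bgamma$ so that $\tLip_\gamma$, hence $\kappa(\gamma)$, stays bounded, and indeed $\kappa_+$ will depend on $\bgamma$. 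Finally, $\Delta_{\msx}$ is absorbing for $\Kker_\gamma$ by construction of the reflection coupling, so all hypotheses of \Cref{theo:discrete_contrac_wass_D_v2} are met, and applying it (together with \eqref{eq:convergence_c} to absorb the $\lambda^{k\gamma/4}$ terms into the leading $\brho_1^{k\gamma/4}$ term, and noting $\Vnorm{\mu-\nu}\leq\distV(\mu,\nu)$ since $\bfc(x,y)=\1_{\Deltar^{\complementary}}(x,y)\{V(x)+V(y)\}/2$ with $\lyap$ the $V$-norm cost) yields \eqref{eq:contrac_TULA} with $C_\gamma = \bD_1+\bD_2+\bC_1$ and $\rho_\gamma = \brho_1$.

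The main obstacle — and the reason this theorem is stated with $\gamma$-dependent constants $C_\gamma,\rho_\gamma$ rather than uniform ones and without a $\gamma\to0$ limit — is that $\kappa(\gamma) = 2\tLip_\gamma + \gamma\tLip_\gamma^2$ with $\tLip_\gamma = 2\gamma^{-1}M_{\tell}(1+M_{\tell})\tLip$ \emph{blows up} as $\gamma\to0$, so $\kappa_+\to+\infty$ and $\bvareps_{\discrete,1} = \inf_{\gamma,(x,y)\in\Delta_{\msx,\tM_{\discrete}}}\Psibf(\gamma,\ell,\norm{x-y})\to0$; consequently the bound \eqref{eq:majo_rho} degrades as $\gamma\to0$ and one cannot pass to the limit as in \Cref{sec:applications-1}. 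Thus I would be careful to fix $\gamma$ throughout (or work on a fixed interval $\ocint{0,\bgamma}$ and keep all constants, $\tM_{\discrete}=2R$, $\ell$, $\bvareps_{\discrete,1}$, explicitly depending on $\gamma$ and $\bgamma$), which is harmless since the claim itself only asks for a per-$\gamma$ geometric contraction. A secondary technical point is checking the mild integrability/Feller requirements implicit in invoking \Cref{lemma:drift_2_drift_1} and \Cref{theo:discrete_contrac_wass_D_v2} (that $V\in\measfun$, that level sets of $V$ are bounded, that $\Tg$ is continuous so $\Rker_\gamma$ is Feller), all of which follow immediately from \Cref{assum:glip_tula} and the continuity of $x\mapsto b(x)/(1+\gamma\norm{b(x)})$.
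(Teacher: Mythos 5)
Your argument is correct, but it takes a different route from the paper's. The paper proves this statement as a direct application of \Cref{theo:discrete_contrac_wass}-\ref{propo:discrete_contrac_wass_a} (the Rosenthal-type result of the appendix): the drift of \cite{brosse2018tamed} for $\Rker_{\gamma}$ immediately yields the product drift $\bfDd(\lyap,\lambda^{\gamma},A\gamma,\msx^2)$ for the reflection coupling $\Kker_{\gamma}$ (\Cref{lemma:drift_2_drift_1}-\ref{item:a_drift}), the level sets of $\lyap(x,y)=\defEns{V(x)+V(y)}/2$ are bounded since $V(x)\to+\infty$, and the minorization \eqref{eq:minorization_condition_v2} follows from \Cref{prop:lip_tamed} combined with \Cref{propo:doeb}-\ref{item:kappa_pos} (or \Cref{coro:doeblin_lemme_2}-\ref{coro:doeblin_lemme_2_b}); no localization of the drift to a set $\msc\subset\Delta_{\msx,\tM_{\discrete}}$ is required because \Cref{theo:discrete_contrac_wass} manufactures its own small set $\ensembleLigne{(x,y)}{\lyap(x,y)\leq \Bdisc}$ from the bounded-level-set hypothesis. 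You instead go through \Cref{theo:discrete_contrac_wass_D_v2}, which needs the extra step \Cref{lemma:drift_2_drift_1}-\ref{item:b_drift} to localize the drift to $\cballdeux{0}{R}\subset\Delta_{\msx,2R}$; this is equally valid and, as the paper discusses after \Cref{theo:discrete_contrac_wass}, in principle gives sharper constants since it only uses a drift for the original coupling kernel rather than its iterate, at the price of that additional localization. Both routes share the caveat you correctly isolate: since $\kappa(\gamma)=2\tLip_{\gamma}+\gamma\tLip_{\gamma}^2$ with $\tLip_{\gamma}\propto\gamma^{-1}$, the minorization does not hold uniformly in $\gamma\in\ocint{0,\bgamma}$ (the paper notes $\inf_{\gamma}\alpha_+(\kappa,\gamma,\ell)=0$ right after \Cref{prop:lip_tamed}), so the generic theorems must be read at fixed $\gamma$ (e.g.\ via \Cref{thm:uno}, whose constants only involve $\tvareps_{\discrete,\gamma}=\inf_{(x,y)\in\Delta_{\msx,\tM_{\discrete}}}\Psibf(\gamma,\ell,\norm{x-y})$), which is precisely why the statement only claims $\gamma$-dependent $C_{\gamma},\rho_{\gamma}$. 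One small slip: in the middle of your proof you suggest choosing $\bgamma$ so that $\kappa_+=\sup_{\gamma\in\ocint{0,\bgamma}}\kappa(\gamma)<+\infty$, which is impossible on a left-open interval containing arbitrarily small $\gamma$; the per-$\gamma$ reading in your final paragraph is the right resolution and should replace that remark.
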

\begin{proof}
  The proof is a direct application of \Cref{theo:discrete_contrac_wass}-\ref{propo:discrete_contrac_wass_a}.
\end{proof}
It is shown in \cite[Theorem 4]{brosse2018tamed} that the following result holds: there exists $V:\rset^d \to \coint{1,+\infty}$, $\bgamma >0$, $C, D \geq 0$ and $\rho \in (0,1)$ such that for any $k \in \nset$, $\gamma \in \ocint{0, \bgamma}$ and $x \in \rset^d$
\begin{equation}
  \Vnorm{\updelta_x \Rcoupling_{\gamma}^{k} - \pi} \leq C \rho^{k \gamma}V(x) + D \sqrt{\gamma} \eqsp ,
\end{equation}
where $\pi$ is the invariant measure for the diffusion with drift $b$ and diffusion coefficient $\Id$.


\section{Explicit rates and asymptotics in
  \Cref{propo:weak_outside_bounds}}
\label{sec:rates-crefpr}

We recall that $b$ satisfies
\[ \langle b(x), x \rangle \leq - \mttun \| x \| \1_{\cball{0}{\Rtrois}^{\complementary}}(x) -\mttdeux \| b (x) \|^2 + \cconst/2 \eqsp,\]
with $\mttun, \mttdeux >0$ and $\Rtrois, \cconst \geq 0$ and that
  We recall that
    \begin{equation}
    \label{eq:def_V_2}
    V(x) = \exp(\mtttrois\phi(x)) \eqsp , \qquad \phi(x) =  \sqrt{1 + \norm{x}^2} \eqsp , \qquad \mtttrois \in \ocint{0, \mttun / 2} \eqsp .
  \end{equation}
Let $\VlyapDtrois(x,y) = (V(x) + V(y))/2$ with $V(x) = \exp [\mtttrois \sqrt{1 + \norm{x}^2}]$ and $\mtttrois \in \ocint{0, \mttun/2}$.
Therefore, by \Cref{propo:drift_convex}, $\Kcoupling_{\gamma}$ 
satisfies \hyperlink{ass:drift_discrete}{$\bfDd(\VlyapDtrois,\lambda^{\gamma},A \gamma, \msx^2)$} for any $\gamma \in \ocint{0, \bgamma}$ where $\bgamma \in (0, 2\mttdeux)$, $\Rquatre = \max(1, \Rtrois, (d+\cconst)/\mttun)$ and
\begin{equation}
  \begin{aligned}
    &\lambda = e^{-(\mtttrois)^2/2} \eqsp , \quad  A = \exp\parentheseDeux{\bgamma (\mtttrois (d+\cconst)+ (\mtttrois)^2)/2 + \mtttrois (1 + \Rquatre^2)^{1/2}}(\mtttrois (d+\cconst)/2 + (\mtttrois)^2) \eqsp , \\
    & R = \log(2\lambda^{-2\bgamma} A \log^{-1}(1/\lambda)) \eqsp .\end{aligned}      \label{eq:const_drift_vexp}
 \end{equation}
  Let $\bgamma \in \oointLigne{0, 2\mttdeux}$, $\ell \in \nsets$ specified below, $\lambda_{\bgamma, c}, \rho_{\bgamma, c} \in \ooint{0,1}$ and $D_{\bgamma, 1, c}$, $D_{\bgamma, 2, c}$, $C_{\bgamma, c} \geq 0$ the constants given by \Cref{propo:drift_strong_convex_bounds}, such that for any $k \in \nset$, $\gamma \in \ocint{0, \bgamma}$ and $x,y \in \msx$
  \begin{equation}
    \wassctrois(\updelta_x \Rcoupling_{\gamma}^k, \updelta_y \Rcoupling_{\gamma}^k) \leq \Kker_{\gamma}^k \bfc_3(x,y) \leq \lambda_{\bgamma, c}^{k\gamma/4} [D_{\bgamma,1,c} \bfc_3(x,y) + D_{\bgamma,2,c}\1_{\Deltar^{\complementary}}] +  C_{\bgamma, c} \rho_{\bgamma, c}^{k\gamma/4}  \eqsp ,  \end{equation}
  with $\bfc_3(x,y) = \1_{\Deltar^{\complementary}}(x,y) \defEnsLigne{V(x) + V(y)}/2$ for any $x,y \in \msx$.
Note that by \eqref{eq:def_V_2},  this result implies that for any $k \in \nset$, $\gamma \in \ocint{0, \bgamma}$ and $x,y \in \msx$
  \begin{equation}
    \label{eq:wc3_convergence_true}
    \Vnorm{\updelta_x \Rcoupling_{\gamma}^k - \updelta_y \Rcoupling_{\gamma}^k} \leq \defEns{D_{\bgamma, 1, c} + D_{\bgamma, 2, c} + C_{\bgamma, c} } \rho_{\bgamma, c}^{k \gamma} \bfc_3(x,y) \eqsp .
  \end{equation}
  Note that using \Cref{theo:discrete_contrac_wass_D_v2}, we obtain that the following limits exist and do not depend on~$\Lip$
\begin{equation}
  \begin{cases} &D_{1,c} = \lim_{\bgamma \to 0} D_{\bgamma, 1, c}\eqsp , \quad D_{2,c} = \lim_{\bgamma \to 0} D_{\bgamma, 2, c}\eqsp , \quad C_c = \lim_{\bgamma \to 0}C_{\bgamma, c}\eqsp , \\ &\lambda_c = \lim_{\bgamma \to 0}\lambda_{\bgamma, c} \eqsp , \quad \rho_c = \lim_{\bgamma \to 0}\rho_{\bgamma, c} \eqsp . \end{cases}\end{equation}

We now discuss the dependency of $\rho_b$ with respect to the
introduced parameters, depending on the sign of $\mtt$ and based on
\Cref{theo:discrete_contrac_wass_D_v2}.
\begin{enumerate}[label= (\alph*), wide, labelwidth=!, labelindent=0pt]
\item If \Cref{as:b_lemme_descente} holds, set $\ell = \ceil{\tM_{\discrete}^2}$. Then, if  we consider $\mttun, \mttdeux$ sufficiently small and $\cconst$ sufficiently large such that the conditions of \Cref{theo:discrete_contrac_wass_D_v2} hold, we have
    \begin{align}
   &\log^{-1}(\rho_c^{-1}) \leq \left. 2\parentheseDeux{1 + \mttplustrois (1+R)/4  + \log(1+2A) + (1 + 4R^2) \mttplustrois} \middle/ \parentheseDeux{\mttplustrois\Phibf(-1/2)}  \right. \eqsp.
  \end{align}
    Note that the leading term on the right hand side of this equation is of order $R^2$.
\item If \Cref{as:b_min}($\mtt$) with $\mtt \in \rset_-$, set
  $\ell = \ceil{\tM_{\discrete}^2}$. Then, if  we consider $\mttun, \mttdeux$ sufficiently small and $\cconst$ sufficiently large such that the conditions of \Cref{theo:discrete_contrac_wass_D_v2} hold, we have 
  \begin{align}
    \log^{-1}(\rho_b^{-1}) &\leq \left. 2\parentheseDeux{1 + \mttplustrois (1+R)/4 + \log(1+2A) + (1 + 4R^2) \mttplustrois} \right .\\ & \qquad \qquad \left . \middle/ \parentheseDeux{\mttplustrois \Phibf\defEnsLigne{-2(-\mtt)^{1/2}R/(2 - 2\rme^{2\mtt R^2})^{1/2}}}  \label{eq:rho_bgamma_3_majo_lim}\right. \eqsp ,
  \end{align}
Note that the right hand side of \eqref{eq:rho_bgamma_3_majo_lim}  is exponential in $-\mtt R^2$.
  \end{enumerate}
A similar result was already obtained in \cite[Theorem 10]{durmus2017nonasymp} but the scheme of the proof was different as the authors compared the discretization scheme to the associated diffusion process and used the contraction of the continuous process.



\end{document}